\setlist[enumerate]{
    nosep,
    label=\textit{\roman*}),
}
\newlist{draft}{enumerate}{1}
\setlist[draft]{
    nosep,
    label=\textit{\roman*}),
    before=\color{purple},
}
\theoremstyle{plain}
\newtheorem{theorem}{Theorem}
\newtheorem{lemma}[theorem]{Lemma}
\newtheorem{proposition}[theorem]{Proposition}
\newtheorem{corollaryenv}{Corollary}[theorem]
\newenvironment{corollary}[1][]{%
    \if\relax\detokenize{#1}\relax
    \else
        \ifcsname #1-used\endcsname
            \expandafter\xdef\csname #1-used\endcsname{\the\numexpr\csname #1-used\endcsname+1}%
        \else
            \expandafter\gdef\csname #1-used\endcsname{1}%
        \fi
        \renewcommand{\thecorollaryenv}{\ref{#1}.\csname #1-used\endcsname}%
    \fi%
    \begin{corollaryenv}%
}{%
    \end{corollaryenv}%
}
\theoremstyle{definition}
\newtheorem{definition}{Definition}
\theoremstyle{remark}
\newtheorem*{remark}{Remark}
\newcommand*{\beq}[1]{\begin{equation} \label{#1}}
\newcommand*{\eeq}{\end{equation}}
\newcommand*{\quadtext}[1]{\quad \text{#1} \quad}
\newcommand*{\qquadtext}[1]{\qquad \text{#1} \qquad}
\newcommand*{\defeq}{\coloneqq}
\newcommand*{\eqdef}{\eqqcolon}
\let\Re\relax
\let\Im\relax
\DeclareMathOperator{\Re}{Re}
\DeclareMathOperator{\Im}{Im}
\DeclareMathOperator{\supp}{supp}
\DeclarePairedDelimiter{\floor}{\lfloor}{\rfloor}
\DeclarePairedDelimiter{\ceil}{\lceil}{\rceil}
\DeclarePairedDelimiter{\abs}{\lvert}{\rvert}
\DeclarePairedDelimiter{\norm}{\lVert}{\rVert}
\DeclarePairedDelimiterXPP{\maxnorm}[1]{}{\lVert}{\rVert}{_{\mathrm{max}}}{#1}
\DeclarePairedDelimiterX{\inprod}[2]{\langle}{\rangle}{#1, #2}
\DeclareMathOperator{\AC}{AC}
\newcommand*{\loc}{_{\mathrm{loc}}}
\NewDocumentCommand{\diff}{s o m}{%
    \mathop{}\!\mathrm{d}%
    \IfNoValueTF{#2}{}{^{#2}}%
    \IfBooleanTF{#1}{\mkern -1mu #3}{#3}%
}
\DeclareMathOperator{\tr}{tr}
\newcommand*{\transpose}{\mathsf{T}}
\newenvironment{smallpmatrix}{%
    \bigl( \begin{smallmatrix}
}{\end{smallmatrix} \bigr)}
\newcommand*{\charf}[1]{\mathbb{1}_{#1}}
\newcommand*{\probto}[1][]{\xrightarrow[#1]{\mathbb{P}}}
\newcommand*{\lawto}[1][]{\xrightarrow[#1]{\mathrm{law}}}
\DeclarePairedDelimiterXPP{\bprob}[1]{\mathbb{P}}{[}{]}{}{#1}
\DeclarePairedDelimiterXPP{\pprob}[1]{\mathbb{P}}{(}{)}{}{#1}
\newcommand*{\expect}{\mathop{\mbox{}\mathbb{E}}}
\DeclarePairedDelimiterXPP{\bexpect}[1]{\mathbb{E}}{[}{]}{}{#1}
\DeclarePairedDelimiterXPP{\var}[1]{\mathrm{Var}}{[}{]}{}{#1}
\DeclarePairedDelimiterXPP{\cov}[2]{\mathrm{Cov}}{[}{]}{}{#1, #2}
\newcommand*{\given}[1][]{\nonscript\>#1\vert\nonscript\>\mathopen{}}
\DeclarePairedDelimiterX{\quadvar}[1]{\langle}{\rangle}{#1}
\DeclarePairedDelimiterX{\crossvar}[2]{\langle}{\rangle}{#1, #2}
\DeclareMathOperator{\erf}{erf}
\newcommand*{\normal}[2]{\mathcal{N}(#1, #2)}
\DeclareMathOperator{\uniform}{Unif}
\newcommand*{\HBM}[1][\beta]{\mathcal{B}_{#1}}
\newcommand*{\ABM}{X_\beta}
\newcommand*{\UHP}{\mathbb{H}}
\newcommand*{\clUHP}{\overline{\mathbb{H}}_\infty}
\newcommand*{\RiemannSphere}{\mathbb{C}_\infty}
\newcommand*{\projection}{\mathscr{P}}
\DeclareMathOperator{\TM}{TM}
\DeclareMathOperator{\TMLP}{TM_{LP}}
\DeclareMathOperator{\TMLC}{TM_{LC}}
\DeclareMathOperator{\Hol}{Hol}
\newcommand*{\testfunc}{\varphi}
\DeclarePairedDelimiterXPP{\wronskian}[2]{\mathcal{W}}{(}{)}{}{#1, #2}
\DeclarePairedDelimiterXPP{\unswronskian}[2]{\tilde{\mathcal{W}}}{(}{)}{}{#1, #2}
\newcommand*{\shift}{E}
\newcommand*{\setshift}[1]{%
    \renewcommand*{\shift}{#1}%
}
\newcommand*{\timechange}{\eta_{\shift}}
\newcommand*{\bcphase}{\phi_{\shift}}
\newcommand*{\sine}{\mathrm{sine}_\beta}
\newcommand*{\sinemat}{\@ifstar\@sinemat\@@sinemat}
\newcommand*{\@sinemat}{\tilde{R}_\beta}
\newcommand*{\@@sinemat}[1][\beta]{R_{#1}}
\newcommand*{\sineTM}{\@ifstar\@sineTM\@@sineTM}
\newcommand*{\@sineTM}{T_{\sinemat*}}
\newcommand*{\@@sineTM}{T_{\sinemat}}
\newcommand*{\sineWT}{m_{\sinemat}}
\newcommand*{\logtime}{\upsilon}
\newcommand*{\slogtime}{\upsilon_{\shift}}
\newcommand*{\Airy}{\mathrm{Airy}_{\beta}}
\newcommand*{\Bessel}{\mathrm{Bessel}_{\beta,a}}
\newcommand*{\Besselop}{\mathfrak{G}_{\beta,a}}
\newcommand*{\Besselweight}{w_{\beta,a}}
\newcommand*{\Besselcoeff}{p_{\beta,a}}
\newcommand*{\Besselsol}{\varpi_{\beta,a}}
\DeclarePairedDelimiterXPP{\BesselWronskian}[2]{\mathcal{W}_{\beta,a}}{(}{)}{}{#1, #2}
\newcommand*{\sBesselop}{\mathfrak{G}_{\beta,a,\shift}}
\newcommand*{\sBesselmat}{\@ifstar\@sBesselmat\@@sBesselmat}
\newcommand*{\@sBesselmat}{\tilde{G}_{\beta,a,\shift}}
\newcommand*{\@@sBesselmat}[1][\beta]{G_{#1,a,\shift}}
\newcommand*{\sBesselTM}{\@ifstar\@sBesselTM\@@sBesselTM}
\newcommand*{\@sBesselTM}{T_{\sBesselmat*}}
\newcommand*{\@@sBesselTM}{T_{\sBesselmat}}
\newcommand*{\sBesselWT}{m_{\sBesselmat}}
\newcommand*{\sBesselSLtoCS}[1][\beta]{A_{#1,a,\shift}}
\newcommand*{\sBesself}[1][\beta]{\mathrm{f}_{#1,a,\shift}}
\newcommand*{\sBesselg}[1][\beta]{\mathrm{g}_{#1,a,\shift}}
\newcommand*{\diln}{c_{\shift}}
\newcommand*{\lasttime}{\tau_{\shift}}
\newcommand*{\penultime}{\tau_{\shift,\alpha}}
\newcommand*{\sBM}{B_{\shift}}
\newcommand*{\newfgcommands}{\@ifstar\@@newfgcommands\@newfgcommands}
\newcommand*{\@newfgcommands}[2]{%
    \expandafter\newcommand\expandafter*\csname #1\endcsname{#2_{\beta, a, \shift}}
    \expandafter\newcommand\expandafter*\csname #1f\endcsname{#2_{\beta, a, \shift}^{\mathrm{f}}}
    \expandafter\newcommand\expandafter*\csname #1g\endcsname{#2_{\beta, a, \shift}^{\mathrm{g}}}
}
\newcommand*{\@@newfgcommands}[3]{%
    \expandafter\newcommand\expandafter*\csname #1\endcsname{#2_{\beta, a, \shift}^{#3}}
    \expandafter\newcommand\expandafter*\csname #1f\endcsname{#2_{\beta, a, \shift}^{#3, \mathrm{f}}}
    \expandafter\newcommand\expandafter*\csname #1g\endcsname{#2_{\beta, a, \shift}^{#3, \mathrm{g}}}
}
\newcommand*{\diffamps}{\Delta^\rho_{\beta, a, \shift}}
\newcommand*{\diffphases}{\Delta^\xi_{\beta, a, \shift}}
\newcommand*{\sumamps}{\Sigma^\rho_{\beta, a, \shift}}
\newcommand*{\sumphases}{\Sigma^\xi_{\beta, a, \shift}}
\newcommand*{\sumdrift}{R_{\beta, a, \shift}}
\newcommand*{\sumdiff}{S_{\beta, a, \shift}}
\newcommand*{\GBM}{Z_{\beta, \shift}}
\newcommand*{\BesselABM}{Y_{\beta,a, \shift}}
\newcommand*{\unsBesselop}{\tilde{\mathfrak{G}}_{\beta,a}}
\newcommand*{\unsBesselweight}{\tilde{w}_{\beta,a}}
\newcommand*{\unsBesselcoeff}{\tilde{p}_{\beta,a}}
\newcommand*{\unsBesself}{\tilde{\mathrm{f}}_{\beta,a,\shift}}
\newcommand*{\unsBesselg}{\tilde{\mathrm{g}}_{\beta,a,\shift}}
\newcommand*{\unsBM}{\tilde{B}_{\shift}}
\newcommand*{\reflectvec}[1]{\mathpalette\@reflectvec{#1}}
\newcommand*{\@reflectvec}[2]{\reflectbox{\(#1\vec{\reflectbox{\(#1#2\)}}\)}}
\newcommand*{\rBM}{\reflectvec{B}}
\newcommand*{\rBesselcoeff}{\reflectvec{p}_{\beta,a}}
\newcommand*{\rBesselweight}{\reflectvec{w}_{\beta,a}}
\newcommand*{\ramp}{r_{\beta,a}}
\newcommand*{\rphase}{\xi_{\beta,a}}
\title{Operator level hard edge to bulk transition in \(\beta\)-ensembles via canonical systems}
\author[1]{Vincent Painchaud\thanks{vincent.painchaud@mail.mcgill.ca}}
\affil[1]{Department of Mathematics and Statistics, McGill University}
\date{\today}
\begin{document}

\maketitle

\begin{abstract}
The hard edge and bulk scaling limits of \(\beta\)-ensembles are described by the stochastic Bessel and sine operators, which are respectively a random Sturm--Liouville operator and a random Dirac operator. By representing both operators as canonical systems, we show that in a suitable high-energy scaling limit, the stochastic Bessel operator converges in law to the stochastic sine operator. This is first done in the vague topology of canonical systems' coefficient matrices, and then extended to the convergence of the associated Weyl--Titchmarsh functions and spectral measures. The proof relies on a coupling between the Brownian motions that drive the two operators, under which the convergence holds in probability.
\end{abstract}

\section{Introduction}

A \(\beta\)-ensemble is a point process on a domain \(D \subseteq \mathbb{R}\) that admits the joint density
\beq{eq.betaensembles}
(\lambda_1, \hdots, \lambda_N) \mapsto \frac{1}{Z_{N,V,\beta}} \exp\Bigl( - \sum_{j=1}^N \beta N V(\lambda_j) \Bigr) \prod_{1\leq j<k\leq N} \abs{\lambda_j - \lambda_k}^\beta
\eeq
where \(V\colon D \to \mathbb{R}\) is a constraining potential, \(\beta > 0\) is a parameter usually called the inverse temperature, and \(Z_{N,V,\beta} > 0\) is a normalizing constant (see~\cite{anderson_introduction_2009} for background). An important problem in random matrix theory is to describe the local statistics of such a point process when \(N\) is large.

In the classical cases of \(\beta \in \{1,2,4\}\), these point processes enjoy a Pfaffian or determinantal structure, which allows to compute explicitly the correlation functions and therefore to obtain descriptions of scaling limits as \(N\to\infty\). With general \(\beta > 0\), these special structures are lost. Edelman and Sutton~\cite{edelman_random_2007} worked from the tridiagonal matrix models obtained by Dumitriu and Edelman~\cite{dumitriu_matrix_2002} and introduced an important idea: the local behavior of \(\beta\)-ensembles can be described by the spectra of random differential operators. Three differential operators were then defined and shown to be scaling limits of \(\beta\)-ensembles: the stochastic Airy operator for the soft edge limit~\cite{ramirez_beta_2011}, the stochastic Bessel operator for the hard edge limit~\cite{ramirez_diffusion_2009}, and the stochastic sine operator for the bulk limit~\cite{valko_sine_beta_2017}. The \(\Airy\), \(\Bessel\) and \(\sine\) point processes, which are the spectra of the corresponding operators, were shown to be universal for a large class of potentials~\cite{bekerman_transport_2015, bourgade_edge_2014, bourgade_bulk_2012, bourgade_universality_2014, krishnapur_universality_2016, rider_universality_2019}. We also note that while both the \(\Airy\) and \(\Bessel\) processes were first described as the spectra of the associated operators, the \(\sine\) process was in fact constructed before the operator in~\cite{killip_eigenvalue_2009} and in~\cite{valko_continuum_2009} (independently). 

The operators that describe the edge limits (Airy and Bessel) are a priori fundamentally different from the bulk (sine) operator: while the edge operators are random Schrödinger operators, the sine operator is a random Dirac operator. Nevertheless, it turns out that both of these classes of operators can be represented under the more general framework of canonical systems. This allows, for instance, to describe transitions from the edges operators to the bulk operator. The purpose of this paper is to use the canonical system framework to prove a hard edge to bulk transition at the operator level, in a similar way as done recently for the soft edge to bulk transition in~\cite{painchaud_operator_2025} by E.\@ Paquette and the author. 

\paragraph{Bessel and sine operators.}

In order to state precisely our results, we now introduce the Bessel and sine operators. For \(\beta > 0\), let \(\HBM\) denote a hyperbolic Brownian motion with variance \(\nicefrac{4}{\beta}\) started at \(i\) in the upper half-plane, meaning that \(\HBM\) solves the stochastic differential equation
\beq{eq.HBMSDE}
\diff{\HBM}(t) = \frac{2}{\sqrt{\beta}} \Im\HBM(t) \diff{W}(t)
\qquadtext{with}
\HBM(0) = i
\eeq
where \(W\) is a standard complex Brownian motion, with independent standard real Brownian motions as real and imaginary parts. Then, set
\beq{eq.defsinemat}
\sinemat \defeq \frac{1}{2\Im\HBM} \ABM^\transpose \ABM
\quadtext{with}
\ABM \defeq \begin{pmatrix} 1 & -\Re\HBM \\ 0 & \Im\HBM \end{pmatrix}
\qquadtext{and}
J \defeq \begin{pmatrix} 0 & -1 \\ 1 & 0 \end{pmatrix}.
\eeq
The \emph{stochastic sine operator}, first defined in~\cite{valko_sine_beta_2017}, is the random differential operator sending \(u\colon (0,1) \to \mathbb{C}^2\) to
\beq{eq.defsine}
(\sinemat^{-1}\circ\logtime) Ju'
\qquadtext{and with boundary conditions}
\begin{cases}
    u(0) \parallel (1,0), \\
    u(1) \parallel (\Re\HBM(\infty), 1) \text{ if } \beta > 2
\end{cases}
\eeq
where \(\logtime(t) \defeq -\log(1-t)\) and \(\parallel\) denotes parallel. Under these boundary conditions, the stochastic sine operator is self-adjoint on the appropriate domain and has a discrete spectrum which is the \(\sine\) point process~\cite{valko_sine_beta_2017}. 

Now, for \(\beta > 0\) and \(a > -1\), let
\beq{eq.defBesselpw}
\Besselcoeff(t) \defeq \exp\Bigl( -at - \frac{2}{\sqrt{\beta}} B(t) \Bigr)
\qquadtext{and}
\Besselweight(t) \defeq \exp\Bigl( - (a+1)t - \frac{2}{\sqrt{\beta}} B(t) \Bigr),
\eeq
where \(B\) is a standard Brownian motion. The \emph{stochastic Bessel operator}, first defined in~\cite{ramirez_diffusion_2009}, is the random Sturm--Liouville operator \(\Besselop\) acting on a function \(f\colon (0,\infty) \to \mathbb{R}\) as
\beq{eq.defBessel}
\Besselop f = - \frac{1}{\Besselweight} \bigl( \Besselcoeff f' \bigr)'
\eeq
with Dirichlet boundary condition at \(0\) and Neumann boundary condition at infinity. For any given (continuous) Brownian path, this is a Sturm--Liouville operator, and in particular it is a.s.\@ self-adjoint on the appropriate domain. Its spectrum is the \(\Bessel\) point process~\cite{ramirez_diffusion_2009}. 

We also introduce a shifted and scaled version of the stochastic Bessel operator: \(\sBesselop \defeq \frac{2}{\sqrt{\shift}} (\Besselop - \shift)\), for \(\shift > 0\). Using asymptotics of Bessel functions in order to study the spacing between the eigenvalues of \(\Besselop\), it can be seen in the deterministic case \(\beta = \infty\) that the limit of \(\sBesselop\) as \(\shift\to\infty\) is the scaling limit in which the stochastic Bessel operator should converge to the stochastic sine operator.

\paragraph{Canonical system representation.}

Because the stochastic Bessel and sine operators are two different types of operators, we need a single framework that can encompass both of them, for which we use de Branges' theory of canonical systems~\cite{de_branges_hilbert_1968}. Although we refrain from providing a complete introduction to the theory of canonical systems here, we will still introduce the essential concepts as needed throughout the text. We refer the reader to~\cite[Section~2]{painchaud_operator_2025} for a short introduction specifically tailored for our purposes, or to Remling's book~\cite{remling_spectral_2018} for a more complete overview. 

\begin{definition}
\label{def.CS}
A \emph{canonical system} on an interval \((a,b) \subseteq \mathbb{R}\) is a differential equation of the form
\[
Ju' = -zHu
\qquadtext{with}
J \defeq \begin{pmatrix} 0 & -1 \\ 1 & 0 \end{pmatrix}
\]
where \(u\colon (a,b) \to \mathbb{C}^2\), \(z \in \mathbb{C}\), and \(H\colon (a,b) \to \mathbb{R}^{2\times 2}\) is called the \emph{coefficient matrix}. Here, we will always assume that coefficient matrices are nonzero a.e., positive semi-definite a.e., and locally integrable entrywise.
\end{definition}

When \(H\) is invertible, the canonical system \(Ju' = -zHu\) can be written as \(-H^{-1}Ju' = zu\), and is therefore the eigenvalue equation for the Dirac differential operator \(u \mapsto - H^{-1}Ju'\). In general, a canonical system should still be thought of as an eigenvalue equation, but for a \emph{relation} on a suitable Hilbert space instead of an operator. Just like in the theory of other types of second-order differential operators, canonical systems are self-adjoint with real spectrum on suitable domains, which might have to be defined from boundary conditions depending on the behavior of the system near endpoints.

By inverting the matrix in the eigenvalue equation for the stochastic sine operator, we obtain a canonical system on \((0,1)\) with coefficient matrix \(\sinemat\circ\logtime\). While it is not as straightforward to represent the stochastic Bessel operator as a canonical system, it is in fact possible to turn any Sturm--Liouville operator into a canonical system using an appropriate change of variables. Doing so, it can be shown that \(\sBesselop\) is equivalent to the canonical system on \((0,\infty)\) with coefficient matrix
\[
\sBesselmat \defeq \frac{\Besselweight\sqrt{\shift}}{2} \begin{pmatrix}
    \sqrt{\shift} \sBesselg^2 & \sBesself\sBesselg \\
    \sBesself\sBesselg & \frac{1}{\sqrt{\shift}} \sBesself^2
\end{pmatrix}
\]
where \(\sBesself\) and \(\sBesselg\) are fundamental solutions to \(\sBesselop h = 0\) with \(\sBesself(0) = \sBesselg'(0) = 1\) and \(\sBesself'(0) = \sBesselg(0) = 0\). Remark that this coefficient matrix is not invertible, which explains why we cannot use the theory of Dirac operators and we must stick with canonical systems theory.

\paragraph{Main results.}

Our first result describes the convergence of the canonical systems' coefficient matrices. The space of coefficient matrices on an interval \(\mathcal{I}\) can be given what we call the \emph{vague topology}, which is obtained by thinking of coefficient matrices as matrix-valued measures and testing them against compactly supported continuous functions on \(\mathcal{I}\). This construction results in a separable metric space. 

\begin{theorem}
\label{thm.vagueconv}
\setshift{E_n}
Let \(\mathcal{I} \defeq [0,1)\) if \(\beta \leq 2\) and \(\mathcal{I} \defeq [0,1]\) if \(\beta > 2\). For any diverging sequence \(\{\shift\}_{n\in\mathbb{N}} \subset (0,\infty)\), there are \(\mathscr{C}^1\) bijections \(\timechange\colon [0, 1 + \varepsilon_{\shift}) \to [0,\infty)\) where \(\varepsilon_{\shift} \to 0\) such that
\[
\sBesselmat* \defeq \timechange' (\sBesselmat\circ\timechange) \lawto[n\to\infty] \sinemat* \defeq \sinemat\circ\logtime
\]
in the vague topology of coefficient matrices on \(\mathcal{I}\).
\end{theorem}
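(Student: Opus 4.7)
My plan is to rewrite the canonical system matrix $\sBesselmat$ using Prüfer-type amplitude and phase variables for the fundamental solutions to $\sBesselop h = 0$, choose the time change $\timechange$ guided by the Liouville transformation of the Sturm--Liouville equation, and then decompose $\sBesselmat*$ into a slowly varying part that converges to $\sinemat*$ under a coupling of the Brownian motions, plus rapidly oscillating terms that vanish in the vague topology.

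Concretely, I would introduce amplitudes $\ampf, \ampg$ and phases $\phasef, \phaseg$ so that $\sBesself$ and its appropriately normalized momentum are $\ampf\sin(\phasef/2)$ and $\ampf\cos(\phasef/2)$, and similarly for $\sBesselg$. Itô calculus on~\eqref{eq.defBesselpw} yields SDEs driven by $B$ in which each phase carries a drift of order $\sqrt{\shift\,\Besselweight/\Besselcoeff}$, while the amplitudes and the phase difference $\diffphases$ are slowly varying---the leading deterministic drifts of $\phasef$ and $\phaseg$ cancel in $\diffphases$, leaving only order-one contributions from $B$. The natural time change is $\timechange(\tau) = -2\log(1-\tau)$ up to a small $\shift$-dependent correction (which accounts for the excess $\varepsilon_{\shift}$): this comes from the deterministic Liouville substitution $s = \int_0^t \sqrt{\Besselweight/\Besselcoeff}\,\diff u$ followed by the rescaling $\tau = s/2$, and it matches the logarithmic time change $\logtime$ on the sine side.

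Expanding the bilinear terms in $\sBesselmat$ via trigonometric identities---for instance $\sBesself\sBesselg = \tfrac{1}{2}\ampf\ampg\bigl(\cos(\diffphases/2) - \cos(\sumphases/2)\bigr)$---yields a decomposition $\sBesselmat* = S_n + F_n$ in which $S_n$ depends only on the amplitudes and $\diffphases$, and $F_n$ carries rapidly oscillating trigonometric functions of $\sumphases$. Under a coupling between $B$ and the complex Brownian motion $W$ of~\eqref{eq.HBMSDE}, the SDE system for $(\ampf, \ampg, \diffphases)$ expressed in the $\tau$-variable becomes a perturbation of~\eqref{eq.HBMSDE} for $\HBM\circ\logtime$, so a standard SDE stability argument gives that $S_n$ converges in probability, uniformly on compact subsets of $\mathcal{I}$, to $\sinemat*$. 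For any $\testfunc\in C_c(\mathcal{I})$, the fast contribution $\int\testfunc F_n\diff\tau$ vanishes in probability via the classical averaging argument: integrating by parts against the derivative of $\sumphases$ produces an overall factor of order $1/\sqrt{\shift}$, modulo Itô and boundary corrections.

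The \textbf{main obstacle} is carrying the averaging estimate up to the right endpoint of $\mathcal{I}$ when $\beta > 2$, in which case $\mathcal{I} = [0,1]$ and the Bessel half-line must be covered all the way to $+\infty$. There the exponential decay of $\Besselweight$ makes the amplitudes $\ampf, \ampg$ degenerate while the drift of $\sumphases$ may become small, so the boundary terms in the averaging integration by parts must be controlled uniformly in $n$. A careful choice of $\varepsilon_{\shift}$ and of the $\shift$-dependent correction in $\timechange$ near $\tau = 1$ is what ensures the limit has the correct behaviour, compatible with the boundary condition $u(1)\parallel(\Re\HBM(\infty), 1)$ in~\eqref{eq.defsine}.
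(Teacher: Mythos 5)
Your proposal follows essentially the same route as the paper: polar (Prüfer) coordinates for the fundamental solutions to $\sBesselop h = 0$, the logarithmic time change $\timechange(\tau) = -2\log(1-\diln\tau)$ (the paper motivates it via Bessel-function asymptotics in the deterministic $\beta=\infty$ case rather than a Liouville substitution, but arrives at the same formula), a slow/fast decomposition of $\sBesselmat*$ matching equation~\eqref{eq.sBesselmatpolarcoords}, a coupling of the driving Brownian motion $B$ with the complex Brownian motion $W$, an SDE-comparison argument showing the slow block converges to the hyperbolic Brownian motion's entries (Propositions~\ref{prop.GBM} and~\ref{prop.ReHBM}), oscillatory averaging to kill the fast block (Lemma~\ref{lem.averaging} and Corollaries~\ref{cor.averaging.1},~\ref{cor.averaging.2}), and the tracebound machinery (Proposition~\ref{prop.tracebound}, Lemma~\ref{lem.tracebound}) to handle the right endpoint when $\beta>2$—exactly the obstacle you flag. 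The cosmetic differences (your $\phase/2$ convention, your amplitude being linear rather than logarithmic) are just parametrization choices; the substance is identical.
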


\begin{remark}
The number \(\varepsilon_{\shift}\) appears here for technical reasons related to the possible discrepancy between the behavior of the Bessel and sine systems near the right endpoint. It always vanishes as \(\shift\to\infty\) (we will in fact take it to be exactly zero for some values of \(\beta\) and \(a\)) and \(\timechange\) should essentially be thought of as a time change between \((0,1)\) and \((0,\infty)\). The precise definition of \(\timechange\) will be given in~\eqref{eq.timechange}.
\end{remark}

From this result, we can then show that the solutions to the canonical systems also converge. More precisely, we can deduce the convergence of their \emph{transfer matrices}. Here, by the transfer matrix of a canonical system on \((a,b)\), we mean a function \(T\colon [a,b) \times \mathbb{C} \to \mathbb{C}^{2\times 2}\) such that for each \(z \in \mathbb{C}\), \(T(\cdot,z)\) is a (matrix) solution to the canonical system with initial condition \(T(a,z) = I_2\).

\begin{corollary}
\label{cor.TMconv}
\setshift{E_n}
Let \(\sBesselTM*, \sineTM*\colon \mathcal{I} \times \mathbb{C} \to \mathbb{C}^{2\times 2}\) be the transfer matrices of the canonical systems with coefficient matrices \(\sBesselmat*\) and \(\sinemat*\) respectively. Then \(\sBesselTM* \to \sineTM*\) in law compactly on \(\mathcal{I} \times \mathbb{C}\) as \(n\to\infty\).
\end{corollary}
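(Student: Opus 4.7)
The plan is to combine Theorem~\ref{thm.vagueconv} with a deterministic continuity principle: the map sending a coefficient matrix to its transfer matrix is continuous from the vague topology of coefficient matrices on \(\mathcal{I}\) to the topology of compact convergence on \(\mathcal{I} \times \mathbb{C}\). This is a standard statement in the theory of canonical systems; it is used in the author's previous work~\cite{painchaud_operator_2025} and can also be extracted from Remling's book~\cite{remling_spectral_2018}.

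To deduce the corollary, I would invoke Skorokhod's representation theorem, which is available since the vague topology on coefficient matrices is separable and metrizable. This lets me couple the \(\sBesselmat*\) and \(\sinemat*\) on a common probability space so that the convergence from Theorem~\ref{thm.vagueconv} holds almost surely. Applying the deterministic continuity principle path by path then produces almost sure compact convergence \(\sBesselTM* \to \sineTM*\) on \(\mathcal{I} \times \mathbb{C}\), which in particular gives the claimed convergence in law.

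The deterministic continuity itself can be proved from the Picard expansion
\[
T(t,z) = \sum_{k=0}^\infty (-zJ^{-1})^k \int_{0 \leq s_1 \leq \cdots \leq s_k \leq t} H(s_k) \cdots H(s_1) \diff{s_1} \cdots \diff{s_k},
\]
in which each iterated integral depends continuously on \(H\) under vague convergence when \(t\) lies in a compact subinterval of the interior of \(\mathcal{I}\) (by testing against suitable tensor products of compactly supported continuous functions), and the tail of the series is dominated uniformly on compact sets in \((t,z)\) by the Gronwall bound \(\norm{T(t,z)} \leq \exp\bigl(\abs{z} \int_0^t \norm{H(s)}\diff{s}\bigr)\), whose input is controlled locally uniformly in \(n\) by the vague convergence of the \(\sBesselmat*\).

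The main obstacle is the right endpoint when \(\beta > 2\), where \(\mathcal{I} = [0,1]\): the vague topology does not see mass concentration at \(t = 1\), so compact convergence up to and including the endpoint requires the additional input that \(\int_{1-\delta}^1 \sBesselmat*(t) \diff{t}\) is uniformly small in \(n\) as \(\delta \to 0\). This uniform integrability statement should follow from the explicit form of \(\sBesselmat*\) together with the asymptotic behavior of \(\sBesself\) and \(\sBesselg\) underlying the construction of the time change \(\timechange\) in Theorem~\ref{thm.vagueconv}. Combined with the interior convergence, it would promote the vague convergence to weak convergence on \([0,1]\) and hence yield compact convergence of transfer matrices all the way to \(t = 1\).
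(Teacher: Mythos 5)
Your overall strategy — vague convergence of coefficient matrices plus a continuity principle for the map $H \mapsto T_H$ — matches the paper's. The paper, however, works on the explicit coupling built in Lemma~\ref{lem.coupling} and uses the convergence-in-probability version of the continuity principle (Proposition~\ref{prop.CStoTM}), so it never needs Skorokhod's representation theorem and directly obtains convergence in probability (Corollary~\ref{cor.TMconvP}), which is stronger than what the corollary asks for. Your Skorokhod route is valid for the stated conclusion in law, and is a reasonable alternative when one does not wish to exhibit a coupling. (One small correction in passing: in the Picard expansion the factor $J^{-1}$ interleaves with the $H(s_j)$'s rather than being pulled out as $(-zJ^{-1})^k$, since $J^{-1}$ and $H$ do not commute; this is a notational slip, not a conceptual one.)

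The genuine gap is precisely at the point you flag as the ``main obstacle.'' For $\beta > 2$ with $\mathcal{I} = [0,1]$, the continuity principle you invoke is simply false without the extra hypothesis of uniform domination of $\tr H_n$ near $1$ by a single integrable function; vague convergence on $[0,1]$ says nothing about this because mass can escape to the endpoint. You write that the needed uniform integrability ``should follow from the explicit form of $\sBesselmat*$,'' but this is where nearly all of the remaining work of the corollary actually lives, and it is not a soft consequence of anything already established. In the paper, for $\abs{a} < 1$ this is Lemma~\ref{lem.tracebound}, whose proof requires passing to the unshifted operator via Proposition~\ref{prop.unshift}, the limit-circle analysis of Proposition~\ref{prop.Weyl}, and the quantitative bounds of Lemma~\ref{lem.Weylbound}, combined with Proposition~\ref{prop.tracebound}; for $a \geq 1$ it is Proposition~\ref{prop.tracebound} together with the specific choice $\diln = 1 - E_n^{-1/2}$, which makes $\lasttime = 1$ so that the bound reaches the endpoint. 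Neither of these is immediate, and without one of them your argument only yields compact convergence on $[0,1) \times \mathbb{C}$, not on $[0,1] \times \mathbb{C}$ as the corollary requires when $\beta > 2$.
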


The vague topology, however, is not strong enough to capture the behavior of the canonical systems' spectra. To extend the convergence to the spectrum, we use Weyl theory. A canonical system always has a Weyl--Titchmarsh function, which is a generalized Herglotz function (i.e., a holomorphic map from the upper half-plane \(\UHP\) to its closure \(\clUHP\) in the Riemann sphere) and essentially the Sieltjes transform of its spectral measure (we will come back to the precise definition in Section~\ref{sec.spectralconv}). What is missing from the convergence of transfer matrices to get that of Weyl--Titchmarsh functions is the convergence of the systems' boundary conditions. From there, we obtain the following result.

\begin{theorem}
\label{thm.WTconv}
\setshift{E_n}
Let \(\{\shift\}_{n\in\mathbb{N}} \subset (0,\infty)\) be a diverging sequence, and let \(\sBesselWT, \sineWT\colon \UHP \to \clUHP\) be the Weyl--Titchmarsh functions of the canonical systems with coefficient matrices \(\sBesselmat\) and \(\sinemat\) respectively. Then \(\sBesselWT \to \sineWT\) in law compactly on \(\UHP\) as \(n\to\infty\), and this holds jointly with the convergence of transfer matrices in Corollary~\ref{cor.TMconv}. In particular, the spectral measures of the corresponding systems converge vaguely in law.
\end{theorem}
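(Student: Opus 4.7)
The strategy is to combine the convergence of transfer matrices from Corollary~\ref{cor.TMconv} with control of the boundary behavior of the two canonical systems near the right endpoint of \(\mathcal{I}\). By Skorokhod's representation, I would pass to a subsequence and a coupling on which Corollary~\ref{cor.TMconv} holds almost surely; the task then reduces to proving almost-sure pointwise convergence \(\sBesselWT(z) \to \sineWT(z)\) for each fixed \(z \in \UHP\). Since generalized Herglotz functions \(\UHP \to \clUHP\) form a normal family (after a Möbius reduction to bounded holomorphic functions), pointwise convergence on \(\UHP\) automatically upgrades to compact convergence, and this occurs jointly with the transfer matrix convergence by construction of the coupling.

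The main tool is the Weyl disk representation: for each \(t_0\) interior to the canonical system's domain and each \(z \in \UHP\), the Möbius action of the transfer matrix sends \(\clUHP\) to a closed disk \(D(t_0, z) \subseteq \clUHP\); the disks \(D(t_0, z)\) are nested in \(t_0\), and each of them contains the Weyl--Titchmarsh function. For \(\beta \leq 2\), the sine system is limit point at \(1\), so \(\bigcap_{t_0 < 1} D_{\sine}(t_0, z) = \{\sineWT(z)\}\). Fixing \(t_0 \in [0, 1)\), for \(n\) large enough that \(t_0 < 1 + \varepsilon_{E_n}\) we have \(\sBesselWT(z) \in D_{E_n}(t_0, z)\), and the a.s.\@ convergence \(\sBesselTM*(t_0, z) \to \sineTM*(t_0, z)\) forces Hausdorff convergence of the disks on the Riemann sphere. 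Any subsequential limit of \(\sBesselWT(z)\) thus lies in \(D_{\sine}(t_0, z)\); letting \(t_0 \to 1\) identifies the limit as \(\sineWT(z)\).

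For \(\beta > 2\), the right endpoint is limit circle: the disk \(D_{\sine}(1, z)\) is non-degenerate, and \(\sineWT(z)\) is the Möbius image under \(\sineTM*(1, z)\) of a specific point on \(\partial \clUHP\) determined by the boundary vector \((\Re \HBM(\infty), 1)\). Analogously, \(\sBesselWT(z)\) corresponds to the Neumann condition of the Bessel operator at infinity, transported through the time change \(\timechange\) to a boundary direction at \(1 + \varepsilon_{E_n}\). The key additional step in this regime is to show that the transported boundary direction converges to \((\Re \HBM(\infty), 1)\), which I would handle using the coupling of Brownian motions behind Theorem~\ref{thm.vagueconv}, by identifying \(\Re \HBM(\infty)\) as a limit of the appropriate Brownian-driven asymptotic quantities in the Bessel system. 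Once pointwise convergence of \(\sBesselWT\) is established in both regimes, the vague convergence of spectral measures in law follows from the standard fact that compact convergence of Herglotz functions on \(\UHP\) implies vague convergence of the boundary measures arising from the Herglotz representation. I expect the main obstacle to be the \(\beta > 2\) case: identifying the limiting boundary direction requires information beyond the vague convergence of coefficient matrices and must be extracted from the explicit coupling.
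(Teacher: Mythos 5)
Your high-level structure matches the paper's: the limit point/limit circle dichotomy at the right endpoint, the Weyl disk argument for \(\beta \leq 2\), and the recognition that for \(\beta > 2\) you additionally need convergence of boundary conditions, extracted from the coupling. For \(\beta \leq 2\) your argument is essentially a re-derivation of the continuity result (Theorem~\ref{thm.TMconv.LP}, adapted from Remling) that the paper invokes directly, and your use of Skorokhod representation rather than working on the explicit coupled probability space is a cosmetic difference since the paper's coupling already yields in-probability convergence. That part is sound.

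The gap is the technical core of the \(\beta > 2\) case, which you explicitly defer but which is where most of Section~\ref{sec.spectralconv} actually lives. First, your sketch assumes there is a ``Neumann condition of the Bessel operator at infinity, transported through the time change to a boundary direction at \(1 + \varepsilon_{E_n}\)''. This only makes sense when \(\abs{a} < 1\), where the Bessel operator is limit circle at infinity (Proposition~\ref{prop.Weyl}). When \(a \geq 1\) and \(\beta > 2\) the Bessel system is limit point on the right even though the limiting sine system is limit circle, so there is \emph{no} boundary condition to transport; the paper handles this by truncating the system to \([0,1]\) and introducing a \(z\)-dependent ``boundary condition'' at \(1\) given by the integrable solution, which is then fed into Theorem~\ref{thm.TMconv.LC} as a nonconstant \(w\). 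You would need this machinery, and it is not implied by anything you wrote. Second, ``identifying \(\Re\HBM(\infty)\) as a limit of the appropriate Brownian-driven asymptotic quantities'' requires an entirely new analysis of solutions to \(\Besselop f = \lambda f\) toward \(-\infty\) for a two-sided Brownian motion (Proposition~\ref{prop.polarcoords2} and Theorem~\ref{thm.asymptotics}): one needs polar coordinates on the negative axis, a subgaussian growth bound on the amplitude, and equidistribution of the phase, and these feed into a lower bound on the Wronskian normalizer (Lemma~\ref{lem.spectralconv.beta>2.goodevent}) without which the boundary vector in~\eqref{eq.spectralconv.uz} is not even well defined. Finally, the coupling of Proposition~\ref{prop.ReHBM} only controls \(-e^{-\diffamps}\cos\diffphases\) up to time \(\penultime < \lasttime\); extending the convergence to \(\lasttime\) (and, when \(\abs{a} < 1\), moving the boundary condition from \(1\) back to \(\lasttime\)) requires the separate estimates around~\eqref{eq.spectralconv.bcdiff}. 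None of this is routine, and a proof that stops at ``use the coupling'' at this point is not yet a proof.
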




As part of the proof of Theorem~\ref{thm.WTconv}, we obtain the following asymptotics of solutions to \(\Besselop f = \lambda f\) towards \(-\infty\) for \(\lambda > 0\). 

\begin{theorem}
\label{thm.asymptotics}
Let \(\Besselop\) be defined on the full real line from a two-sided Brownian motion. If \(f\) solves \(\Besselop f = \lambda f\) for \(\lambda > 0\), then for \(t \geq 1\),
\[
f(-t) = C_f \lambda^{\nicefrac{-1}{4}} \exp\biggl( \Bigl( \frac{1}{2\beta} - \frac{a}{2} - \frac{1}{4} \Bigr) t + X(t) \biggr) \cos\rphase(t)
\quadtext{and}
f'(-t) = C_f \lambda^{\nicefrac{1}{4}} \exp\biggl( \Bigl( \frac{1}{2\beta} - \frac{a}{2} + \frac{1}{4} \Bigr) t + X(t) \biggr) \sin\rphase(t)
\]
where \(C_f^2 \defeq f^2(-1) + {f'}^2(-1)\), \(\rphase\) is a process such that \(\rphase(t) - 2\pi \floor[\big]{\frac{\rphase(t)}{2\pi}} \to U \sim \uniform[0,2\pi)\) in law as \(t\to\infty\), and \(X\) is a process such that for any \(\varepsilon, \delta > 0\), there is a \(C > 0\) for which
\[
\bprob[\Big]{\forall t \geq 1, \abs{X(t)} \leq C(1 + t^{\nicefrac{1}{2} + \delta})} \geq 1 - \varepsilon.
\]
\end{theorem}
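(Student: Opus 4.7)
The plan is to convert $\Besselop f = \lambda f$ near $-\infty$ into a phase-amplitude stochastic system adapted to its exponentially growing wavenumber, apply Itô calculus, and read off the asymptotics from the resulting SDEs. Writing $s = -t > 0$ and $\tilde B(s) \defeq B(-s)$ (a standard Brownian motion by two-sidedness), the Sturm--Liouville equation becomes the first-order system $g'(s) = -h(s)/\tilde p(s)$, $h'(s) = \lambda \tilde w(s) g(s)$ for $g(s) \defeq f(-s)$, $h(s) \defeq (\Besselcoeff f')(-s)$, $\tilde p(s) \defeq \exp(as + \tfrac{2}{\sqrt\beta}\tilde B(s))$, and $\tilde w(s) \defeq \exp((a+1)s + \tfrac{2}{\sqrt\beta}\tilde B(s))$. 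Since $\tilde w/\tilde p = e^s$, the natural wavenumber $k(s) \defeq \sqrt{\lambda\tilde w/\tilde p} = \lambda^{1/2} e^{s/2}$ is deterministic and grows exponentially. I would use the rescaled coordinates $F \defeq \sqrt k\, g$ and $G_1 \defeq f'(-s)/\sqrt k$, then polar coordinates $F = r\cos\rphase$, $G_1 = r\sin\rphase$, which serve as the target amplitude and phase.

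Using the Itô identity $d\tilde p/\tilde p = (a + \tfrac{2}{\beta})\,ds + \tfrac{2}{\sqrt\beta}\, d\tilde B$, Itô calculus on $F, G_1$ yields
\[
dF = \left(\tfrac{F}{4} - kG_1\right) ds, \qquad dG_1 = \left(kF + \left(\tfrac{2}{\beta} - a - \tfrac{1}{4}\right) G_1\right) ds - \tfrac{2}{\sqrt\beta}\, G_1\, d\tilde B,
\]
so that the $\pm k$ rotation terms cancel in $d\log r$, producing
\[
d\log r = \left(\tfrac{1}{2\beta} - \tfrac{a}{2}\right) ds + v(\rphase)\, ds - \tfrac{2}{\sqrt\beta}\sin^2\rphase\, d\tilde B,
\]
where $v$ is a bounded, $2\pi$-periodic function of mean zero; the constant drift arises from the uniform averages $\bexpect{\cos^2\rphase} = \bexpect{\sin^2\rphase} = \tfrac{1}{2}$ and $\bexpect{\sin^4\rphase} = \tfrac{3}{8}$ of the Itô-correction terms. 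Setting $X(s) \defeq \log r(s) - (\tfrac{1}{2\beta} - \tfrac{a}{2})s - \log C_f$ with $C_f^2 \defeq r^2(1) = f^2(-1) + {f'}^2(-1)$, I would split $X$ into its martingale and drift parts. The martingale has quadratic variation bounded by $\tfrac{4}{\beta}s$, so by Dambis--Dubins--Schwarz and standard Brownian modulus estimates its supremum on $[1, T]$ is $\leq C T^{1/2+\delta}$ with probability $\geq 1 - \varepsilon$. The oscillatory drift $\int_1^s v(\rphase)\,du$ is handled by integration by parts against a bounded antiderivative of $v$: using $d\rphase = k\,ds + O(1)\,ds + O(1)\,d\tilde B$ with $k$ exponential, the boundary and correction terms are $O(1)$ thanks to the decay of $1/k$.

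The phase equation takes the form $d\rphase = k\,ds + b(\rphase)\,ds + \sigma(\rphase)\, d\tilde B$ for bounded periodic $b$ and $\sigma$ (with $\sigma$ a multiple of $\sin\rphase\cos\rphase$). To conclude $\rphase(s) - 2\pi\floor{\rphase(s)/(2\pi)} \to \uniform[0, 2\pi)$ in law, I would estimate $\abs{\bexpect{e^{in\rphase(s)}}}$ for each $n \neq 0$: conditional on the Brownian history, the martingale part of $\rphase$ has quadratic variation $\int_0^s \sigma^2(\rphase)\,du$ that diverges linearly in $s$, and a Fourier argument then yields exponential decay in $n^2 s$, giving the uniform limit by Fourier inversion. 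Reversing all substitutions and identifying $s$ with $t$ produces the claimed formulas for $f(-t)$ and $f'(-t)$. The main obstacle is the phase mixing step: because $\sigma$ vanishes at four phase values, one must show that the path spends enough time away from these zeros for the quadratic variation to genuinely grow linearly, which I expect to handle by combining the rapid deterministic rotation induced by $k$ with occupation-time estimates; the integration-by-parts bound for the amplitude drift is a secondary technical point requiring careful tracking of stochastic error terms.
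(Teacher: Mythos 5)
The amplitude part of your proposal is essentially the paper's argument. Your rescaled coordinates $F = \sqrt{k}\,g$, $G_1 = f'(-s)/\sqrt{k}$ with $k = \lambda^{1/2}e^{s/2}$ are the same as the paper's scaling $S(t) = \lambda^{1/4}e^{t/4}$ in its polar change of variables, and your resulting SDE for $d\log r$ matches the paper's SDE for $\ramp$. Your plan for $X$ (split off the martingale, which has quadratic variation $\leq 4t/\beta$ and so obeys DDS plus Brownian modulus bounds; handle the oscillatory drift by integration by parts against a bounded primitive, using the exponentially decaying factor $1/k$) is exactly what the paper does by applying Itô's formula to $e^{ki\rphase}$ and solving for $\int_1^t e^{ki\rphase}\,ds$, leaving boundary terms and stochastic integrals with $e^{-s/2}$ weights that are bounded. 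So no concerns there.

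The phase part has a genuine gap, one you partly sense but do not fill. Your proposed route is: conditionally on the driving Brownian path, the martingale part of $\rphase$ has quadratic variation $\int_0^s\sigma^2(\rphase)\,du$ growing linearly, so a Fourier argument gives $\abs{\bexpect{e^{in\rphase(s)}}}\to 0$; and the "obstacle" is that $\sigma \propto \sin 2\rphase$ vanishes at four values, so you would need occupation-time control to guarantee linear growth. This line of reasoning does not straightforwardly close. You cannot condition on the full Brownian history (then $\rphase$ is deterministic and $\abs{e^{in\rphase}}=1$), and conditioning on $\langle M\rangle$ does not make the martingale Gaussian either, because $M$ and $\langle M\rangle$ are generated by the same driving noise. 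Worse, the occupation-time input you would need --- that the phase spends enough time where $\sigma\neq 0$, i.e.\ is roughly equidistributed --- is essentially the conclusion you are trying to establish, so the argument is in danger of circularity. The paper sidesteps all of this: it first removes the deterministic fast rotation by working with $X_m(t) \defeq \exp\bigl(mi\rphase(t) + 2mi\sqrt{\lambda}e^{t/2}\bigr)$, so the leading drift $-\sqrt{\lambda}e^{t/2}$ cancels and $dX_m = R_m X_m\,dt + S_m X_m\,d\rBM$ with $R_m, S_m$ bounded. Taking expectations, $\varphi_m(t) \defeq \expect X_m(t)$ solves $\varphi_m' = -\tfrac{m^2}{4\beta}\varphi_m + \expect\tilde R_m X_m$, where the negative constant $-\tfrac{m^2}{4\beta}$ is the \emph{average} of the Itô correction $-\tfrac{m^2}{2}\sigma^2(\rphase)$, i.e.\ it comes from $\expect\sin^2 2\rphase = \tfrac12$ no matter where the path currently is, and $\tilde R_m$ contains only oscillatory terms $e^{ki\rphase}$. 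Variation of constants then gives $\varphi_m(t) = e^{-m^2(t-1)/4\beta}\varphi_m(1) + e^{-m^2 t/4\beta}\expect\int_1^t e^{m^2 s/4\beta}\tilde R_m X_m\,ds$, and the forcing integral is shown to decay by precisely the same integration-by-parts-against-$e^{s/2}$ trick used for the amplitude. No occupation-time estimate is needed; the decay of each Fourier coefficient is extracted in expectation from the global structure of the SDE, not pathwise. I would suggest replacing the conditional-quadratic-variation plan with this expectation-and-ODE argument.
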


\paragraph{Related work on transitions in \(\beta\)-ensembles.}

The transition from the hard edge to the bulk was studied previously at the level of point processes by Holcomb in~\cite{holcomb_random_2018}. Specifically, she proved that for any \(\beta > 0\) and \(a > 0\), the square root of the \(\Bessel\) point process converges to the \(\sine\) point process in a suitable scaling limit. The results from Theorems~\ref{thm.vagueconv} and~\ref{thm.WTconv} can be seen as an extension of this result to the operator level, and allowing \(a \in (-1,0]\) as well.

There are other similar transitions that appear in the theory of \(\beta\)-ensembles. An important example is the transition from the soft edge to the bulk, which was first proven by Valkó and Virág at the level of the point processes in~\cite{valko_continuum_2009}. More recently, in~\cite{painchaud_operator_2025}, Paquette and the author used canonical systems theory to prove that this transition also occurs at the operator level, and the present work builds on the same ideas. Although some important aspects of the hard edge to bulk transition problem do not arise in the soft edge to bulk transition (as we will see later, this is related to the behavior of the operator at infinity, which is always the same for the Airy operator, but undergoes a transition at \(a=1\) for the Bessel operator), it turns out that a large part of the technical analysis involved in the proofs of Theorems~\ref{thm.vagueconv} and~\ref{thm.WTconv} and their counterpart in the soft edge to bulk transition easily transfers from one problem to the other. This highlights the applicability of canonical systems theory to the study of scaling limits of \(\beta\)-ensembles and relations between them. 

Another example of transition in \(\beta\)-ensembles theory is that from the hard edge to soft edge, which occurs when taking \(a \to \infty\) in the stochastic Bessel operator or point process. As the stochastic Airy and Bessel operators are both random (generalized) Sturm--Liouville operators, one can exploit their resolvents to describe a transition between them. This was done by Dumaz, Li and Valkó in~\cite{dumaz_operator_2021}, where it is shown that this transition occurs at the operator level, in the norm resolvent sense. For comparison with this type of result, we note that a resolvent \((\mathcal{S}_H - z)^{-1}\) can be defined for a canonical system with coefficient matrix \(H\) given appropriate boundary conditions, and in fact Theorems~\ref{thm.vagueconv} and~\ref{thm.WTconv} imply that for any \(z \in \mathbb{C} \setminus \mathbb{R}\) and any compactly supported continuous function \(\varphi\colon \mathcal{I} \to \mathbb{C}^2\),
\beq{eq.resolventconvergence}
\setshift{E_n}
(\mathcal{S}_{\sBesselmat*} - z)^{-1} \varphi \lawto[n\to\infty] (\mathcal{S}_{\sinemat*} - z)^{-1} \varphi
\eeq
compactly in \(z\). While this is reminiscent of a convergence of resolvents in a strong operator topology, it should be noted that the resolvents that appear here are not defined on the same Hilbert space, so~\eqref{eq.resolventconvergence} does not (a priori) say anything about the resolvents themselves. We won't go into details about resolvent convergence here, and we rather refer the interested reader to \cite[Appendix~B]{painchaud_operator_2025} for precise definitions and for relations with other types of convergence of canonical systems.

\paragraph{Remarks on the point process convergence.}

Theorem~\ref{thm.WTconv} shows the vague convergence in law of the spectral measures of the shifted Bessel operator \(\sBesselop\) to that of the sine operator. These spectral measures are pure point and have positive masses precisely at the (simple) eigenvalues of the two operators, but their vague convergence is not strong enough to guarantee the vague convergence of the associated eigenvalue point processes, since in principle spectral masses could vanish or merge with others in the limit. The spectral masses of the sine operator are known to be independent of each other and of the eigenvalues, and to be Gamma random variables with shape parameter \(\nicefrac{\beta}{2}\) and mean \(2\)~\cite[Proposition 3]{valko_palm_2023}. If the same was true for the Bessel operator, the vague convergence in law of the eigenvalue point processes would follow from Theorem~\ref{thm.WTconv}, but this remains an open problem for now.

\paragraph{Organization of the paper.}

The rest of the paper is organized as follows. In Section~\ref{sec.setup}, we introduce basic properties of the stochastic Bessel operator, and we build a canonical system version of the shifted and scaled operator \(\sBesselop\). From this, we give an intuitive overview of the proof of Theorem~\ref{thm.vagueconv}. Working towards the full proof, we build in Section~\ref{sec.coupling} a coupling between the Bessel and sine canonical systems, which we use to derive the asymptotic behavior of processes that appear in the entries of the Bessel system's coefficient matrix. From this, we finish the proof of Theorem~\ref{thm.vagueconv} in Section~\ref{sec.vagueconv}. Finally, Section~\ref{sec.spectralconv} is dedicated to the proofs of Theorems~\ref{thm.WTconv} and~\ref{thm.asymptotics}. 

\paragraph{Acknowledgements.}

The author is supported by an NSERC CGS-D scholarship as well as an FRQ doctoral scholarship (doi:~\href{https://doi.org/10.69777/319962}{\texttt{10.69777/319962}}). Part of this work was conducted while the author was in residence at Institut Mittag-Leffler in Djursholm, Sweden during Fall 2024, so we acknowledge support from the Swedish Research Council under grant no.~2021-06594. The author would also like to thank Elliot Paquette for lots of helpful discussions.

\section{The Bessel canonical system and the setup for the convergence}
\label{sec.setup}

In this section, we introduce basic properties of the stochastic Bessel operator \(\Besselop\), and we build a canonical system version of the shifted and scaled operator \(\sBesselop \defeq \frac{2}{\sqrt{\shift}} (\Besselop - E)\). We will see that the coefficient matrix of this canonical system involves solutions to \(\sBesselop f = 0\). From a change of variables of these solutions into polar coordinates, we then give a heuristic argument for the convergence of \(\sBesselop\) to the stochastic sine operator at the level of canonical systems, which will serve as a plan for the rigorous proofs presented in Sections~\ref{sec.coupling} and~\ref{sec.vagueconv}.

\subsection{The Bessel operator and its basic properties}

\subsubsection{Definition and domain}

Recall from the introduction that for \(\beta > 0\) and \(a > -1\), the stochastic Bessel operator acts on \(f\colon (0,\infty) \to \mathbb{R}\) as \(\Besselop f = - \frac{1}{\Besselweight} (\Besselcoeff f')'\) where
\[
\Besselcoeff(t) \defeq \exp\Bigl( - at - \frac{2}{\sqrt{\beta}} B(t) \Bigr)
\qquadtext{and}
\Besselweight(t) \defeq \exp\Bigl( - (a+1) t - \frac{2}{\sqrt{\beta}} B(t) \Bigr)
\]
for a standard Brownian motion \(B\). For almost every Brownian path, this is a well-defined Sturm--Liouville operator on the domain \(\mathfrak{D}_{\Besselop} \defeq \bigl\{ f \in \AC\loc(0,\infty) : \Besselcoeff f' \in \AC\loc(0,\infty) \bigr\}\), where \(\AC\loc(a,b)\) denotes the set of locally absolutely continuous functions on \((a,b)\), that is, functions \(f\colon (a,b) \to \mathbb{C}\) such that \(f(t) = f(t_0) + \int_{t_0}^t g(t) \diff{t}\) for some \(t_0 \in (a,b)\) and some \(g \in L^1\loc(a,b)\). To describe the spectral properties of this operator, we will use Weyl theory. We recall some basic results and terminology here, but we refer the reader to~\cite{eckhardt_weyl-titchmarsh_2013} for a complete introduction.

As a Sturm--Liouville operator, \(\Besselop\) is said to be \emph{limit circle} at \(0\) (or \(\infty\)) if for all \(z \in \mathbb{C}\), all solutions to \(\Besselop f = zf\) lie in \(L^2\bigl( (0,\infty), \Besselweight(t) \diff{t} \bigr)\) near \(0\) (or \(\infty\)), and \emph{limit point} at \(0\) (or \(\infty\)) otherwise. By the Weyl alternative theorem (see e.g.~\cite[Lemma~4.1]{eckhardt_weyl-titchmarsh_2013}), this alternative does not depend on \(z\). A Sturm--Liouville operator becomes self-adjoint when its domain is restricted by boundary conditions at limit circle endpoints, but not at limit point endpoints (see e.g.~\cite[Sections~5 and~6]{eckhardt_weyl-titchmarsh_2013}). 

The stochastic Bessel operator is always limit circle at \(0\), and we will see in Proposition~\ref{prop.Weyl} that it is limit circle at infinity when \(\abs{a} < 1\) but limit point at infinity when \(a \geq 1\). Hence, a boundary condition is always needed at \(0\) to make it self-adjoint, but a boundary condition at infinity is only needed when \(\abs{a} < 1\). From~\cite{ramirez_diffusion_2009}, the boundary conditions that give \(\Besselop\) the \(\Bessel\) point process as its spectrum are a Dirichlet boundary condition \(f(0) = 0\) on the left and a Neumann boundary condition \(\lim_{t\to\infty} \Besselcoeff(t) f'(t) = 0\) on the right.

\subsubsection{Some properties of solutions to \texorpdfstring{\(\Besselop f = z f\)}{Gf = zf}}
\label{sec.solprops}

The solutions to \(\Besselop f = zf\) can be described as solutions to a stochastic differential equation. Indeed, if \(f \in \mathfrak{D}_{\Besselop}\), then by definition \(\Besselcoeff f'\) is absolutely continuous, so
\[
\diff{\bigl( \Besselcoeff f' \bigr)}(t)
    = \bigl( \Besselcoeff f' \bigr)'(t) \diff{t}
    = - z \Besselweight(t) f(t) \diff{t},
\]
but we can also apply Itô's formula to obtain
\[
\diff{\bigl( \Besselcoeff f' \bigr)}(t)
    = f'(t) \diff{\Besselcoeff}(t) + \Besselcoeff(t) \diff{f'}(t) + \diff{\crossvar{\Besselcoeff}{f'}}(t).
\]
Comparing the two expressions, we get
\[
\diff{f'}(t) = - \frac{1}{\Besselcoeff(t)} \Bigl( z \Besselweight(t) f(t) \diff{t} + f'(t) \diff{\Besselcoeff}(t) + \diff{\crossvar{\Besselcoeff}{f'}}(t) \Bigr).
\]
Now,
\[
\diff{\Besselcoeff}(t)
    = - \Bigl( a - \frac{2}{\beta} \Bigr) \Besselcoeff(t) \diff{t} - \frac{2}{\sqrt{\beta}} \Besselcoeff(t) \diff{B}(t)
\qquadtext{so}
\diff{\crossvar{\Besselcoeff}{f'}}(t)
    = - \frac{4}{\beta} \Besselcoeff(t) f'(t) \diff{t},
\]
and we find that \(f\) solves
\beq{eq.BesselSDE}
\begin{aligned}
    \diff{f}(t) & = f'(t) \diff{t}, \\
    \diff{f'}(t) & = - z e^{-t} f(t) \diff{t} + \Bigl( a + \frac{2}{\beta} \Bigr) f'(t) \diff{t} + \frac{2}{\sqrt{\beta}} f'(t) \diff{B}(t).
\end{aligned}
\eeq

There is another characterization of these solutions that will be useful in the sequel. Remark that the equation \(\Besselop f = 0\) can in fact be solved explicitly. Indeed, it directly reduces to \((\Besselcoeff f')' = 0\), which forces \(\Besselcoeff f'\) to be constant. If that constant is zero, then \(f\) itself must be constant, and otherwise \(f\) must be a constant plus a multiple of
\beq{eq.defBesselsol}
\Besselsol(t)
    \defeq \int_0^t \frac{1}{\Besselcoeff(s)} \diff{s}
    = \int_0^t \exp\Bigl( as + \frac{2}{\sqrt{\beta}} B(s) \Bigr) \diff{s}.
\eeq
Hence, \(1\) and \(\Besselsol\) are a pair of fundamental solutions to \(\Besselop f = 0\), and the general solution is \(f = f(0) + f'(0) \Besselsol\). 

This leads to the following.

\begin{proposition}
\label{prop.Weyl}
\(\Besselop\) is a.s.\@ limit circle at infinity if \(\abs{a} < 1\), and a.s.\@ limit point at infinity if \(a \geq 1\).
\end{proposition}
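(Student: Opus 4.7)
The plan is to invoke the Weyl alternative theorem, which reduces the limit point/circle classification at infinity to determining when both fundamental solutions of \(\Besselop f = 0\) lie in \(L^2\bigl((0, \infty), \Besselweight(t) \diff{t}\bigr)\) near infinity. From the pair \(\{1, \Besselsol\}\) identified in~\eqref{eq.defBesselsol}, this is equivalent to the a.s.\@ finiteness of
\[
I_1 \defeq \int^\infty \Besselweight(t) \diff{t}
\qquadtext{and}
I_2 \defeq \int^\infty \Besselsol(t)^2 \Besselweight(t) \diff{t}.
\]

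The integral \(I_1\) is a.s.\@ finite for every \(a > -1\): by the strong law of large numbers for Brownian motion, \(B(t)/t \to 0\) a.s., so for small \(\epsilon > 0\), \(\Besselweight(t) \leq e^{-(a + 1 - 2\epsilon/\sqrt{\beta})t}\) eventually. The behavior of \(I_2\) I would analyze by splitting into regimes of \(a\). For \(-1 < a \leq 0\), the integrand of \(\Besselsol\) is a.s.\@ integrable on \((0, \infty)\), so \(\Besselsol\) has an a.s.\@ finite limit \(\Besselsol(\infty)\) and \(I_2 \leq \Besselsol(\infty)^2 I_1 < \infty\). For \(0 < a < 1\), the SLLN gives \(\Besselsol(t) \leq C(\omega) e^{(a + \epsilon)t}\) eventually, so \(\Besselsol^2 \Besselweight \leq C(\omega)^2 e^{(a - 1 + O(\epsilon))t}\) is integrable once \(\epsilon\) is small. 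For \(a > 1\), the matching lower bound \(\Besselsol(t) \geq c(\omega) e^{(a - \epsilon)t}\) yields \(\Besselsol^2 \Besselweight \geq c(\omega)^2 e^{(a - 1 - O(\epsilon))t}\), which diverges at infinity.

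The main obstacle is the boundary case \(a = 1\), where the SLLN-based bounds are not sharp enough. Here the plan is to use the factorization (obtained by the substitution \(s = t - u\))
\[
\Besselsol(t) = e^{t + \frac{2}{\sqrt{\beta}} B(t)} V(t)
\qquadtext{with}
V(t) \defeq \int_0^t e^{-u - \frac{2}{\sqrt{\beta}}(B(t) - B(t-u))} \diff{u} > 0,
\]
which produces the exact identity \(\Besselsol(t)^2 \Besselweight(t) = V(t)^2 e^{(2/\sqrt{\beta}) B(t)}\). A lower bound \(V(t) \geq e^{-O(\sqrt{\log t})}\) a.s.\@ for large \(t\) follows from restricting the defining integral to \(u \in [0, 1]\) and applying Gaussian tail bounds to the Brownian path over \([t - 1, t]\). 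Combining this with the fact that \(\int^\infty e^{(2/\sqrt{\beta}) B(t)} \diff{t} = \infty\) a.s.\@---a consequence of L\'{e}vy's arcsine law, since \(\abs*{\{s \in [0, T] : B(s) > 0\}} \eqlaw T \cdot A\) with \(A\) arcsine-distributed on \([0, 1]\) forces the total occupation time of \((0, \infty)\) to be a.s.\@ infinite---gives \(I_2 = \infty\) a.s.
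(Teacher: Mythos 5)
Your structure matches the paper's: apply the Weyl alternative at \(z = 0\) with the fundamental pair \(\{1, \Besselsol\}\), note \(I_1 < \infty\) always, and study \(I_2\) case-by-case in \(a\). The cases \(0 < a < 1\) and \(a > 1\) are fine and essentially identical to the paper's. Two issues, one small and one significant.

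The small one: the case \(a = 0\) is mishandled. For \(a = 0\) the integrand \(e^{\frac{2}{\sqrt{\beta}}B(s)}\) of \(\Besselsol\) is \emph{not} a.s.\@ integrable on \((0,\infty)\); in fact \(\Besselsol(\infty) = \infty\) a.s., by exactly the recurrence/arcsine reasoning you invoke later for \(a = 1\). The conclusion \(I_2 < \infty\) is nevertheless true at \(a = 0\), but you should get it from the same SLLN-type bounds you use for \(0 < a < 1\): \(\Besselsol(t) \leq Ce^{\epsilon t}\) and \(\Besselweight(t) \leq Ce^{-(1-\epsilon)t}\), which give \(\Besselsol^2\Besselweight \leq Ce^{-(1-3\epsilon)t}\) uniformly over all \(|a| < 1\), including \(a = 0\). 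Restrict the \(\Besselsol(\infty) < \infty\) shortcut to \(a < 0\).

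The significant one: the final \enquote{combining} step at \(a = 1\) does not follow as stated. The factorization \(\Besselsol(t)^2\Besselweight(t) = V(t)^2 e^{\frac{2}{\sqrt{\beta}}B(t)}\) is correct and a genuinely nice observation, and both ingredients — \(V(t) \geq e^{-O(\sqrt{\log t})}\) eventually a.s., and \(\int^\infty e^{\frac{2}{\sqrt{\beta}}B(t)}\diff{t} = \infty\) a.s. — are true. But a pointwise lower bound on \(V^2\) that decays to zero, together with a.s.\@ divergence of \(\int e^{\frac{2}{\sqrt{\beta}}B(t)}\diff{t}\), does not imply \(\int V(t)^2 e^{\frac{2}{\sqrt{\beta}}B(t)}\diff{t} = \infty\). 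Your divergence argument uses only the crude bound \(e^{\frac{2}{\sqrt{\beta}}B(t)} \geq \charf{B(t)>0}\), and \(\int^\infty e^{-2C\sqrt{\log t}}\,\charf{B(t)>0}\diff{t}\) can a priori be finite even when \(\abs*{\{t : B(t) > 0\}} = \infty\): the deterministic weight is summable along a thin enough subsequence of unit intervals, so the arcsine law by itself is not quantitatively strong enough. To close this gap you would need something sharper — e.g.\@ a.s.\@ infinite occupation of \(\{B(t) > c\sqrt{\log t}\}\), or a 0--1 law argument paired with a second-moment / reverse-Fatou estimate for \(\int_T^{2T}\). The paper sidesteps the issue entirely: using the strong Markov property at zeros of \(B\) and the second Borel--Cantelli lemma, it locates a.s.\@ infinitely many unit intervals on which \(\abs{B} \leq 1\), and on each such interval the contribution to \(\norm{\Besselsol}_{\Besselweight}^2\) is bounded below by a fixed positive constant, giving a divergent sum with no decaying prefactor to fight against.
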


\begin{proof}
By the Weyl alternative, it suffices to check the behavior of solutions to \(\Besselop f = zf\) at a single value of \(z \in \mathbb{C}\). With \(z = 0\), we have seen above that \(1\) and \(\Besselsol\) are a fundamental pair of solutions, and by definition of \(\Besselweight\) it is clear that \(1 \in L^2\bigl( (0,\infty), \Besselweight(t) \diff{t} \bigr)\) a.s.\@ in any case. Therefore, \(\Besselop\) is a.s.\@ limit circle at infinity if \(\Besselsol \in L^2\bigl( (0,\infty), \Besselweight(t) \diff{t} \bigr)\) a.s., and it is a.s.\@ limit point at infinity if \(\Besselsol \notin L^2\bigl( (0,\infty), \Besselweight(t) \diff{t} \bigr)\) a.s.

In order to analyze the behavior of \(\Besselsol\), we use the following result, which follows from basic properties of Brownian motion (see e.g.~\cite[Proposition~18]{painchaud_operator_2025} for a proof): for any \(\varepsilon > 0\), there is a \(C > 0\) such that
\beq{eq.Weyl.goodevent}
\bprob[\Big]{\forall t \geq 0, \frac{2}{\sqrt{\beta}} \abs{B(t)} < C\bigl( 1 + t^{\nicefrac{3}{4}} \bigr)} \geq 1 - \varepsilon. 
\eeq
Note that for any \(\delta > 0\) there is a \(T > 0\) such that \(C(1 + t^{\nicefrac{3}{4}}) \leq \delta t\) for all \(t \geq T\). The rest of the proof is split in three cases: \(\abs{a} < 1\), \(a > 1\) and \(a = 1\). 

Suppose \(\abs{a} < 1\) and take \(\delta < \frac{1-a}{3} \wedge \frac{1}{2}\). Then on the good event from~\eqref{eq.Weyl.goodevent}, if \(t \geq T\) with \(T\) as above,
\[
\Besselsol(t)
    \leq \int_0^T e^{as + C(1 + s^{\nicefrac{3}{4}})} \diff{s} + \int_T^t e^{(a+\delta)s} \diff{s}
    = C_T + \frac{e^{(a+\delta)t}}{a+\delta}
\quadtext{where}
C_T \defeq \int_0^T e^{as + C(1+s^{\nicefrac{3}{4}})} \diff{s} - \frac{e^{(a+\delta)T}}{a+\delta}.
\]
Therefore
\begin{align*}
\norm{\Besselsol}_{\Besselweight}^2
    & = \int_0^\infty \Besselsol^2(t) \Besselweight(t) \diff{t} \\
    & \leq \int_0^T \biggl( \int_0^t e^{as + C(1+s^{\nicefrac{3}{4}})} \diff{s} \biggr)^2 e^{-(a+1)t + C(1+t^{\nicefrac{3}{4}})} \diff{t} + \int_T^\infty \biggl( C_T + \frac{e^{(a+\delta)t}}{a+\delta} \biggr)^2 e^{-(a+1-\delta)t} \diff{t} \\
    & = C_T' + C_T^2 \int_T^\infty e^{-(a+1-\delta)t} \diff{t} + \frac{2C_T}{a+\delta} \int_T^\infty e^{-(1-2\delta)t} \diff{t} + \frac{1}{(a+\delta)^2} \int_T^\infty e^{-(1-a-3\delta)t} \diff{t}
\end{align*}
where \(C_T'\) is the first term of the previous line. By our choice of \(\delta\), this is finite. So \(\norm{\Besselsol}_{\Besselweight}^2 < \infty\) with probability at least \(1 - \varepsilon\) for any \(\varepsilon > 0\), and it follows that \(\norm{\Besselsol}_{\Besselweight}^2 < \infty\) a.s. This shows that \(\Besselop\) is a.s. limit circle at infinity. 

With \(a > 1\) and \(\delta < \frac{a-1}{3} \wedge \frac{1}{2}\), essentially the same argument as above shows that \(\Besselop\) is a.s.\@ limit point at infinity. Indeed, on the good event from~\eqref{eq.Weyl.goodevent}, we now have for \(t \geq T\) that
\[
\Besselsol(t) \geq \tilde{C}_T + \frac{e^{(a-\delta)t}}{a-\delta}
\qquadtext{where}
\tilde{C}_T \defeq \int_0^T e^{as-C(1+s^{\nicefrac{3}{4}})} \diff{s} - \frac{e^{(a-\delta)T}}{a-\delta},
\]
and therefore
\begin{align*}
\norm{\Besselsol}_{\Besselweight}^2
    & \geq \int_T^\infty \biggl( \tilde{C}_T + \frac{e^{(a-\delta)t}}{a-\delta} \biggr)^2 e^{-(a+1+\delta)t} \diff{t} \\
    & = \tilde{C}_T^2 \int_T^\infty e^{-(a+1+\delta)t} \diff{t} + \frac{2\tilde{C}_T}{a-\delta} \int_T^\infty e^{-(1+2\delta)t} \diff{t} + \frac{1}{(a-\delta)^2} \int_T^\infty e^{(a-1-3\delta)t} \diff{t},
\end{align*}
which is infinite because the last integral diverges while the other two converge. It follows that \(\norm{\Besselsol}_{\Besselweight}^2 = \infty\) a.s., so that \(\Besselop\) is a.s.\@ limit point at infinity.

This only leaves out the case \(a = 1\), which is more delicate as in that case the bound on the Brownian motion from~\eqref{eq.Weyl.goodevent} is not strong enough to allow us to conclude. To replace it, we define a sequence of stopping times by setting \(\tau_0 \defeq \inf\{t \geq 1 : B(t) = 0\}\) and then recursively \(\tau_n \defeq \inf\{t \geq \tau_{n-1} + 2 : B(t) = 0\}\) for \(n \in \mathbb{N}\). Then, let \(A_n \defeq \bigl\{ \forall t \in [\tau_n, \tau_n + 1], \abs{B(t)} \leq 1 \bigr\}\). By the strong Markov property of Brownian motion, the \(A_n\)'s are independent, and for every \(n\)
\[
\pprob{A_n}
    = \bprob[\Big]{\sup_{t\in[\tau_n, \tau_n+1]} \abs{B(t)} \leq 1}
    = \bprob[\Big]{\sup_{t\in[0,1]} \abs{B(t)} \leq 1}
    \geq 1 - 2 \bprob[\Big]{\sup_{t\in[0,1]} B(t) > 1}.
\]
The reflection principle shows that
\(\bprob[\big]{\sup_{t\in[0,1]} B(t) > 1}
    = 2 \bprob{B(1) > 1}
    = 1 - \erf\bigl( \nicefrac{1}{\sqrt{2}} \bigr)\), so
\(\pprob{A_n} \geq 2\erf\bigl( \nicefrac{1}{\sqrt{2}} \bigr) - 1\). This number is strictly positive, so the second Borel--Cantelli lemma implies that \(\pprob[\big]{\bigcap_{n\in\mathbb{N}} \bigcup_{k\geq n} A_k} = 1\). On that event, there is always a subsequence \(\{A_{n_k}\}_{k\in\mathbb{N}}\) in which each event occurs, and this gives us a sequence of intervals of length 1 on which the Brownian motion is bounded by 1. We can use this to estimate the norm of \(\Besselsol\):
\begin{align*}
\norm{\Besselsol}_{\Besselweight}^2
    & = \int_0^\infty \biggl( \int_0^t e^{s + \frac{2}{\sqrt{\beta}} B(s)} \diff{s} \biggr)^2 e^{-2t-\frac{2}{\sqrt{\beta}} B(t)} \diff{t}
    \geq \sum_{k=1}^\infty \int_{\tau_{n_k}+\nicefrac{1}{2}}^{\tau_{n_k}+1} \biggl( \int_{\tau_{n_k}}^{\tau_{n_k}+\nicefrac{1}{2}} e^{s - \nicefrac{2}{\sqrt{\beta}}} \diff{s} \biggr)^2 e^{-2t - \nicefrac{2}{\sqrt{\beta}}} \diff{t} \\
    & \geq e^{\nicefrac{-6}{\sqrt{\beta}}} \sum_{k=1}^\infty \bigl( e^{\tau_{n_k}+\nicefrac{1}{2}} - e^{\tau_{n_k}} \bigr)^2 \frac{e^{-2(\tau_{n_k}+\nicefrac{1}{2})} - e^{-2(\tau_{n_k}+1)}}{2}
    = \frac{1}{2} e^{\nicefrac{-6}{\sqrt{\beta}}} \sum_{k=1}^\infty (e^{\nicefrac{1}{2}} - 1)^2 (e^{-1} - e^{-2}).
\end{align*}
The summand here is a positive constant, so this obviously diverges. Therefore, \(\norm{\Besselsol}_{\Besselweight}^2 = \infty\) a.s.\@ when \(a = 1\), which shows that \(\Besselop\) is limit point at infinity and concludes the proof.
\end{proof}

We record here for future reference the following result, which is just a more explicit version of something we have seen in the proof of the last proposition.

\begin{lemma}
\label{lem.Weylbound}
If \(\abs{a} < 1\) and \(\delta < \frac{1-a}{3} \wedge \frac{1}{2}\), then for every \(\varepsilon > 0\) there is a \(C_\varepsilon > 0\) such that
\[
\bprob[\big]{\forall t \geq 0, \Besselweight(t) \leq C_\varepsilon e^{-(1+a-\delta)t}} \geq 1 - \varepsilon
\qquadtext{and}
\bprob[\big]{\forall t \geq 0, \Besselsol^2(t) \Besselweight(t) \leq C_\varepsilon e^{-(1-a-3\delta)t}} \geq 1 - \varepsilon.
\]
\end{lemma}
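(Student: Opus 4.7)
Both claims are pointwise refinements of the integral estimates already carried out inside the proof of Proposition~\ref{prop.Weyl}, so my plan is to run exactly the same estimate and extract a uniform-in-$t$ conclusion rather than an integrated one. Fix \(\varepsilon > 0\) and, using~\eqref{eq.Weyl.goodevent}, choose \(C > 0\) so that the event \(A \defeq \{\forall t \geq 0,\ \tfrac{2}{\sqrt{\beta}}\abs{B(t)} \leq C(1+t^{\nicefrac{3}{4}})\}\) has probability at least \(1-\varepsilon\). Then pick \(T > 0\) large enough that \(C(1+t^{\nicefrac{3}{4}}) \leq \delta t\) for all \(t \geq T\). All estimates below are on \(A\), and \(C_\varepsilon\) will be chosen at the very end to absorb the behavior on the compact set \([0,T]\).

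For the first bound I would simply observe that \(\Besselweight(t) \leq e^{-(1+a)t + \frac{2}{\sqrt{\beta}}\abs{B(t)}}\), so on \(A\) we get \(\Besselweight(t) \leq e^{-(1+a-\delta)t}\) for \(t \geq T\), while on \([0,T]\) the right-hand side is uniformly bounded by \(e^{C(1+T^{\nicefrac{3}{4}})}\). Since \(e^{-(1+a-\delta)t}\) is bounded below on \([0,T]\), a single large enough multiplicative constant covers both regimes.

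For the second bound, the plan is to reuse the estimate on \(\Besselsol\) derived inside the proof of Proposition~\ref{prop.Weyl}: splitting the defining integral at \(T\) yields, for \(t \geq T\),
\[
\Besselsol(t) \leq \int_0^T e^{as + C(1+s^{\nicefrac{3}{4}})} \diff{s} + \int_T^t e^{(a+\delta)s} \diff{s} \leq C_T + \frac{e^{(a+\delta)t}}{a+\delta}
\]
for some constant \(C_T\). Applying the elementary inequality \((\alpha+\beta)^2 \leq 2\alpha^2 + 2\beta^2\) and multiplying by the first bound gives
\[
\Besselsol^2(t) \Besselweight(t) \leq 2 C_T^2 e^{-(1+a-\delta)t} + \frac{2}{(a+\delta)^2} e^{-(1-a-3\delta)t}.
\]
Since \(a+\delta > 0\) implies \(1+a-\delta \geq 1-a-3\delta\), the first term is pointwise dominated by the second up to an absolute constant, and the \(t \in [0,T]\) regime is absorbed into \(C_\varepsilon\) exactly as for the first bound.

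The plan involves no new probabilistic ingredient beyond the tail bound~\eqref{eq.Weyl.goodevent}; the only subtlety is the bookkeeping of the two exponents produced by squaring \(\Besselsol\) and checking that the hypothesis \(\delta < \frac{1-a}{3}\) leaves the dominant rate \(1-a-3\delta\) strictly positive while keeping \(1+a-\delta\) at least as large.
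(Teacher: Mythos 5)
Your proof follows the paper's almost step for step: work on the good event from~\eqref{eq.Weyl.goodevent}, fix \(T\) so that \(C(1+t^{\nicefrac{3}{4}}) \le \delta t\) for \(t \ge T\), bound \(\Besselweight\) and \(\Besselsol\) separately on \([T,\infty)\), and absorb \([0,T]\) into \(C_\varepsilon\); the only stylistic difference is that you expand \(\Besselsol^2\) with \((\alpha+\beta)^2 \le 2\alpha^2+2\beta^2\) before multiplying, whereas the paper first compresses the bound to \(\Besselsol(t) \le Ce^{(a+\delta)t}\) and then squares.

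However, there is a gap in your final step, inherited from the paper. You write ``Since \(a+\delta>0\) implies \(1+a-\delta \ge 1-a-3\delta\)'' and in your closing paragraph claim that \(\delta<\frac{1-a}{3}\) ``keeps \(1+a-\delta\) at least as large'' as \(1-a-3\delta\). The implication in the first clause is correct, but \(a+\delta > 0\) does \emph{not} follow from \(\abs{a}<1\) and \(\delta<\frac{1-a}{3}\wedge\frac12\): take \(a=-0.8\), \(\delta=0.3\), which satisfies both hypotheses yet gives \(a+\delta=-0.5\). Without \(a+\delta\ge 0\), the term \(2C_T^2 e^{-(1+a-\delta)t}\) in your decomposition dominates (it decays \emph{slower} than \(e^{-(1-a-3\delta)t}\)), so your argument only delivers the exponent \(1+a-\delta\), not \(1-a-3\delta\). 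The paper's own proof carries the same silent assumption: the sentence ``there is a \(C>0\) such that \(\Besselsol(t)\le Ce^{(a+\delta)t}\) for all \(t\ge 0\)'' is false once \(a+\delta\le 0\), since \(\Besselsol\) is increasing and eventually bounded away from zero while the right-hand side is non-increasing. In fact, combining \(\Besselweight(t) \ge e^{-(1+a+\delta)t}\) on the good event with \(\Besselsol(t)\ge \Besselsol(1)>0\) shows the second conclusion of the lemma cannot hold at all once \(a+2\delta<0\), so the statement itself needs either an extra restriction such as \(a+\delta\ge 0\) or a weaker exponent such as \(\min(1+a-\delta,\,1-a-3\delta)\); but in any case you should not assert that the displayed exponent ordering follows from \(\delta<\frac{1-a}{3}\), because it does not.
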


\begin{proof}
Recall the good event from~\eqref{eq.Weyl.goodevent}, and take \(T > 0\) such that \(C(1+t^{\nicefrac{3}{4}}) \leq \delta t\) for \(t \geq T\). On that event, the bound \(C(1+t^{\nicefrac{3}{4}})\) on the Brownian motion is itself bounded on the compact \([0, T]\) by a deterministic constant, so by definition of \(\Besselweight\) and \(\Besselsol\) these are also bounded on \([0, T]\) by deterministic constants. 

Now, for \(t \geq T\), on the good event it also holds that \(\Besselweight(t) \leq e^{-(a+1-\delta)t}\), and therefore there is a \(C_\varepsilon > 0\) such that \(\Besselweight(t) \leq C_\varepsilon e^{-(a+1-\delta)t}\) for all \(t \geq 0\). Similarly, we have seen in the proof of Proposition~\ref{prop.Weyl} that on the good event from~\eqref{eq.Weyl.goodevent}, there is a \(C_T \in \mathbb{R}\) such that \(\Besselsol(t) \leq C_T + \frac{1}{a+\delta} e^{(a+\delta)t}\) for \(t \geq T\), meaning that there is a \(C > 0\) such that \(\Besselsol(t) \leq C e^{(a+\delta)t}\) for all \(t \geq 0\). Combining these bounds on \(\Besselweight\) and \(\Besselsol\) on the good event, we see that \(\Besselsol^2(t) \Besselweight(t) \leq C^2C_\varepsilon e^{-(1-a-3\delta)t}\) for all \(t \geq 0\). Enlarging \(C_\varepsilon\) if necessary completes the proof.
\end{proof}

\subsubsection{Relationship between solutions to \texorpdfstring{\(\Besselop f = zf\)}{Gf = zf} and those of \texorpdfstring{\(\sBesselop f = zf\)}{GEf = zf}.}

We point out a last property of the stochastic Bessel operator that will be essential in the sequel. 

\begin{proposition}
\label{prop.unshift}
Let \(\Besselop\) be defined from a Brownian motion \(B\), and set \(\sBesselop \defeq \frac{2}{\sqrt{\shift}} (\Besselop - \shift)\). If \(f\) solves \(\sBesselop f = zf\) for some \(z \in \mathbb{C}\), then \(\tilde{f}(t) \defeq f(t + \log\shift)\) solves \(\unsBesselop \tilde{f} = \bigl(1 + \frac{z}{2\sqrt{\shift}} \bigr) \tilde{f}\) where \(\unsBesselop\) has the law of \(\Besselop\), but is defined from the Brownian motion \(B(\cdot + \log\shift) - B(\log\shift)\).
\end{proposition}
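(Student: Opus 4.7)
The proposition is really a direct translation-invariance computation, and the plan is to unfold both operators explicitly and match the scalar factors.

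First, I would rewrite the hypothesis in the unshifted form: since $\sBesselop = \frac{2}{\sqrt{\shift}}(\Besselop - \shift)$, the equation $\sBesselop f = zf$ is equivalent to $\Besselop f = \lambda f$ with $\lambda \defeq \shift + \frac{z\sqrt{\shift}}{2}$. Evaluating this at $t + \log\shift$ and using the definition of $\Besselop$ gives
\[
-\frac{1}{\Besselweight(t+\log\shift)} \bigl( \Besselcoeff(\cdot)\, f'(\cdot) \bigr)'(t+\log\shift) = \lambda\, f(t+\log\shift).
\]
Since the shift is linear in $t$, the derivative w.r.t.\ $s = t + \log\shift$ equals the derivative w.r.t.\ $t$ of the composed function, so the left-hand side is $-\frac{1}{\Besselweight(t+\log\shift)} \bigl( \Besselcoeff(t+\log\shift)\, \tilde f'(t) \bigr)'$.

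The second step is to factor out the shift from $\Besselcoeff$ and $\Besselweight$. Setting $\tilde B(t) \defeq B(t + \log\shift) - B(\log\shift)$ and writing $c \defeq \exp\bigl(-\tfrac{2}{\sqrt{\beta}} B(\log\shift)\bigr)$, a direct substitution gives
\[
\Besselcoeff(t + \log\shift) = c\, \shift^{-a}\, \unsBesselcoeff(t),
\qquad
\Besselweight(t + \log\shift) = c\, \shift^{-(a+1)}\, \unsBesselweight(t),
\]
where $\unsBesselcoeff$ and $\unsBesselweight$ are the coefficient and weight of $\unsBesselop$, built from $\tilde B$ in the same way as $\Besselcoeff,\Besselweight$ are built from $B$. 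Substituting into the previous display, the common factor $c\,\shift^{-a}$ cancels between the outer $\Besselweight$ and the inner $\Besselcoeff$, leaving exactly an extra $\shift$ on the left:
\[
-\frac{\shift}{\unsBesselweight(t)} \bigl( \unsBesselcoeff(t)\, \tilde f'(t) \bigr)' = \lambda\, \tilde f(t),
\]
that is, $\unsBesselop \tilde f(t) = \frac{\lambda}{\shift} \tilde f(t) = \bigl( 1 + \frac{z}{2\sqrt{\shift}} \bigr) \tilde f(t)$, which is the claimed identity.

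Finally, the fact that $\unsBesselop$ has the law of $\Besselop$ follows from the simple Markov property of Brownian motion: $\tilde B$ is a standard Brownian motion, and $\unsBesselcoeff, \unsBesselweight$ depend on $B$ only through $\tilde B$, so $\unsBesselop \eqlaw \Besselop$. There is no real obstacle here; the only mildly subtle point is keeping track of the $\shift^{-a}$ versus $\shift^{-(a+1)}$ factors so that one factor of $\shift$ survives and rescales $\lambda$ to $1 + \frac{z}{2\sqrt{\shift}}$, which is exactly the shift-and-scale relationship that motivates the definition of $\sBesselop$.
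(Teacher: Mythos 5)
Your proof is correct and follows essentially the same approach as the paper: both rely on translating the coefficient and weight functions by $\log\shift$, factoring out the resulting $\shift^{-a}$ and $\shift^{-(a+1)}$ powers (and the common $\exp(-\tfrac{2}{\sqrt{\beta}}B(\log\shift))$ factor), and observing that one net factor of $\shift$ survives to rescale $\lambda = \shift + \tfrac{z\sqrt{\shift}}{2}$ to $1 + \tfrac{z}{2\sqrt{\shift}}$. The paper runs the algebra in the opposite direction (expressing $\unsBesselop\tilde f$ in terms of $\Besselop f$) but the content is identical; your closing remark about the law of $\unsBesselop$ is a small useful addition that the paper leaves implicit.
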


\begin{proof}
By definition, \(\unsBesselop\) is the stochastic Bessel operator defined from the Brownian motion \(B(\cdot + \log\shift) - B(\log\shift)\); more explicitly, \(\unsBesselop f = - \frac{1}{\unsBesselweight} (\unsBesselcoeff f')'\) where
\[
\unsBesselcoeff(t)
    = \exp\Bigl( - at - \frac{2}{\sqrt{\beta}} \bigl( B(t + \log\shift) - B(\log\shift) \Bigr)
    = \shift^a e^{\frac{2}{\sqrt{\beta}} B(\log\shift)} \Besselcoeff(t + \log\shift)
\]
and likewise \(\unsBesselweight(t) = \shift^{a+1} e^{\frac{2}{\sqrt{\beta}} B(\log\shift)} \Besselweight(t + \log\shift)\). Therefore if \(t \geq 0\) then
\[
\unsBesselop \tilde{f}(t)
    = - \frac{1}{\unsBesselweight(t)} \bigl( \unsBesselcoeff \tilde{f}' \bigr)'(t)
    = - \frac{1}{\shift \Besselweight(t + \log\shift)} \bigl( \Besselcoeff f' \bigr)'(t + \log\shift)
    = \frac{1}{\shift} \Besselop f(t + \log\shift).
\]
But if \(f\) solves \(\sBesselop f = zf\), then by definition of \(\sBesselop\) it also solves \(\Besselop f = (\shift + \frac{z\sqrt{\shift}}{2}) f\), and combining with the above we see that \(\unsBesselop \tilde{f}(t) = (1 + \frac{z}{2\sqrt{\shift}}) \tilde{f}(t)\).
\end{proof}

\subsection{A canonical system representation of the shifted Bessel operator}

\subsubsection{The canonical system representation and its boundary conditions}
\label{sec.sBesselCSrepresentation}

Like any Sturm--Liouville operator, the shifted and scaled stochastic Bessel operator \(\sBesselop\) can be turned into a canonical system. To do so, we introduce \(\sBesself\) and \(\sBesselg\), a pair of fundamental solutions to \(\sBesselop f = 0\) with initial conditions \(\sBesself(0) = \sBesselg'(0) = 1\) and \(\sBesself'(0) = \sBesselg(0) = 0\), and we set
\beq{eq.sBesselSLtoCS}
\sBesselSLtoCS \defeq \begin{pmatrix}
    \shift^{\nicefrac{1}{4}} \Besselcoeff \sBesselg' & \shift^{\nicefrac{-1}{4}} \Besselcoeff \sBesself' \\
    \shift^{\nicefrac{1}{4}} \sBesselg & \shift^{\nicefrac{-1}{4}} \sBesself
\end{pmatrix}.
\eeq
It is easy to see by direct computations (see~\cite[Section~2.4]{painchaud_operator_2025} for details in the case of a general Sturm--Liouville operator) that \(f\) solves the eigenvalue equation \(\sBesselop f = zf\) if and only if \(u \defeq \sBesselSLtoCS^{-1} \begin{smallpmatrix} \Besselcoeff f' \\ f \end{smallpmatrix}\) solves the canonical system
\beq{eq.sBesselCS}
Ju' = -z\sBesselmat u
\quadtext{on} (0,\infty)
\qquadtext{with}
\sBesselmat \defeq \frac{\Besselweight\sqrt{\shift}}{2} \begin{pmatrix}
    \sqrt{\shift} \sBesselg^2 & \sBesself\sBesselg \\
    \sBesself\sBesselg & \frac{1}{\sqrt{\shift}} \sBesself^2
\end{pmatrix}.
\eeq
Under this change of variables, the boundary condition \(f(0) = 0\) becomes \(e_0^*Ju(0) = 0\) where \(e_\phi \defeq (\cos\phi, \sin\phi)\), meaning that \(u(0)\) must be parallel to \(e_0 = (1,0)\). The boundary condition at infinity transforms in a similar way, although less explicitly. Indeed, it can be shown by standard theory (see e.g.~\cite[Section~6]{eckhardt_weyl-titchmarsh_2013}) that when \(\abs{a} < 1\), there is a \(\bcphase \in [0,\pi)\) such that for \(f\) in the maximal domain of \(\sBesselop\), \(\lim_{t\to\infty} \Besselcoeff(t) f'(t) = 0\) if and only if
\[
\lim_{t\to\infty} \Bigl( \BesselWronskian{\shift^{\nicefrac{1}{4}}\sBesselg}{f}(t) \cos\bcphase + \BesselWronskian{\shift^{\nicefrac{-1}{4}}\sBesself}{f}(t) \sin\bcphase \Bigr) = 0,
\]
where \(\BesselWronskian{f}{g} \defeq \Besselcoeff (fg' - f'g)\) denotes the Wronskian of \(f\) and \(g\). Note that here, because \(\Besselop\) is limit circle at infinity, the limits of both Wronskians above exist (see e.g.~\cite[Lemma~3.2]{eckhardt_weyl-titchmarsh_2013} for a proof). Under the correspondence \(u = \sBesselSLtoCS^{-1} \begin{smallpmatrix} \Besselcoeff f' \\ f \end{smallpmatrix}\), this condition is exactly the condition that \(\lim_{t\to\infty} e_{\bcphase}^* J u(t) = 0\), which is therefore the boundary condition at infinity for the canonical system.

\subsubsection{Time-changing the system}

The canonical system~\eqref{eq.sBesselCS}, under the boundary conditions described above, is almost set up to converge to the sine canonical system, although an important detail is missing: the two systems are not defined on the same time domains. Thus, we will find a \(\mathscr{C}^1\) bijection \(\timechange\colon (0, 1+\varepsilon_{\shift}) \to (0,\infty)\) and rather work with the time-changed system with coefficient matrix \(\timechange' (\sBesselmat\circ\timechange)\) on \((0, 1+\varepsilon_{\shift})\). A direct computation shows that \(u\colon (0,\infty) \to \mathbb{C}^2\) solves~\eqref{eq.sBesselCS} if and only if \(v \defeq u\circ\timechange\) solves \(Jv' = -z\timechange' (\sBesselmat\circ\timechange) v\) on \((0, 1+\varepsilon_{\shift})\), so the two canonical systems are equivalent. 

When analyzing the convergence of a sequence of canonical systems, if the limit system has limit circle endpoints, the convergence is easier to analyze when the corresponding endpoints of the systems in the sequence are also limit circle, as we will see in Section~\ref{sec.spectralconv} (see also~\cite[Section~2.2]{painchaud_operator_2025}). This is the reason for introducing the extra parameter \(\varepsilon_{\shift}\): when \(\beta > 2\) and \(a \geq 1\), the sine system is limit circle on the right but the Bessel system is limit point, so taking \(\varepsilon_{\shift} > 0\) allows us to have systems which are all limit circle at both endpoints on \([0,1]\), and we will work out the vague convergence on that restricted domain. In that case, the Bessel system won't have a proper boundary condition at \(1\), but the value at \(1\) of the integrable (on \((0, 1+\varepsilon_{\shift})\)) solution will play the same role, as we will see in detail in Section~\ref{sec.spectralconv}. For other choices of parameters, either the sine system is limit point on the right or both systems are limit circle, so we simply take \(\varepsilon_{\shift} = 0\). 

A candidate for the appropriate time change \(\timechange\) can be guessed from the limit deterministic case \(\beta = \infty\). Indeed, in that case the Brownian motion disappears from the problem, and it is easy to see that a solution to \(\sBesselop f = 0\) has the form
\[
f(t) = e^{\nicefrac{at}{2}} \Bigl( C J_a(2\sqrt{\shift} e^{\nicefrac{-t}{2}}) + C' Y_a(2\sqrt{\shift} e^{\nicefrac{-t}{2}}) \Bigr)
\]
for constants \(C,C' \in \mathbb{R}\), where \(J_a\) and \(Y_a\) are Bessel functions of the first and second kind respectively. This yields explicit expressions for \(\sBesself[\infty]\) and \(\sBesselg[\infty]\). Using asymptotic expansions of Bessel functions (see e.g.~\cite{NIST:DLMF}), when \(\shift\) is large we can approximate these expressions by
\[
\sBesself[\infty](t) \approx e^{\nicefrac{at}{2} + \nicefrac{t}{4}} \cos\bigl( 2\sqrt{\shift} (1 - e^{\nicefrac{-t}{2}}) \bigr)
\qquadtext{and}
\sBesselg[\infty](t) \approx \frac{1}{\sqrt{\shift}} e^{\nicefrac{at}{2} + \nicefrac{t}{4}} \sin\bigl( 2\sqrt{\shift} (1 - e^{\nicefrac{-t}{2}}) \bigr).
\]
This gives
\[
\timechange' (\sBesselmat\circ\timechange) \approx
    \timechange' \frac{\sqrt{\shift} e^{-(a+1)\timechange}}{2} \frac{e^{(a+\nicefrac{1}{2})\timechange}}{\sqrt{\shift}} \begin{pmatrix}
        \sin^2(\theta_{\shift}\circ\timechange) & \sin(\theta_{\shift}\circ\timechange) \cos(\theta_{\shift}\circ\timechange) \\
        \sin(\theta_{\shift}\circ\timechange) \cos(\theta_{\shift}\circ\timechange) & \cos^2(\theta_{\shift}\circ\timechange)
    \end{pmatrix}
\]
where \(\theta_{\shift}(t) \defeq 2\sqrt{\shift} (1 - e^{\nicefrac{-t}{2}})\). As \(\shift \to \infty\), the oscillations of the trigonometric functions should make the matrix converge vaguely to \(\frac{1}{2} I_2\), which is exactly the coefficient matrix of the sine system when \(\beta = \infty\). Hence, \(\timechange\) should be chosen so that the prefactor cancels out when \(\shift \to \infty\). This suggests to take \(\timechange\) as a solution of
\begin{subequations}
\label{eq.timechange}
\beq{eq.timechangeODE}
\timechange' = 2\diln e^{\nicefrac{\timechange}{2}}
\eeq
where we allow a dependence on an extra parameter \(\diln \in \mathbb{R}\) chosen so that \(\diln \to 1\) as \(\shift\to\infty\). It is easy to solve this equation explicitly, and the solution with \(\timechange(0) = 0\) is
\beq{eq.timechangesol}
\timechange(t) = -2\log(1 - \diln t).
\eeq
\end{subequations}
This is a \(\mathscr{C}^1\) bijection \((0, \nicefrac{1}{\diln}) \to (0,\infty)\). As explained above, we want a bijection \((0,1) \to (0,\infty)\) when \(\beta \leq 2\) and when \(\beta > 2\) and \(\abs{a} < 1\), so in that case we simply take \(\diln \defeq 1\). When \(\beta > 2\) and \(a \geq 1\), we want to take \(\diln < 1\) in order to extend slightly the time domain; the specific value we take is \(\diln \defeq 1 - \nicefrac{1}{\sqrt{\shift}}\). 
To motivate this choice, note that by Proposition~\ref{prop.unshift}, the time domain \([0,1]\) for the time-changed system \(\timechange' (\sBesselmat\circ\timechange)\) corresponds, upon inverting the time change and reversing the shift, to the time domain \([-\log\shift, -2\log(1-\diln) - \log\shift]\) for the original operator \(\Besselop\). Taking \(\diln = 1 - \nicefrac{1}{\sqrt{\shift}}\) therefore fixes the right boundary of this time domain to always be \(0\) for the original operator, which removes its dependence on the shift \(\shift\). This will play a role in the analysis of the right boundary conditions in Section~\ref{sec.spectralconv}.

In the sequel, we will denote by \(\lasttime \defeq \frac{1}{\diln} (1 - \nicefrac{1}{\sqrt{\shift}})\) the time that satisfies \(\timechange(\lasttime) = \log\shift\), and thus corresponds to time \(0\) for \(\Besselop\), when reversing the time-change and shift. 


\subsection{Polar coordinates and intuition on the convergence}
\label{sec.polarcoords}

The first thing we need to prove the vague convergence of the canonical systems is to write the solutions \(\sBesself\) and \(\sBesselg\), which appear in the coefficient matrix of the Bessel system, in terms of suitable polar coordinates. The purpose of this change of variables is to separate the oscillating part of the coefficient matrix, which vanishes in the vague limit, from the part that actually converges to the sine system's coefficient matrix. 

\begin{proposition}
\label{prop.polarcoords}
Define \(\sBesselop\) from a standard Brownian motion \(B\) on a filtered probability space \((\Omega, \mathscr{F}, \{\mathscr{F}_t\}_{t\geq 0}, \mathbb{P})\), and let \(\timechange\) and \(\diln\) be as in~\eqref{eq.timechange}. If \(f\) solves \(\sBesselop f = 0\), then for \(t \in [0, \nicefrac{1}{\diln})\),
\[
f\circ\timechange
    = \frac{C_{f} \shift^{\nicefrac{-1}{4}} e^{\nicefrac{\timechange}{4}}}{\sqrt{\Besselcoeff\circ\timechange}} e^{\amp} \cos\phase
\qquadtext{and}
f'\circ\timechange
    = \frac{C_{f} \shift^{\nicefrac{1}{4}} e^{\nicefrac{-\timechange}{4}}}{\sqrt{\Besselcoeff\circ\timechange}} e^{\amp} \sin\phase
\]
where \(C_{f}^2 \defeq \sqrt{\shift} f^2(0) + \frac{1}{\sqrt{\shift}} {f'}^2(0)\) and \(\amp\) and \(\phase\) solve the coupled stochastic differential equations
\begin{align*}
\diff{\amp}(t)
    & = \biggl( \frac{1}{\beta} - \Bigl( a + \frac{1}{2} \Bigr) \cos 2\phase(t) - \frac{1}{\beta} \cos 4\phase(t) \biggr) \frac{\diln}{1 - \diln t} \diff{t} - \sqrt{\frac{2}{\beta}} \cos 2\phase(t) \sqrt{\frac{\diln}{1 - \diln t}} \diff{\sBM}(t), \\
\diff{\phase}(t)
    & = -2\diln\sqrt{\shift} \diff{t} + \biggl( \Bigl( a + \frac{1}{2} \Bigr) \sin 2\phase(t) + \frac{1}{\beta} \sin 4\phase(t) \biggr) \frac{\diln}{1 - \diln t} \diff{t} \\
    &\hspace*{88mm} + \sqrt{\frac{2}{\beta}} \sin 2\phase(t) \sqrt{\frac{\diln}{1 - \diln t}} \diff{\sBM}(t)
\end{align*}
with \(\amp(0) = 0\) and \(\phase(0) = \arctan\bigl( \nicefrac{f'(0)}{\sqrt{\shift} f(0)} \bigr)\), and where \(\sBM(t) \defeq \frac{1}{\sqrt{2\diln}} \int_0^{\timechange(t)} e^{\nicefrac{-s}{4}} \diff{B}(s)\) is a Brownian motion and a martingale with respect to the filtration \(\{\mathscr{F}_{\timechange(t)}\}_{t\in [0,\nicefrac{1}{\diln})}\). 
\end{proposition}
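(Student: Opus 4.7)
The plan is to derive the claimed SDEs by a direct application of Itô's formula in well-chosen polar coordinates. Since \(\sBesselop = \tfrac{2}{\sqrt{\shift}}(\Besselop - \shift)\), solutions to \(\sBesselop f = 0\) are exactly solutions to \(\Besselop f = \shift f\), so with \(z = \shift\) equation~\eqref{eq.BesselSDE} gives an SDE for \((f, f')\) driven by the underlying Brownian motion \(B\). The first step is to apply the deterministic time change \(t \mapsto \timechange(t)\), using \(\timechange'(t) = 2\diln e^{\timechange(t)/2} = 2\diln/(1 - \diln t)\) and \(e^{-\timechange(t)} = (1 - \diln t)^2\). A short quadratic variation calculation gives \(\langle \sBM \rangle_t = \tfrac{1}{2\diln} \int_0^{\timechange(t)} e^{-s/2}\,ds = t\), so by Lévy's characterization \(\sBM\) is a standard Brownian motion in the filtration \(\{\mathscr{F}_{\timechange(t)}\}\), with \(dB(\timechange(t)) = \sqrt{\timechange'(t)}\,d\sBM(t)\).

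Next, I would introduce the Prüfer-type Cartesian coordinates
\[
X(t) \defeq \shift^{\nicefrac{1}{4}} e^{-\timechange(t)/4} f(\timechange(t)), \qquad Y(t) \defeq \shift^{\nicefrac{-1}{4}} e^{\timechange(t)/4} f'(\timechange(t)),
\]
whose prefactors match the deterministic \(\beta = \infty\) Bessel asymptotics sketched in Section~\ref{sec.setup}, so that the norm \(r \defeq \sqrt{X^2 + Y^2}\) is slowly varying and the angle rotates at essentially constant frequency \(-2\diln\sqrt{\shift}\). A direct Itô computation from~\eqref{eq.BesselSDE} then gives
\[
dX = \bigl[2\diln\sqrt{\shift}\,Y - \tfrac{\kappa}{2} X\bigr] dt, \qquad dY = \bigl[-2\diln\sqrt{\shift}\,X + \bigl(\tfrac{1}{2} + 2a + \tfrac{4}{\beta}\bigr)\kappa Y\bigr] dt + \tfrac{2}{\sqrt{\beta}} Y \sqrt{2\kappa}\,d\sBM,
\]
where \(\kappa \defeq \diln/(1 - \diln t)\). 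Writing \(X = r\cos\phase\), \(Y = r\sin\phase\) continuously (which is possible because \((X, Y)\) cannot simultaneously vanish by uniqueness for the underlying second-order equation with nontrivial initial data), one checks \(r(0)^2 = \sqrt{\shift}\, f^2(0) + \shift^{\nicefrac{-1}{2}}\, {f'}^2(0) = C_f^2\) and \(\phase(0) = \arctan\bigl(f'(0)/(\sqrt{\shift} f(0))\bigr)\), matching the stated initial conditions. Finally, define \(\amp \defeq \log r - \log C_f + \tfrac{1}{2} \log(\Besselcoeff \circ \timechange)\), for which \(\amp(0) = 0\) since \(\Besselcoeff(0) = 1\).

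The SDEs for \(\phase\) and \(\amp\) then follow from Itô's formula applied to this polar decomposition. For \(\phase\), combining \(d\phase = \tfrac{X\,dY - Y\,dX}{r^2} - \tfrac{\sin 2\phase}{2 r^2}\,d\langle Y\rangle\) with \(d\langle Y\rangle = \tfrac{8}{\beta}\kappa Y^2\,dt\) and the identities \(XY/r^2 = \tfrac{1}{2}\sin 2\phase\) and \(\sin 2\phase\sin^2\phase = \tfrac{1}{2}\sin 2\phase - \tfrac{1}{4}\sin 4\phase\) yields the claimed SDE after simplification. For \(\amp\), one computes \(d\log r = \tfrac{1}{2r^2} d(r^2) - \tfrac{1}{4r^4}\,d\langle r^2\rangle\) and expands using \(\cos^2\phase = \tfrac{1 + \cos 2\phase}{2}\) and \(\sin^4\phase = \tfrac{3}{8} - \tfrac{1}{2}\cos 2\phase + \tfrac{1}{8}\cos 4\phase\), then adds \(\tfrac{1}{2}\,d\log(\Besselcoeff \circ \timechange) = -a\kappa\,dt - \sqrt{2\kappa/\beta}\,d\sBM\), obtained from \(\log \Besselcoeff(s) = -as - \tfrac{2}{\sqrt{\beta}} B(s)\). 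The extra \(-a\kappa\,dt\) cancels the \(a\kappa\) piece of the \(d\log r\) drift, and the \(-\sqrt{2\kappa/\beta}\,d\sBM\) shifts the Brownian coefficient from \(1 - \cos 2\phase\) to \(-\cos 2\phase\), producing exactly the SDE claimed for \(\amp\). The main obstacle is purely bookkeeping: correctly identifying \(\sBM\) as the driving noise after the time change, then carefully tracking the trigonometric identities through Itô's formula; once both are handled the proof is mechanical.
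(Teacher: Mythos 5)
Your proposal is correct and follows essentially the same route as the paper: polar coordinates for a rescaled $(f\circ\timechange, f'\circ\timechange)$ pair, with the SDEs for the amplitude and phase extracted via Itô's formula after the Dambis--Dubins--Schwarz time change. The only difference is bookkeeping: the paper works with the constant scaling $S = \sqrt{2\diln}\shift^{\nicefrac{1}{4}}$ applied to $(f\circ\timechange, (f\circ\timechange)')$ and takes the complex logarithm, then subtracts an auxiliary process $\tilde{r}$ to define $\amp$, whereas you use the symmetric time-dependent scaling $\shift^{\pm\nicefrac{1}{4}}e^{\mp\timechange/4}$ on $(f\circ\timechange, f'\circ\timechange)$ and fold $\tfrac{1}{2}\log\Besselcoeff\circ\timechange$ into the definition of $\amp$ directly; since the two Cartesian pairs differ only by a common positive scalar factor, they produce the same $\phase$ and the same $\amp$, and I have checked that your drift and diffusion coefficients reduce to the claimed ones.
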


\begin{proof}
Let \(f\) solve \(\sBesselop f = 0\). Then \(f\) also solves \(\Besselop f = \shift f\), and therefore it is also a solution to the SDE~\eqref{eq.BesselSDE} with \(z = \shift\). Hence, by the Dambis--Dubins--Schwarz theorem,
\[
\diff{(f'\circ\timechange)}(t)
    = -\shift e^{-\timechange(t)} f\circ\timechange(t) \timechange'(t) \diff{t} + \Bigl( a + \frac{2}{\beta} \Bigr) f'\circ\timechange(t) \timechange'(t) \diff{t} + \frac{2}{\sqrt{\beta}} f'\circ\timechange(t) \sqrt{\timechange'(t)} \diff{\sBM}(t)
\]
where
\[
\sBM(t)
    \defeq \int_0^{\timechange(t)} \frac{1}{\sqrt{\timechange'\circ\timechange^{-1}(s)}} \diff{B}(s)
    = \frac{1}{\sqrt{2\diln}} \int_0^{\timechange(t)} e^{\nicefrac{-s}{4}} \diff{B}(s)
\]
is a standard Brownian motion and a martingale with respect to \(\{\mathscr{F}_{\timechange(t)}\}_{t\in[0,\nicefrac{1}{\diln})}\).

Set \(y \defeq f\circ\timechange\), so that \(y' = \timechange' (f'\circ\timechange)\). From the above expression for \(\diff{(f'\circ\timechange)}\), we deduce that
\[
\diff{y'}(t)
    = \frac{\timechange''(t)}{\timechange'(t)} y'(t) \diff{t}
    - \shift \timechange'(t)^2 e^{-\timechange(t)} y(t) \diff{t} + \Bigl( a + \frac{2}{\beta} \Bigr) \timechange'(t) y'(t) \diff{t} + \frac{2}{\sqrt{\beta}} \sqrt{\timechange'(t)} y'(t) \diff{\sBM}(t).
\]
By definition of \(\timechange\), \({\timechange'}^2 e^{-\timechange} = 4\diln^2\) and \(\nicefrac{\timechange''}{\timechange'} = \nicefrac{\timechange'}{2}\), so this simplifies to
\[
\diff{y'}(t)
    = - 4\diln^2\shift y(t) \diff{t} + \Bigl( a + \frac{2}{\beta} + \frac{1}{2} \Bigr) \timechange'(t) y'(t) \diff{t} + \frac{2}{\sqrt{\beta}} \sqrt{\timechange'(t)} y'(t) \diff{\sBM}(t).
\]

Now, define two real-valued stochastic processes \(r\) and \(\xi\) such that \(e^{r + i\xi} = Sy + \nicefrac{iy'}{S}\) where \(S\) is a (constant) scaling factor that will be determined later. By Itô's formula, omitting the explicit time dependence to simplify notation,
\begin{align*}
\diff{r} + i \diff{\xi}
    & = \diff{\bigl( \log(Sy + \nicefrac{iy'}{S}) \bigr)}
    = \frac{1}{Sy + \nicefrac{iy'}{S}} \Bigl( S y' \diff{t} + \frac{i}{S} \diff{y'} \Bigr) + \frac{1}{2S^2} \frac{1}{(Sy + \nicefrac{iy'}{S})^2} \diff{\quadvar{y'}} \\
    & = e^{-2r} \Bigl( S^2yy' \diff{t} + \frac{1}{S^2} y' \diff{y'} + iy\diff{y'} - i{y'}^2 \diff{t} \Bigr) + \frac{e^{-4r}}{2S^2} \Bigl( S^2y^2 - \frac{{y'}^2}{S^2} - 2iyy' \Bigr) \diff{\quadvar{y'}} \\
    & = e^{-2r} \biggl( S^2yy' - \frac{4\diln^2\shift}{S^2} yy' + \Bigl( a + \frac{2}{\beta} + \frac{1}{2} \Bigr) \frac{{y'}^2}{S^2} \timechange' \biggr) \diff{t} 
    + \frac{2}{\beta} e^{-4r} \frac{{y'}^2}{S^2} \Bigl( S^2y^2 - \frac{{y'}^2}{S^2} \Bigr) \timechange' \diff{t}
    + \frac{2}{\sqrt{\beta}} e^{-2r} \frac{{y'}^2}{S^2} \sqrt{\timechange'} \diff{\sBM}  \\
    &\qquad + ie^{-2r} \biggl( -4\diln^2\shift y^2 - {y'}^2 + \Bigl( a + \frac{2}{\beta} + \frac{1}{2} \Bigr) yy' \timechange' \biggr) \diff{t} 
    - \frac{4i}{\beta} e^{-4r} \frac{y{y'}^3}{S^2} \timechange' \diff{t}
    + \frac{2i}{\sqrt{\beta}} e^{-2r} yy' \sqrt{\timechange'} \diff{\sBM}.
\end{align*}
This simplifies with \(S \defeq \sqrt{2\diln} \shift^{\nicefrac{1}{4}}\). Indeed, it balances the magnitudes of the first two terms of both lines so that they cancel out in the first line and combine as \(4\diln^2\shift y^2 + {y'}^2 = S^2 (S^2y^2 + \nicefrac{{y'}^2}{S^2}) = 2\diln\sqrt{\shift} e^{2r}\) in the second one. Extracting the SDEs for \(r\) and \(\xi\) from the real and imaginary parts of the above and replacing \(y\) and \(y'\) with their expressions in terms of \(r\) and \(\xi\), we get
\begin{align*}
\diff{r}
    & = \Bigl( a + \frac{2}{\beta} + \frac{1}{2} \Bigr) \sin^2\xi \, \timechange' \diff{t}
    + \frac{2}{\beta} \sin^2\xi (\cos^2\xi - \sin^2\xi) \timechange' \diff{t}
    + \frac{2}{\sqrt{\beta}} \sin^2\xi \sqrt{\timechange'} \diff{\sBM} \\
\shortintertext{and}
\diff{\xi}
    & = - 2\diln\sqrt{\shift} \diff{t} + \Bigl( a + \frac{2}{\beta} + \frac{1}{2} \Bigr) \sin\xi \cos\xi \, \timechange' \diff{t}
    - \frac{4}{\beta} \sin^3\xi \cos\xi \, \timechange' \diff{t}
    + \frac{2}{\sqrt{\beta}} \sin\xi \cos\xi \sqrt{\timechange'} \diff{\sBM}.
\end{align*}
It is easy to see by simplifying further with trigonometric identities that \(\phase \defeq \xi\) follows the announced SDE with \(\phase(0) = \arctan\bigl( \frac{y'(0)}{S^2y(0)} \bigr) = \arctan\bigl( \frac{f'(0)}{\sqrt{\shift} f(0)} \bigr)\). Performing similar simplifications in the other SDE, we get
\[
\diff{r}
    = \biggl( \frac{1}{\beta} + a + \frac{1}{2} - \Bigl( a + \frac{1}{2} \Bigr) \cos 2\xi - \frac{1}{\beta} \cos 4\xi \biggr) \frac{\timechange'}{2} \diff{t}
    + \frac{1}{\sqrt{\beta}} (1 - \cos 2\xi) \sqrt{\timechange'} \diff{\sBM}.
\]
With
\[
\tilde{r}(t)
    \defeq r(0) + \Bigl( \frac{a}{2} + \frac{1}{4} \Bigr) \timechange(t) + \frac{1}{\sqrt{\beta}} \int_0^t \sqrt{\timechange'(s)} \diff{\sBM}(s),
\]
we obtain that \(\amp \defeq r - \tilde{r}\) follows the announced SDE with \(\amp(0) = 0\). 

It remains to check the representations of \(f\circ\timechange\) and \(f'\circ\timechange\) in terms of \(\amp\) and \(\phase\). To do so, remark that the last term of \(\tilde{r}\) is exactly \(\frac{1}{\sqrt{\beta}} B\bigl(\timechange(t)\bigr)\) where \(B\) is the original Brownian motion, so in fact \(\tilde{r} = r(0) + \frac{\timechange}{4} - \frac{1}{2} \log\Besselcoeff\circ\timechange\). Moreover,
\[
e^{2r(0)}
    = S^2f^2(0) + \frac{{\timechange'}^2(0) {f'^2}(0)}{S^2}
    = 2\diln \Bigl( \sqrt{\shift} f^2(0) + \frac{{f'}^2(0)}{\sqrt{\shift}} \Bigr)
    \eqdef 2\diln C_{f}^2.
\]
It follows that
\begin{align*}
f\circ\timechange
    & = \frac{1}{S} e^r \cos\xi
    = \frac{1}{\sqrt{2\diln}\shift^{\nicefrac{1}{4}}} \frac{e^{r(0) + \amp + \nicefrac{\timechange}{4}}}{\sqrt{\Besselcoeff\circ\timechange}} \cos\phase
    = \frac{C_{f} \shift^{\nicefrac{-1}{4}} e^{\nicefrac{\timechange}{4}}}{\sqrt{\Besselcoeff\circ\timechange}} e^{\amp} \cos\phase
\shortintertext{and likewise that}
f'\circ\timechange
    & = \frac{S}{\timechange'} e^r \sin\xi
    = \frac{\shift^{\nicefrac{1}{4}}}{\sqrt{2\diln} e^{\nicefrac{\timechange}{2}}} \frac{e^{r(0) + \amp + \nicefrac{\timechange}{4}}}{\sqrt{\Besselcoeff\circ\timechange}} \sin\phase
    = \frac{C_{f} \shift^{\nicefrac{1}{4}} e^{\nicefrac{-\timechange}{4}}}{\sqrt{\Besselcoeff\circ\timechange}} e^{\amp} \sin\phase.
\qedhere
\end{align*}
\end{proof}

The polar coordinates from Proposition~\ref{prop.polarcoords} can be used to represent the fundamental solutions \(\sBesself\) and \(\sBesselg\) that appear in the expression~\eqref{eq.sBesselCS} of the coefficient matrix of the Bessel system in terms of pairs \((\ampf, \phasef)\) and \((\ampg, \phaseg)\). In particular, the constants that appear in these representations are \(C_{\sBesself} = \shift^{\nicefrac{1}{4}}\) and \(C_{\sBesselg} = \shift^{\nicefrac{-1}{4}}\), which motivates our choice of scaling in the definitions~\eqref{eq.sBesselSLtoCS} and~\eqref{eq.sBesselCS} of the canonical system representation. These four polar coordinates are not completely independent, since by standard theory the Wronskian of \(\sBesself\) and \(\sBesselg\) is constant (this is also easy to verify by a straightforward computation). This Wronskian is \(1\) at \(0\), so
\[
1 \equiv \Besselcoeff\circ\timechange (\sBesself\sBesselg' - \sBesself'\sBesselg) \circ\timechange
    = e^{\ampf+\ampg} \bigl( \cos\phasef \sin\phaseg - \sin\phasef \cos\phaseg \bigr),
\]
and therefore
\beq{eq.Wronskianidentity}
1 \equiv e^{\ampf+\ampg} \sin(\phaseg - \phasef).
\eeq

Recall that the coefficient matrix of the time-changed, scaled and shifted Bessel system is given by
\[
\timechange'(t)(\sBesselmat\circ\timechange)(t)
    = \frac{\timechange'(t)\Besselweight\circ\timechange(t)\sqrt{\shift}}{2} \begin{pmatrix}
        \sqrt{\shift}\sBesselg^2 & \sBesself\sBesselg \\
        \sBesself\sBesselg & \frac{1}{\sqrt{\shift}} \sBesself^2
    \end{pmatrix}\bigl( \timechange(t) \bigr).
\]
Switching to the polar coordinates from Proposition~\ref{prop.polarcoords}, this becomes
\[
\timechange' (\sBesselmat\circ\timechange)
    = \frac{\timechange' (\Besselweight\circ\timechange) e^{\nicefrac{\timechange}{2}}}{2\Besselcoeff\circ\timechange} \begin{pmatrix}
        e^{2\ampg} \cos^2\phaseg & e^{\ampf+\ampg} \cos\phasef \cos\phaseg \\
        e^{\ampf+\ampg} \cos\phasef \cos\phaseg & e^{2\ampf} \cos^2\phasef
    \end{pmatrix}.
\]
We chose \(\timechange' = 2\diln e^{\nicefrac{\timechange}{2}}\) and by definition \(e^{\timechange} \Besselweight\circ\timechange = \Besselcoeff\circ\timechange\), so the entire prefactor reduces to \(\diln\). To simplify notation in what follows, we set \(\diffamps \defeq \ampg - \ampf\) and \(\sumamps \defeq \ampg + \ampf\), and likewise with the phases \(\phasef\) and \(\phaseg\). Notice that the identity~\eqref{eq.Wronskianidentity} implies that \(e^{-2\ampg} = e^{-\diffamps} \sin\diffphases\). Using this and the trigonometric identity \(\cos\xi\cos\xi' = \frac{1}{2} \cos(\xi-\xi') + \frac{1}{2} \cos(\xi+\xi')\), we can rewrite
\beq{eq.sBesselmatpolarcoords}
\setlength{\multlinegap}{22mm}
\begin{multlined}
\timechange' (\sBesselmat\circ\timechange)
    = \frac{\diln}{2e^{-\diffamps}\sin\diffphases} \begin{pmatrix}
        1 & e^{-\diffamps} \cos\diffphases \\ e^{-\diffamps} \cos\diffphases & e^{-2\diffamps}
    \end{pmatrix} \\
    + \frac{\diln}{2} \begin{pmatrix}
        e^{2\ampg} \cos 2\phaseg & e^{\sumamps} \cos\sumphases \\ e^{\sumamps} \cos\sumphases & e^{2\ampf} \cos 2\phasef
    \end{pmatrix}.
\end{multlined}
\eeq
For comparison, we can expand the definition~\eqref{eq.defsinemat} of the coefficient matrix of the sine system:
\[
\sinemat\circ\logtime
    = \frac{1}{2\Im\HBM\circ\logtime} \begin{pmatrix}
        1 & -\Re\HBM\circ\logtime \\
        -\Re\HBM\circ\logtime & \abs{\HBM\circ\logtime}^2
    \end{pmatrix}.
\]
Comparing the expressions of these two coefficient matrices provides a clear guess on how the convergence happens. Indeed, it is clear from the SDE in Proposition~\ref{prop.polarcoords} that the phases \(\phasef\) and \(\phaseg\) will grow increasingly fast as \(\shift\) becomes large, which should make the second term of~\eqref{eq.sBesselmatpolarcoords} vanish in the vague limit. Then, in the first term, we recognize in place of the real and imaginary parts of the hyperbolic Brownian motion \(\HBM\) those of the process \(-\exp\bigl(-\diffamps-i\diffphases\bigr)\). Hence, we expect that what makes the convergence of possible is that as \(\shift\) becomes large, this process becomes close to a hyperbolic Brownian motion with variance \(\nicefrac{4}{\beta}\), started at \(i\) in the upper half-plane. Notice that because \(\diffamps(0) = 0\) and \(\diffamps(0) = \nicefrac{\pi}{2}\), it does start at \(i\). 

To see how this happens, we derive an SDE for \(-\exp\bigl( -\diffamps-i\diffphases\bigr)\). From Proposition~\ref{prop.polarcoords}, we see that
\begin{align*}
\diff{\diffamps}(t)
    & = \Bigl( (2a+1) \sin\diffphases(t) \sin\sumphases(t) + \frac{2}{\beta} \sin 2\diffphases(t) \sin 2\sumphases(t) \Bigr) \frac{\diln}{1-\diln t} \diff{t} \\
    &\hspace*{66mm} + \frac{2}{\sqrt{\beta}} \sin\diffphases(t) \sin\sumphases(t) \sqrt{\frac{2\diln}{1-\diln t}} \diff{\sBM}(t)
\shortintertext{and}
\diff{\diffphases}(t)
    & = \Bigl( (2a+1) \sin\diffphases(t) \cos\sumphases(t) + \frac{2}{\beta} \sin 2\diffphases(t) \cos 2\sumphases(t) \Bigr) \frac{\diln}{1-\diln t} \diff{t} \\
    &\hspace*{66mm} + \frac{2}{\sqrt{\beta}} \sin\diffphases(t) \cos\sumphases(t) \sqrt{\frac{2\diln}{1-\diln t}} \diff{\sBM}(t).
\end{align*}
From this, applying Itô's formula yields
\beq{eq.approxHBMSDE}
\setlength{\multlinegap}{33mm}
\begin{multlined}
\diff{\bigl( -e^{-\diffamps-i\diffphases} \bigr)}(t)
    = \frac{2i}{\sqrt{\beta}} e^{-\diffamps(t)} \sin\diffphases(t) e^{-2i\phaseg(t)} \sqrt{\frac{2\diln}{1-\diln t}} \diff{\sBM}(t) \\
    + ie^{-\diffamps(t)} \sin\diffphases(t) \Bigl( (2a+1) e^{-2i\phaseg(t)} + \frac{4}{\beta} e^{-4i\phaseg(t)} \Bigr) \frac{\diln}{1-\diln t} \diff{t}.
\end{multlined}
\eeq
Again, because of the increasingly fast oscillations of \(e^{-2i\phaseg}\) and \(e^{-4i\phaseg}\) when \(\shift\) grows, we expect the second line here to vanish in the vague limit. Ignoring the second line,~\eqref{eq.approxHBMSDE} has the form of the SDE for a hyperbolic Brownian motion with variance \(\nicefrac{4}{\beta}\) in the upper half-plane, but driven by the process
\beq{eq.approxCBM}
t \mapsto \int_0^{t} ie^{-2i\phaseg(s)} \sqrt{\frac{2\diln}{1-\diln s}} \diff{\sBM}(s).
\eeq
In the next section, we show that indeed this process converges in distribution to a standard complex Brownian motion. Then, we use this to make rigorous the heuristic idea presented above and prove that \(-\exp\bigl( -\diffamps-i\diffphases \bigr)\) does converge to a hyperbolic Brownian motion. This will allow us to prove the vague convergence of the canonical systems in Section~\ref{sec.vagueconv}.

\section{Coupling the Bessel and sine systems}
\label{sec.coupling}

In this section, we build a coupling between a sequence of real Brownian motions and a single complex Brownian motion. This coupling immediately yields a coupling between a sequence of shifted and scaled stochastic Bessel canonical systems and a stochastic sine canonical system, and from this we make rigorous the heuristic presented at the end of the last section and prove the convergence of the process in~\eqref{eq.approxHBMSDE} to a hyperbolic Brownian motion.

Before moving on, we make an important comment about several of the proofs that will be found in the sequel. The transition from the soft edge to the bulk, at the canonical system level, was described in~\cite{painchaud_operator_2025} using a setup similar to what we have presented here so far: first finding a suitable time change, then deriving polar coordinates for fundamental solutions, and then identifying the process that must converge to a hyperbolic Brownian motion for the convergence to happen. It turns out that the SDEs that describe the evolutions of the polar coordinates have essentially the same structure in both cases. Moreover, the processes that converge to the hyperbolic Brownian motions have the same expression in terms of the polar coordinates, and the processes that become the driving complex Brownian motions have essentially the same form. Because of this, the ideas needed to build the coupling and eventually to prove the vague convergence of the canonical systems are mostly the same, and the technical parts of the proofs are very similar. To avoid repeating lengthy computations that are easy to reproduce from the detailed proofs that can be found in~\cite{painchaud_operator_2025}, for the remaining of the paper we will often abridge (or even omit) the proofs of the results that have a counterpart in~\cite{painchaud_operator_2025} and rather focus on the important ideas.

\subsection{Construction of the coupling}

\begin{lemma}
\label{lem.coupling}
\setshift{E_n}
Let \(\{\shift\}_{n\in\mathbb{N}} \subset (0,\infty)\) satisfy \(\shift\to\infty\). There exists a probability space on which are defined a sequence \(\{\sBM\}_{n\in\mathbb{N}}\) of standard real Brownian motions and a standard complex Brownian motion \(W\) such that if \(\phaseg\) is the solution to the SDE from Proposition~\ref{prop.polarcoords} started from \(\phaseg(0) = \nicefrac{\pi}{2}\) and driven by \(\sBM\), then for any \(\alpha \in (0, \nicefrac{1}{2})\) and \(\delta \in (0, \nicefrac{\alpha}{3})\), for any \(\shift\) large enough,
\beq{eq.coupling}
\bprob[\bigg]{\sup_{\diln t \in [0, 1-\shift^{\nicefrac{-1}{2}+\alpha}]} \abs[\Big]{\int_0^t ie^{-2i\phaseg\mkern-3mu(s)} \mkern-1mu \sqrt{\frac{2\diln}{1-\diln s}} \diff{\sBM}(s) - W\circ\slogtime(t)} \geq \shift^{-(\nicefrac{\alpha}{3}-\delta)}}
    \leq 3\shift^{\nicefrac{4\alpha}{3}} \log^2\shift e^{-C\shift^{\nicefrac{2\delta}{3}}}
\eeq
where \(\slogtime(t) \defeq -\log(1 - \diln t)\) and where \(C > 0\) depends only on \(\beta\), \(a\), \(\alpha\) and \(\delta\). 
\end{lemma}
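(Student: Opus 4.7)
The plan is to construct the coupling on an extension of the probability space supporting $\sBM$ by introducing an independent standard real Brownian motion $\sBM^{\perp}$. Passing to the time variable $u = \slogtime(t)$ and letting $\tilde\sBM$ denote the Dambis--Dubins--Schwarz Brownian motion of $\int_0^{\cdot} \sqrt{\slogtime'(s)}\diff\sBM(s)$, the process to be approximated becomes $M(\slogtime^{-1}(u)) = \sqrt{2}\int_0^u ie^{-2i\tilde\phaseg(r)}\diff\tilde\sBM(r)$, where $\tilde\phaseg \defeq \phaseg\circ\slogtime^{-1}$. A preliminary step, using the SDE of $\phaseg$ from Proposition~\ref{prop.polarcoords}, is to show that the deterministic leading part of $\tilde\phaseg(u)$ is $\nicefrac{\pi}{2} - 2\sqrt{\shift}(1-e^{-u})$ and that on a good event $\tilde\phaseg'(u) \asymp \sqrt{\shift}\,e^{-u}$ stays at least of order $\shift^{\alpha}$ throughout $[0,\slogtime(\penultime)]$; this makes the oscillatory integrals $\int e^{\pm 2i\tilde\phaseg}\diff r$ and $\int e^{\pm 4i\tilde\phaseg}\diff r$ small via integration by parts.

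The instantaneous quadratic-variation matrix of $M(\slogtime^{-1}(\cdot))$ is rank-one along the direction $(\sin 2\tilde\phaseg, \cos 2\tilde\phaseg)^{\transpose}$, so it cannot be made isotropic by adding an independent martingale, and a blockwise construction of $W$ is therefore needed. I would partition $[0, \slogtime(\penultime)]$ into $N \asymp \shift^{4\alpha/3}\log^2\shift$ subintervals and, on each block $[u_{k-1}, u_k]$, define the increment of $W$ by freezing $\tilde\phaseg$ at $u_{k-1}$ and coupling the resulting (approximately Gaussian) increment of $M\circ\slogtime^{-1}$ with a $\normal{0}{\Delta u_k\, I_2}$-distributed vector, supplying the transverse variance from $\sBM^{\perp}$. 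The blockwise coupling error then decomposes into a Gaussian-coupling error controlled by the QV discrepancy (itself bounded by the oscillatory integral estimate above), a linearization error from freezing $\tilde\phaseg$, and a continuous-time interpolation error inside each block handled by the local Hölder regularity of Brownian motion.

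Summing the blockwise estimates, applying sub-Gaussian tail bounds per block, and union-bounding over the $N$ blocks and the three essentially independent oscillatory-integral estimates (yielding the factor of $3$) gives the stated probability bound; the Gaussian tail $e^{-C\shift^{2\delta/3}}$ arises because the block standard deviations are of order $\shift^{-\alpha/3 + 2\delta/3}$ while the deviation threshold is $\shift^{-\alpha/3 + \delta}$. The main obstacle is assembling the blockwise Gaussian coupling into a genuine continuous-time standard complex Brownian motion $W$ while jointly controlling the three error sources, together with the auxiliary concentration bound ensuring that $\tilde\phaseg$ stays within $O(1)$ of its deterministic leading term so that the oscillation rate remains large. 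These technical ingredients mirror those of the soft-edge-to-bulk analysis in \cite{painchaud_operator_2025}, which the author has explicitly flagged as a template for the present setting.
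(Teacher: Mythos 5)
Your proposal misses the central device of the paper's coupling. The paper does \emph{not} freeze the whole phase over a block, and it does \emph{not} introduce any auxiliary independent Brownian motion. Instead, after discretizing so that each block has equal logarithmic length \(\sigma^2 = \log(1+\shift^{-2\alpha/3})\), the actual phase \(\phaseg\) is replaced inside block \(j\) by \(\bigl(\phaseg(t_{j-1})-\theta(t_{j-1})\bigr) + \theta\), where \(\theta(t) = \nicefrac{\pi}{2} - 2\diln\sqrt{\shift}\,t\) is the deterministic fast-oscillating part of \(\phaseg\): the oscillation is \emph{kept}, and only the slowly-varying random drift is frozen. This makes the block integral \(\mathscr{B}^\theta_j = \int_{t_{j-1}}^{t_j} ie^{-2i\theta(s)}\sqrt{2\diln/(1-\diln s)}\,\diff\sBM(s)\) exactly conditionally Gaussian, and — because \(\theta\) sweeps many full periods over the block — its covariance \(\Sigma_j\) is already close to \(\sigma^2 I_2\). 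The isotropic increment \(W_j\) is then produced deterministically by \(W_j = e^{-2i(\phaseg(t_{j-1})-\theta(t_{j-1}))}\sigma\Sigma_j^{-1/2}\mathscr{B}^\theta_j\), so the only error is \(\norm{I_2 - \sigma\Sigma_j^{-1/2}}\), which is genuinely small.

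Your approach breaks down on exactly these two points. Freezing \(\tilde\phaseg\) itself at \(u_{k-1}\) is a bad approximation: the deterministic part of the phase changes by roughly \(\sqrt{\shift}\,e^{-u}\Delta u\) over a block, which is of order \(\shift^{1/2-2\alpha/3}\gg 1\) near \(u=0\), so the integrand \(ie^{-2i\tilde\phaseg}\) spins many times around the circle and is nowhere near its frozen value; the resulting per-block error accumulates to \(\Theta(\sqrt{\log\shift})\), far above the \(\shift^{-\alpha/3+\delta}\) threshold. Moreover, because a constant integrand gives a rank-one increment, you introduce an independent \(\sBM^\perp\) to fill in the transverse direction. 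But the true \(M\)-increment is \emph{already} nearly isotropic, with its transverse fluctuations generated precisely by the fast oscillation of \(\tilde\phaseg\), i.e.\ from \(\sBM\) itself, not from an independent source. Coupling to a genuinely independent transverse noise leaves an \(O(\sqrt{\Delta u_k})\) discrepancy per block that cannot be made small. Keeping the deterministic oscillation \(\theta\) inside the block is exactly what renders \(\sBM^\perp\) unnecessary. (Incidentally, the prefactor \(\shift^{4\alpha/3}\log^2\shift\) in the tail bound is not the block count — that is \(N\asymp\shift^{2\alpha/3}\log\shift\) — but comes from the Freedman-type estimate applied to the martingale \(\Delta^{\xi\theta}\) controlling the slowly-varying part.)
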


\begin{remark}
\setshift{E_n}
For each Brownian motion \(\sBM\), one can define two pairs \((\ampf, \phasef)\) and \((\ampg, \phaseg)\) of solutions to the SDEs from Proposition~\ref{prop.polarcoords} with \(\ampf(0) = \ampg(0) = \phasef(0) = 0\) and \(\phaseg(0) = \nicefrac{\pi}{2}\), which can be used to define a pair of fundamental solutions \(\sBesself\) and \(\sBesselg\) through their representations in terms of polar coordinates. This yields a realisation of the canonical system~\eqref{eq.sBesselCS} for \(\sBesselop\). In the same way, one can define a hyperbolic Brownian motion driven by \(W\), which yields a realisation of the sine canonical system. Therefore, the probability space from the lemma actually supports a sequence of canonical systems for the shifted and scaled Bessel operators \(\sBesselop\) as well as a sine canonical system, and these are coupled through their driving Brownian motions. 
\end{remark}

\begin{proof}
This result is analogous to Lemma~14 of~\cite{painchaud_operator_2025}. We provide only the main steps of the proof here, but the computations that we omit can easily be adapted from the detailed proof of Lemma~14 of~\cite{painchaud_operator_2025}. 

The idea of the proof is to build, for a given \(\shift > 0\), a coupling between a discretization of the stochastic integral~\eqref{eq.approxCBM} and a random walk, and to extend this coupling to one between~\eqref{eq.approxCBM} and a complex Brownian motion in such a way that the estimate~\eqref{eq.coupling} is satisfied. It is then easy to combine a sequence of such couplings to obtain the announced probability space. 

Fix \(\shift > 0\) and a standard real Brownian motion \(\sBM\), and let \(\phaseg\) solve the SDE from Proposition~\ref{prop.polarcoords} with \(\phaseg(0) = \nicefrac{\pi}{2}\). With \(\lasttime = \frac{1}{\diln}(1 - \nicefrac{1}{\sqrt{\shift}})\), we discretize the interval \([0, \lasttime]\) by setting \(t_0 \defeq 0\), \(t_N \defeq \lasttime\), and in between
\beq{eq.coupling.partition}
\diln t_j \defeq 1 - (1 + \shift^{-p})^{-j}
\qquadtext{so that}
N \defeq \ceil[\bigg]{\frac{\log\shift}{2\log(1 + \shift^{-p})}}
\eeq
where \(p \defeq \nicefrac{2\alpha}{3}\). Then, we set
\beq{eq.coupling.discretized}
\mathscr{B}^\xi_j \defeq \int_{t_{j-1}}^{t_j} ie^{-2i\phaseg(s)} \sqrt{\frac{2\diln}{1-\diln s}} \diff{\sBM}(s)
\qquadtext{and}
\mathscr{B}^\theta_j \defeq \int_{t_{j-1}}^{t_j} ie^{-2i\theta(s)} \sqrt{\frac{2\diln}{1-\diln s}} \diff{\sBM}(s)
\eeq
where \(\theta(t) \defeq \frac{\pi}{2} -2\diln\sqrt{\shift} t\) is the deterministic part of \(\phaseg\). With
\beq{eq.coupling.sigmas}
\sigma^2 \defeq \int_{t_{j-1}}^{t_j} \frac{\diln}{1-\diln t} \diff{t}
    = \log(1 + \shift^{-p})
\qquadtext{and}
\Sigma_j \defeq \begin{pmatrix}
    \expect (\Re\mathscr{B}^\theta_j)^2 & \expect\Re\mathscr{B}^\theta_j \Im\mathscr{B}^\theta_j \\
    \expect \Re\mathscr{B}^\theta_j \Im\mathscr{B}^\theta_j & \expect (\Im\mathscr{B}^\theta_j)^2
\end{pmatrix},
\eeq
we then set \(W_j \defeq e^{-2i(\phaseg(t_{j-1}) - \theta(t_{j-1}))} \sigma\Sigma_j^{\nicefrac{-1}{2}} \mathscr{B}^\theta_j\), where we identify complex numbers with \(\mathbb{R}^2\) vectors for matrix products. Because \(\mathscr{B}^\theta_j \sim \mathbb{C}\normal{0}{\Sigma_j}\), the orthogonal invariance of the normal distribution implies that \(W_j \sim \mathbb{C}\normal{0}{\sigma^2}\) and that the \(W_j\)'s are independent. 

We start by working on a discretization of the problem: we compare the martingale \(\sum_{j=1}^n \mathscr{B}^\xi_j\) for \(n \in \{1, \hdots, N\}\) with the random walk \(\sum_{j=1}^n W_j\). To do so, we rather focus on controlling the discrete processes
\[
\Delta^{\xi\theta}_n \defeq \sum_{j=1}^n \Bigl( \mathscr{B}^\xi_j - e^{-2i(\phaseg(t_{j-1}) - \theta(t_{j-1}))} \mathscr{B}^\theta_j \Bigr)
\qquadtext{and}
\Delta^{\theta W}_n \defeq \sum_{j=1}^n \Bigl( e^{-2i(\phaseg(t_{j-1}) - \theta(t_{j-1})} \mathscr{B}^\theta_j - W_j \Bigr)
\]
for \(n \in \{1, \hdots, N\}\), which are martingales with respect to the filtration \(\{\mathscr{F}_n\}_{n=0}^N\) generated by \(\{\sBM(t_n)\}_{n=0}^N\). 

To control \(\Delta^{\xi\theta}\), one first verifies that its increments are \(8\sigma^2\)-subgaussian. Then, the Itô isometry and Bernstein's inequality for martingales (see e.g.~\cite[Exercise~IV.3.16]{revuz_continuous_1999} for a precise statement of the latter) can be used to show that for \(j \leq N-1\), \(\bexpect[\big]{\abs{\mathscr{B}^\xi_j - e^{-2i(\phaseg(t_{j-1}) - \theta(t_{j-1}))} \mathscr{B}^\theta_j}^2 \given[\big] \mathscr{F}_{j-1}} \leq C_{\beta,a} \sigma^4\) for a \(C_{\beta,a} > 0\). From these estimates, a corollary (Corollary~27.1 in~\cite{painchaud_operator_2025}) of Freedman's inequality~\cite{freedman_tail_1975} shows that for any \(x > 0\) and any \(\shift\) large enough,
\beq{eq.coupling.Deltaxitheta}
\setlength{\multlinegap}{22mm}
\begin{multlined}
\bprob[\bigg]{\sup_{1\leq n\leq N-1} \abs{\Delta^{\xi\theta}_n} > x}
    \leq 2\shift^{2p} \log^2\shift \exp\biggl( -\frac{1}{2} \min\Bigl\{ \Bigl( \frac{\shift^{\nicefrac{p}{2}} x\log 2}{12\sqrt{2}} \Bigr)^{\nicefrac{2}{3}}, \frac{\shift^px^2\log2}{9C_{\beta,a}\log\shift} \Bigr\} \biggr) \\
    + 4\exp\biggl( - \frac{x\log 2}{3\sqrt{2C_{\beta,a}} \log\shift} \exp\biggl( \frac{1}{4}\Bigl( \frac{\shift^{\nicefrac{p}{2}} x\log 2}{12\sqrt{2}} \Bigr)^{\nicefrac{2}{3}} \biggr) \biggr).
\end{multlined}
\eeq

To control the increments of \(\Delta^{\theta W}\), we write them as
\[
e^{-2i(\phaseg(t_{j-1}) - \theta(t_{j-1}))} \mathscr{B}^\theta_j - W_j
    = e^{-2i(\phaseg(t_{j-1}) - \theta(t_{j-1}))} (I_2 - \sigma\Sigma_j^{\nicefrac{-1}{2}}) \mathscr{B}^\theta_j,
\]
from which we deduce that
\beq{eq.coupling.DeltathetaW.variance}
\bexpect[\big]{\abs{e^{-2i(\phaseg(t_{j-1}) - \theta(t_{j-1}))}\mathscr{B}^\theta_j - W_j}^2 \given[\big] \mathscr{F}_{j-1}}
    \leq \norm{I_2 - \sigma\Sigma_j^{\nicefrac{-1}{2}}}_2^2\, \bexpect[\big]{\abs{\mathscr{B}^\theta_j}^2 \given[\big] \mathscr{F}_{j-1}}.
\eeq
Now, the Itô isometry immediately yields \(\bexpect[\big]{\abs{\mathscr{B}^\theta_j}^2 \given[\big] \mathscr{F}_{j-1}} = 2\sigma^2\), and it can be shown by estimating the eigenvalues of the matrix that
\[
\norm{I_2 - \sigma\Sigma_j^{\nicefrac{-1}{2}}}_2
    \leq \frac{\delta_j}{2\sqrt{2}\sigma^2 - \delta_j}
\qquadtext{where}
\delta_j \defeq \frac{1}{\sqrt{\shift}(1-\diln t_j)}.
\]
This yields a bound of \(\nicefrac{\delta_j^2}{2\sigma^2}\) on~\eqref{eq.coupling.DeltathetaW.variance} for \(\diln t_j \in [0, 1-\shift^{\nicefrac{-1}{2}+\alpha}]\) and \(\shift\) large enough. Because the increments of \(\Delta^{\theta W}\) are gaussian, this estimate on the conditional variances can be taken as an estimate on their subgaussian constants. Summing them up, we obtain an estimate of \(2\shift^{-2(\alpha-p)}\) on the subgaussian bracket of \(\Delta^{\theta W}\), and it follows from Azuma's inequality for subgaussian martingales (see e.g.~\cite[Theorem~28]{painchaud_operator_2025}) that for \(x > 0\),
\beq{eq.coupling.DeltathetaW}
\bprob[\bigg]{\sup_{n\in S_{\shift,\alpha}} \abs{\Delta^{\theta W}_n} > x}
    \leq 4\exp\Bigl( - \frac{x^2\shift^{2(\alpha-p)}}{4} \Bigr)
\eeq
where \(S_{\shift,\alpha} \defeq \bigl\{ n \in \mathbb{N} : 0 < \diln t_n \leq 1 - \shift^{\nicefrac{-1}{2}+\alpha} \bigr\}\). 

The estimates~\eqref{eq.coupling.Deltaxitheta} and~\eqref{eq.coupling.DeltathetaW} give together an estimate on the difference between the discrete martingale \(\sum_{j=1}^n \mathscr{B}^\xi_j\) for \(n \in \{1, \hdots, N\}\) and the random walk \(\sum_{j=1}^n W_j\). This random walk can be extended to a complex Brownian motion run in logarithmic time by setting \(W\bigl( \slogtime(t_j) \bigr) \defeq W_j\) for all \(j\), and defining \(W\) from independent complex Brownian motions on intervals \([\slogtime(t_{j-1}), \slogtime(t_j)]\). Then, our comparison between the two discrete martingales readily extends to a comparison between the process \(t \mapsto \int_0^t ie^{-2i\phaseg(s)} \sqrt{\frac{\diln}{1-\diln s}} \diff{\sBM}(s)\) and \(W\circ\slogtime\) by estimating the growth of the two continuous processes on intervals \([t_{j-1}, t_j]\). Combining all of the tail bounds together then yields~\eqref{eq.coupling} for fixed \(\shift\). Finally, the proof is completed by combining the couplings for a sequence of shifts \(\{\shift_n\}_{n\in\mathbb{N}}\) into a single probability space, which can be done as in the proof of~\cite[Lemma~14]{painchaud_operator_2025}.
\end{proof}

\subsection{Convergence to the hyperbolic Brownian motion}
\label{sec.coupling.HBMconvergence}

On the probability space from Lemma~\ref{lem.coupling}, we can perform pathwise comparisons between processes driven by the limit complex Brownian motion \(W\) and the processes that approximate them. In particular, we can compare the hyperbolic Brownian motion that appears in the coefficient matrix of the sine system with the process \(-\exp\bigl( -\diffamps - i\diffphases \bigr)\) that approximates it, and which solves the SDE~\eqref{eq.approxHBMSDE}. Our proofs rely in part on estimates on integrals whose integrands oscillate quickly, which causes them to average out as \(\shift\to\infty\). After giving these estimates, we prove the convergence of the hyperbolic Brownian motion, considering separately the imaginary and the real part. 

\subsubsection{Averaging of integrals with oscillatory integrands}

We start with the following result. 

\begin{lemma}
\label{lem.averaging}
Let \(\phase\) be a solution to the SDE from Proposition~\ref{prop.polarcoords} driven by a standard Brownian motion \(B\). Let \(X_{\shift}\) solve
\beq{eq.lem.averaging.SDE}
\diff{X_{\shift}}(t) = \mu_{\shift}(t) X_{\shift}(t) \frac{\diln}{1-\diln t} \diff{t} + \sigma_{\shift}(t) X_{\shift}(t) \sqrt{\frac{\diln}{1-\diln t}} \diff{B}(t)
\eeq
on \([0,\nicefrac{1}{\diln})\) for some complex processes \(\mu_{\shift}\) and \(\sigma_{\shift}\). Fix \(T \in (0, \nicefrac{1}{\diln})\) and a nonzero \(k \in \mathbb{R}\), and suppose that \(\mu_{\shift}\) and \(\sigma_{\shift}\) are bounded on \([0, T]\) by constants \(m_\mu, m_\sigma > 0\) independent of \(\shift\). Then, there are constants \(C, C' > 0\) depending only on \(\beta\), \(a\), \(m_\mu\) and \(m_\sigma\) such that for any \(M, x > 0\),
\begin{multline*}
\pprob[\bigg]{\biggl\{ \sup_{t\in [0,T]} \abs{X_{\shift}} \leq M \biggr\} \cup \biggl\{ \sup_{t\in [0,T]} \abs[\Big]{\int_0^t X_{\shift}(s) e^{ki\phase(s)} \frac{\diln}{1-\diln s} \diff{s}} \geq x + \frac{CM}{\sqrt{\shift}(1-\diln T)} \biggr\}} \\
    \leq 4 \exp\Bigl( - \frac{C' \shift(1-\diln T)^2 x^2}{M^2} \Bigr).
\end{multline*}
\end{lemma}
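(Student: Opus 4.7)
The key observation is that $\phase$ satisfies an SDE whose deterministic drift is $-2\diln\sqrt{\shift}$, so $e^{ki\phase}$ oscillates at frequency proportional to $\sqrt{\shift}$. This should make the integral small through an averaging/integration by parts argument, gaining a factor of $\shift^{\nicefrac{-1}{2}}$. My plan is to use Itô's formula on $e^{ki\phase(s)}$ to rewrite the oscillatory factor $e^{ki\phase(s)} \diff s$ as a total differential plus lower-order corrections, then integrate by parts to produce a boundary term (the deterministic $\nicefrac{CM}{\sqrt{\shift}(1-\diln T)}$ contribution) and controllable stochastic corrections.

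\textbf{Step 1 (Itô substitution).} Applying Itô to $e^{ki\phase}$ using the SDE from Proposition~\ref{prop.polarcoords} and solving algebraically for $e^{ki\phase(s)} \diff s$ yields
\[
e^{ki\phase(s)} \diff s \;=\; \frac{-1}{2kci\sqrt{\shift}} \diff(e^{ki\phase(s)}) \;+\; \frac{1}{\sqrt{\shift}} e^{ki\phase(s)} A(\phase(s)) \frac{\diln}{1-\diln s} \diff s \;+\; \frac{1}{\sqrt{\shift}} e^{ki\phase(s)} \sigma_\phase(\phase(s)) \sqrt{\tfrac{\diln}{1-\diln s}} \diff B(s),
\]
where $A$ and $\sigma_\phase$ are bounded trigonometric polynomials determined by the drift and diffusion of $\phase$. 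Multiplying by $X_{\shift}(s)\tfrac{\diln}{1-\diln s}$ and integrating over $[0,t]$ reduces the problem to four terms.

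\textbf{Step 2 (Integration by parts).} The term containing $\diff(e^{ki\phase})$ is integrated against $\tfrac{X_{\shift}(s)}{1-\diln s}$ by the Itô product rule. This produces a boundary term, an integral against $\diff\bigl( \tfrac{X_{\shift}}{1-\diln\cdot}\bigr)$ (both a drift part using the SDE~\eqref{eq.lem.averaging.SDE} and the explicit $\tfrac{\diln}{(1-\diln s)^2} X_{\shift}\diff s$ contribution), and a quadratic covariation term $\diff\bigl\langle e^{ki\phase}, \tfrac{X_{\shift}}{1-\diln\cdot}\bigr\rangle$. On the event $\bigl\{ \sup_{[0,T]} \abs{X_{\shift}} \leq M \bigr\}$ the boundary term is bounded by $\tfrac{CM}{\sqrt{\shift}(1-\diln T)}$, while the other deterministic contributions from this step together with the drift correction from Step 1 are controlled by
\[
\frac{CM}{\sqrt{\shift}} \int_0^T \frac{\diln}{(1-\diln s)^2} \diff s \;\leq\; \frac{CM}{\sqrt{\shift}(1-\diln T)},
\]
so they can be absorbed into the stated deterministic term by enlarging $C$.

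\textbf{Step 3 (Martingale tail).} What remains are two $\shift^{\nicefrac{-1}{2}}$-scaled continuous martingales, one from the martingale correction in Step 1 and one from the diffusion part of $\diff(X_{\shift}/(1-\diln\cdot))$. On the event $\bigl\{ \sup_{[0,T]} \abs{X_{\shift}} \leq M \bigr\}$, using the uniform bounds $\abs{\sigma_{\shift}} \leq m_\sigma$ and $\abs{\sigma_\phase} \leq \sqrt{\nicefrac{2}{\beta}}$, the quadratic variation of each martingale up to time $T$ is bounded by a constant times $\tfrac{M^2}{\shift}\int_0^T \tfrac{\diln}{(1-\diln s)^3} \diff s \leq \tfrac{C_{\beta,a,m_\sigma} M^2}{\shift (1-\diln T)^2}$. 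Apply Bernstein's inequality for continuous martingales via the standard stopping-time trick (stop at the first exit from $\{\abs{X_{\shift}}\leq M\}$, at which point we are outside the conditioning event anyway) to conclude that each martingale exceeds $\nicefrac{x}{2}$ with probability at most $2\exp\bigl(-C'\shift (1-\diln T)^2 x^2/M^2\bigr)$. A union bound finishes the proof.

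\textbf{Main obstacle.} The bookkeeping of the Itô corrections is delicate: one has to verify that every drift integral encountered scales as $\int_0^T \tfrac{\diln}{(1-\diln s)^2} \diff s$ (not worse) and that every martingale's quadratic variation is at worst $\int_0^T \tfrac{\diln}{(1-\diln s)^3} \diff s$; any additional power of $(1-\diln T)^{-1}$ would break the stated bound. Beyond this, the argument is essentially a cleaner analogue of the averaging estimate carried out for the soft-edge-to-bulk transition in~\cite{painchaud_operator_2025}.
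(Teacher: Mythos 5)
Your proposal is correct and matches the approach the paper indicates: Itô's formula on \(e^{ki\phase}\) and integration by parts to extract the \(\shift^{-\nicefrac{1}{2}}\) gain, then Bernstein concentration for the residual martingales with the stopping-time truncation at the first exit of \(\bigl\{\abs{X_{\shift}}\leq M\bigr\}\), and your scaling check that all drift corrections integrate to \(\int_0^T \diln(1-\diln s)^{-2}\diff{s}\) and all quadratic variations to \(\int_0^T \diln(1-\diln s)^{-3}\diff{s}\) does hold. One small remark: the \(\cup\) in the lemma's statement must be a typo for \(\cap\) (otherwise the specialization \(X_{\shift}\equiv 1\), \(M=1\) in Corollary~\ref{cor.averaging.1} would make the left-hand side identically \(1\)), and your argument correctly treats the event as an intersection.
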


The proof of this lemma relies mainly on integration by parts combined with concentration inequalities for continuous martingales obtained from estimates on their quadratic variations. As the proof can easily be adapted from that of~\cite[Lemma~15]{painchaud_operator_2025}, we omit it.

We give two immediate corollaries. First, taking \(X_{\shift} \equiv 1\) and \(\diln T \defeq 1 - \shift^{\nicefrac{-1}{2}+\alpha}\) yields the following.

\begin{corollary}
\label{cor.averaging.1}
If \(k \neq 0\) and \(\alpha \in (0, \nicefrac{1}{2})\), there are \(C, C' > 0\) depending only on \(\beta\), \(a\) and \(k\) such that for any \(x > 0\),
\[
\bprob[\bigg]{\sup_{\diln t \in [0, 1-\shift^{\nicefrac{-1}{2}+\alpha}]} \abs[\Big]{\int_0^t e^{ki\phase(s)} \frac{\diln}{1-\diln s} \diff{s}} \geq x + C\shift^{-\alpha}}
    \leq 4 e^{-C'\shift^{2\alpha} x^2}.
\]
\end{corollary}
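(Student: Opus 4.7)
The plan is to obtain this corollary as an essentially immediate specialization of Lemma~\ref{lem.averaging}, with the trivial choice $X_{\shift} \equiv 1$ and the endpoint determined by $\diln T = 1 - \shift^{\nicefrac{-1}{2}+\alpha}$.

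First, observe that the constant process $X_{\shift} \equiv 1$ solves the SDE~\eqref{eq.lem.averaging.SDE} with $\mu_{\shift} \equiv 0$ and $\sigma_{\shift} \equiv 0$. These trivially satisfy the boundedness hypothesis of Lemma~\ref{lem.averaging} (they are bounded by any $m_\mu, m_\sigma > 0$ one cares to fix, and since they are identically zero, no $\shift$-dependence is introduced). With $M \defeq 1$, the uniform bound $\sup_{t\in[0,T]} \abs{X_{\shift}} \leq M$ holds deterministically, so that event in the conclusion of Lemma~\ref{lem.averaging} is the full space and can be dropped.

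Next I substitute $1 - \diln T = \shift^{\nicefrac{-1}{2}+\alpha}$ into the two quantitative outputs of the lemma. The additive correction in the threshold becomes
\[
\frac{CM}{\sqrt{\shift}\,(1 - \diln T)} = \frac{C}{\sqrt{\shift}\cdot \shift^{\nicefrac{-1}{2}+\alpha}} = C\shift^{-\alpha},
\]
and the rate in the subgaussian tail becomes
\[
\frac{C'\shift(1-\diln T)^2 x^2}{M^2} = C'\shift \cdot \shift^{-1+2\alpha} x^2 = C'\shift^{2\alpha} x^2.
\]
Renaming $C$ and $C'$ if necessary (they depend only on $\beta$, $a$, $k$ and on the fixed choices $m_\mu, m_\sigma, M$, none of which bring in new parameters), this is exactly the announced bound.

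There is no genuine obstacle at this level; the entire difficulty is already packed into Lemma~\ref{lem.averaging}, whose proof (per the text) proceeds by integration by parts against the fast-oscillating factor $e^{ki\phase}$ to trade the oscillatory integral for boundary terms of size $O\bigl((\sqrt{\shift}(1-\diln t))^{-1}\bigr)$ plus a stochastic remainder controlled via Bernstein/Freedman-type concentration on its quadratic variation. The corollary is simply the resulting estimate in the cleanest special case, where no drift or diffusion from an underlying process needs to be accommodated.
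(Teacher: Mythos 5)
Your proof is correct and is essentially identical to the paper's: the paper proves this corollary by exactly the specialization $X_{\shift} \equiv 1$ (so $\mu_{\shift} = \sigma_{\shift} = 0$, hence trivially bounded) with $\diln T = 1 - \shift^{\nicefrac{-1}{2}+\alpha}$, after which the additive threshold becomes $CM/(\sqrt{\shift}(1-\diln T)) = C\shift^{-\alpha}$ and the rate becomes $C'\shift(1-\diln T)^2 x^2 = C'\shift^{2\alpha}x^2$. Your treatment of the event $\{\sup|X_{\shift}| \leq M\}$ (noting it is trivially the full space and can be dropped) is the right reading of Lemma~\ref{lem.averaging}, whose displayed statement should evidently have an intersection rather than the typeset union for the bound to be meaningful.
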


Likewise, taking \(X_{\shift} \equiv 1\) and \(T \defeq \lasttime = \frac{1}{\diln}(1 - \nicefrac{1}{\sqrt{\shift}})\) yields the following.

\begin{corollary}
\label{cor.averaging.2}
If \(k \neq 0\), then for every \(\varepsilon > 0\) there is a \(C_\varepsilon > 0\) depending only on \(\beta\), \(a\), \(k\) and \(\varepsilon\) such that
\[
\bprob[\bigg]{\sup_{t\in [0, \lasttime]} \abs[\Big]{\int_0^t e^{ki\phase(s)} \frac{\diln}{1 - \diln s} \diff{s}} > C_\varepsilon} < \varepsilon.
\]
\end{corollary}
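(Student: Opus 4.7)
The plan is to apply Lemma~\ref{lem.averaging} with the trivial choice $X_{\shift} \equiv 1$ (so that $\mu_{\shift} \equiv \sigma_{\shift} \equiv 0$ in~\eqref{eq.lem.averaging.SDE}), $M \defeq 1$, and $T \defeq \lasttime = \frac{1}{\diln}(1 - \shift^{-1/2})$. Since $\abs{X_{\shift}} \equiv 1 \leq M$ deterministically, the event on $X_{\shift}$ appearing in the lemma's statement is trivially satisfied, and what remains is a pure tail bound on the oscillatory integral.

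The key observation is that $T = \lasttime$ is chosen precisely so that $1 - \diln T = \shift^{-1/2}$, which cancels every factor of $\shift$ appearing on the right-hand side of Lemma~\ref{lem.averaging}: the additive term $CM/(\sqrt{\shift}(1-\diln T))$ reduces to the constant $C$, and the exponent $-C'\shift(1-\diln T)^2 x^2 / M^2$ becomes $-C' x^2$. Thus the lemma yields
\[
\bprob[\bigg]{\sup_{t \in [0, \lasttime]} \abs[\Big]{\int_0^t e^{ki\phase(s)} \frac{\diln}{1-\diln s} \diff{s}} \geq x + C} \leq 4 e^{-C' x^2}
\]
with $C, C' > 0$ depending only on $\beta$, $a$ and $k$. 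To finish, given $\varepsilon > 0$ we pick $x > 0$ large enough that $4 e^{-C' x^2} < \varepsilon$ (say $x \defeq \sqrt{\log(4/\varepsilon)/C'}$) and set $C_\varepsilon \defeq x + C$.

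There is no real obstacle in this argument beyond bookkeeping: all the analytic work sits inside Lemma~\ref{lem.averaging} (integration by parts plus a Bernstein-type estimate on the martingale part), and the right endpoint $\lasttime$ has been designed, as explained following~\eqref{eq.timechangesol}, so that the residual $\shift$-dependence in that tail bound is neutralized exactly, leaving a uniform-in-$\shift$ estimate. The only point worth double-checking is that taking $\mu_{\shift} \equiv \sigma_{\shift} \equiv 0$ allows one to pick $m_\mu = m_\sigma = 0$ and hence obtain constants depending only on $\beta$, $a$ (and $k$, which enters through integration by parts in the proof of Lemma~\ref{lem.averaging}).
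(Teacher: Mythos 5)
Your argument is exactly the paper's: it states (just before the corollary) that one applies Lemma~\ref{lem.averaging} with \(X_{\shift} \equiv 1\) and \(T = \lasttime\), and you have spelled out the same bookkeeping, correctly observing that \(1 - \diln\lasttime = \shift^{-\nicefrac{1}{2}}\) makes the \(\shift\)-dependence in both the additive shift and the exponent cancel. The one cosmetic point — Lemma~\ref{lem.averaging} requires \(m_\mu, m_\sigma > 0\), so one should take, say, \(m_\mu = m_\sigma = 1\) rather than \(0\) — you flag yourself and it is harmless since any positive bound works.
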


\subsubsection{The geometric Brownian motion}

We now deduce the convergence of \(e^{-\diffamps}\sin\diffphases\) to the imaginary part of the hyperbolic Brownian motion driven by \(W\). Recall that by the Wronskian identity~\eqref{eq.Wronskianidentity}, \(e^{-\diffamps} \sin\diffphases = e^{-2\ampg}\), and by definition the imaginary part of a hyperbolic Brownian motion is a geometric Brownian motion. We start by comparing the logarithms of the two processes.

\begin{proposition}
\label{prop.GBM}
On the probability space from Lemma~\ref{lem.coupling}, if \(\alpha \in (0,\nicefrac{1}{2})\) and \(\delta \in (0, \nicefrac{\alpha}{3})\), then for any \(\shift \in \{\shift_n\}_{n\in\mathbb{N}}\) large enough, there is a \(C > 0\) depending only on \(\beta\), \(a\), \(\alpha\) and \(\delta\) such that
\[
\bprob[\bigg]{\sup_{\diln t \in [0,1-\shift^{\nicefrac{-1}{2}+\alpha}]} \abs[\Big]{ 2\ampg(t) + \log\GBM(t)} \geq \shift^{\nicefrac{-\alpha}{3}+\delta}}
    \leq 4\shift^{\nicefrac{4\alpha}{3}} \log^2\shift e^{-C\shift^{\nicefrac{2\delta}{3}}}.
\]
where \(\GBM(t) \defeq \exp\bigl( \frac{2}{\sqrt{\beta}} \Im W\circ\slogtime(t) - \frac{2}{\beta} \slogtime(t) \bigr)\). 
\end{proposition}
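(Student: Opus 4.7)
The plan is to decompose $2\ampg(t) + \log\GBM(t)$ into terms controlled by previously established estimates, then combine them by a union bound. Integrating the SDE for $\ampg$ from Proposition~\ref{prop.polarcoords}, the constant drift $\frac{1}{\beta}$ contributes $\frac{1}{\beta}\slogtime(t)$, which after doubling cancels exactly against the $-\frac{2}{\beta}\slogtime(t)$ appearing in the definition of $\log\GBM$. What remains in $2\ampg(t)+\log\GBM(t)$ is the sum of two oscillatory drift integrals (involving $\cos 2\phaseg$ and $\cos 4\phaseg$ with prefactors $-(2a+1)$ and $-\frac{2}{\beta}$) and the quantity $-M(t)+\frac{2}{\sqrt{\beta}}\Im W\circ\slogtime(t)$, where $M(t)\defeq 2\sqrt{\nicefrac{2}{\beta}}\int_0^t\cos 2\phaseg(s)\sqrt{\nicefrac{\diln}{1-\diln s}}\diff{\sBM}(s)$.

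The crucial algebraic identity is $\Im(ie^{-2i\phaseg})=\cos 2\phaseg$, which lets me rewrite $M(t) = \frac{2}{\sqrt{\beta}}\Im\int_0^t ie^{-2i\phaseg(s)}\sqrt{\nicefrac{2\diln}{1-\diln s}}\diff{\sBM}(s)$. This is exactly the martingale that Lemma~\ref{lem.coupling} compares to $W\circ\slogtime$. Consequently $\lvert{-M(t)}+\frac{2}{\sqrt{\beta}}\Im W\circ\slogtime(t)\rvert$ is bounded by $\frac{2}{\sqrt{\beta}}$ times the supremum controlled by that lemma, which, on $\diln t\in[0,1-\shift^{\nicefrac{-1}{2}+\alpha}]$, does not exceed (say) $\frac{1}{2}\shift^{\nicefrac{-\alpha}{3}+\delta}$ with probability at least $1-3\shift^{\nicefrac{4\alpha}{3}}\log^2\shift\, e^{-C\shift^{\nicefrac{2\delta}{3}}}$, up to mild shrinking of $\delta$ and adjusting the constant absorbed into $\frac{2}{\sqrt\beta}$ for $\shift$ large.

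For the two oscillatory drift integrals I would expand $\cos k\phaseg = \frac{1}{2}(e^{ik\phaseg}+e^{-ik\phaseg})$ and apply Corollary~\ref{cor.averaging.1} with $k=\pm 2$ and $k=\pm 4$, using $\phase=\phaseg$ (which solves the SDE of Proposition~\ref{prop.polarcoords} by construction). Choosing the target $x$ to be a small constant multiple of $\shift^{\nicefrac{-\alpha}{3}+\delta}$ dominates the additive error $C\shift^{-\alpha}$ from the corollary (since $\nicefrac{\alpha}{3}-\delta<\alpha$) and produces a tail probability of order $e^{-C'\shift^{\nicefrac{4\alpha}{3}+2\delta}}$, which decays far faster than the bound we aim for. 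A final union bound over these four oscillatory tails and the coupling tail, with constants chosen so that the sum of all pieces fits inside $\shift^{\nicefrac{-\alpha}{3}+\delta}$, yields the proposition; the factor $4$ in the claim (compared with $3$ in Lemma~\ref{lem.coupling}) comfortably absorbs the subdominant oscillatory tails. The only substantive step is the identification of $M$ with the imaginary part of the coupled martingale of Lemma~\ref{lem.coupling}; once this alignment is in place no new probabilistic input is required, and the remainder is routine bookkeeping.
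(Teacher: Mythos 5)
Your proof is correct and follows essentially the same approach as the paper's: integrate the SDE for \(\ampg\), exploit the cancellation of the \(\frac{2}{\beta}\slogtime\) terms, control the martingale part via the identity \(\Im(ie^{-2i\phaseg}) = \cos 2\phaseg\) and the coupling of Lemma~\ref{lem.coupling}, control the two oscillatory drift integrals with Corollary~\ref{cor.averaging.1}, and take a union bound. Your explicit handling of the \(\frac{2}{\sqrt{\beta}}\) prefactor (shrinking \(\delta\) slightly) makes a detail explicit that the paper leaves implicit, but the route is the same.
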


\begin{proof}
Write \(I_{\shift,\alpha} \defeq [0, (1-\shift^{\nicefrac{-1}{2}+\alpha}) / \diln]\). By definition of \(\ampg\),
\begin{align*}
&\sup_{t\in I_{\shift,\alpha}} \abs[\Big]{2\ampg(t) + \frac{2}{\sqrt{\beta}} \Im W\circ\slogtime(t) - \frac{2}{\beta} \slogtime(t)} \\
    &\hspace*{18mm} \leq \frac{2}{\sqrt{\beta}} \sup_{t\in I_{\shift,\alpha}} \abs[\Big]{\int_0^t \cos 2\phaseg(s) \sqrt{\frac{2\diln}{1-\diln s}} \diff{\sBM}(s) - \Im W\circ\slogtime(t)} \\
    &\hspace*{36mm} + \abs{2a+1} \sup_{t\in I_{\shift,\alpha}} \abs[\Big]{\int_0^t \cos 2\phaseg(s) \frac{\diln}{1-\diln s} \diff{s}}
    + \frac{2}{\beta} \sup_{t\in I_{\shift,\alpha}} \abs[\Big]{\int_0^t \cos 4\phaseg(s) \frac{\diln}{1-\diln s} \diff{s}}.
\end{align*}
Out of the three suprema on the right-hand side, the first one is directly controlled by taking the imaginary part in~\eqref{eq.coupling} in Lemma~\ref{lem.coupling}, and the two others are controlled by Corollary~\ref{cor.averaging.1} with \(x = \shift^{\nicefrac{-\alpha}{3}+\delta}\). The result then follows by combining the tail bounds, which are dominated by the one from Lemma~\ref{lem.coupling}.
\end{proof}

By exponentiating, we can use the above result to compare \(e^{-2\ampg}\) and \(e^{2\ampg}\) to the geometric Brownian motion \(\GBM\) and its reciprocal. 

\begin{corollary}
\label{cor.GBM}
In the setting of the proposition, for any \(\delta \in (0, \nicefrac{\alpha}{3})\) and any \(\shift\) large enough,
\beq{eq.GBM.-}
\bprob[\bigg]{\sup_{\diln t\in [0,1-\shift^{\nicefrac{-1}{2}+\alpha}]} \abs[\Big]{e^{-2\ampg(t)} - \GBM(t)} \geq \shift^{\nicefrac{-\alpha}{3}+\delta}}
    \leq \frac{2}{\log\shift},
\eeq
and if \(\alpha > \frac{3}{\beta+6}\) and \(\delta < \frac{1}{3\beta} \bigl( \alpha(\beta+6) - 3 \bigr)\), then for any \(\shift\) large enough,
\beq{eq.GBM.+}
\bprob[\bigg]{\sup_{\diln t\in [0,1-\shift^{\nicefrac{-1}{2}+\alpha}]} \abs[\Big]{e^{2\ampg(t)} - \frac{1}{\GBM(t)}} \geq \exp\Bigl( - \frac{\alpha(\beta+6)-3-3\beta\delta}{3\beta} \log\shift \Bigr)}
    \leq 2\exp\Bigl( - \frac{\beta\delta^2}{36(1-2\alpha)} \log\shift \Bigr).
\eeq
\end{corollary}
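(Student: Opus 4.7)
The plan is to exponentiate the estimate of Proposition~\ref{prop.GBM}. Using $e^x - e^y = e^y(e^{x-y} - 1)$ together with $\abs{e^z - 1} \leq \abs{z} e^{\abs{z}}$, we obtain the pointwise inequalities
\begin{align*}
\abs[\big]{e^{-2\ampg(t)} - \GBM(t)} & \leq \GBM(t) \cdot \abs{2\ampg(t) + \log\GBM(t)} \cdot e^{\abs{2\ampg(t) + \log\GBM(t)}}, \\
\abs[\Big]{e^{2\ampg(t)} - \frac{1}{\GBM(t)}} & \leq \frac{1}{\GBM(t)} \cdot \abs{2\ampg(t) + \log\GBM(t)} \cdot e^{\abs{2\ampg(t) + \log\GBM(t)}}.
\end{align*}
Fix an auxiliary $\delta' \in (0, \delta)$, to be chosen differently in each part, and apply Proposition~\ref{prop.GBM} with $\delta'$ in place of $\delta$: on the associated good event, whose complement has doubly-exponentially small probability, $\abs{2\ampg + \log\GBM} \leq \shift^{\nicefrac{-\alpha}{3} + \delta'}$, so the exponential prefactor is at most $2$ for $\shift$ large. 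The task thus reduces to bounding $\GBM$ and $\nicefrac{1}{\GBM}$ from above on $I_{\shift,\alpha} \defeq [0, (1 - \shift^{\nicefrac{-1}{2} + \alpha})/\diln]$.

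Since $\log\GBM(t) = \frac{2}{\sqrt{\beta}} \Im W \circ \slogtime(t) - \frac{2}{\beta} \slogtime(t)$ and $\slogtime$ maps $I_{\shift,\alpha}$ onto $[0, (\nicefrac{1}{2} - \alpha)\log\shift]$, the reflection principle applied to the standard real Brownian motion $\Im W$ gives tail bounds of the form $Ce^{-\nicefrac{y^2}{2T}}$ for $\sup_{s\in[0,T]}\abs{\Im W(s)}$. For~\eqref{eq.GBM.-}, using $\log\GBM(t) \leq \frac{2}{\sqrt{\beta}}\Im W\circ\slogtime(t)$, the choice $y = \frac{\sqrt{\beta}}{2}(\delta - \delta')\log\shift$ and $T = (\nicefrac{1}{2} - \alpha)\log\shift$ controls the event $\bigl\{ \sup_{I_{\shift,\alpha}} \GBM > \frac{1}{2}\shift^{\delta - \delta'} \bigr\}$ by a negative power of $\shift$, which is much smaller than $\nicefrac{1}{\log\shift}$ for $\shift$ large. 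Intersecting with the good event from Proposition~\ref{prop.GBM}, we get $\abs{e^{-2\ampg} - \GBM} \leq 2\GBM \shift^{\nicefrac{-\alpha}{3} + \delta'} \leq \shift^{\nicefrac{-\alpha}{3} + \delta}$ on $I_{\shift,\alpha}$, which yields~\eqref{eq.GBM.-}.

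For~\eqref{eq.GBM.+}, the same approach applies but with a tighter budget because $\log(\nicefrac{1}{\GBM})$ carries the positive deterministic part $\frac{2}{\beta}\slogtime(t) \leq \frac{1 - 2\alpha}{\beta}\log\shift$. Setting $\gamma \defeq (\alpha(\beta + 6) - 3 - 3\beta\delta)/(3\beta)$, so that the target bound is $\shift^{-\gamma}$, a short computation gives $\frac{\alpha}{3} - \gamma = \frac{1 - 2\alpha}{\beta} + \delta$, and the required estimate $\frac{2}{\GBM} \shift^{\nicefrac{-\alpha}{3} + \delta'} \leq \shift^{-\gamma}$ reduces to $\log(\nicefrac{1}{\GBM}) \leq \bigl( \frac{1 - 2\alpha}{\beta} + \delta - \delta' \bigr)\log\shift - \log 4$. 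Bounding $\log(\nicefrac{1}{\GBM}) \leq \frac{2}{\sqrt{\beta}} \abs{\Im W \circ \slogtime} + \frac{1 - 2\alpha}{\beta}\log\shift$ and applying the reflection principle with $y$ of order $(\delta - \delta')\log\shift$ gives a tail of order $\exp\bigl( -\frac{\beta(\delta - \delta')^2}{4(1 - 2\alpha)}\log\shift \bigr)$; picking $\delta' \defeq 2\delta/3$ makes $(\delta - \delta')^2 = \delta^2/9$ and recovers the stated exponent $\beta\delta^2 / (36(1 - 2\alpha))$ in~\eqref{eq.GBM.+}.

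The only subtlety is the constant bookkeeping in~\eqref{eq.GBM.+}: the hypotheses $\alpha > \nicefrac{3}{(\beta+6)}$ and $\delta < (\alpha(\beta+6) - 3)/(3\beta)$ are precisely what ensure $\gamma > 0$ and $\nicefrac{(1-2\alpha)}{\beta} + \delta - \delta' > 0$ for the chosen $\delta'$, so the decomposition of the budget is feasible. No step of the argument is deep—the main work is aligning the exponents so that the resulting probability estimates match those stated.
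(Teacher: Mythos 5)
Your overall strategy mirrors the paper's: exponentiate Proposition~\ref{prop.GBM} via the elementary inequality $\abs{e^{\mp 2\ampg}-\GBM^{\pm1}} \leq \GBM^{\pm1}\abs{2\ampg+\log\GBM}e^{\abs{2\ampg+\log\GBM}}$ and then control $\sup\GBM^{\pm1}$. For~\eqref{eq.GBM.-} this is fine: any polynomial bound on $\sup\GBM$ with a polynomially small failure probability works against the very generous target $\nicefrac{2}{\log\shift}$.

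The place you part ways with the paper is in how $\sup\GBM^{-1}$ is bounded for~\eqref{eq.GBM.+}. The paper applies Girsanov's theorem to get the \emph{exact} law of the running maximum of the drifted Brownian motion $-\log\GBM = \tfrac{2}{\beta}\slogtime - \tfrac{2}{\sqrt{\beta}}\Im W\circ\slogtime$, and with threshold $\shift^{\nicefrac{(1-2\alpha)}{\beta}+\nicefrac{\delta}{3}}$ and proposition parameter $\nicefrac{\delta}{3}$ it obtains exactly $\exp(-\tfrac{\beta\delta^2}{36(1-2\alpha)}\log\shift)$, leaving a $\shift^{\nicefrac{\delta}{3}}$ worth of slack in the power to absorb the factor $2$ from $e^{\abs{\cdot}}\leq 2$. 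You instead absorb the drift with the crude estimate $\tfrac{2}{\beta}\slogtime \leq \tfrac{1-2\alpha}{\beta}\log\shift$ and control the martingale part by the reflection principle, moving the factor $2$ into a $\tfrac{1}{2}$ on the $\GBM^{-1}$ threshold. This is where the bookkeeping becomes delicate. With $\delta' = 2\delta/3$ you hit the exponent $\tfrac{\beta\delta^2}{36(1-2\alpha)}$ \emph{exactly}, with no spare room in the power of $\shift$. The reflection bound actually requires $y = \tfrac{\sqrt{\beta}}{2}\bigl((\delta-\delta')\log\shift - \log 4\bigr)$, and the $-\log 4$ term contributes a nonvanishing factor $\exp\bigl(\tfrac{\beta\delta\log 4}{6(1-2\alpha)}\bigr)$ when you expand $(a-b)^2$. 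With the subgaussian tail $\bprob{Z>z}\leq\tfrac{1}{2}e^{-z^2/2}$ (which seems to be what ``tail bounds of the form $Ce^{-y^2/2T}$'' suggests), this constant is not absorbed and the bound can exceed the stated $2\exp(-\tfrac{\beta\delta^2}{36(1-2\alpha)}\log\shift)$ for any given $\shift$. The estimate \emph{is} saved if one uses the sharper Gaussian tail $\bprob{Z>z}\leq\tfrac{1}{z\sqrt{2\pi}}e^{-z^2/2}$, since $z\asymp\sqrt{\log\shift}$ contributes a vanishing prefactor that beats the constant, but you never invoke this; or alternatively, one can simply take $\delta'$ strictly smaller than $2\delta/3$ (as the paper effectively does with $\delta'=\delta/3$), which gives genuine slack in the exponent and makes the crude subgaussian bound sufficient. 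As stated, ``gives a tail of order $\exp(\cdots)$'' glosses over a constant-level correction that matters precisely because you've chosen the borderline $\delta'$, so that sentence needs either the sharper tail estimate or a slightly smaller $\delta'$ to be rigorous.
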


\begin{proof}
This result follows from Proposition~\ref{prop.GBM} in the same way as Corollary~16.1 follows from Proposition~16 in~\cite{painchaud_operator_2025}. We give the main ideas here, but the details that are skipped can easily be adapted from~\cite{painchaud_operator_2025}.

To deduce this result from the proposition, note that
\[
\abs[\Big]{e^{\mp 2\ampg} - \GBM^{\pm 1}}
    \leq \GBM^{\pm 1} \abs[\big]{2\ampg + \log\GBM} e^{\abs{2\ampg + \log\GBM}},
\]
so what remains to control here is only the suprema of \(\GBM\) and \(\GBM^{-1}\). Using the joint density of a Brownian motion and its running maximum, an application of Girsanov's theorem allows to recover the cumulative density function of the supremum of \(\pm \log\GBM\), and from this one can deduce that for \(\shift\) large enough,
\beq{eq.supGBM}
\bprob[\bigg]{\sup_{\diln t \in [0,1-\shift^{\nicefrac{-1}{2}+\alpha}]} \GBM(t) \geq \log\shift}
    \leq \frac{3}{2\log\shift}
\eeq
and
\beq{eq.supGBM-1}
\bprob[\bigg]{\sup_{\diln t \in [0,1-\shift^{\nicefrac{-1}{2}+\alpha}]} \GBM^{-1}(t) \geq \exp\biggl( \Bigl( \frac{1-2\alpha}{\beta} + \frac{\delta}{3} \Bigr) \log\shift \biggr)}
    \leq \exp\Bigl( - \frac{\beta\delta^2}{36(1-2\alpha)} \log\shift \Bigr).
\eeq
Combining the estimate~\eqref{eq.supGBM} with the tail bound of the proposition directly gives~\eqref{eq.GBM.-}, as the latter bound is dominated by \(\nicefrac{1}{2\log\shift}\) for \(\shift\) large enough. Then, to deduce~\eqref{eq.GBM.+}, note that on the intersection of the complement of the event in~\eqref{eq.supGBM-1} with the complement of the event in the proposition with \(\delta\) replaced by \(\nicefrac{\delta}{3}\), for \(\shift\) large enough,
\[
\sup_{\diln t\in [0,1-\shift^{\nicefrac{-1}{2}+\alpha}]} \abs[\Big]{e^{2\ampg(t)} - \GBM^{-1}(t)} \leq \exp\biggl( - \Bigl( \frac{\alpha}{3} - \delta - \frac{1-2\alpha}{\beta} \Bigr) \log\shift \biggr).
\]
If \(\alpha > \frac{3}{\beta+6}\) and \(\delta < \frac{1}{3\beta}\bigl( \alpha(\beta+6) - 3 \bigr)\), then the exponent is negative and combining the tail bounds yields~\eqref{eq.GBM.+}.
\end{proof}

\subsubsection{The real part of the hyperbolic Brownian motion}

We finally turn to the comparison between \(-e^{-\diffamps} \cos\diffphases\) and the real part of the hyperbolic Brownian motion driven by \(W\). 

\begin{proposition}
\label{prop.ReHBM}
On the probability space from Lemma~\ref{lem.coupling}, for any \(\alpha \in (0,\nicefrac{1}{2})\) and \(\delta \in (0,\nicefrac{\alpha}{12})\), there is a \(C > 0\) such that
\[
\bprob[\bigg]{\sup_{\diln t\in [0,1-\shift^{\nicefrac{-1}{2}+\alpha}]} \abs[\Big]{-e^{-\diffamps(t)} \cos\diffphases(t) - \frac{2}{\sqrt{\beta}} \int_0^t \GBM(s) \diff{(\Re W\circ\slogtime)}(s)} \geq \shift^{\nicefrac{-\alpha}{12}+\delta}}
    \leq \frac{C}{\log\shift}
\]
for any \(\shift \in \{\shift_n\}_{n\in\mathbb{N}}\) large enough.
\end{proposition}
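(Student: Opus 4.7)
The plan is to extract from \eqref{eq.approxHBMSDE} an explicit decomposition of $-e^{-\diffamps}\cos\diffphases$ into a stochastic integral plus a drift integral, and then compare each piece to its counterpart in the target process. Taking real parts in \eqref{eq.approxHBMSDE} and using the Wronskian identity \eqref{eq.Wronskianidentity} in the form $e^{-\diffamps}\sin\diffphases = e^{-2\ampg}$ (which follows from $\sin\diffphases = e^{-\sumamps}$), together with the initial condition $\diffphases(0) = \pi/2$ which gives the vanishing of the left-hand side at $t = 0$, I would obtain
\[
-e^{-\diffamps(t)}\cos\diffphases(t) = \frac{2}{\sqrt{\beta}}\int_0^t e^{-2\ampg(s)}\sin 2\phaseg(s)\sqrt{\frac{2\diln}{1-\diln s}}\diff{\sBM}(s) + \int_0^t e^{-2\ampg(s)}\Bigl((2a+1)\sin 2\phaseg(s) + \tfrac{4}{\beta}\sin 4\phaseg(s)\Bigr)\frac{\diln}{1-\diln s}\diff{s}.
\]

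I would first dispose of the drift integral using Lemma~\ref{lem.averaging}. Itô's formula applied to the SDE for $\ampg$ from Proposition~\ref{prop.polarcoords} shows that $X_{\shift} := e^{-2\ampg}$ solves an SDE of the form \eqref{eq.lem.averaging.SDE} whose drift and diffusion coefficients are trigonometric polynomials in $\phaseg$, hence bounded by constants depending only on $\beta$ and $a$. On the intersection of the good events from \eqref{eq.GBM.-} and \eqref{eq.supGBM}, which has probability at least $1 - C/\log\shift$, one has $\sup_{[0,\penultime]} e^{-2\ampg} \leq \sup\GBM + \sup\abs{e^{-2\ampg} - \GBM} \leq M$ with $M \asymp \log\shift$. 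Applying Lemma~\ref{lem.averaging} with $T = \penultime$ (so $1 - \diln T = \shift^{-1/2+\alpha}$) to the frequencies $k \in \{\pm 2, \pm 4\}$ then bounds the drift by $\shift^{-\alpha/12+\delta}$ with super-polynomial tail.

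For the martingale part $\mathcal{M}(t)$, I would interpolate through $\mathcal{Q}(t) \defeq \frac{2}{\sqrt\beta}\int_0^t \GBM(s)\sin 2\phaseg(s)\sqrt{\frac{2\diln}{1-\diln s}}\diff{\sBM}(s)$. On the event from \eqref{eq.GBM.-} where $\abs{e^{-2\ampg} - \GBM} \leq \shift^{-\alpha/3+\delta'}$ uniformly, the quadratic variation of $\mathcal{M} - \mathcal{Q}$ is bounded by $\shift^{-2\alpha/3+2\delta'}\log\shift$, so BDG gives $\sup\abs{\mathcal{M} - \mathcal{Q}} = O(\shift^{-\alpha/3+\delta'}\sqrt{\log\shift})$ with high probability. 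Setting $\tilde{N}(t) \defeq \int_0^t \sin 2\phaseg(s)\sqrt{\frac{2\diln}{1-\diln s}}\diff{\sBM}(s) - \Re W\circ\slogtime(t)$, which by the real part of \eqref{eq.coupling} satisfies $\sup\abs{\tilde{N}} \leq \shift^{-\alpha/3+\delta}$ with high probability, the remaining difference $\mathcal{Q} - \mathcal{T}$ is exactly $\frac{2}{\sqrt\beta}\int_0^t \GBM\diff{\tilde{N}}$.

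The main obstacle is bounding $\int_0^t \GBM\diff{\tilde{N}}$ from the sup-norm control on $\tilde{N}$ alone, since $\tilde{N}$ is a continuous process of unbounded variation. I would handle this via Itô integration by parts,
\[
\int_0^t \GBM\diff{\tilde{N}} = \GBM(t)\tilde{N}(t) - \int_0^t \tilde{N}(s)\diff{\GBM}(s) - \crossvar{\GBM}{\tilde{N}}_t,
\]
with the boundary term bounded directly by $\sup\GBM \cdot \sup\abs{\tilde{N}} = O(\log\shift \cdot \shift^{-\alpha/3+\delta})$, the second term by BDG using $\diff{\GBM} = \frac{2}{\sqrt\beta}\GBM\diff{(\Im W\circ\slogtime)}$ together with the pointwise control on $\tilde{N}$, and the cross-variation via the joint structure of $W$ and $\sBM$ from the construction in Lemma~\ref{lem.coupling} (or, alternatively, through a partition of $[0, \penultime]$ into subintervals over which $\GBM$ is approximately constant, exploiting the Hölder-$\frac{1}{2}$ continuity of $\GBM$ on the logarithmic timescale $\slogtime$, combined with Abel summation). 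The $\shift^{-\alpha/12+\delta}$ exponent in the conclusion reflects the compounding of square-root losses across these successive comparisons, while the $C/\log\shift$ tail is inherited from \eqref{eq.supGBM}; the detailed computations closely parallel the analogous step in the proof of the corresponding proposition in \cite{painchaud_operator_2025}, and for brevity I would import those estimates rather than reproducing them.
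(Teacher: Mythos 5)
Your setup (the explicit decomposition of $-e^{-\diffamps}\cos\diffphases$, the observation $e^{-\diffamps}\sin\diffphases = e^{-2\ampg}$, the disposal of the drift integral via Lemma~\ref{lem.averaging} applied to $X_{\shift}=e^{-2\ampg}$ on the good event where both $\GBM$ and $|e^{-2\ampg}-\GBM|$ are controlled) exactly matches the paper's opening moves. The first interpolation step $\mathcal{M}\to\mathcal{Q}$ is also sound, modulo replacing BDG with Bernstein to obtain an explicit high-probability statement rather than a moment bound.

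The genuine gap is in the step $\mathcal{Q}\to\mathcal{T}$. Your Itô integration by parts formula $\int_0^t \GBM\diff\tilde{N} = \GBM\tilde{N}\big|_0^t - \int_0^t\tilde{N}\diff\GBM - \crossvar{\GBM}{\tilde{N}}_t$ leaves the cross-variation term un-bounded, and this term is not tractable ``via the joint structure of $W$ and $\sBM$'' as you claim. Since $\crossvar{\GBM}{\Re W\circ\slogtime}=0$, one has $\crossvar{\GBM}{\tilde{N}} = \tfrac{2}{\sqrt\beta}\int \GBM\,\sin 2\phaseg\sqrt{\tfrac{2\diln}{1-\diln s}}\,\diff\crossvar{\Im W\circ\slogtime}{\sBM}$; but in the coupling from Lemma~\ref{lem.coupling}, $W\circ\slogtime$ is assembled at mesh points from $\mathscr{F}_{t_j}$-measurable increments $W_j$ that depend on $\sBM$ over the \emph{whole} subinterval $[t_{j-1},t_j]$, and then bridged over that subinterval with independent noise. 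Consequently $W\circ\slogtime$ is not adapted to the natural filtration of $\sBM$, the quadratic covariation is nonzero, and its only a priori bound via Kunita--Watanabe, $|\crossvar{\Im W\circ\slogtime}{\sBM}_t|\le\sqrt{\slogtime(t)\cdot t}$, is far too large. Showing this covariation is in fact small would amount to another averaging argument of the same flavour as Lemma~\ref{lem.averaging}, which you have not supplied.

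The paper avoids this issue entirely: instead of continuous-time IBP, it discretizes $[0,\lasttime]$ with mesh $\shift^{-\alpha/6}$ in logarithmic time and writes the discrete martingale difference in~\eqref{eq.ReHBM.split} as four terms --- a ``reset'' term for $e^{-2\ampg}$, a pointwise-comparison term $(e^{-2\ampg(t_{k-1})}-\GBM(t_{k-1}))$, a term that directly applies the coupling estimate~\eqref{eq.coupling} on each subinterval, and a ``reset'' term for $\GBM$ --- each of which is controlled via Bernstein's inequality on its quadratic variation, with no cross-variation between $W$ and $\sBM$ ever appearing. This is precisely the ``partition into subintervals where $\GBM$ is approximately constant, combined with Abel summation'' that you mention parenthetically as an alternative; it should be the argument, not the fallback, and it replaces the Itô IBP route rather than supplementing it.
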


\begin{proof}
This result is analogous to Proposition~17 of~\cite{painchaud_operator_2025}. We follow the same proof method here, but we will omit some details which can easily be adapted from~\cite{painchaud_operator_2025}.

Taking the real part of~\eqref{eq.approxHBMSDE} and simplifying with the Wronskian identity~\eqref{eq.Wronskianidentity}, we get
\beq{eq.approxReHBMSDE}
\setlength{\multlinegap}{33mm}
\begin{multlined}
\diff{\bigl( -e^{-\diffamps} \cos\diffphases \bigr)}(t)
    = \frac{2}{\sqrt{\beta}} e^{-2\ampg(t)} \sin 2\phaseg(t) \sqrt{\frac{2\diln}{1-\diln t}} \diff{\sBM}(t) \\
    + e^{-2\ampg(t)} \Bigl( (2a + 1) \sin 2\phaseg(t) + \frac{4}{\beta} \sin 4\phaseg(t) \Bigr) \frac{\diln}{1-\diln t} \diff{t}.
\end{multlined}
\eeq
The prove the proposition, we start by showing that the second line of~\eqref{eq.approxReHBMSDE} does not contribute, and then we compare the first line with the integral of \(\GBM\) with respect to \(\Re W\circ\slogtime\) by reducing the problem to discretized versions of the two processes and using the results of Lemma~\ref{lem.coupling} and Proposition~\ref{prop.GBM}. Throughout, we work on the event
\[
\mathscr{G}_{\shift} \defeq \biggl\{ \sup_{\diln t \in[0,1-\shift^{\nicefrac{-1}{2}+\alpha}]} \GBM(t) \leq \log\shift \biggr\} \cup \biggl\{ \sup_{\diln t\in [0,1-\shift^{\nicefrac{-1}{2}+\alpha}]} \abs[\big]{e^{-2\ampg(t)} - \GBM(t)} \leq \shift^{\nicefrac{-\alpha}{3}+\delta} \biggr\}
\]
on which we can bound \(e^{-2\ampg}\) and \(\GBM\). By Proposition~\ref{prop.GBM} and its proof, we know that \(\pprob{\mathscr{G}_{\shift}^\complement} \leq \nicefrac{4}{\log\shift}\). 

To control the oscillatory terms that appear in the second line of~\eqref{eq.approxReHBMSDE}, remark that a simple application of Itô's formula shows that \(e^{-2\ampg}\) satisfies an SDE of the form~\eqref{eq.lem.averaging.SDE} from Lemma~\ref{lem.averaging}, with \(\mu_{\shift} = (2a+1) \cos 2\phaseg + \frac{4}{\beta} \cos 4\phaseg\) and \(\sigma_{\shift} = \frac{2\sqrt{2}}{\sqrt{\beta}} \cos 2\phaseg\). Applying the lemma with \(M = 2\log\shift\) and \(\diln T = 1 - \shift^{\nicefrac{-1}{2}+\alpha}\), we get that for all \(x > 0\), there are \(C, C' > 0\) such that
\[
\pprob[\bigg]{\mathscr{G}_{\shift} \cap \biggl\{ \sup_{\diln t\in [0,1-\shift^{\nicefrac{-1}{2}+\alpha}]} \abs[\Big]{\int_0^t e^{-2\ampg(s)} e^{ki\phaseg(s)} \frac{\diln}{1-\diln s} \diff{s}} > x + \frac{2C\log\shift}{\shift^\alpha} \biggr\}}
    \leq 4 \exp\Bigl( - \frac{C' \shift^{2\alpha} x^2}{4\log^2\shift} \Bigr)
\]
for both \(k=2\) and \(k=4\). Taking, say, \(x = \shift^{\nicefrac{-\alpha}{2}}\), the tail bound vanishes faster than \(\nicefrac{1}{\log\shift}\) and therefore we can safely neglect these oscillatory terms. 

Now, to compare the first line of~\eqref{eq.approxReHBMSDE} with the integral of \(\GBM\) with respect to \(\Re W\circ\slogtime\), we discretize the time interval \([0, \lasttime]\) by setting \(t_0 \defeq 0\), \(t_N \defeq \lasttime\), and in between
\[
\diln t_j \defeq 1 - (1 + \shift^{\nicefrac{-\alpha}{6}})^{-j}
\qquadtext{so that}
N \defeq \ceil[\Big]{\frac{\log\shift}{2\log(1+\shift^{\nicefrac{-\alpha}{6}})}}
    \leq \shift^{\nicefrac{\alpha}{6}} \log\shift.
\]
Write \(I_{\shift,\alpha} \defeq [0, (1-\shift^{\nicefrac{-1}{2}+\alpha})/\diln]\) and \(S_{\shift,\alpha} \defeq \bigl\{ j \in \mathbb{N} : 0 < \diln t_j \leq 1 - \shift^{\nicefrac{-1}{2}+\alpha} \bigr\}\). Then
\begin{subequations}
\begin{align}
& \sup_{t \in I_{\shift,\alpha}} \abs[\Big]{\int_0^t e^{-2\ampg(s)} \sin 2\phaseg(s) \sqrt{\frac{2\diln}{1-\diln s}} \diff{\sBM}(s) - \int_0^t \GBM(s) \diff{(\Re W\circ\slogtime)}(s)} \notag \\
    &\hspace*{1mm} \leq \sup_{j\in S_{\shift,\alpha}} \abs[\Big]{\int_0^{t_{j-1}} e^{-2\ampg(s)} \sin 2\phaseg(s) \sqrt{\frac{2\diln}{1-\diln s}} \diff{\sBM}(s) - \int_0^{t_{j-1}} \GBM(s) \diff{(\Re W\circ\slogtime)}(s)}
    \label{eq.ReHBM.discretization.a} \\
    &\hspace*{1mm} + \sup_{j\in S_{\shift,\alpha}} \sup_{t\in [t_{j-1}, t_j]} \biggl( \abs[\Big]{\int_{t_{j-1}}^t e^{-2\ampg(s)} \sin 2\phaseg(s) \sqrt{\frac{2\diln}{1-\diln s}} \diff{\sBM}(s)} + \abs[\Big]{\int_{t_{j-1}}^t \GBM(s) \diff{(\Re W\circ\slogtime)}(s)} \biggr). 
    \label{eq.ReHBM.discretization.b}
\end{align}
\end{subequations}
On \(\mathscr{G}_{\shift}\), if \(Y(t) \defeq \int_{t_{j-1}}^t \diff{X}(s)\) denotes any of the two stochastic integrals on~\eqref{eq.ReHBM.discretization.b}, then \(\quadvar{Y}(t) \leq 8\shift^{\nicefrac{-\alpha}{6}} \log^2\shift\) for \(t \in [t_{j-1},t_j]\). Hence, Bernstein's inequality for continuous martingales shows that
\[
\pprob[\bigg]{\mathscr{G}_{\shift} \cap \biggl\{ \sup_{t\in [t_{j-1},t_j]} \abs[\Big]{\int_{t_{j-1}}^t \diff{X}(s)} > x \biggr\}}
    \leq 2 \exp\Bigl( - \frac{\shift^{\nicefrac{\alpha}{6}} x^2}{16\log^2\shift} \Bigr).
\]
With \(x = \shift^{\nicefrac{-\alpha}{12}+\delta}\) for \(\delta > 0\), the tail bound is an exponential decay. Thus, summing up the bounds for all \(j \in S_{\shift,\alpha}\) (of which there are less than \(N \leq \shift^{\nicefrac{\alpha}{6}} \log\shift\)), we see that the whole of~\eqref{eq.ReHBM.discretization.b} exceeds \(\shift^{\nicefrac{-\alpha}{12}+\delta}\) with probability exponentially decreasing in a power of \(\shift\), which is certainly dominated by \(\nicefrac{1}{\log\shift}\) for \(\shift\) large enough. 

It only remains to compare the two discrete martingales on~\eqref{eq.ReHBM.discretization.a}. To do so, we write \(R(X)\) for the process \(X\) but reset on each increment, i.e., \(R(X)(t) \defeq X(t) - X(t_{j-1})\) for \(t \in [t_{j-1}, t_j)\), and we split the comparison as
\begin{subequations}
\label{eq.ReHBM.split}
\begin{align}
&\int_0^{t_{j-1}} e^{-2\ampg(s)} \sin 2\phaseg(s) \sqrt{\frac{2\diln}{1-\diln s}} \diff{\sBM}(s) - \int_0^{t_{j-1}} \GBM(s) \diff{(\Re W\circ\slogtime)}(s) \notag \\
    &\hspace*{11mm} = \int_0^{t_{j-1}} R\bigl( e^{-2\ampg} \bigr)(s) \sin 2\phaseg(s) \sqrt{\frac{2\diln}{1-\diln s}} \diff{\sBM}(s) \label{eq.ReHBM.split.Rexp} \\
    &\hspace*{22mm} + \sum_{k=1}^{j-1} \Bigl( e^{-2\ampg(t_{k-1})} - \GBM(t_{k-1}) \Bigr) \int_{t_{k-1}}^{t_k} \sin 2\phaseg(s) \sqrt{\frac{2\diln}{1-\diln s}} \diff{\sBM}(s) \label{eq.ReHBM.split.diffGBM} \\
    &\hspace*{22mm} + \sum_{k=1}^{j-1} \GBM(t_{k-1}) \biggl( \int_{t_{k-1}}^{t_k} \sin 2\phaseg(s) \sqrt{\frac{2\diln}{1-\diln s}} \diff{\sBM}(s) - \int_{t_{k-1}}^{t_k} \diff{(\Re W\circ\slogtime)}(s) \biggr) \label{eq.ReHBM.split.diffCBM} \\
    &\hspace*{22mm} - \int_0^{t_{j-1}} R(\GBM)(s) \diff{(\Re W\circ\slogtime)}(s) \label{eq.ReHBM.split.RG}.
\end{align}
\end{subequations}
We control each of these four terms independently, our goal being to show that their suprema over \(j \in S_{\shift,\alpha}\) are bounded by \(\shift^{\nicefrac{-\alpha}{12}+\delta}\) with probability at least \(1 - \nicefrac{1}{\log\shift}\) for \(\shift\) large enough.

To control~\eqref{eq.ReHBM.split.diffCBM}, note that on \(\mathscr{G}_{\shift}\),
\(
\frac{\shift^{\nicefrac{-\alpha}{12}+\delta}}{N\GBM(t_{k-1})}
    \geq \frac{\shift^{\nicefrac{-\alpha}{4}+\delta}}{\log^2\shift}
    \geq \shift^{\nicefrac{-\alpha}{4}}
\)
for \(\shift\) large enough, so Lemma~\ref{lem.coupling} implies that
\[
\pprob[\bigg]{\mathscr{G}_{\shift} \cap \biggl\{ \abs[\Big]{\int_{t_{k-1}}^{t_k} \sin 2\phaseg(s) \sqrt{\frac{2\diln}{1-\diln s}} \diff{\sBM}(s) - \int_{t_{k-1}}^{t_k} \diff{(\Re W\circ\slogtime)}(s)} \geq \frac{\shift^{\nicefrac{-\alpha}{12}+\delta}}{N\GBM(t_{k-1})} \biggr\}}
    \leq 3\shift^{\nicefrac{4\alpha}{3}} \log^2\shift e^{-C\shift^{\nicefrac{\alpha}{18}}}
\]
for \(k \in \{1, \hdots, j-1\}\). Summing over \(k\) and taking the supremum over \(j \in S_{\shift,\alpha}\), the bound keeps an exponential decay.

Then, on \(\mathscr{G}_{\shift}\), each summand of~\eqref{eq.ReHBM.split.diffGBM} has quadratic variation bounded by \(2\shift^{\nicefrac{-2\alpha}{3}+2\delta - \nicefrac{\alpha}{6}}\), so Bernstein's inequality shows that for each \(k\) and \(x > 0\),
\[
\pprob[\bigg]{\mathscr{G}_{\shift} \cap \biggl\{ \abs[\Big]{e^{-2\ampg(t_{k-1})} - \GBM(t_{k-1})} \abs[\bigg]{\int_{t_{k-1}}^{t_k} \sin 2\phaseg(s) \sqrt{\frac{2\diln}{1-\diln s}} \diff{\sBM}(s)} > \frac{x}{N} \biggr\}}
    \leq 2 \exp\Bigl( - \frac{\shift^{\nicefrac{\alpha}{2}-2\delta}x^2}{4\log^2\shift} \Bigr).
\]
With \(x = \shift^{\nicefrac{-\alpha}{12}+\delta}\), this tail bound remains expontential when the increments are summed over \(k\) and when the supremum over \(j \in S_{\shift,\alpha}\) is taken.

To control~\eqref{eq.ReHBM.split.RG}, note that it has quadratic variation
\beq{eq.ReHBM.RGbracket}
\sum_{k=1}^{j-1} \int_{t_{k-1}}^{t_k} \bigl( \GBM(t) - \GBM(t_{k-1}) \bigr)^2 \frac{\diln}{1-\diln t} \diff{t}
    = \frac{4}{\beta} \sum_{k=1}^{j-1} \int_{t_{k-1}}^{t_k} \biggl( \int_{t_{k-1}}^t \GBM(s) \diff{(\Im W\circ\slogtime)}(s) \biggr)^2 \frac{\diln}{1-\diln t} \diff{t}.
\eeq
On \(\mathscr{G}_{\shift}\), the quadratic variation of the remaining stochastic integral (for \(t \in [t_{k-1}, t_k]\)) is bounded by \(\shift^{\nicefrac{-\alpha}{6}} \log^2\shift\), so Bernstein's inequality implies that for all \(y > 0\),
\beq{eq.ReHBM.goodeventQV}
\pprob[\bigg]{\mathscr{G}_{\shift} \cap \biggl\{ \max_{1\leq k\leq j-1} \sup_{t\in [t_{k-1},t_k]} \abs[\Big]{\int_{t_{k-1}}^t \GBM(s) \diff{(\Im W\circ\slogtime)}(s)} > y \biggr\}}
    \leq 2(j-1) \exp\Bigl( - \frac{\shift^{\nicefrac{\alpha}{6}} y^2}{2\log^2\shift} \Bigr).
\eeq
On the complementary event, the bound~\eqref{eq.ReHBM.RGbracket} on the quadratic variation of~\eqref{eq.ReHBM.split.RG} is bounded by \(\frac{2y^2}{\beta} \log\shift\), so another application of Bernstein's inequality shows that for all \(x > 0\),
\[
\pprob[\bigg]{\mathscr{G}_{\shift} \cap \biggl\{ \abs[\Big]{\int_0^{t_{j-1}} R(\GBM)(s) \diff{(\Re W\circ\slogtime)}(s)} > x \biggr\}}
    \leq 2(j-1) \exp\Bigl( - \frac{\shift^{\nicefrac{\alpha}{6}} y^2}{2\log^2\shift} \Bigr)
    + 2 \exp\Bigl( - \frac{\beta x^2}{4y^2 \log\shift} \Bigr).
\]
The exponents are matched if \(2\shift^{\nicefrac{\alpha}{6}} y^4 = \beta x^2 \log\shift\), with which the exponents become \(- \shift^{\nicefrac{\alpha}{12}}x \log^{\nicefrac{1}{2}}\shift\). Thus, with \(x = \shift^{\nicefrac{-\alpha}{12}+\delta}\), the tail bound remains exponential when the supremum over \(j \in S_{\shift,\alpha}\) is taken. 

Finally, \eqref{eq.ReHBM.split.Rexp} can be controlled using the same method as this last case. Indeed, on \(\mathscr{G}_{\shift}\), its quadratic variation is bounded by
\[
\sum_{k=1}^{j-1} \int_{t_{k-1}}^{t_k} \Bigl( 2\shift^{\nicefrac{-2\alpha}{3}+2\delta} + \frac{1}{2} \bigl( \GBM(t) - \GBM(t_{k-1}) \bigr)^2 \Bigr) \frac{2\diln}{1-\diln t} \diff{t}.
\]
On the good event in~\eqref{eq.ReHBM.goodeventQV}, this is bounded by \(2\log\shift (\shift^{\nicefrac{-2\alpha}{3}+2\delta} + \nicefrac{y^2}{\beta})\). From this, like in the last case, another application of Bernstein's inequality yields an exponential tail bound that is strong enough to guarantee that the supremum of~\eqref{eq.ReHBM.split.Rexp} over \(j \in S_{\shift,\alpha}\) exceeds \(\shift^{\nicefrac{-\alpha}{12}+\delta}\) with probability exponentially decaying in a power of \(\shift\). 
\end{proof}

\section{Vague convergence of the canonical systems and convergence of solutions}
\label{sec.vagueconv}

Our goal in this section is to prove the vague convergence of the canonical system for \(\sBesselop\) to the stochastic sine canonical system, from which we also deduce the convergence of their transfer matrices. In addition to results from Section~\ref{sec.coupling}, the proofs rely on bounds on the entries of the coefficient matrix, which we derive in the first subsection.

\subsection{Control on the entries of the coefficient matrix}

We start with the following.

\begin{proposition}
\label{prop.tracebound}
Let \(\amp\) solve the SDE from Proposition~\ref{prop.polarcoords} with \(\amp(0) = 0\). For any \(\varepsilon > 0\), there are constants \(C, C' > 0\) depending only on \(\beta\), \(a\) and \(\varepsilon\) such that
\[
\bprob[\bigg]{\forall t \in [0, \lasttime], \abs[\Big]{\amp(t) + \frac{1}{\beta} \log(1 - \diln t)} \leq C + C'\bigl( -\log(1-\diln t) \bigr)^{\nicefrac{3}{4}}}
    \geq 1 - \varepsilon.
\]
\end{proposition}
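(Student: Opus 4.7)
The plan is to exploit the fact that the drift of \(\amp\) in Proposition~\ref{prop.polarcoords} consists of a constant \(\frac{1}{\beta}\) plus purely oscillatory terms, so after subtracting the deterministic growth \(-\frac{1}{\beta}\log(1-\diln t) = \frac{1}{\beta}\slogtime(t)\) what remains is controlled either by Corollary~\ref{cor.averaging.2} or by a standard Brownian bound on the martingale part.

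Integrating the SDE for \(\amp\) with \(\amp(0) = 0\) and using \(\int_0^t \frac{\diln}{1-\diln s} \diff{s} = \slogtime(t)\), I would write
\begin{align*}
\amp(t) + \frac{1}{\beta}\log(1-\diln t)
    &= -\Bigl(a + \tfrac{1}{2}\Bigr)\int_0^t \cos 2\phase(s)\, \frac{\diln}{1-\diln s} \diff{s} \\
    &\qquad - \frac{1}{\beta} \int_0^t \cos 4\phase(s)\, \frac{\diln}{1-\diln s} \diff{s} + M(t),
\end{align*}
where \(M(t) \defeq -\sqrt{\nicefrac{2}{\beta}} \int_0^t \cos 2\phase(s) \sqrt{\diln/(1-\diln s)} \diff{\sBM}(s)\). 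Taking real parts in Corollary~\ref{cor.averaging.2} with \(k=2\) and \(k=4\) bounds each of the two oscillatory drift integrals uniformly on \([0,\lasttime]\) by a constant depending only on \(\beta\), \(a\) and \(\varepsilon\), with probability at least \(1 - \nicefrac{\varepsilon}{3}\) each.

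For the martingale, I compute, via the identity \(2\cos^2 x = 1 + \cos 2x\),
\begin{align*}
\quadvar{M}(t) = \frac{1}{\beta}\slogtime(t) + \frac{1}{\beta}\int_0^t \cos 4\phase(s)\,\frac{\diln}{1-\diln s}\diff{s}.
\end{align*}
On the good event from Corollary~\ref{cor.averaging.2} with \(k=4\), this gives \(\quadvar{M}(t) \leq \frac{1}{\beta}\slogtime(t) + C_\varepsilon\) uniformly on \([0,\lasttime]\). By Dambis--Dubins--Schwarz, \(M(t) = \tilde B\bigl(\quadvar{M}(t)\bigr)\) for a standard Brownian motion \(\tilde B\) on a possibly enlarged probability space. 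The Brownian tail bound recalled in the proof of Proposition~\ref{prop.Weyl} --- that for any \(\eta > 0\) there is \(C_\eta > 0\) with \(\abs{\tilde B(s)} \leq C_\eta(1 + s^{\nicefrac{3}{4}})\) for all \(s \geq 0\) with probability at least \(1 - \eta\) --- then yields
\begin{align*}
\abs{M(t)} \leq C_\eta\bigl(1 + \quadvar{M}(t)^{\nicefrac{3}{4}}\bigr) \leq C + C'\bigl(-\log(1-\diln t)\bigr)^{\nicefrac{3}{4}}
\end{align*}
uniformly on \([0,\lasttime]\), with constants depending only on \(\beta\) and \(\eta\).

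Taking \(\eta = \nicefrac{\varepsilon}{3}\) and union-bounding the three failure events yields the proposition. There is no serious technical obstacle here: the decomposition is explicit, and both Corollary~\ref{cor.averaging.2} and the \(t^{\nicefrac{3}{4}}\) Brownian bound are already in hand. The only mildly delicate point is matching constants so they depend only on \(\beta\), \(a\) and \(\varepsilon\) and not on \(\shift\); this is immediate since Corollary~\ref{cor.averaging.2} already provides \(\shift\)-independent bounds, and the bound on \(M\) is inherited from a standard Brownian motion.
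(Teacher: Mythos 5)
Your proof is correct and follows essentially the same route as the paper: integrate the SDE for \(\amp\), peel off the deterministic drift \(\frac{1}{\beta}\slogtime(t)\), control the two oscillatory drift integrals by Corollary~\ref{cor.averaging.2}, and control the martingale via Dambis--Dubins--Schwarz together with the Brownian \(t^{\nicefrac{3}{4}}\) bound. The only (harmless, slightly superfluous) deviation is that you compute \(\quadvar{M}(t)\) exactly via \(\cos^2 2\phase = \tfrac{1}{2}(1+\cos 4\phase)\) and spend a third application of Corollary~\ref{cor.averaging.2} on the oscillatory part, whereas the paper simply uses \(\cos^2 2\phase \leq 1\) to get \(\quadvar{M}(t) \leq -\tfrac{2}{\beta}\log(1-\diln t)\) directly, which is already enough.
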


\begin{remark}
By exponentiating, this allows to control the entries of the coefficient matrix on a good event, uniformly in \(\shift\). Indeed, if \(k, l \in \{0,1,2\}\) sum to \(2\) and \(\delta > 0\), then on the good event from the proposition there are \(C, C', C'' > 0\) such that
\[
e^{k\ampg(t) + l\ampf(t)}
    \leq \frac{e^C}{(1 - \diln t)^{\nicefrac{2}{\beta}}} \exp\Bigl( C'\bigl( -\log(1-\diln t) \bigr)^{\nicefrac{3}{4}} \Bigr)
    \leq \frac{C'' e^C}{(1 - \diln t)^{\nicefrac{2}{\beta} + \delta}}
    \leq \frac{C'' e^C}{(1 - t)^{\nicefrac{2}{\beta} + \delta}}.
\]
In particular, if \(\beta > 2\), then \(\delta\) can be chosen small enough so that \(\nicefrac{2}{\beta} + \delta < 1\), and then the last bound is integrable and does not depend on \(\shift\). 
\end{remark}

\begin{proof}
This result is analogous to Proposition~19 of~\cite{painchaud_operator_2025} in the case of the soft edge to bulk transition, and the proof uses the same ideas. 

From the SDE in Proposition~\ref{prop.polarcoords}, we know that \(\amp\) satisfies
\begin{multline*}
\amp(t) + \frac{1}{\beta} \log(1 - \diln t)
    = - \Bigl( a + \frac{1}{2} \Bigr) \int_0^t \cos 2\phase(s) \frac{\diln}{1 - \diln s} \diff{s}
    - \frac{1}{\beta} \int_0^t \cos 4\phase(s) \frac{\diln}{1 - \diln s} \diff{s} \\
    - \sqrt{\frac{2}{\beta}} \int_0^t \cos 2\phase(s) \sqrt{\frac{\diln}{1-\diln s}} \diff{\sBM}(s).
\end{multline*}
By Corollary~\ref{cor.averaging.2}, each of the first two terms of the right-hand side is bounded on \([0, \lasttime]\) with probability at least \(1 - \nicefrac{\varepsilon}{3}\). Then, the third term is a continuous martingale \(M\) with quadratic variation \(\quadvar{M}(t) \leq - \frac{2}{\beta} \log(1 - \diln t)\). As such, it can be shown from the Dambis--Dubins--Schwarz theorem and the law of the iterated logarithm (see e.g.~\cite[Proposition~18]{painchaud_operator_2025}) that there is a \(C > 0\) such that it is dominated by \(C + C\bigl( - \frac{2}{\beta} \log(1 - \diln t) \bigr)^{\nicefrac{3}{4}}\) with probability at least \(1 - \nicefrac{\varepsilon}{3}\). Combining the bounds then yields the result.
\end{proof}

Proposition~\ref{prop.tracebound} gives a good control on the entries of the coefficient matrix on \([0, \lasttime]\). When \(\beta > 2\) and \(\abs{a} < 1\), however, this will not suffice and we will need to extend this to \((\lasttime, 1)\). So, we strengthen the last result to the following.

\begin{lemma}
\label{lem.tracebound}
Suppose that \(\abs{a} < 1\), and let \(\mathcal{I} \defeq [0,1)\) if \(\beta \leq 2\) or \(\mathcal{I} \defeq [0,1]\) if \(\beta > 2\). For every \(\varepsilon > 0\), there is an \(F_{\varepsilon} \in L^1\loc(\mathcal{I})\) such that \(\bprob{\timechange' (\tr\sBesselmat\circ\timechange) \leq F_{\varepsilon}} \geq 1 - \varepsilon\).
\end{lemma}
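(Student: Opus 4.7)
The starting point is the identity
\[
\timechange'(t)\tr\sBesselmat\circ\timechange(t) = \diln\bigl(e^{2\ampg(t)}\cos^2\phaseg(t) + e^{2\ampf(t)}\cos^2\phasef(t)\bigr),
\]
obtained by tracing the polar-coordinate expression of $\timechange'(\sBesselmat\circ\timechange)$ built in Section~\ref{sec.polarcoords}, whose prefactor simplifies to $\diln$ via $\timechange'=2\diln e^{\timechange/2}$ together with $\Besselweight\circ\timechange = e^{-\timechange}(\Besselcoeff\circ\timechange)$. It therefore suffices to dominate each of $e^{2\ampg}$ and $e^{2\ampf}$ by a single $L^1_{\mathrm{loc}}(\mathcal{I})$ function on an event of probability at least $1-\varepsilon$.

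Apply Proposition~\ref{prop.tracebound} to $\ampg$ and to $\ampf$ with parameter $\varepsilon/2$ each, and intersect the two good events: for every $t\in[0,\lasttime]$ one obtains
\[
e^{2\ampf(t)},\ e^{2\ampg(t)} \leq e^{2C}(1-\diln t)^{-2/\beta}\exp\!\bigl(2C'(-\log(1-\diln t))^{3/4}\bigr) \leq C_{\varepsilon,\delta}(1-\diln t)^{-2/\beta-\delta}
\]
for any chosen $\delta>0$, where the last step absorbs the subpower exponential using $x^{3/4}=o(x)$. Since $|a|<1$ forces $\diln=1$, the right-hand side equals $C_{\varepsilon,\delta}(1-t)^{-2/\beta-\delta}$, suggesting the candidate dominant $F_\varepsilon(t):=2C_{\varepsilon,\delta}(1-t)^{-2/\beta-\delta}$. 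For $\beta\leq 2$, we have $\mathcal{I}=[0,1)$ and $F_\varepsilon\in L^1_{\mathrm{loc}}([0,1))$ for any $\delta>0$; any compact $K\subset[0,1)$ is contained in $[0,\lasttime]$ for all $E$ in the diverging sequence large enough, so the domination holds on $K$ and this case is done. For $\beta>2$ we have $\mathcal{I}=[0,1]$; fixing $\delta<1-2/\beta$ gives $F_\varepsilon\in L^1([0,1])$, and the bound still covers $[0,\lasttime]$, leaving the short interval $(\lasttime,1]$ of length $\shift^{-1/2}$ to treat.

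The main obstacle is precisely this extension to $(\lasttime,1]$ in the case $\beta>2$: the SDEs governing $\ampf,\ampg$ have drift and diffusion that blow up as $t\uparrow 1$, so Proposition~\ref{prop.tracebound} cannot be pushed further directly. The plan is to exploit the hypothesis $|a|<1$ and the consequent limit-circle property at $+\infty$ (Proposition~\ref{prop.Weyl}). Via $\timechange$ the interval $(\lasttime,1]$ corresponds to the Bessel-time half-line $(\log\shift,\infty)$, and Proposition~\ref{prop.unshift} identifies solutions of $\sBesselop u=0$ on this half-line with solutions of $\unsBesselop\tilde u=\tilde u$ driven by a time-shifted Brownian motion. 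The deterministic dominants on $\unsBesselweight$ and on $\Besselsol^2\unsBesselweight$ supplied by Lemma~\ref{lem.Weylbound} (each valid with probability $\geq 1-\varepsilon$), combined with a Duhamel-type expansion of these solutions against the fundamental pair $\{1,\Besselsol\}$ of $\unsBesselop u=0$, are intended to produce an $\shift$-uniform pointwise bound on $\timechange'\tr\sBesselmat\circ\timechange$ over $(\lasttime,1]$; absorbing this bound into $F_\varepsilon$ by enlarging its constant on that short interval then completes the proof.
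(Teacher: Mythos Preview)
Your proposal is correct and follows essentially the same route as the paper: Proposition~\ref{prop.tracebound} handles $[0,\lasttime]$, and for $\beta>2$ the extension to $(\lasttime,1]$ proceeds via the unshift of Proposition~\ref{prop.unshift}, a Duhamel expansion of the solutions against the fundamental pair $\{1,\Besselsol\}$ of $\unsBesselop u=0$, and the exponential dominants of Lemma~\ref{lem.Weylbound}. The one step you gloss over is that the Duhamel bound on $(\lasttime,1]$ carries factors of $\sBesself(\log\shift),\sBesselg(\log\shift)$ and their derivatives, hence powers of $\shift$; the paper uses the polar representations together with $\shift\le(1-t)^{-2}$ on $(\lasttime,1]$ to convert everything into a bound of the form $C(1-t)^{-\gamma}$ with $\gamma<1$ (splitting into the cases $\tfrac12+\tfrac1\beta-|a|\gtrless 0$), so it is not merely a matter of ``enlarging the constant.''
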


\begin{proof}
Recall from the remark following Proposition~\ref{prop.tracebound} that for each \(\varepsilon, \delta > 0\), there is a \(C > 0\) such that with probability at least \(1 - \nicefrac{\varepsilon}{2}\), for all \(t \in [0, \lasttime]\), 
\[
\timechange'(t) \tr\sBesselmat\circ\timechange(t)
    \leq e^{2\ampg(t)} + e^{2\ampf(t)}
    \leq \frac{2C}{(1 - t)^{\nicefrac{2}{\beta}+\delta}}.
\]
When \(\beta > 2\), we can take \(\delta\) small enough so that \(\nicefrac{2}{\beta} + \delta < 1\), and then the map \(t \mapsto 2C(1-t)^{\nicefrac{-2}{\beta}-\delta}\) is indeed in \(L^1[0,1]\). When \(\beta \leq 2\), this map is in \(L^1\loc[0,1)\) for any value of \(\delta\), so in both cases this bound suffices on \([0,\lasttime]\). To complete the proof, we show that with probability at least \(1 - \nicefrac{\varepsilon}{2}\), there is an \(\tilde{F}_{\varepsilon} \in L^1\loc(\mathcal{I})\) such that \(\timechange'(t) \tr\sBesselmat\circ\timechange(t) \leq \tilde{F}_{\varepsilon}(t)\) for all \(t \in (\lasttime, 1)\); then \(F_\varepsilon(t) \defeq \tilde{F}_\varepsilon(t) + 2C(1-t)^{\nicefrac{-2}{\beta}-1}\) satisfies the desired condition.

To obtain a bound on \((\lasttime, 1)\), we can work with \(\unsBesself(t) \defeq \sBesself(t + \log\shift)\) and \(\unsBesselg(t) \defeq \sBesselg(t + \log\shift)\), which solve \(\unsBesselop \tilde{h} = \tilde{h}\) by Proposition~\ref{prop.unshift}, where \(\unsBesselop\) is defined from the Brownian motion \(\unsBM(t) \defeq \sBM(\cdot + \log\shift) - \sBM(\log\shift)\), \(\sBM\) being the Brownian motion from which \(\sBesselop\) is defined. Recall from Section~\ref{sec.solprops} that \(1\) and \(\Besselsol(t) = \int_0^t \frac{1}{\unsBesselcoeff(s)} \diff{s}\) are a pair of fundamental solutions to \(\unsBesselop \tilde{h} = 0\). By standard theory of Sturm--Liouville operators (see e.g.~\cite[Lemma~2.4]{eckhardt_weyl-titchmarsh_2013}), it follows that the functions \(\varphi\) and \(\psi\) that solve
\begin{align*}
\varphi(t) & = 1 + \int_0^t \bigl( \Besselsol(s) - \Besselsol(t) \bigr) \varphi(s) \unsBesselweight(s) \diff{s}
\shortintertext{and}
\psi(t) & = \Besselsol(t) + \int_0^t \bigl( \Besselsol(s) - \Besselsol(t) \bigr) \psi(s) \unsBesselweight(s) \diff{s}
\end{align*}
are a pair of fundamental solutions for \(\unsBesselop \tilde{h} = \tilde{h}\). Because \(\varphi(0) = \psi'(0) = 1\) and \(\varphi'(0) = \psi(0) = 0\), it follows that for \(t \geq 0\),
\[
\unsBesself(t)
    = \unsBesself(0) \varphi(t) + \unsBesself'(0) \psi(t)
    = \sBesself(\log\shift) \varphi(t) + \sBesself'(\log\shift) \psi(t)
\]
and likewise \(\unsBesselg(t) = \sBesselg(\log\shift) \varphi(t) + \sBesselg'(\log\shift) \psi(t)\). Because \(\unsBesselop\) is limit circle at infinity by Proposition~\ref{prop.Weyl}, we know that \(1, \varphi, \psi \in L^2\bigl( [0,\infty), \unsBesselweight(t) \diff{t} \bigr)\) a.s. Hence, the Cauchy--Schwarz inequality implies that
\[
\int_0^t \Besselsol(s) \varphi(s) \unsBesselweight(s) \diff{s}
    \leq \norm{\Besselsol}_{\unsBesselweight} \norm{\varphi}_{\unsBesselweight}
\qquadtext{and that}
\int_0^t \varphi(s) \unsBesselweight(s) \diff{s}
    \leq \norm{1}_{\unsBesselweight} \norm{\varphi}_{\unsBesselweight},
\]
and likewise for \(\psi\) instead of \(\varphi\). These norms are all well-defined random variables, and it follows that there is a \(\tilde{C} > 0\) such that with probability at least \(1 - \nicefrac{\varepsilon}{6}\), for all \(t \geq 0\),
\[
\abs{\unsBesself(t)} \leq \tilde{C} \bigl( \abs{\sBesself(\log\shift)} + \abs{\sBesself'(\log\shift)} \bigr) \bigl( 1 + \Besselsol(t) \bigr),
\]
and likewise for \(\unsBesselg\) instead of \(\unsBesself\). So with probability at least \(1 - \nicefrac{\varepsilon}{6}\), for \(t \in (\lasttime, 1)\),
\begin{align*}
\tr\sBesselmat\circ\timechange(t)
    & = \frac{\Besselweight\bigl( \timechange(t) \bigr)\sqrt{\shift}}{2} \Bigl( \sqrt{\shift} \unsBesselg^2\bigl( \timechange(t) - \log\shift \bigr) + \frac{1}{\sqrt{\shift}} \unsBesself^2\bigl( \timechange(t) - \log\shift \bigr) \Bigr) \\
    & \leq \begin{aligned}[t]
        2\tilde{C}^2\sqrt{\shift} \Besselweight\bigl( \timechange(t) \bigr) \Bigl( & \sqrt{\shift}\abs[\big]{\sBesselg(\log\shift)}^2 + \sqrt{\shift}\abs[\big]{\sBesselg'(\log\shift)}^2 \\
        & + \frac{1}{\sqrt{\shift}} \abs[\big]{\sBesself(\log\shift)}^2 + \frac{1}{\sqrt{\shift}} \abs[\big]{\sBesself'(\log\shift)}^2 \Bigr) \Bigl( 1 + \Besselsol^2\bigl( \timechange(t) - \log\shift \bigr) \Bigr).
    \end{aligned}
\end{align*}
Now, the representations in polar coordinates of \(\sBesself\) and \(\sBesselg\) give
\begin{align*}
\sqrt{\shift} \abs[\big]{\sBesselg(\log\shift)}^2 \vee \sqrt{\shift} \abs[\big]{\sBesselg'(\log\shift)}^2
    & \leq \frac{e^{2\ampg(\lasttime)}}{\Besselcoeff(\log\shift)}
\shortintertext{and}
\frac{1}{\sqrt{\shift}} \abs[\big]{\sBesself(\log\shift)}^2 \vee \frac{1}{\sqrt{\shift}} \abs[\big]{\sBesself'(\log\shift)}^2
    & \leq \frac{e^{2\ampf(\lasttime)}}{\Besselcoeff(\log\shift)},
\end{align*}
and the bounds on \(e^{2\ampf}\) and \(e^{2\ampg}\) that hold on \([0, \lasttime]\) by Proposition~\ref{prop.tracebound} imply that for any \(\delta > 0\), there is a \(\tilde{C}' > 0\) such that \(e^{2\ampf(\lasttime)} \vee e^{2\ampg(\lasttime)} \leq \tilde{C}'\shift^{\nicefrac{1}{\beta}+\nicefrac{\delta}{2}}\) with probability at least \(1 - \nicefrac{\varepsilon}{6}\). Moreover, notice that by definition of \(\Besselweight\) and \(\Besselcoeff\),
\[
\frac{\Besselweight\bigl( \timechange(t) \bigr)}{\Besselcoeff(\log\shift)}
    = \exp\Bigl( -(a+1)\timechange(t) + a\log\shift - \frac{2}{\sqrt{\beta}} \bigl( \sBM\bigl( \timechange(t) \bigr) - \sBM(\log\shift) \bigr) \Bigr)
    = \shift^{-1} \unsBesselweight\bigl( \timechange(t) - \log\shift \bigr).
\]
This reduces the bound on the trace for \(t \in (\lasttime, 1)\) to
\[
\tr\sBesselmat\circ\timechange(t)
    \leq 8\tilde{C}^2\tilde{C}' \shift^{\nicefrac{-1}{2}+\nicefrac{1}{\beta}+\nicefrac{\delta}{2}} \unsBesselweight\bigl( \timechange(t) - \log\shift\bigr) \Bigl( 1 + \Besselsol^2\bigl( \timechange(t) - \log\shift \bigr) \Bigr),
\]
and this holds with probability at least \(1 - \nicefrac{\varepsilon}{3}\). By Lemma~\ref{lem.Weylbound}, if \(\delta\) is small enough, then there is a \(C_\varepsilon > 0\) such that with probability at least \(1 - \nicefrac{\varepsilon}{6}\), both \(\unsBesselweight(t) \leq C_\varepsilon e^{-(1+a-\delta)t}\) and \(\Besselsol^2(t) \unsBesselweight(t) \leq C_\varepsilon e^{-(1-a-\delta)t}\) for all \(t \geq 0\). Since \(\timechange(t) = -2\log(1 - t)\), we deduce that on this event, for any \(t \geq \lasttime\),
\[
\unsBesselweight\bigl( \timechange(t) - \log\shift \bigr) \Bigl( 1 + \Besselsol^2\bigl( \timechange(t) - \log\shift \bigr) \Bigr)
    \leq 2C_\varepsilon\shift^{1-\abs{a}-\delta} (1 - t)^{2(1-\abs{a}-\delta)}.
\]
Finally, as \(\timechange'(t) = 2(1-t)^{-1}\), this shows that with probability at least \(1 - \nicefrac{\varepsilon}{2}\), for all \(t \in (\lasttime,1)\),
\beq{eq.tracebound.temp}
\timechange'(t) \tr\sBesselmat\circ\timechange(t)
    \leq C \shift^{\nicefrac{1}{2}+\nicefrac{1}{\beta}-\abs{a}-\nicefrac{\delta}{2}} (1-t)^{1-2\abs{a}-2\delta}
\eeq
for \(C \defeq 32\tilde{C}^2\tilde{C}'C_\varepsilon\). Notice that for \(t \geq \lasttime\), \(1 - t \leq \nicefrac{1}{\sqrt{\shift}}\) so \(\shift \leq (1-t)^{-2}\). Hence, if \(\nicefrac{1}{2} + \nicefrac{1}{\beta} - \abs{a} > 0\), then for \(\delta\) small enough the above bound is bounded by \(C (1-t)^{\nicefrac{-2}{\beta}-\delta}\). On the other hand, if \(\nicefrac{1}{2} + \nicefrac{1}{\beta} - \abs{a} \leq 0\), then the power of \(\shift\) in the bound~\eqref{eq.tracebound.temp} is negative for \(\delta\) small enough, so~\eqref{eq.tracebound.temp} is bounded by \(C (1-t)^{-(2\abs{a}-1+2\delta)}\), which is integrable on \([0,1)\) since \(0 < 2\abs{a}-1 < 1\) by the assumptions on \(a\). This argument gives the bound we need: it is always \(L^1\loc[0,1)\), and we can make it integrable when \(\beta > 2\) and \(\nicefrac{1}{2}+\nicefrac{1}{\beta}-\abs{a} > 0\) by taking \(\delta\) small enough so that \(\nicefrac{2}{\beta}+\delta < 1\).
\end{proof}

\subsection{Vague convergence of coefficient matrices}

We finally turn to the vague convergence of the coefficient matrices, and complete the proof of Theorem~\ref{thm.vagueconv}. In fact, we prove the following stronger result.

\begin{theorem}
\label{thm.vagueconvP}
\setshift{E_n}
Let \(\{\sBM\}_{n\in\mathbb{N}}\) and \(W\) be the Brownian motions from Lemma~\ref{lem.coupling}, and let \(\sBesselmat\) and \(\sinemat\) be the coefficient matrices of the shifted Bessel system and of the sine system built from these Brownian motions as described in the remark following Lemma~\ref{lem.coupling}. Let \(\mathcal{I} \defeq [0,1)\) if \(\beta \leq 2\) and \(\mathcal{I} \defeq [0,1]\) if \(\beta > 2\). Then for any \(\testfunc \in \mathscr{C}_c(\mathcal{I},\mathbb{C}^2)\),
\beq{eq.vagueconvP}
\int_{\mathcal{I}} \testfunc^*(t) \Bigl( \timechange'(t) \sBesselmat\circ\timechange(t) - \sinemat\circ\logtime(t) \Bigr) \testfunc(t) \diff{t} \probto[n\to\infty] 0.
\eeq
In particular, \(\timechange' (\sBesselmat\circ\timechange) \to \sinemat\circ\logtime\) vaguely on \(\mathcal{I}\) in probability and in law.
\end{theorem}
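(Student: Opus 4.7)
The plan is to follow the heuristic of Section~\ref{sec.polarcoords}. Using~\eqref{eq.sBesselmatpolarcoords}, I would decompose $\timechange'(\sBesselmat\circ\timechange) = M_n^s + M_n^o$ into its slow part
\[
M_n^s \defeq \frac{\diln}{2e^{-\diffamps}\sin\diffphases}\begin{pmatrix} 1 & e^{-\diffamps}\cos\diffphases \\ e^{-\diffamps}\cos\diffphases & e^{-2\diffamps} \end{pmatrix}
\]
and its oscillatory part $M_n^o$, and then show that $\int_{\mathcal{I}} \testfunc^* M_n^o \testfunc \diff{t} \to 0$ in probability while $\int_{\mathcal{I}} \testfunc^* M_n^s \testfunc \diff{t} \to \int_{\mathcal{I}} \testfunc^* (\sinemat\circ\logtime) \testfunc \diff{t}$ in probability. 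Since Lemma~\ref{lem.tracebound} provides (together with a similar a.s.\ local integrability of $\tr\sinemat\circ\logtime$) a uniform-in-$n$ $L^1\loc(\mathcal{I})$ domination of the traces, a standard approximation argument reduces matters to $\testfunc \in \mathscr{C}^\infty_c(\mathcal{I},\mathbb{C}^2)$.

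For the oscillatory term, each scalar component is of the form $\int \psi(t) h_{\shift}(t) \cos(k\phi_{\shift}(t)) \diff{t}$, where $\psi$ is a smooth compactly supported function built from $\testfunc$, $h_{\shift} \in \{e^{2\ampg}, e^{2\ampf}, e^{\sumamps}\}$ is uniformly bounded on compact subsets of $[0,1)$ by Proposition~\ref{prop.tracebound}, and the phase $\phi_{\shift} \in \{2\phaseg, 2\phasef, \sumphases\}$ has a deterministic drift of order $-\sqrt{\shift}$. By Itô's formula, each $h_{\shift}$ satisfies an SDE of the form~\eqref{eq.lem.averaging.SDE} with coefficients bounded on $[0,T]$ for any fixed $T < 1$, so after rewriting $\diff{t} = \frac{1-\diln t}{\diln}\cdot\frac{\diln}{1-\diln t}\diff{t}$ and absorbing $\psi(t)(1-\diln t)/\diln$ into the multiplicand process, Lemma~\ref{lem.averaging} applies on $[0,T]$ and yields a tail bound vanishing as a power of $\shift$.

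For the slow part, the Wronskian identity~\eqref{eq.Wronskianidentity} gives $e^{-\diffamps}\sin\diffphases = e^{-2\ampg}$ and $e^{-2\diffamps} = e^{-4\ampg} + (e^{-\diffamps}\cos\diffphases)^2$, so the entries of $M_n^s$ are rational expressions in $e^{-2\ampg}$, $e^{2\ampg}$ and $-e^{-\diffamps}\cos\diffphases$. By Corollary~\ref{cor.GBM} and Proposition~\ref{prop.ReHBM}, these three processes converge in probability, uniformly on $[0,(1-\shift^{\nicefrac{-1}{2}+\alpha})/\diln]$, to $\Im\HBM\circ\logtime$, $1/\Im\HBM\circ\logtime$ and $\Re\HBM\circ\logtime$ respectively, where $\HBM$ is the hyperbolic Brownian motion driven by the limit complex Brownian motion $W$ from Lemma~\ref{lem.coupling}. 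Since $\diln \to 1$ and $\slogtime \to \logtime$, the entries of $M_n^s$ converge pointwise in probability on $[0,1)$ to those of $\sinemat\circ\logtime$, and integrating against $\testfunc^* \cdot \testfunc$ then gives the correct limit on any compact subset of $[0,1)$.

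The main obstacle is the case $\beta > 2$, where $\supp\testfunc \subset [0,1]$ may include $t=1$, while the convergence results of Section~\ref{sec.coupling} only hold on $[0, (1-\shift^{\nicefrac{-1}{2}+\alpha})/\diln]$. I would resolve this using the uniform $L^1[0,1]$ domination of $\timechange'(\tr\sBesselmat\circ\timechange)$ provided by (a suitable extension of) Lemma~\ref{lem.tracebound}: given $\varepsilon > 0$, choose $T < 1$ close enough to $1$ so that, with probability at least $1-\varepsilon$ uniformly in $n$, the tail $\int_T^1 \timechange'(\tr\sBesselmat\circ\timechange) \diff{t}$ is bounded by $\varepsilon$ (and similarly for $\tr\sinemat\circ\logtime$); then apply the pointwise-in-probability convergence on $[0,T]$ and absorb the tail. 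For $\beta \leq 2$ this difficulty disappears since $\supp\testfunc$ is then automatically separated from $1$.
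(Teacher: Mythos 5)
Your decomposition into slow and oscillatory parts matches the paper's (the paper writes the same thing as $\frac{\diln}{2\det\BesselABM}\BesselABM^\transpose\BesselABM$ plus the oscillatory remainder in~\eqref{eq.sBesselmatABM}), and your treatment of the slow part is essentially the paper's argument modulo bookkeeping about replacing $\logtime$ by $\slogtime$. However, there is a genuine gap in the oscillatory part.

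You propose to deal with the oscillatory entries via Lemma~\ref{lem.averaging}, by ``absorbing $\psi(t)(1-\diln t)/\diln$ into the multiplicand process.'' Two things go wrong. First, the lemma requires the phase appearing in $e^{ki\phase}$ to itself solve the SDE of Proposition~\ref{prop.polarcoords}, with deterministic drift $-2\diln\sqrt{\shift}$. That hypothesis is satisfied by $\phaseg$ and $\phasef$ individually (diagonal terms), but the off-diagonal oscillatory entry is $e^{\sumamps}\cos\sumphases$ with $\sumphases = \phaseg + \phasef$, and $\sumphases$ is \emph{not} a solution of that SDE (its drift coefficients involve both phases, and it has a different deterministic drift). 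Lemma~\ref{lem.averaging} as stated therefore does not cover this term, and one would need to re-derive a version of the oscillatory-averaging estimate for phase sums. Second, multiplying the candidate $X_{\shift} = h_{\shift}$ by the deterministic factor $\psi(t)(1-\diln t)/\diln$ takes it outside the class~\eqref{eq.lem.averaging.SDE}: the resulting drift picks up a term $\psi'/\psi$, which blows up where the compactly supported $\psi$ vanishes, so ``$\mu_{\shift}$ bounded on $[0,T]$'' fails. (You could avoid this by keeping $X_{\shift} = h_{\shift}$, applying the lemma to bound $\sup_t\bigl|\int_0^t h_{\shift}e^{ki\phase}\frac{\diln}{1-\diln s}\diff s\bigr|$, and then doing a continuous integration by parts against $\psi(1-\diln t)/\diln$ --- but that is a different argument from what you wrote, and it still leaves the $\sumphases$ problem.) The paper circumvents both difficulties at once: it replaces $\psi$ by a piecewise-constant approximation on a mesh of width $\asymp \shift^{-(\nicefrac{1}{2}-p)}$, applies It\^o's formula directly to $e^{k\ampg+l\ampf}\sin(k\phaseg+l\phasef)$ to produce the $-4\diln\sqrt{\shift}\cos(k\phaseg+l\phasef)$ term, and then performs a discrete integration by parts, bounding the boundary, drift, and martingale pieces with Proposition~\ref{prop.tracebound} and Bernstein's inequality. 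This handles all three oscillatory entries uniformly, including the cross term.

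One further small issue: your appeal to Lemma~\ref{lem.tracebound} for the $\beta > 2$ tail absorption is fine when $\abs{a} < 1$, but that lemma assumes $\abs{a} < 1$. When $a \geq 1$ and $\beta > 2$, $\diln = 1-\nicefrac{1}{\sqrt{\shift}}$ forces $\lasttime = 1$, so the bound from Proposition~\ref{prop.tracebound} (and the remark that follows it) already gives the uniform-in-$n$ $L^1[0,1]$ domination directly; no extension of the trace bound past $\lasttime$ is needed. This is easy to fix, but worth noting since you invoke Lemma~\ref{lem.tracebound} without distinguishing the cases.
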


\begin{proof}
This result is analogous to Theorem~20 of~\cite{painchaud_operator_2025}, and the proof uses the same ideas. To simplify notation throughout the proof, we drop the \(n\) subscript from the shift \(\shift\), with the understanding that any limit as \(\shift\to\infty\) is taken along the (arbitrary) diverging sequence \(\{\shift_n\}_{n\in\mathbb{N}}\). 

We start by making a few simplifications to the problem. First, we cut the time interval to \([0, \frac{1}{\diln}(1 - \shift^{\nicefrac{-1}{2}+\alpha})]\). Note that this can trivially be done when \(\beta \leq 2\), since in that case the interval \([\frac{1}{\diln}(1-\shift^{\nicefrac{-1}{2}+\alpha}), 1)\) has eventually left the support of \(\testfunc\), this function being compactly supported in \([0,1)\). If \(\beta > 2\), then because \(\tr\sinemat\circ\logtime\) is a.s.\@ integrable on \([0,1)\), \(\int_{(1-\shift^{\nicefrac{-1}{2}+\alpha})/\diln}^1 \testfunc^*(t) \sinemat\circ\logtime(t) \testfunc(t) \diff{t} \to 0\) a.s.\@ as \(\shift\to\infty\). Moreover, for any \(\varepsilon > 0\), Lemma~\ref{lem.tracebound} provides an \(F_\varepsilon \in L^1[0,1]\) such that \(\timechange' \tr\sBesselmat\circ\timechange \leq F_\varepsilon\) with probability at least \(1-\varepsilon\), and therefore
\[
\int_{(1-\shift^{\nicefrac{-1}{2}+\alpha})/\diln}^1 \testfunc^*(t) \timechange'(t) \sBesselmat\circ\timechange(t) \testfunc(t) \diff{t}
    \leq \norm{\testfunc}_\infty^2 \int_{(1-\shift^{\nicefrac{-1}{2}+\alpha})/\diln}^1 F_\varepsilon(t) \diff{t}
    \to 0.
\]
Hence, this integral also converges to \(0\) in probability as \(\shift\to\infty\), and it suffices to prove~\eqref{eq.vagueconvP} with \(\mathcal{I}\) replaced with \([0, \frac{1}{\diln}(1 - \shift^{\nicefrac{-1}{2}+\alpha})]\).

Then, to simplify the comparison between the two systems, we switch the logarithmic time scale of the sine system from \(\logtime(t) = -\log(1-t)\) to \(\slogtime(t) = -\log(1-\diln t)\) when \(\beta > 2\) and \(a \geq 1\) (in other cases, \(\diln = 1\) so nothing changes). A change of variables in the second term below yields
\begin{multline*}
\int_0^{(1-\shift^{\nicefrac{-1}{2}+\alpha})/\diln} \testfunc^*(t) \bigl( \sinemat\circ\logtime(t) - \sinemat\circ\slogtime(t) \bigr) \testfunc(t) \diff{t}
    = \int_{1-\shift^{\nicefrac{-1}{2}+\alpha}}^{(1-\shift^{\nicefrac{-1}{2}+\alpha})/\diln} \testfunc^*(t) \sinemat\circ\logtime(t) \testfunc(t) \diff{t} \\
    + \int_0^{1-\shift^{\nicefrac{-1}{2}+\alpha}} \Bigl( \testfunc^*(t) \sinemat\circ\logtime(t) \testfunc(t) - \frac{1}{\diln} \testfunc^*(\nicefrac{t}{\diln}) \sinemat\circ\logtime(t) \testfunc(\nicefrac{t}{\diln}) \Bigr) \diff{t}.
\end{multline*}
The first term in the right-hand side vanishes a.s.\@ as \(\shift\to\infty\) by integrability of \(\tr\sinemat\circ\logtime\) on \([0,1)\). The second term can be written as
\[
\Re \int_0^{1-\shift^{\nicefrac{-1}{2}+\alpha}} \Bigl( \testfunc^*(t) + \frac{1}{\sqrt{\diln}} \testfunc^*(\nicefrac{t}{\diln}) \Bigl) \sinemat\circ\logtime(t) \Bigl( \testfunc(t) - \frac{1}{\sqrt{\diln}} \testfunc(\nicefrac{t}{\diln}) \Bigr) \diff{t}.
\]
For \(\shift\) large enough, the integrand is dominated by \(8\norm{\testfunc}_\infty^2 \tr\sinemat\circ\logtime\), which is a.s.\@ integrable, so by continuity of \(\testfunc\), the dominated convergence theorem shows that this integral vanishes a.s.\@ as \(\shift\to\infty\).

Combining the above arguments, we have reduced the problem to showing that
\[
\int_0^{(1-\shift^{\nicefrac{-1}{2}+\alpha})/\diln} \testfunc^*(t) \Bigl( \timechange'(t)\sBesselmat\circ\timechange(t) - \sinemat\circ\slogtime(t) \Bigr) \testfunc(t) \diff{t}
    \probto[\shift\to\infty] 0.
\]
Recall that
\[
\sinemat = \frac{1}{2\det\ABM} \ABM^\transpose \ABM
\qquadtext{with}
\ABM = \begin{pmatrix}
    1 & -\Re\HBM \\
    0 & \Im\HBM
\end{pmatrix}
\]
and where \(\HBM\) is a hyperbolic Brownian motion started at \(i\) in \(\UHP\) and driven by \(W\). Then, as we have seen in~\eqref{eq.sBesselmatpolarcoords} in Section~\ref{sec.polarcoords}, \(\timechange' (\sBesselmat\circ\timechange)\) can be written using the polar coordinates from Proposition~\ref{prop.polarcoords} as
\beq{eq.sBesselmatABM}
\timechange' (\sBesselmat\circ\timechange)
    = \frac{\diln}{2\det\BesselABM} \BesselABM^\transpose \BesselABM + \frac{\diln}{2} \begin{pmatrix}
        e^{2\ampg} \cos 2\phaseg & e^{\sumamps} \cos\sumphases \\
        e^{\sumamps} \cos\sumphases & e^{2\ampf} \cos 2\phasef
    \end{pmatrix}
\eeq
where
\[
\BesselABM \defeq \begin{pmatrix}
    1 & e^{-\diffamps} \cos\diffphases \\
    0 & e^{-\diffamps} \sin\diffphases
\end{pmatrix}.
\]

Our results on the convergence of \(-\exp(-\diffamps-i\diffphases)\) to \(\HBM\circ\slogtime\) from Section~\ref{sec.coupling} imply that the first term of~\eqref{eq.sBesselmatABM} converges to the coefficient matrix of the sine system. Indeed, we can decompose
\begin{multline*}
\frac{\diln}{\det\BesselABM} \BesselABM^\transpose \BesselABM - \frac{1}{\det\ABM\circ\slogtime} (\ABM\circ\slogtime)^\transpose \ABM\circ\slogtime \\
    = \frac{1}{\det\BesselABM} \Bigl( (\BesselABM - \ABM\circ\slogtime)^\transpose (\BesselABM - \ABM\circ\slogtime) + (\ABM\circ\slogtime)^\transpose (\BesselABM - \ABM\circ\slogtime) + (\BesselABM - \ABM\circ\slogtime)^\transpose \ABM\circ\slogtime \Bigr) \\
    + \Bigl( \frac{1}{\det\BesselABM} - \frac{1}{\det\ABM\circ\slogtime} \Bigr) (\ABM\circ\slogtime)^\transpose \ABM\circ\slogtime
    + \frac{\diln - 1}{\det\BesselABM} \BesselABM^\transpose \BesselABM.
\end{multline*}
Since for two matrices \(A, A' \in \mathbb{R}^{2\times 2}\), \(\testfunc^*A^\transpose A'\testfunc = \inprod{A'\testfunc}{A\testfunc} \leq \norm{A}_2 \norm{A'}_2 \norm{\testfunc}_2^2 \leq 4 \maxnorm{A} \maxnorm{A'} \norm{\testfunc}_2^2\) where \(\maxnorm{A} \defeq \max_{1\leq j,k \leq 2} \abs{A_{jk}}\), it follows that
\begin{multline}
\abs[\Big]{\testfunc^* \Bigl( \frac{\diln}{2\det\BesselABM} \BesselABM^\transpose \BesselABM - \frac{1}{2\det\ABM\circ\slogtime} (\ABM\circ\slogtime)^\transpose \ABM\circ\slogtime \Bigr) \testfunc} \\
\begin{multlined}
    \leq \frac{2\norm{\testfunc}_2^2}{\det\BesselABM} \maxnorm{\BesselABM - \ABM\circ\slogtime}^2
    + \frac{4\maxnorm{\ABM\circ\slogtime} \norm{\testfunc}_2^2}{\det\BesselABM} \maxnorm{\BesselABM - \ABM\circ\slogtime} \\
    + 2\maxnorm{\ABM\circ\slogtime}^2 \norm{\testfunc}_2^2 \abs[\Big]{\frac{1}{\det\BesselABM} - \frac{1}{\det\ABM\circ\slogtime}}
    + \frac{2\maxnorm{\BesselABM}^2 \norm{\testfunc}_2^2 \abs{\diln-1}}{\det\BesselABM}.
\end{multlined}
\label{eq.vagueconvP.firsttermbound}
\end{multline}
Like in~\eqref{eq.supGBM} in the proof Corollary~\ref{cor.GBM}, we can find an event with probability at least \(1 - \frac{3}{2\log\shift}\) on which \(\Im\HBM\circ\slogtime\) is bounded by \(\log\shift\) on \([0, (1-\shift^{\nicefrac{-1}{2}+\alpha})/\diln]\). On this event, the quadratic variation of \(\Re\HBM\circ\slogtime\) is bounded on \([0, \lasttime]\) by \(\frac{2}{\beta} \log^3\shift\), and thus it follows from Bernstein's inequality that \(\maxnorm{\ABM\circ\slogtime} \leq \log^2\shift\) on \([0, (1-\shift^{\nicefrac{-1}{2}+\alpha})/\diln]\) with probability at least \(1 - \nicefrac{2}{\log\shift}\) for all \(\shift\) large enough. Then, Corollary~\ref{cor.GBM} and Proposition~\ref{prop.ReHBM} show together that for \(\delta > 0\) small enough, there is a \(C > 0\) such that \(\maxnorm{\BesselABM - \ABM\circ\slogtime} \leq \shift^{\nicefrac{-\alpha}{12}+\delta}\) on \([0, (1-\shift^{\nicefrac{-1}{2}+\alpha})/\diln]\) with probability at least \(1 - \nicefrac{C}{\log\shift}\) for all \(\shift\) large enough. Corollary~\ref{cor.GBM} also shows that there is a \(\gamma > 0\) such that \(\abs[\big]{\nicefrac{1}{\det\BesselABM} - \nicefrac{1}{\det\ABM\circ\slogtime}} \leq \shift^{-\gamma}\) on \([0, (1-\shift^{\nicefrac{-1}{2}+\alpha})/\diln]\) with probability at least \(1 - \nicefrac{1}{\log\shift}\) for all \(\shift\) large enough. Finally, by Proposition~\ref{prop.tracebound}, for all \(\varepsilon > 0\) there is a \(C_\varepsilon > 0\) such that with probability at least \(1 - \nicefrac{\varepsilon}{2}\), \(\nicefrac{1}{\det\BesselABM(t)} \leq C_\varepsilon (1-t)^{\nicefrac{-2}{\beta}-\delta}\) for all \(t \in [0,\lasttime]\). Combining all of the above, we see that with probability at least \(1 - \varepsilon\), \eqref{eq.vagueconvP.firsttermbound} is bounded for all \(\shift\) large enough by
\beq{eq.vagueconvP.firsttermbound2}
\frac{2C_\varepsilon \norm{\testfunc(t)}_2^2 \shift^{\nicefrac{-\alpha}{6}+2\delta}}{(1-t)^{\nicefrac{2}{\beta}+\delta}}
    + \frac{4C_\varepsilon \shift^{\nicefrac{-\alpha}{12}+\delta} \log^2\shift \norm{\testfunc(t)}_2^2}{(1-t)^{\nicefrac{2}{\beta}+\delta}}
    + 2\shift^{-\gamma} \log^4\shift \norm{\testfunc(t)}_2^2
    + \frac{2C_\varepsilon (\log^2\shift + \shift^{\nicefrac{-\alpha}{12}+\delta})^2 \norm{\testfunc(t)}_2^2}{\sqrt{\shift} (1-t)^{\nicefrac{2}{\beta}+\delta}}.
\eeq
These four terms are integrable on the support of \(\testfunc\), either (if \(\beta \leq 2\)) because it is compact in \([0,1)\), or (if \(\beta > 2\)) because we can choose \(\delta\) small enough so that \(\nicefrac{2}{\beta} + \delta < 1\). Integrating~\eqref{eq.vagueconvP.firsttermbound2} over \([0, (1-\shift^{\nicefrac{-1}{2}+\alpha})/\diln]\) therefore replaces the time-dependent parts by constants which do not depend on \(\shift\), and since for fixed \(t\) all terms in~\eqref{eq.vagueconvP.firsttermbound2} vanish as \(\shift\to\infty\), it follows that
\[
\int_0^{(1-\shift^{\nicefrac{-1}{2}+\alpha})/\diln} \testfunc^*(t) \Bigl( \frac{\diln}{2\det\BesselABM(t)} \BesselABM^\transpose(t) \BesselABM(t) - \sinemat\circ\slogtime(t) \Bigr) \testfunc(t) \diff{t}
    \probto[\shift\to\infty] 0.
\]

To complete the proof, it remains to show that the integral of the second term of~\eqref{eq.sBesselmatABM} sandwiched between \(\testfunc^*\) and \(\testfunc\) vanishes as \(\shift\to\infty\). By linearity, it suffices to show that if \(\psi \in \mathscr{C}_c\bigl( \mathcal{I}, \mathbb{R} \bigr)\), then
\beq{eq.vagueconvP.vaguely0}
\int_0^{\lasttime} \psi(t) e^{k\ampg(t)+l\ampf(t)} \cos\bigl( k\phaseg(t)+l\phasef(t) \bigr) \diff{t}
    \probto[\shift\to\infty] 0
\eeq
for \(k,l \in \{0,1,2\}\) with \(k+l=2\). For \(\varepsilon > 0\), let
\[
\mathscr{H}_{\shift,\varepsilon} \defeq \Bigl\{ \forall t \in [0,\lasttime], e^{k\ampg(t)+l\ampf(t)} \leq \frac{C_\varepsilon}{(1-\diln t)^{\nicefrac{2}{\beta}+\gamma}} \Bigr\}
\qquadtext{with}
\gamma \defeq \begin{cases}
    \nicefrac{1}{4} & \text{if } \beta \leq 2, \\
    \nicefrac{1}{4} - \nicefrac{1}{2\beta} & \text{if } \beta > 2,
\end{cases}
\]
and where \(C_\varepsilon > 0\) is chosen so that \(\pprob{\mathscr{H}_{\shift,\varepsilon}} \geq 1 - \varepsilon\), which is always possible by Proposition~\ref{prop.tracebound}. Remark that the upper bound \((1 - t)^{\nicefrac{2}{\beta}+\gamma}\) is always integrable on the support of \(\psi\), either (if \(\beta \leq 2\)) because it is compact in \([0,1)\) or (if \(\beta > 2\)) because \(\nicefrac{2}{\beta} + \gamma < 1\). 

To prove~\eqref{eq.vagueconvP.vaguely0}, we replace \(\psi\) with a piecewise constant approximation. To do so, we partition \([0, \lasttime]\) by setting
\[
t_j \defeq \frac{j\pi}{2\diln^2} \shift^{-(\nicefrac{1}{2}-p)}
\qquadtext{for}
p \defeq \begin{cases}
    \nicefrac{1}{4} & \text{if } \beta \leq 2, \\
    \nicefrac{1}{4} + \nicefrac{1}{2\beta} & \text{if } \beta > 2
\end{cases}
\]
until we reach an index \(N\) for which this would give \(t_N \geq \lasttime\), and then we set \(t_N \defeq \lasttime\). We can define a discretization \(\hat{\psi} \colon \mathcal{I} \to \mathbb{R}\) of \(\psi\) by setting
\[
\hat{\psi}
    \equiv \hat{\psi}_j
    \defeq \frac{1}{t_j-t_{j-1}} \int_{t_{j-1}}^{t_j} \psi(s) \diff{s}
\quadtext{on}
[t_{j-1}, t_j)
\]
whenever \(j \leq N\), and \(\hat{\psi}(t) \defeq 0\) for \(t \geq \lasttime\). Note that by definition, if \(t \in [t_{j-1}, t_j)\) then \(\abs{\hat{\psi}(t) - \psi(t)} \leq \omega_\psi\bigl( \abs{t_j - t_{j-1}} \bigr) \leq \omega_\psi\bigl( \frac{\pi}{2\diln^2} \shift^{-(\nicefrac{1}{2}-p)} \bigr)\) where \(\omega_\psi\) is a modulus of continuity for \(\psi\), so \(\norm{(\hat{\psi} - \psi) \charf{[0,\lasttime)}}_\infty \leq \omega_\psi\bigl( \frac{\pi}{2\diln^2} \shift^{-(\nicefrac{1}{2}-p)} \bigr) \to 0\). Because the integrand in~\eqref{eq.vagueconvP.vaguely0} can be dominated on \(\mathscr{H}_{\shift,\varepsilon}\) by an integrable function that does not depend on \(\shift\), this shows that it suffices to prove~\eqref{eq.vagueconvP.vaguely0} with \(\psi\) replaced by \(\hat{\psi}\). 

Now, from the SDEs that are satisfied by the polar coordinates, a straightforward application of Itô's formula shows that
\begin{multline*}
e^{-k\ampg(t)-l\ampf(t)} \diff{\Bigl( e^{k\ampg+l\ampf} \sin\bigl( k\phaseg+l\phasef \bigr) \Bigr)}(t) \\
    = - 4\diln\sqrt{\shift} \cos\bigl( k\phaseg(t)+l\phasef(t) \bigr) \diff{t} + \sumdrift(t) \frac{\diln}{1-\diln t} \diff{t} + \sumdiff(t) \sqrt{\frac{\diln}{1-\diln t}} \diff{\sBM}(t)
\end{multline*}
where the processes \(\sumdrift\) and \(\sumdiff\), which can be expressed as polynomials in trigonometric functions of \(\phaseg\) and \smash{\(\phasef\)}, are bounded by constants that depend only on \(\beta\) and \(a\). It follows that
\begin{subequations}
\begin{align}
& \int_0^{\lasttime} \hat{\psi}(t) e^{k\ampg(t)+l\ampf(t)} \cos\bigl( k\phaseg(t) + l\phasef(t) \bigr) \diff{t} \notag \\
    &\hspace*{22mm} = - \frac{1}{4\diln\sqrt{\shift}} \sum_{j=1}^N \hat{\psi}_j e^{k\ampg(t)+l\ampf(t)} \sin\bigl( k\phaseg(t) + l\phasef(t) \bigr) \biggr\rvert_{t_{j-1}}^{t_j} 
    \label{eq.vagueconvP.psihat.parts} \\
    &\hspace*{44mm} + \frac{1}{4\diln\sqrt{\shift}} \int_0^{\lasttime} \hat{\psi}(t) e^{k\ampg(t)+l\ampf(t)} \sumdrift(t) \frac{\diln}{1-\diln t} \diff{t} 
    \label{eq.vagueconvP.psihat.drift} \\
    &\hspace*{44mm} + \frac{1}{4\diln\sqrt{\shift}} \int_0^{\lasttime} \hat{\psi}(t) e^{k\ampg(t)+l\ampf(t)} \sumdiff(t) \sqrt{\frac{\diln}{1-\diln t}} \diff{\sBM}(t). 
    \label{eq.vagueconvP.psihat.diff}
\end{align}
\end{subequations}
Note that on \(\mathscr{H}_{\shift,\varepsilon}\), for all \(\shift\) large enough, \(\exp\bigl( k\ampg(t)+l\ampf(t) \bigr) \leq C_\varepsilon \shift^{p-\nicefrac{\gamma}{2}}\) for all \(t \in \supp\psi\). Indeed, this is clear when \(\beta \leq 2\) as in that case \(\supp\psi\) is contained in \([0,1)\), and when \(\beta > 2\), then \(\exp\bigl( k\ampg(t)+l\ampf(t) \bigr) \leq C_\varepsilon (1-\diln t)^{\nicefrac{-2}{\beta}-\gamma} \leq C_\varepsilon \shift^{\nicefrac{1}{\beta}+\nicefrac{\gamma}{2}} = C_\varepsilon \shift^{p-\nicefrac{\gamma}{2}}\). Since \(t_{N-1} < \lasttime\), we know that \(N < 1 + \frac{2}{\pi} \shift^{\nicefrac{1}{2}-p}\), and it follows that~\eqref{eq.vagueconvP.psihat.parts} is bounded on \(\mathscr{H}_{\shift,\varepsilon}\) by
\[
\frac{C_\varepsilon \norm{\psi}_\infty \shift^{p-\nicefrac{\gamma}{2}}}{4\diln\sqrt{\shift}} \Bigl( 1 + \frac{2\shift^{\nicefrac{1}{2}-p}}{\pi} \Bigr)
    \leq \frac{C_\varepsilon \norm{\psi}_\infty}{\pi} \shift^{\nicefrac{-\gamma}{2}},
\]
and therefore~\eqref{eq.vagueconvP.psihat.parts} converges to \(0\) in probability as \(\shift\to\infty\). Then~\eqref{eq.vagueconvP.psihat.drift} is bounded on \(\mathscr{H}_{\shift,\varepsilon}\) by
\[
\frac{C_\varepsilon \norm{\psi}_\infty \norm{\sumdrift}_\infty \shift^{p-\nicefrac{\gamma}{2}}}{4\diln\sqrt{\shift}} \int_0^{\lasttime} \frac{\diln}{1-\diln t} \diff{t}
    = \frac{C_\varepsilon \norm{\psi}_\infty \norm{\sumdrift}_\infty}{8\diln} \shift^{-(\nicefrac{1}{2}-p+\nicefrac{\gamma}{2})} \log\shift,
\]
so~\eqref{eq.vagueconvP.psihat.drift} also converges to \(0\) in probability. Finally, in the same way as in the last case, the quadratic variation of~\eqref{eq.vagueconvP.psihat.diff} is bounded by a negative power of \(\shift\), so Bernstein's inequality shows that it converges to \(0\) in probability as well. This finishes to prove~\eqref{eq.vagueconvP.vaguely0} and concludes the proof.
\end{proof}

\subsection{Compact convergence of transfer matrices}

The compact convergence of transfer matrices can be deduced from the vague convergence of coefficient matrices that was proven in Theorem~\ref{thm.vagueconvP}, since the mapping from coefficient matrices to transfer matrices is continuous on domains on which the trace of coefficient matrices is dominated by a locally integrable function that is integrable near limit circle endpoints (see e.g.~\cite[Theorem~5.7(a)]{remling_spectral_2018} or~\cite[Theorem~8]{painchaud_operator_2025}). 

The specific version of this continuity result is that we use is the following, which is adapted to the convergence in probability of random canonical systems.

\begin{proposition}[Proposition 12 of \cite{painchaud_operator_2025}]
\label{prop.CStoTM}
Let \(\mathcal{I} = [a,b)\) or \([a,b]\). Let \(H_n, H\) be random coefficient matrices, all limit circle at \(a\), and also at \(b\) in case \(\mathcal{I} = [a,b]\). Let \(T_{H_n}, T_H \colon \mathcal{I} \times \mathbb{C} \to \mathbb{C}^{2\times 2}\) be their transfer matrices. Suppose that for any \(\varepsilon > 0\), there are \(f_\varepsilon, g_\varepsilon \in L^1\loc(\mathcal{I})\) such that \(\bprob{\tr H \leq f_\varepsilon} \geq 1 - \varepsilon\) and \(\bprob{\tr H_n \leq g_\varepsilon} \geq 1 - \varepsilon\) for all \(n\) large enough. If \(H_n \to H\) vaguely on \(\mathcal{I}\) in probability, then \(T_{H_n} \to T_H\) compactly on \(\mathcal{I} \times \mathbb{C}\) in probability.
\end{proposition}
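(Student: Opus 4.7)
The strategy is to prove a deterministic continuity statement and then transfer it to the probabilistic setting. Specifically: \emph{if} $H_n \to H$ vaguely with pointwise trace dominations $\tr H_n \le g$ eventually and $\tr H \le f$ for fixed $f, g \in L^1_{\mathrm{loc}}(\mathcal{I})$ integrable near any limit circle endpoint, \emph{then} $T_{H_n} \to T_H$ compactly on $\mathcal{I} \times \mathbb{C}$. To pass from this to the statement of the proposition, for fixed $\varepsilon > 0$ one truncates the sequences by setting $\tilde{H}_n \defeq H_n$ on $\{\tr H_n \le g_\varepsilon\}$ and $\tilde{H}_n$ equal to an auxiliary fixed coefficient matrix $H^\star_\varepsilon$ (with $\tr H^\star_\varepsilon \le g_\varepsilon$) off it; analogously one defines $\tilde H$ to enforce $\tr \tilde H \le f_\varepsilon$ almost surely. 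Since $\bprob{\tilde{H}_n \ne H_n} \le \varepsilon$ and $\bprob{\tilde{H} \ne H} \le \varepsilon$, the truncation preserves vague convergence in probability. The subsequence characterization of convergence in probability then lets us extract, for any subsequence, a further subsequence along which $\tilde{H}_n \to \tilde{H}$ vaguely almost surely; on that event the deterministic lemma yields $T_{\tilde{H}_n} \to T_{\tilde{H}}$ compactly. Combining with $\bprob{T_{\tilde{H}_n} \ne T_{H_n}} \le \varepsilon$ and $\bprob{T_{\tilde{H}} \ne T_H} \le \varepsilon$ gives $\limsup_n \bprob{\norm{T_{H_n} - T_H}_K > \delta} \le 2\varepsilon$ for every compact $K \subseteq \mathcal{I} \times \mathbb{C}$ and every $\delta > 0$; letting $\varepsilon \downarrow 0$ concludes.

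For the deterministic statement, fix a compact set $[a, c] \times D \subseteq \mathcal{I} \times \mathbb{C}$, with $c = b$ allowed when $b$ is limit circle since then $g$ is integrable up to $b$, and write $M \defeq \sup_{z \in D} \abs{z}$. From the Volterra integral form of the canonical system,
\[
T_{H_n}(t, z) = I_2 + z \int_a^t J H_n(s) T_{H_n}(s, z) \diff{s},
\]
a Gronwall estimate using the trace domination produces a uniform bound $\norm{T_{H_n}(t, z)} \le \exp\bigl( M \int_a^c g \bigr)$ on $[a, c] \times D$. Injecting this back into the integral equation yields uniform equicontinuity in $t$ via $\norm{T_{H_n}(t_2, z) - T_{H_n}(t_1, z)} \le C \int_{t_1}^{t_2} g(s) \diff{s}$, which is small on small intervals by absolute continuity of the integral, while holomorphicity of $T_{H_n}(t, \cdot)$ together with Montel's theorem gives equicontinuity in $z$ for free. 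By Arzelà--Ascoli, every subsequence of $\{T_{H_n}\}$ admits a further subsequence converging uniformly on $[a, c] \times D$ to some continuous $T^\infty$.

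It remains to identify $T^\infty$ with $T_H$ by passing to the limit in the integral equation. Writing
\[
\int_a^t J H_n T_{H_n} \diff{s} - \int_a^t J H T^\infty \diff{s} = \int_a^t J H_n (T_{H_n} - T^\infty) \diff{s} + \int_a^t J (H_n - H) T^\infty \diff{s},
\]
the first term is bounded by $\norm{T_{H_n} - T^\infty}_\infty \int_a^c \tr H_n \to 0$ by uniform convergence on the extracted subsequence and the uniform trace bound, while the second vanishes by vague convergence applied to the continuous integrand $T^\infty(\cdot, z)$, after replacing the sharp cutoff $\charf{[a, t]}$ by a continuous bump (the tail error being controlled uniformly in $n$ by $\int \tr H_n$ and $\int \tr H$ on a small neighborhood of $t$). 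Uniqueness for Volterra equations then forces $T^\infty = T_H$, and since every subsequence has a sub-subsequence converging to the same limit, the full sequence converges. The main technical difficulty is precisely this passage to the limit: one must handle simultaneously a coefficient matrix that converges only in the vague sense \emph{and} a varying test function $T_{H_n}$, which is why both the uniform trace bound and the Arzelà--Ascoli equicontinuity are essential, and why the hypothesis requires a dominating function integrable all the way up to limit circle endpoints.
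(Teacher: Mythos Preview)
The paper does not prove this proposition; it is quoted verbatim as Proposition~12 of \cite{painchaud_operator_2025}, so there is no in-paper proof to compare against. Your deterministic core --- the Gr\"onwall bound from the Volterra form, Arzel\`a--Ascoli compactness via the trace domination, and identification of the limit through the integral equation --- is the standard argument (it is essentially the content of \cite[Theorem~5.7(a)]{remling_spectral_2018}, which the paper also cites) and is correct.

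The probabilistic reduction, however, has a real gap. Your claim that ``truncation preserves vague convergence in probability'' does not follow from $\bprob{\tilde H_n \ne H_n} \le \varepsilon$ and $\bprob{\tilde H \ne H} \le \varepsilon$: those bounds only give $\limsup_n \bprob{d_{\mathrm{vague}}(\tilde H_n,\tilde H) > \rho} \le 2\varepsilon$, not convergence to zero, so you are not entitled to extract an almost-surely convergent subsequence of the truncated objects. The difficulty is structural: your deterministic lemma is a \emph{sequential} continuity statement and, to apply it at a fixed $\omega$, you need $\tr H_n(\omega) \le g_\varepsilon$ for \emph{all} large $n$ simultaneously --- that is, $\omega \in \liminf_n\{\tr H_n \le g_\varepsilon\}$ --- and this event can have arbitrarily small probability (e.g.\ if the good events are independent with probability $1-\varepsilon$). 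The clean repair is to upgrade to \emph{uniform} continuity: the set $\{H : \tr H \le f_\varepsilon \vee g_\varepsilon\}$ is vaguely compact (a Helly/Banach--Alaoglu argument, with closedness because vague limits inherit the trace bound), so $H \mapsto T_H$ is uniformly continuous there. One then bounds, for each $n$ individually,
\[
\bprob[\big]{\norm{T_{H_n}-T_H}_K > \delta}
\le \bprob{\tr H_n > g_\varepsilon} + \bprob{\tr H > f_\varepsilon} + \bprob{d_{\mathrm{vague}}(H_n,H) \ge \rho}
\le 2\varepsilon + o(1),
\]
and lets $\varepsilon \downarrow 0$. With this modification the truncation step is unnecessary.
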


From this general result, it is easy to deduce the convergence of the transfer matrix of the shifted Bessel system to that of the sine system from Theorem~\ref{thm.vagueconvP} and Lemma~\ref{lem.tracebound}. The following also implies Corollary~\ref{cor.TMconv}.

\begin{corollary}[thm.vagueconvP]
\label{cor.TMconvP}
\setshift{E_n}
Let \(\mathcal{I} \defeq [0,1)\) if \(\beta \leq 2\) and \(\mathcal{I} \defeq [0,1]\) if \(\beta > 2\). With \(\sBesselmat* \defeq \timechange' (\sBesselmat\circ\timechange)\) and \(\sinemat* \defeq \sinemat\circ\logtime\), let \(\sBesselTM*, \sineTM* \colon \mathcal{I} \times \mathbb{C} \to \mathbb{C}^{2\times 2}\) be the transfer matrices of the associated canonical systems, both defined on the probability space from Lemma~\ref{lem.coupling}. Then \(\sBesselTM* \to \sineTM*\) compactly on \(\mathcal{I} \times \mathbb{C}\) in probability as \(n\to\infty\).
\end{corollary}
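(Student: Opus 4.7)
The plan is to invoke Proposition~\ref{prop.CStoTM} with $H_n = \sBesselmat*$ and $H = \sinemat*$. Theorem~\ref{thm.vagueconvP} already supplies the required vague convergence on $\mathcal{I}$ in probability, so what remains is to verify the two hypotheses of the proposition: that every system in sight is limit circle at the endpoints of $\mathcal{I}$, and that $\tr\sBesselmat*$ and $\tr\sinemat*$ admit uniform deterministic $L^1_{\mathrm{loc}}(\mathcal{I})$ dominants on events of probability at least $1 - \varepsilon$ for every $\varepsilon > 0$.

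All systems are limit circle at $0$ by construction. When $\beta \leq 2$ we have $\mathcal{I} = [0,1)$ and no right endpoint condition is needed. When $\beta > 2$, $\mathcal{I} = [0,1]$: limit circle at $1$ for the sine system is the standard property behind the right boundary condition in~\eqref{eq.defsine}; for the Bessel system, Proposition~\ref{prop.Weyl} gives limit circle at infinity (and hence at $1$ after the time change $\timechange$) when $\abs{a} < 1$, while when $a \geq 1$ the choice $\diln = 1 - \nicefrac{1}{\sqrt{\shift}}$ places $1$ strictly inside the open interval $(0, \nicefrac{1}{\diln})$ on which the time-changed system is defined, automatically giving the limit circle alternative at $1$.

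For the trace domination, the Bessel side is exactly Lemma~\ref{lem.tracebound} when $\abs{a} < 1$; when $a \geq 1$ (hence $\beta > 2$), the choice $\diln = 1 - \nicefrac{1}{\sqrt{\shift}}$ gives $\lasttime = 1$, and the remark following Proposition~\ref{prop.tracebound} (with $\delta$ chosen so that $\nicefrac{2}{\beta} + \delta < 1$) directly furnishes a dominant of the form $C(1-t)^{-\nicefrac{2}{\beta}-\delta}$, which lies in $L^1[0,1]$ and is valid on the whole of $[0,1]$ with probability at least $1 - \varepsilon$. For $\sinemat\circ\logtime$, an analogous deterministic dominant follows from standard concentration bounds on the hyperbolic Brownian motion: the same estimates on $\Im\HBM$ and $1/\Im\HBM$ used in Corollary~\ref{cor.GBM}, combined with a Bernstein-type control on $\Re\HBM$, yield on an event of probability at least $1-\varepsilon$ a deterministic dominant in $L^1_{\mathrm{loc}}(\mathcal{I})$ (in $L^1[0,1]$ when $\beta > 2$, consistently with limit circle there). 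With both hypotheses in hand, Proposition~\ref{prop.CStoTM} applies and delivers the compact convergence in probability of the transfer matrices. No genuine obstacle arises; the step most deserving of mild care is the deterministic $L^1[0,1]$-dominant for $\tr\sinemat\circ\logtime$ when $\beta > 2$, since integrability at $1$ is a sample-by-sample statement, but this is routinely handled by the same Brownian motion tools already in play.
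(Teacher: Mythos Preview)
Your approach is the same as the paper's: invoke Proposition~\ref{prop.CStoTM} after verifying the trace-domination hypotheses. The handling of the sine trace, of the case \(\abs{a} < 1\) via Lemma~\ref{lem.tracebound}, and of the case \(\beta > 2\), \(a \geq 1\) via Proposition~\ref{prop.tracebound} (where indeed \(\lasttime = 1\)) is correct.

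There is, however, a genuine gap: your parenthetical ``(hence \(\beta > 2\))'' after ``\(a \geq 1\)'' is simply false. The parameter regime \(\beta \leq 2\), \(a \geq 1\) is perfectly legitimate, and your argument does not cover it. In that regime \(\diln = 1\), so \(\lasttime = 1 - \nicefrac{1}{\sqrt{\shift}} < 1\); Proposition~\ref{prop.tracebound} only controls the trace on \([0,\lasttime]\), and Lemma~\ref{lem.tracebound} does not apply since it requires \(\abs{a} < 1\). You therefore have no uniform-in-\(n\) dominant \(g_\varepsilon \in L^1\loc[0,1)\) valid on the whole of \([0,1)\), which is what Proposition~\ref{prop.CStoTM} asks for.

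The paper closes this gap with a short extra step: for any fixed \(b \in (0,1)\), one has \(\lasttime > b\) for all large \(\shift\), so the bound from Proposition~\ref{prop.tracebound} holds on \([0,b]\) for all large \(n\); Proposition~\ref{prop.CStoTM} then yields compact convergence of the transfer matrices on \([0,b] \times \mathbb{C}\). Since \(b < 1\) is arbitrary and compact convergence on \([0,1) \times \mathbb{C}\) is equivalent to compact convergence on each \([0,b] \times \mathbb{C}\), the conclusion follows. You should add this argument to complete the proof.
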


\begin{proof}
By Proposition~\ref{prop.CStoTM}, this result follows from the vague convergence \(\timechange' (\sBesselmat\circ\timechange) \to \sinemat\circ\logtime\) proved in Theorem~\ref{thm.vagueconvP} provided there are suitable bounds on the traces of these coefficient matrices. 

Recall that by definition, \(\tr\sinemat = \frac{1}{2\Im\HBM} (1 + \abs{\HBM}^2)\) where \(\HBM\) is a hyperbolic Brownian motion with variance \(\nicefrac{4}{\beta}\), started at \(i\) in the upper half-plane. Because the imaginary part of \(\HBM\) is a geometric Brownian motion, it can be verified from properties of Brownian motion (see the proof of~\cite[Theorem~21]{painchaud_operator_2025} for details) that for all \(\varepsilon, \delta > 0\), there is a \(C_\varepsilon > 0\) such that
\beq{eq.TMconvP.sinebound}
\bprob[\Big]{\forall t \in [0,1), \tr\sinemat\circ\logtime(t) \leq \frac{C_\varepsilon}{(1-t)^{\nicefrac{2}{\beta}+\delta}}} \geq 1 - \varepsilon.
\eeq
The function \(t \mapsto C_\varepsilon (1-t)^{-\nicefrac{2}{\beta}-\delta}\) is always \(L^1\loc[0,1)\), and it is also integrable near \(1\) when \(\beta > 2\) if \(\delta\) is taken small enough. 

When \(\abs{a} < 1\), Lemma~\ref{lem.tracebound} ensures the existence of a dominating function on the trace of \(\timechange' (\sBesselmat\circ\timechange)\), and with~\eqref{eq.TMconvP.sinebound} this suffices to deduce the convergence of the transfer matrices from Proposition~\ref{prop.CStoTM} together with the vague convergence of the coefficient matrices. When \(a \geq 1\) and \(\beta > 2\), we rather have a suitable bound on \(\timechange' (\tr\sBesselmat\circ\timechange)\) from Proposition~\ref{prop.tracebound}, and the convergence of transfer matrices follows as before.

This only leaves out the case \(\beta \leq 2\) and \(a \geq 1\). In that case, we have no bound on \(\timechange' (\tr\sBesselmat\circ\timechange)\) on \((1-\nicefrac{1}{\sqrt{\shift}}, 1)\), but we still have the bound from Proposition~\ref{prop.tracebound} up to \(1 - \nicefrac{1}{\sqrt{\shift}}\). Hence, for any \(b \in (0,1)\), this bound holds on \([0,b]\) for all \(\shift\) large enough, and it follows that \(\sBesselTM* \to \sineTM*\) compactly on \([0,b] \times \mathbb{C}\) in probability. Because \(b\) is arbitrary in \((0,1)\), this implies that \(\sBesselTM* \to \sineTM*\) compactly on \([0,1) \times \mathbb{C}\) in probability. 
\end{proof}

\section{Spectral convergence}
\label{sec.spectralconv}

In this section, we prove the compact convergence in probability of the canonical systems' Weyl--Titchmarsh functions. We start by recalling basic facts about Weyl theory for canonical systems and describe the relationship between the convergence of Weyl--Titchmarsh and that of transfer matrices. We will see that this relationship depends strongly on the behavior of the limit system at the endpoints of the time domain. For this reason, the proof of Theorem~\ref{thm.WTconv} is broken in two parts: first we carry out the proof for \(\beta \leq 2\), and then we do it for \(\beta > 2\). As part of the proof for \(\beta > 2\) we also derive the asymptotics announced in Theorem~\ref{thm.asymptotics}.

\subsection{Convergence of Weyl--Titchmarsh functions of canonical systems}

Here, we recall some basic facts about the Weyl theory of canonical systems, and introduce some convergence results that will be used in the sequel. For a more complete introduction to the Weyl theory of canonical systems, we refer the reader to~\cite[Section~3.4]{remling_spectral_2018}.

Consider a canonical system \(Ju' = -zHu\) on an interval \((a,b)\), and suppose that it is limit circle at \(a\) and subject to the boundary condition \(e_0^*Ju(a) = 0\). Suppose that this system is either limit point at \(b\) or also subject to a boundary condition at \(b\) if it is limit circle, which in both cases yields a self-adjoint realization of the system. The Weyl--Titchmarsh function for this problem is the map \(m\colon \UHP \to \clUHP\) given by \(m(z) \defeq \frac{u_1(a,z)}{u_2(a,z)}\), where \(u\colon [a,b) \times \mathbb{C} \to \mathbb{C}^2\) is defined so that for each \(z \in \mathbb{C}\), \(u(\cdot,z)\) solves the canonical system and is either integrable near \(b\) if it is limit point or satisfies the boundary condition at \(b\) if it is limit circle. Here, by an \emph{integrable} solution \(u\), we mean one such that \(\int_a^b u^*(t) H(t) u(t) \diff{t} < \infty\). Note also that it is clear from the definition that the Weyl--Titchmarsh function is not modified by any time change made to the system: if \(H\) is replaced with \(\eta' (H\circ\eta)\) for some increasing \(\mathscr{C}^1\) bijection \(\eta\), then the solution \(u\) is replaced with \(u\circ\eta\) but this has no effect on the Weyl--Titchmarsh function.

A canonical system's Weyl--Titchmarsh function is always a generalized Herglotz function, that is, a holomorphic function \(\UHP \to \clUHP\). As such, it induces a spectral measure \(\mu\) which can be recovered through Stieltjes inversion. In particular, the singular part of \(\mu\) can be obtained by the relation \(\mu\bigl( \{t\} \bigr) = -i \lim_{\varepsilon \downarrow 0} \varepsilon m(t + i\varepsilon)\), which holds for all \(t \in \mathbb{R}\). We refer the reader to~\cite[Appendix~F]{schmudgen_unbounded_2012} for more details on Herglotz functions. 

The Weyl--Titchmarsh function can be described explicitly in terms of the transfer matrix, and of the boundary condition at \(b\) in the limit circle case. In the latter case, if the system is given the boundary condition \(e_\phi^*Ju(b) = 0\) for some \(\phi \in [0,\pi)\), then the properties of the transfer matrix imply that the Weyl--Titchmarsh function is \(m(z) = \projection T(b,z)^{-1} e_\phi\) where \(\projection(z,w) \defeq \nicefrac{z}{w}\). When the system is limit point at \(b\), a fundamental result of Weyl theory shows that the function \(z \mapsto \lim_{t\to b} \projection T(t,z)^{-1} e_\phi\) does not depend on \(\phi\), and is in fact always equal to the Weyl--Titchmarsh function. 

It turns out that the mapping from a transfer matrix to the corresponding Weyl--Titchmarsh function is continuous on suitable domains. The way this relationship works depends on whether the systems involved are limit point or limit circle at the right endpoint. The simplest case is the limit point case; then we have the following result, which is a simple extension of~\cite[Theorem~5.7(a)]{remling_spectral_2018}.

\begin{theorem}
\label{thm.TMconv.LP}
Let \(\TM[a,b)\) denote the set of transfer matrices of canonical systems on \((a,b)\) that are limit circle at \(a\) with boundary condition \(e_0^*Ju(a) = 0\), and write \(\TMLP[a,b) \subset \TM[a,b)\) for the subset of those which are additionally limit point at \(b\). Let \(T_n \in \TM[a,b)\) and \(T \in \TMLP[a,b)\), and let \(m_n, m \in \Hol(\UHP, \clUHP)\) be the Weyl--Titchmarsh functions of the corresponding systems. If \(T_n \to T\) compactly, then \(m_n \to m\) compactly. In particular, the mapping \(\TMLP[a,b) \to \Hol(\UHP, \clUHP)\) sending a transfer matrix to the corresponding Weyl--Titchmarsh function is continuous with respect to the topology of compact convergence.
\end{theorem}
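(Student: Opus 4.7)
The plan is to combine the normal-family structure of generalized Herglotz functions with the shrinking-Weyl-disk characterization of the limit-point case. Since every \(m_n\) maps \(\UHP\) into the compact set \(\clUHP\), Montel's theorem gives that the family \(\{m_n\}\) is normal: any subsequence admits a further subsequence converging compactly on \(\UHP\) to some \(\tilde{m} \in \Hol(\UHP, \clUHP)\). It will therefore suffice to show that every such subsequential limit equals \(m\), from which the full compact convergence \(m_n \to m\) follows by the usual subsequence-of-subsequence argument.

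To identify such a subsequential limit, I would use nested Weyl disks. For a transfer matrix \(T_H \in \TM[a,b)\), \(t \in (a,b)\), and \(z \in \UHP\), let
\[
D_H^{(t)}(z) \defeq \overline{\bigl\{ \projection T_H(t,z)^{-1} e_\phi : \phi \in [0,\pi) \bigr\}} \subset \clUHP
\]
be the closed Weyl disk at \(t\). Standard Weyl theory for canonical systems (see e.g.~\cite[Section~3.4]{remling_spectral_2018}) guarantees that the disks \(D_H^{(t)}(z)\) nest as \(t\) increases towards \(b\), that the Weyl--Titchmarsh function \(m_H(z)\) of the full system lies in \(D_H^{(t)}(z)\) for every \(t\) (regardless of whether \(H\) is limit point or limit circle at \(b\), and of the chosen right boundary condition in the limit-circle case), and that when the system is limit point at \(b\) the intersection \(\bigcap_{t\in(a,b)} D_H^{(t)}(z)\) reduces to the single point \(m_H(z)\).

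Compact convergence \(T_n \to T\) on \([a,b) \times \mathbb{C}\) yields, for each fixed \(t \in (a,b)\) and \(z \in \UHP\), \(T_n(t,z) \to T(t,z)\). Since transfer matrices of canonical systems have determinant one, the Möbius transformations \(w \mapsto \projection T_n(t,z)^{-1} w\) converge uniformly on \(\overline{\UHP}\) to \(w \mapsto \projection T(t,z)^{-1} w\), so \(D_{T_n}^{(t)}(z) \to D_T^{(t)}(z)\) in Hausdorff distance. Passing to the subsequence along which \(m_{n_k} \to \tilde{m}\) and using the inclusion \(m_{n_k}(z) \in D_{T_{n_k}}^{(t)}(z)\) would give \(\tilde{m}(z) \in D_T^{(t)}(z)\) for every \(t \in (a,b)\), and the limit-point property of \(T\) would then force \(\tilde{m}(z) = m(z)\) pointwise on \(\UHP\), hence everywhere.

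The main obstacle will be checking the uniform inclusion \(m_n(z) \in D_{T_n}^{(t)}(z)\) across the limit-point/limit-circle dichotomy for the \(T_n\). In the limit-circle case this is straightforward: the right boundary condition corresponds to some \(\phi_n \in [0,\pi)\), and by composing the transfer matrix across \([a,t]\) with that across \([t,b]\) one writes \(m_n(z)\) as \(\projection T_n(t,z)^{-1} w\) for some \(w \in \overline{\UHP}\). In the limit-point case one approximates by truncated problems on \((a, s)\) with \(s \uparrow b\) and passes to the limit using nesting. Both arguments are classical; the rest of the proof is a routine application of the normal-family and Weyl-disk machinery, which is why the paper flags this only as a simple extension of~\cite[Theorem~5.7(a)]{remling_spectral_2018}.
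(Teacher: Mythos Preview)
The paper does not give its own proof of this theorem; it simply states it as ``a simple extension of~\cite[Theorem~5.7(a)]{remling_spectral_2018}'' and moves on. Your outline via normal families plus shrinking Weyl disks is exactly the classical argument behind that cited result, and it is correct as written: the key points are that every Weyl--Titchmarsh function (for any choice of right boundary data in the limit-circle case) sits in \(D_{T_n}^{(t)}(z)\) for all \(t<b\), that these disks converge under compact convergence of the transfer matrices, and that the limit-point hypothesis on \(T\) collapses the limiting nested family to the single point \(m(z)\). The only cosmetic remark is that the compactness of \(\{m_n\}\) is really compactness of \(\Hol(\UHP,\clUHP)\) in the spherical-metric topology rather than Montel's theorem in its usual bounded-family form, but this is a standard fact and does not affect the argument.
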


When the systems are limit circle at the right endpoint, the convergence of boundary conditions is also required for the Weyl--Titchmarsh functions to converge. For that case, we have the following result.

\begin{theorem}[Theorem 10 of \cite{painchaud_operator_2025}]
\label{thm.TMconv.LC}
Let \(\TMLC[a,b]\) denote the set of transfer matrices of canonical systems which are limit circle at both \(a\) and \(b\) with boundary condition \(e_0^*Ju(a) = 0\). Let \(\RiemannSphere^\UHP\) denote the set of mappings from \(\UHP\) to the Riemann sphere \(\RiemannSphere\), and define \(M\colon \TMLC[a,b] \times \mathscr{C}(\UHP, \mathbb{C}^2) \to \RiemannSphere^\UHP\) by \(M(T,w) \defeq \projection T(b,\cdot)^{-1}w\). Then \(M\) is continuous on \(M^{-1}\bigl( \Hol(\UHP, \clUHP) \bigr)\) under the topology of compact convergence.
\end{theorem}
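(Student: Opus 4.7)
The plan is to factor $M$ as a composition of two continuous operations. Write $\Psi(T, w)(z) \defeq T(b, z)^{-1} w(z)$, so that $M(T, w) = \projection \circ \Psi(T, w)$. The map $\Psi$ is continuous in the topology of compact convergence to $\mathscr{C}(\UHP, \mathbb{C}^2)$, and the projectivization $\projection \colon \mathbb{C}^2 \setminus \{0\} \to \RiemannSphere$ is continuous. The crux is that the hypothesis $M(T, w) \in \Hol(\UHP, \clUHP)$ rules out the singular point $0 \in \mathbb{C}^2$ along the limit, so continuity of the composition follows on the preimage.

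First, I would show that $\Psi$ is continuous. Compact convergence $T_n \to T$ on $[a, b] \times \mathbb{C}$ immediately yields compact convergence of the evaluations $T_n(b, \cdot) \to T(b, \cdot)$ on $\mathbb{C}$. Transfer matrices of canonical systems satisfy $\det T \equiv 1$: differentiating the ODE $\partial_t T = zJH T$ and using that $H$ is symmetric and $J$ antisymmetric gives $\tr(JH) = 0$, so $\det T$ is constant and equal to $\det T(a, z) = 1$. Matrix inversion is continuous on $SL(2, \mathbb{C})$, hence $T_n(b, \cdot)^{-1} \to T(b, \cdot)^{-1}$ compactly on $\mathbb{C}$, and composing with $w_n \to w$ compactly on $\UHP$ gives $\Psi(T_n, w_n) \to \Psi(T, w)$ compactly on $\UHP$ as $\mathbb{C}^2$-valued functions.

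Second, the assumption that $M(T, w) \in \Hol(\UHP, \clUHP)$ means $\projection \Psi(T, w)$ is well-defined at every point of $\UHP$, which forces $\Psi(T, w)(z) \neq 0$ for every $z \in \UHP$. On any compact $K \subset \UHP$, the continuous nonzero function $\Psi(T, w)$ is bounded below in norm by some $\delta_K > 0$, and by uniform convergence on $K$ we have $\abs{\Psi(T_n, w_n)(z)} \geq \delta_K / 2$ for all $z \in K$ and all $n$ sufficiently large. On the set $\{v \in \mathbb{C}^2 : \abs{v} \geq \delta_K / 2\}$, the projectivization $\projection$ is uniformly continuous as a map into $\RiemannSphere$ equipped with the chordal metric, so $M(T_n, w_n) \to M(T, w)$ uniformly on $K$ in $\RiemannSphere$. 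Taking $K$ arbitrary yields compact convergence on $\UHP$, which restricts to the desired topology on $\Hol(\UHP, \clUHP)$.

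\textbf{Main obstacle.} The genuine difficulty is the singularity of $\projection$ at the origin of $\mathbb{C}^2$. Without restricting to the preimage $M^{-1}(\Hol(\UHP, \clUHP))$, the function $M(T, w)$ could be a meromorphic function with true poles in $\UHP$, and compact convergence in the chordal metric near such poles would require Hurwitz-type reasoning to ensure that zeros of the denominator match up in the limit. The Herglotz hypothesis eliminates this possibility uniformly on compact subsets of $\UHP$ and is essential to the simple factorization argument above.
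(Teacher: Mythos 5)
Your proof is correct. The statement is imported from~\cite{painchaud_operator_2025} and not reproven in the present paper, so there is nothing to compare against here, but the chain of reductions you give is the right one: compact convergence of transfer matrices passes to $T_n(b,\cdot)^{-1}$ via $\det T\equiv 1$ (your $\tr(JH)=0$ computation is correct), and the hypothesis that $M(T,w)$ lands in $\Hol(\UHP,\clUHP)$ is used precisely to force $\Psi(T,w)$ to avoid the origin of $\mathbb{C}^2$, so that $\projection$ is applied only on a region $\{\abs{v}\geq\delta\}$ where it is Lipschitz into the chordal metric.

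One small imprecision in your closing paragraph: genuine poles of $M(T,w)$ are not the danger, since $\infty\in\clUHP$ is a regular value in the chordal metric and the argument handles it uniformly. The actual singularity that the Herglotz hypothesis removes is the indeterminate $0/0$, i.e.\ simultaneous vanishing of both components of $\Psi(T,w)$, and your main argument treats this correctly even if the informal remark misattributes it to poles.
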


Remark that when \(w \in \mathscr{C}(\UHP, \mathbb{C}^2)\) is constant with \(\nicefrac{w_1}{w_2} \equiv \cot\theta \in \mathbb{R} \cup \{\infty\}\) for a \(\theta \in [0,\pi)\), then \(M(T,w)\) is the Weyl--Titchmarsh function of a canonical system on \([a,b]\) with boundary condition \(e_\theta^* Ju(b) = 0\). When \(w\) is not constant, then it should be understood as a boundary condition which is allowed to depend on the spectral parameter. Of course, this does not result in an actual boundary condition for the system, but \(M(T,w)\) can still be a perfectly valid Weyl--Titchmarsh function in that case. 

An important example of this situation is the following. Suppose that, for some \(c > b\), \(T\) is the restriction to \([a,b] \times \mathbb{C}\) of the transfer matrix \(\tilde{T} \colon [a,c) \times \mathbb{C} \to \mathbb{C}^{2\times 2}\) of a system that is limit point at \(c\). The Weyl--Titchmarsh function of this system is then \(m(z) = \projection u(a,z)\) for an appropriate \(u\colon [a,c) \times \mathbb{C} \to \mathbb{C}^2\), and it can be shown from the definition of the transfer matrix that in fact \(u(a,z) = \tilde{T}(t,z)^{-1} u(t,z)\) for any \(t \in [a,c)\). Hence, \(m(z) = \projection T(b,z)^{-1} u(b,z) = M\bigl( T, u(b,\cdot) \bigr)(z)\). In other words, \(u(b,z)\) can be seen as the \(z\)-dependent boundary condition that one must add at \(b\) to restrict the system on \([a,c)\) to \([a,b]\) in order to preserve the spectral information. In the context of the hard edge to bulk transition, this is the idea behind our choice of taking \(\diln < 1\) when \(\beta > 2\) and \(a \geq 1\): it allows to make the Bessel system limit circle at \(1\), at the cost of having a boundary condition that depends on the spectral parameter.

\subsection{\texorpdfstring{\(\beta \leq 2\)}{beta < 2}: Limit point case}
\label{sec.spectralconv.beta<2}

When \(\beta \leq 2\), the stochastic sine canonical system is limit point at its right endpoint \(1\). Therefore, the convergence of the Weyl--Titchmarsh function of \(\sBesselop\) to that of the sine system is a simple consequence of the convergence of their transfer matrices. The following result proves the part of Theorem~\ref{thm.WTconv} about \(\beta \leq 2\). 

\begin{corollary}[thm.vagueconvP]
\label{cor.WTconvP.beta<2}
\setshift{E_n}
Suppose \(\beta \leq 2\), and let \(\sBesselWT, \sineWT \colon \UHP \to \clUHP\) be the Weyl--Titchmarsh functions of the shifted Bessel and sine canonical systems, both defined on the probability space from Lemma~\ref{lem.coupling}. Then \(\sBesselWT \to \sineWT\) compactly on \(\UHP\) in probability as \(n\to\infty\). 
\end{corollary}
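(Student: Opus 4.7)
The plan is to deduce this result mechanically from two ingredients already in place: the compact convergence in probability of transfer matrices from Corollary~\ref{cor.TMconvP}, and the continuity of the transfer-matrix-to-Weyl--Titchmarsh map in the limit point case from Theorem~\ref{thm.TMconv.LP}. Recall that in the regime $\beta \leq 2$ the domain is $\mathcal{I} = [0,1)$ and we take $\varepsilon_{\shift} = 0$ with $\diln = 1$.

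First, I would verify that for $\beta \leq 2$ the stochastic sine canonical system is almost surely limit point at its right endpoint $1$, i.e.~$\sineTM* \in \TMLP[0,1)$ a.s. This is consistent with the definition of the sine operator in~\eqref{eq.defsine}, where a boundary condition at $1$ is imposed only when $\beta > 2$: the absence of a right-endpoint boundary condition is precisely the manifestation of the limit point alternative. The left endpoint is limit circle with boundary condition $e_0^* J u(0) = 0$, so the transfer matrix indeed belongs to $\TM[0,1)$.

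Next, Corollary~\ref{cor.TMconvP} gives $\sBesselTM* \to \sineTM*$ compactly on $[0,1) \times \mathbb{C}$ in probability, and Theorem~\ref{thm.TMconv.LP} states that the map $\TM[0,1) \to \Hol(\UHP, \clUHP)$ sending a transfer matrix to its Weyl--Titchmarsh function is continuous at every point of $\TMLP[0,1)$ with respect to the topology of compact convergence. Both source and target spaces are metrizable, so the continuous mapping theorem for convergence in probability applies at the a.s.\ continuity point $\sineTM*$, yielding compact convergence in probability of the associated Weyl--Titchmarsh functions.

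Finally, as noted in the discussion preceding Theorem~\ref{thm.TMconv.LP}, the Weyl--Titchmarsh function is invariant under $\mathscr{C}^1$ time changes, so the Weyl--Titchmarsh functions of the time-changed systems with coefficient matrices $\sBesselmat*$ and $\sinemat*$ coincide respectively with $\sBesselWT$ and $\sineWT$, and the corollary follows. The only substantive step is confirming the limit point character of the sine system at $1$ when $\beta \leq 2$; once that is in hand, the statement reduces to a direct application of the continuous mapping theorem to the already-established convergence of transfer matrices.
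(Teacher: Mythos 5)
Your proposal is correct and takes essentially the same approach as the paper: deduce the Weyl--Titchmarsh convergence from Corollary~\ref{cor.TMconvP} and Theorem~\ref{thm.TMconv.LP}, after noting that the sine system is a.s.\@ limit point at $1$ when $\beta \leq 2$. The paper does not invoke the continuous mapping theorem directly but instead unrolls its proof through the subsequence characterization of convergence in probability, which is a slightly safer formulation here because the assignment $T \mapsto m$ is not a single well-defined map on all of $\TM[0,1)$ (a transfer matrix that is limit circle on the right only determines its Weyl--Titchmarsh function together with a boundary condition); the subsequence argument applies Theorem~\ref{thm.TMconv.LP} pathwise and so avoids this issue, while still amounting to the same argument you sketch.
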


\begin{proof}
\setshift{E_n}
Recall that on a separable metric space, convergence in probability is equivalent to each subsequence having a further subsequence that converges a.s.~\cite[Lemma~5.2]{kallenberg_foundations_2021}. This characterization can be used to prove~Corollary~\ref{cor.WTconvP.beta<2} in the same way as it can be used to prove the continuous mapping theorem (as done e.g.\@ in~\cite[Lemma~5.3]{kallenberg_foundations_2021}). 

Pick an arbitrary subsequence \(S \subset \mathbb{N}\). By Corollary~\ref{cor.TMconvP}, as \(n\to\infty\) the transfer matrix \(\sBesselTM*\) of the shifted Bessel system converges compactly on \([0,1) \times \mathbb{C}\) in probability to the transfer matrix \(\sineTM*\) of the sine system, so there is a further subsequence \(S' \subset S\) along which \(\sBesselTM* \to \sineTM*\) a.s. By Theorem~\ref{thm.TMconv.LP}, it follows that \(\sBesselWT \to \sineWT\) compactly on \(\UHP\) a.s.\@ along \(S'\). Because the subsequence \(S\) was arbitrary, it follows that \(\sBesselWT \to \sineWT\) as \(n\to\infty\) compactly on \(\UHP\) in probability.
\end{proof}

\subsection{\texorpdfstring{\(\beta > 2\)}{beta > 2}: Limit circle case}
\label{sec.spectralconv.beta>2}

When \(\beta > 2\), the stochastic sine canonical system is limit circle at its right endpoint \(1\). In that case, the convergence of the Bessel system's transfer matrix to that of the sine system is not sufficient to deduce the convergence of the Weyl--Titchmarsh functions: the boundary conditions must also converge. Our proof of the convergence of the boundary conditions relies on asymptotics of solutions to \(\Besselop f = f\) towards \(-\infty\) when \(\Besselop\) is defined on the whole real line from a two-sided Brownian motion; this argument is similar to the one used to prove the convergence of the boundary conditions in~\cite[Section~7]{painchaud_operator_2025} in the context of the soft edge to bulk transition.

This section is split in two parts. First, in Section~\ref{sec.spectralconv.beta>2.polarcoords}, we find polar coordinates for solutions to \(\Besselop f = f\) on the negative real line, and we analyze their asymptotic behavior towards \(-\infty\). Then, we use this in Section~\ref{sec.spectralconv.beta>2.WT} to prove the convergence of the boundary conditions, and deduce the convergence of the Weyl--Titchmarsh functions.

\subsubsection{Polar coordinates and their asymptotic behavior}
\label{sec.spectralconv.beta>2.polarcoords}

In this section, we derive polar coordinates for solutions to \(\Besselop f = \lambda f\) towards \(-\infty\) for a positive spectral parameter \(\lambda\). Then, we obtain descriptions of the asymptotic behavior of these polar coordinates. The results of this section (in particular Propositions~\ref{prop.asymptotics.amp} and~\ref{prop.asymptotics.phase}) will together prove Theorem~\ref{thm.asymptotics}.

\begin{proposition}
\label{prop.polarcoords2}
Suppose \(\Besselop\) is defined from a two-sided standard Brownian motion \(B\), and write \(\rBM(t) \defeq B(-t)\). If \(f\) solves \(\Besselop f = \lambda f\) with \(\lambda > 0\), then for \(t \geq 1\),
\[
f(-t) = \lambda^{\nicefrac{-1}{4}} e^{\ramp(t) - \nicefrac{t}{4}} \cos\rphase(t)
\qquadtext{and}
f'(-t) = - \lambda^{\nicefrac{1}{4}} e^{\ramp(t) + \nicefrac{t}{4}} \sin\rphase(t)
\]
where \(\ramp\) and \(\rphase\) solve
\begin{align*}
\diff{\ramp}(t) 
    & = \Bigl( \frac{1}{2\beta} - \frac{a}{2} \Bigr) \diff{t} + \biggl( \Bigl( \frac{1}{4} + \frac{a}{2} \Bigr) \cos 2\rphase(t) - \frac{1}{2\beta} \cos 4\rphase(t) \biggr) \diff{t} + \frac{2}{\sqrt{\beta}} \sin^2\rphase(t) \diff{\rBM}(t), \\
\diff{\rphase}(t)
    & = - \sqrt{\lambda} e^{\nicefrac{t}{2}} \diff{t} - \biggl( \Bigl( \frac{1}{4} + \frac{a}{2} \Bigr) \sin 2\rphase(t) - \frac{1}{2\beta} \sin 4\rphase(t) \biggr) \diff{t} + \frac{1}{\sqrt{\beta}} \sin 2\rphase(t) \diff{\rBM}(t)
\end{align*}
with \(\ramp(1) = \frac{1}{2} \log\bigl( f^2(-1) + {f'}^2(-1) \bigr)\) and \(\rphase(1) = \arctan\bigl( \nicefrac{f'(-1)}{f(-1)} \bigr)\).
\end{proposition}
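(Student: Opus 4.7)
The plan is to closely follow the structure of the proof of Proposition~\ref{prop.polarcoords}, adapted to the full-line setting with the time reversal $s \mapsto -t$. The first step is to derive an Itô system for $g(t) \defeq f(-t)$ and $h(t) \defeq f'(-t)$ driven by the reversed Brownian motion $\rBM$. Since $f$ solves the pathwise Sturm--Liouville equation $(\Besselcoeff f')' = -\lambda \Besselweight f$ on $\mathbb{R}$, the integrated form rewritten in the variable $t = -s$ gives $\diff\bigl( \Besselcoeff(-\cdot)\, h \bigr)(t) = \lambda \Besselweight(-t) g(t) \diff t$. Observing that $\Besselcoeff(-t) = \exp\bigl( at - \tfrac{2}{\sqrt{\beta}} \rBM(t) \bigr)$ is a semimartingale with respect to the forward filtration of $\rBM$, I would apply Itô's product rule to the left-hand side; matching the stochastic parts (which must vanish) and then the drifts determines both coefficients of the SDE for $h$, yielding
\[
\diff g(t) = -h(t) \diff t, \qquad \diff h(t) = \bigl[ \lambda e^{t} g(t) + (\tfrac{2}{\beta} - a) h(t) \bigr] \diff t + \tfrac{2}{\sqrt{\beta}} h(t) \diff \rBM(t).
\]
The shift of the linear drift coefficient from $+(a + \tfrac{2}{\beta})$ in~\eqref{eq.BesselSDE} to $+(\tfrac{2}{\beta} - a)$ here is the Itô correction produced by the time reversal.

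Next, following the blueprint of Proposition~\ref{prop.polarcoords}, I introduce polar coordinates through $Z(t) \defeq S(t) g(t) - i h(t)/S(t) = e^{r(t) + i\xi(t)}$ with the WKB-type scaling $S(t) \defeq \lambda^{\nicefrac{1}{4}} e^{\nicefrac{t}{4}}$, which balances $\abs{Sg}^2 \sim \abs{h/S}^2$ at the effective oscillation frequency $\sqrt{\lambda e^{t}}$ of the equation $g'' \approx -\lambda e^{t} g$. Here the $t$-dependence of $S$ plays the role taken by the time change $\timechange$ in Proposition~\ref{prop.polarcoords}. A direct Itô computation using the SDEs above then gives, after combining $-Sh$ and $-i\lambda e^{t} g/S$ into $-i\sqrt{\lambda}\, e^{\nicefrac{t}{2}} Z$ via the identity $\lambda e^{t}/S^2 = S^2$,
\[
\diff Z = \Bigl[ -i\sqrt{\lambda}\, e^{\nicefrac{t}{2}} Z + \tfrac{1}{4}\bar Z + \tfrac{1}{2}\bigl( \tfrac{2}{\beta} - a \bigr)(Z - \bar Z) \Bigr] \diff t + \tfrac{1}{\sqrt{\beta}} (Z - \bar Z) \diff \rBM(t).
\]

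From here, Itô's formula applied to $\log Z = r + i\xi$, together with the quadratic variation correction $-\tfrac{1}{2Z^2} \diff \langle Z \rangle = -\tfrac{1}{2\beta}(1 - e^{-2i\xi})^2 \diff t$, yields an expression that, after expanding using $\bar Z/Z = e^{-2i\xi}$ and collecting the coefficients of $1$, $e^{-2i\xi}$, and $e^{-4i\xi}$, simplifies to exactly the constants $\tfrac{1}{2\beta} - \tfrac{a}{2}$, $\tfrac{1}{4} + \tfrac{a}{2}$, and $-\tfrac{1}{2\beta}$ appearing in the announced SDEs. Taking real and imaginary parts (and using $1 - \cos 2\xi = 2\sin^2\xi$) gives the two SDEs for $\ramp = r$ and $\rphase = \xi$. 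The representation formulas for $f(-t)$ and $f'(-t)$ follow by inverting the polar coordinate definition, which gives $g = e^{r}\cos\xi/S$ and $h = -S e^{r}\sin\xi$, and the initial conditions at $t = 1$ are read off by direct evaluation.

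The main obstacle is the time-reversal step: a naive substitution $s = -t$ in~\eqref{eq.BesselSDE} is not valid because $f'(s)$ for $s < 0$ is not adapted to the forward filtration of $B$, so the stochastic integral $\int_{-t}^{0} f'(s) \diff B(s)$ does not rewrite directly as an Itô integral against $\rBM$. The product-rule argument sketched above sidesteps this issue entirely by working with $\Besselcoeff(-t) h(t)$, which is a semimartingale in the forward filtration of $\rBM$. Once the SDE system for $g, h$ is in place, the remaining polar coordinate computation is essentially mechanical and parallels the one in the proof of Proposition~\ref{prop.polarcoords}.
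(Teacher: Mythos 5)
Your proof is correct and follows essentially the same route as the paper's: derive the SDE for $(f(-t), f'(-t))$ in the forward filtration of $\rBM$ via the product rule on $\Besselcoeff(-\cdot)\,f'(-\cdot)$, introduce polar coordinates with the WKB scaling $S(t) = \lambda^{\nicefrac{1}{4}} e^{\nicefrac{t}{4}}$, and read off the SDEs from Itô's formula applied to $\log Z$. Your $Z = Sg - ih/S$ coincides with the paper's $Sy + iy'/S$ (since $h = -y'$), and packaging the computation through a single complex SDE for $Z$ before taking $\log$ is a tidy but essentially equivalent reorganization of the same calculation.

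Your proof is correct and follows essentially the same route as the paper's: derive the SDE for $(f(-t), f'(-t))$ in the forward filtration of $\rBM$ via the product rule applied to $\Besselcoeff(-\cdot)\,f'(-\cdot)$, introduce polar coordinates with the WKB scaling $S(t) = \lambda^{\nicefrac{1}{4}} e^{\nicefrac{t}{4}}$, and read off the SDEs from Itô's formula applied to $\log Z$. Your $Z = Sg - ih/S$ coincides with the paper's $Sy + iy'/S$ (since $h = -y'$), so packaging the computation through a single complex SDE for $Z$ before taking $\log$ is a tidy but essentially equivalent reorganization of the same calculation.
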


\begin{proof}
Let \(y(t) \defeq f(-t)\). Reversing time in the equation \(\Besselop f = \lambda f\) yields \(\lambda y(t) = \frac{1}{\rBesselweight(t)} \bigl( \rBesselcoeff y' \bigr)'(t)\) where \(\rBesselcoeff(t) \defeq \Besselcoeff(-t) = \exp\bigl( at - \frac{2}{\sqrt{\beta}} \rBM(t) \bigr)\) and \(\rBesselweight(t) \defeq \Besselweight(-t) = e^{t} \rBesselcoeff(t)\). Note that \(\diff{\rBesselcoeff}(t) = (a + \frac{2}{\beta}) \rBesselcoeff(t) - \frac{2}{\sqrt{\beta}} \rBesselcoeff(t) \diff{\rBM}(t)\), so applying Itô's formula in the same way as we did in Section~\ref{sec.solprops} to obtain the SDE~\eqref{eq.BesselSDE} for \(f'\), we get
\beq{eq.rBesselSDE}
\diff{y'}(t)
    = - \lambda e^t y(t) \diff{t} - \Bigl( a - \frac{2}{\beta} \Bigr) y'(t) \diff{t} + \frac{2}{\sqrt{\beta}} y'(t) \diff{\rBM}(t).
\eeq
Now, set \(r\) and \(\xi\) as real processes that satisfy \(e^{r+i\xi} = Sy + \nicefrac{iy'}{S}\) where \(S\) is a (time-dependent) scaling factor to be determined later. Then by Itô's formula, omitting the explicit time dependences to simplify notation,
\begin{align*}
\diff{r} + i\diff{\xi}
    & = \frac{1}{Sy + \nicefrac{iy'}{S}} \Bigl( S'y \diff{t} + Sy' \diff{t} + \frac{i}{S} \diff{y'} - \frac{iS'}{S^2} y' \diff{t} \Bigr) + \frac{1}{2(Sy + \nicefrac{iy'}{S})^2} \frac{1}{S^2} \diff{\quadvar{y'}} \\
    & = e^{-2r} \biggl( \Bigl( SS'y^2 + S^2yy' - \frac{S'{y'}^2}{S^3} \Bigr) \diff{t} + \frac{y'}{S^2} \diff{y'} - i \Bigl( \frac{2S'yy'}{S} + {y'}^2 \Bigr) \diff{t} + iy \diff{y'} \biggr) \\
    &\hspace*{66mm} + \frac{e^{-4r}}{2S^2} \Bigl( S^2y^2 - \frac{{y'}^2}{S^2} - 2iyy' \Bigr) \diff{\quadvar{y'}} \\
    & = e^{-2r} \biggl( \Bigl( SS'y^2 + S^2yy' - \frac{S'{y'}^2}{S^3} - \frac{\lambda e^t yy'}{S^2} - \Bigl( a - \frac{2}{\beta} \Bigr) \frac{{y'}^2}{S^2} \Bigr) \diff{t} + \frac{2}{\sqrt{\beta}} \frac{{y'}^2}{S^2} \diff{\rBM} \\
    &\hspace*{11mm} - i \Bigl( \frac{2S'yy'}{S} + {y'}^2 + \lambda e^t y^2 + \Bigl( a - \frac{2}{\beta} \Bigr) yy' \Bigr) \diff{t} + \frac{2i}{\sqrt{\beta}} yy' \diff{\rBM} \biggr) 
    + \frac{2}{\beta} e^{-4r} \Bigl( y^2{y'}^2 - \frac{{y'}^4}{S^4} - \frac{2iy{y'}^3}{S^2} \Bigr) \diff{t}.
\end{align*}
With \(S(t) \defeq \lambda^{\nicefrac{1}{4}} e^{\nicefrac{t}{4}}\), the second and fourth terms on the first line cancel out, and the second line simplifies with \({y'}^2 + \lambda e^t y^2 = S^2 e^{2r}\). This also gives \(S'(t) = \frac{1}{4} S(t)\), so that taking the real and imaginary parts in the above while substituting \(Sy = e^r \cos\xi\) and \(\nicefrac{y'}{S} = e^r \sin\xi\) yields
\begin{align*}
\diff{r}
    & = \Bigl( \frac{1}{4} \cos^2\xi - \Bigl( \frac{1}{4} + a - \frac{2}{\beta} \bigr) \sin^2\xi + \frac{2}{\beta} \sin^2\xi (\cos^2\xi - \sin^2\xi) \Bigr) \diff{t} + \frac{2}{\sqrt{\beta}} \sin^2\xi \diff{\rBM}
\shortintertext{and}
\diff{\xi}
    & = - \sqrt{\lambda} e^{\nicefrac{t}{2}} \diff{t} - \biggl( \Bigl( \frac{1}{2} + a - \frac{2}{\beta} \Bigr) \cos\xi \sin\xi + \frac{4}{\beta} \cos\xi \sin^3\xi \biggr) \diff{t} + \frac{2}{\sqrt{\beta}} \cos\xi \sin\xi \diff{\rBM}.
\end{align*}
Simplifying with trigonometric identities shows that \(\ramp \defeq r\) and \(\rphase \defeq \xi\) solve the announced SDEs. The representations for \(f\) and \(f'\) are recovered directly using that \(f(-t) = y(t) = \frac{1}{S(t)} e^{\ramp(t)} \cos\rphase(t)\) and that \(f'(-t) = - y'(t) = - S(t) e^{\ramp(t)} \sin\rphase(t)\). 
\end{proof}

We now obtain asymptotic descriptions of these polar coordinates, starting with the radial one.

\begin{proposition}
\label{prop.asymptotics.amp}
Let \(\ramp\) solve the SDE from Proposition~\ref{prop.polarcoords2}. Then for any \(\varepsilon, \delta > 0\), there is a \(C > 0\) such that
\[
\bprob[\bigg]{\forall t \geq 1, \abs[\Big]{\ramp(t) - \ramp(1) - \Bigl( \frac{1}{2\beta} - \frac{a}{2} \Bigr) t} \leq C(1 + t^{\nicefrac{1}{2}+\delta})}
    \geq 1 - \varepsilon.
\]
\end{proposition}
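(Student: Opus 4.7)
Integrating the SDE for $\ramp$ from Proposition~\ref{prop.polarcoords2} on $[1,t]$ yields the decomposition
\[
\ramp(t) - \ramp(1) - \Bigl(\frac{1}{2\beta} - \frac{a}{2}\Bigr) t
    = - \Bigl(\frac{1}{2\beta} - \frac{a}{2}\Bigr) + \Bigl(\frac{1}{4}+\frac{a}{2}\Bigr) I_2(t) - \frac{1}{2\beta} I_4(t) + M(t),
\]
where $I_k(t) \defeq \int_1^t \cos k\rphase(s) \diff{s}$ and $M(t) \defeq \frac{2}{\sqrt{\beta}} \int_1^t \sin^2\rphase(s) \diff{\rBM}(s)$. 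The leftover constant is harmless, so the plan is to show that $\abs{I_2}$, $\abs{I_4}$, and $\abs{M}$ are each bounded on $[1,\infty)$ by $C(1 + t^{\nicefrac{1}{2}+\delta})$ with probability at least $1-\nicefrac{\varepsilon}{3}$, and to conclude by a union bound.

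The martingale $M$ has quadratic variation bounded by $\frac{4}{\beta}(t-1)$, so $M(t) = W\bigl(\quadvar{M}(t)\bigr)$ for a standard Brownian motion $W$ by the Dambis--Dubins--Schwarz theorem, and the law of the iterated logarithm gives a $C>0$ such that $\abs{W(s)} \leq C(1+s^{\nicefrac{1}{2}+\delta})$ for all $s\geq 0$ with probability at least $1 - \nicefrac{\varepsilon}{3}$. This is exactly the estimate used at the end of the proof of Proposition~\ref{prop.tracebound}.

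The integrals $I_k$ should in fact be uniformly bounded in $t$, because the SDE for $\rphase$ has the leading deterministic term $-\sqrt{\lambda} e^{\nicefrac{s}{2}} \diff{s}$ that drives oscillations at an exponentially increasing rate. The standard way to exploit this is integration by parts: applying Itô's formula to $\Psi_k(s) \defeq -\frac{\sin k\rphase(s)}{k\sqrt{\lambda} e^{\nicefrac{s}{2}}}$ and rearranging the resulting identity gives
\[
\cos k\rphase(s) \diff{s} = \diff{\Psi_k}(s) + r_k(s) \diff{s} + \sigma_k(s) \diff{\rBM}(s),
\]
where $r_k$ and $\sigma_k$ are bounded by an absolute constant times $e^{-\nicefrac{s}{2}}$. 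Integrating on $[1,t]$, the boundary term $\Psi_k(t) - \Psi_k(1)$ is $O(1)$, the drift integral is absolutely convergent on $[1,\infty)$, and the stochastic integral has quadratic variation bounded by an absolute constant, so Bernstein's inequality for continuous martingales bounds it uniformly in $t$ with probability at least $1-\nicefrac{\varepsilon}{3}$.

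The main obstacle is the careful bookkeeping of the Itô correction coming from the diffusion part $\frac{1}{\sqrt{\beta}} \sin 2\rphase(s) \diff{\rBM}(s)$ of $\diff{\rphase}$ when applying Itô's formula to $\Psi_k$; this correction enters $r_k$ and $\sigma_k$, but since it is bounded and multiplied by the small factor $e^{-\nicefrac{s}{2}}$, no serious difficulty arises. The argument is morally the same as the integration-by-parts step underlying Lemma~\ref{lem.averaging}, adapted to the semi-infinite interval $[1,\infty)$.
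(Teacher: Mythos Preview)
Your proposal is correct and follows essentially the same approach as the paper: decompose into the martingale $M$ (controlled via DDS and the $t^{\nicefrac{1}{2}+\delta}$ envelope) and the oscillatory integrals $I_k$ (shown to be uniformly bounded by an integration-by-parts that exploits the leading drift $-\sqrt{\lambda}e^{\nicefrac{s}{2}}$ in $\diff{\rphase}$, with the resulting stochastic integral handled by Bernstein). The only cosmetic difference is that the paper runs the integration by parts through the complex exponential $e^{ki\rphase}$ and the identity $\int e^{-\nicefrac{s}{2}} \diff{(e^{ki\rphase})}$, whereas you apply It\^o's formula directly to $\Psi_k(s)=-\sin(k\rphase(s))/(k\sqrt{\lambda}e^{\nicefrac{s}{2}})$; both routes produce the same three pieces (an $O(e^{-\nicefrac{s}{2}})$ boundary term, an absolutely convergent drift integral, and a martingale with bounded quadratic variation).
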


\begin{proof}
Because \(\ramp\) solves the SDE from Proposition~\ref{prop.polarcoords2}, it suffices to show that for each \(\varepsilon, \delta > 0\), there are \(C, C' > 0\) such that for \(k \in \{2,4\}\),
\beq{eq.asymptotics.amp.drift}
\bprob[\bigg]{\sup_{t\geq 1} \abs[\Big]{\int_1^t e^{ki\rphase(s)} \diff{s}} \geq C} \leq \varepsilon
\eeq
and
\beq{eq.asymptotics.amp.martingale}
\bprob[\bigg]{\forall t \geq 1, \abs{M(t)} \leq C'(1 + t^{\nicefrac{1}{2}+\delta})} \geq 1 - \varepsilon
\qquadtext{where}
M(t) \defeq \frac{2}{\sqrt{\beta}} \int_1^t \sin^2\rphase(s) \diff{\rBM}(s).
\eeq
Notice that \(\quadvar{M}(t) \leq \nicefrac{4t}{\beta}\), so~\eqref{eq.asymptotics.amp.martingale} directly follows from properties of continuous martingales (more precisely, from the generalization of~\eqref{eq.Weyl.goodevent} for continuous martingales; see e.g.~\cite[Proposition~18]{painchaud_operator_2025} for a precise statement and a proof). 

To prove~\eqref{eq.asymptotics.amp.drift}, note that by Itô's formula, \(e^{ki\rphase}\) satisfies on \([1,\infty)\) the SDE
\beq{eq.asymptotics.expSDE}
\diff{\bigl( e^{ki\rphase} \bigr)}(t)
    = -ki \sqrt{\lambda} e^{ki\rphase(t) + \nicefrac{t}{2}} \diff{t} + R_k(t) e^{ki\rphase(t)}\diff{t} + S_k(t) e^{ki\rphase(t)} \diff{\rBM}(t)
\eeq
where
\beq{eq.asymptotics.defRS}
R_k \defeq - \frac{k^2}{4\beta} - ki \Bigl( \frac{1}{4} + \frac{a}{2} \Bigr) \sin 2\rphase + \frac{ki}{2\beta} \sin 4\rphase + \frac{k^2}{4\beta} \cos 4\rphase
\qquadtext{and}
S_k \defeq \frac{ki}{\sqrt{\beta}} \sin 2\rphase.
\eeq
It follows that
\begin{align*}
\abs[\Big]{\int_1^t e^{ki\rphase(s)} \diff{s}}
    & = \frac{1}{k\sqrt{\lambda}} \abs[\bigg]{\int_1^t e^{ki\rphase(s)-\nicefrac{s}{2}} R_k(s) \diff{s} + \int_1^t e^{ki\rphase(s)-\nicefrac{s}{2}} S_k(s) \diff{\rBM}(s) - \int_1^t e^{\nicefrac{-s}{2}} \diff{\bigl( e^{ki\rphase} \bigr)}(s)} \\
    & = \frac{1}{k\sqrt{\lambda}} \abs[\bigg]{\int_1^t e^{ki\rphase(s)-\nicefrac{s}{2}} \Bigl( R_k(s) - \frac{1}{2} \Bigr) \diff{s} + \int_1^t e^{ki\rphase(s)-\nicefrac{s}{2}} S_k(s) \diff{\rBM}(s) - e^{ki\rphase(s)-\nicefrac{s}{2}} \Bigr\rvert_1^t}.
\end{align*}
The last term is bounded, and since \(R_k\) is bounded and \(e^{\nicefrac{-s}{2}}\) is integrable the first integral is also bounded. This leaves only the second integral, whose real and imaginary parts have quadratic variations bounded by \(\frac{k^2}{\beta} \int_1^t e^{-s} \diff{s} \leq \frac{k^2}{\beta e}\). By Bernstein's inequality for martingales, it follows that for all \(x > 0\),
\[
\bprob[\bigg]{\sup_{t\geq 1} \frac{1}{k\sqrt{\lambda}} \abs[\Big]{\int_1^t e^{ki\rphase(s)-\nicefrac{s}{2}} S_k(s) \diff{\rBM}(s)} > x} \leq 4 \exp\Bigl( - \frac{\beta\lambda e x^2}{2} \Bigr).
\]
Taking \(x\) large enough so that this exponential tail bound is less than \(\varepsilon\) yields~\eqref{eq.asymptotics.amp.drift} and completes the proof.
\end{proof}

We move on to an asymptotic description of the behavior of the phase \(\rphase\). 

\begin{proposition}
\label{prop.asymptotics.phase}
Let \(\rphase\) solve the SDE from Proposition~\ref{prop.polarcoords2}. If \(U \sim \uniform\mathbb{S}^1\) where \(\mathbb{S}^1 \subset \mathbb{C}\) denotes the unit circle, then \(e^{i\rphase(t)} \to U\) in law as \(t\to\infty\).  
\end{proposition}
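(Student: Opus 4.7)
By Fourier inversion on $\mathbb{S}^1$, it suffices to show that $\phi_k(t) \defeq \bexpect{e^{ik\rphase(t)}} \to 0$ as $t \to \infty$ for every nonzero integer $k$. Applying Itô's formula to $e^{ik\rphase}$ using the SDE from Proposition~\ref{prop.polarcoords2} and expanding the trigonometric factors $\sin 2\rphase \cdot e^{ik\rphase}$, $\sin 4\rphase \cdot e^{ik\rphase}$, and $\sin^2 2\rphase \cdot e^{ik\rphase}$ in Fourier modes of $\rphase$ yields, after taking expectations, the linear ODE
\[
\phi_k'(t) = -\Bigl(ik\sqrt{\lambda}\,e^{t/2} + \tfrac{k^2}{4\beta}\Bigr)\phi_k(t) + F_k(t),
\]
where $F_k(t)$ is an explicit linear combination of $\phi_{k\pm 2}(t)$ and $\phi_{k\pm 4}(t)$ with coefficients polynomial in $k$, $a$ and $\beta^{-1}$. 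The diagonal coefficient combines a fast deterministic oscillation $-ik\sqrt{\lambda}\,e^{t/2}$ inherited from the phase SDE and an exponential damping $-\tfrac{k^2}{4\beta}$ that appears after averaging the Itô correction $-\tfrac{k^2}{2\beta}\sin^2 2\rphase \cdot e^{ik\rphase}$ against the fast oscillations.

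The next step is to introduce the rescaled functions $\psi_k(t) \defeq e^{2ik\sqrt{\lambda}(e^{t/2}-e^{1/2}) + \tfrac{k^2(t-1)}{4\beta}}\phi_k(t)$, which remove the diagonal part of the ODE and satisfy
\[
\psi_k'(t) = \sum_{j \in \{\pm 2, \pm 4\}} c_{k,j}\, e^{-2ij\sqrt{\lambda}(e^{t/2}-e^{1/2}) - \tfrac{j(2k+j)(t-1)}{4\beta}}\,\psi_{k+j}(t)
\]
for bounded constants $c_{k,j}$. The point of this change of variables is that, using the trivial bound $|\phi_l(t)| \leq 1$ valid for every integer $l$, each $\psi_{k+j}$ has $|\psi_{k+j}'(t)| \leq O(k^2)\,e^{\tfrac{(k+j)^2(t-1)}{4\beta}}$, i.e.\@ it evolves at a moderate rate compared with the rapid oscillation $e^{-2ij\sqrt{\lambda}\,e^{t/2}}$ that is present for every $j \neq 0$.

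Integrating by parts with respect to this fast oscillation --- the same oscillatory-integral trick used to prove equation~\eqref{eq.asymptotics.amp.drift} in Proposition~\ref{prop.asymptotics.amp} --- produces a factor of $(\sqrt{\lambda}\,e^{s/2})^{-1}$ in both the boundary and the remaining integral. The algebraic identity $-\tfrac{j(2k+j)}{4\beta} + \tfrac{(k+j)^2}{4\beta} = \tfrac{k^2}{4\beta}$ ensures that the resulting bound on $|\psi_k(t)|$, combined with the prefactor $e^{-\tfrac{k^2(t-1)}{4\beta}}$ that relates $\phi_k$ back to $\psi_k$, yields $|\phi_k(t)| = O(e^{-t/2})$ for every $k \neq 0$, so $\phi_k(t) \to 0$ as required. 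The main technical challenge is the bookkeeping needed to track the damping and oscillation exponents across the infinite coupling $\phi_k \leftrightarrow \phi_{k\pm 2}, \phi_{k\pm 4}$ and to verify that the residual integrals after one integration by parts decay at the claimed rate; passing to the $\psi$-coordinates makes the required cancellation of exponents transparent.
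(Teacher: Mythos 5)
Your argument is correct and arrives at the conclusion by a route that is related to but genuinely different from the paper's. The paper never writes down the coupled ODE system for the Fourier coefficients: it works with the twisted process \(X_m(t) = \exp\bigl(mi\rphase(t) + 2mi\sqrt{\lambda}e^{t/2}\bigr)\), derives a single scalar ODE for \(\varphi_m(t) = \expect X_m(t)\) whose source term \(\expect \tilde{R}_m X_m\) is \emph{not} re-expressed through other \(\varphi_k\)'s, and controls that source by a \emph{stochastic} integration by parts that re-uses the SDE~\eqref{eq.asymptotics.expSDE} for \(e^{ki\rphase}\). You instead leave the probabilistic structure behind after the first application of Itô: you write the full coupled system \(\phi_k' = -(ik\sqrt{\lambda}e^{t/2} + k^2/(4\beta))\phi_k + F_k\) linking \(\phi_k\) to \(\phi_{k\pm 2},\phi_{k\pm 4}\), pass to \(\psi_k\)-coordinates that strip the diagonal of both its fast oscillation and its damping, and integrate by parts \emph{deterministically} against the explicit oscillation \(e^{-2ij\sqrt{\lambda}(e^{s/2}-e^{1/2})}\). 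The a priori bound \(\abs{\phi_l}\le 1\) cuts the infinite coupling at the first step, and the identity \(-j(2k+j)+(k+j)^2=k^2\) is exactly what makes the exponents close. Your route makes the exponent bookkeeping essentially algebraic, at the cost of carrying the whole system; the paper's route avoids the system entirely but pays by doing a second, stochastic integration by parts. Two points to polish: the damping \(-k^2/(4\beta)\) is not produced by ``averaging the Itô correction against the fast oscillations'' --- it is simply the nonoscillatory part of \(-\tfrac{k^2}{2\beta}\sin^2 2\rphase\), no averaging required (the oscillatory part \(\tfrac{k^2}{4\beta}\cos 4\rphase\) goes into \(F_k\)); and the advertised rate \(\abs{\phi_k(t)} = O(e^{-t/2})\) is correct only when \(k^2 \ge 2\beta\) --- for smaller \(\abs{k}\), the \(\psi_k(1)\) contribution and the boundary term at \(s=1\) give the slower rate \(O\bigl(e^{-k^2(t-1)/(4\beta)}\bigr)\), which is also the rate the paper obtains and is of course still enough for the proposition.
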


\begin{proof}
The idea of this proof is the same as that of the analog result in the context of the soft edge to bulk problem, namely \cite[Proposition~25]{painchaud_operator_2025}. 

Since \(\expect U^m = 0\) for any \(m \in \mathbb{Z} \setminus \{0\}\), by density of trigonometric polynomials in continuous functions on \(\mathbb{S}^1\) it suffices to prove that \(\expect e^{mi\rphase(t)} \to 0\) as \(t\to\infty\) for any \(m \in \mathbb{Z} \setminus \{0\}\). This is equivalent to showing that \(\varphi_m(t) \to 0\) as \(t\to\infty\) for any \(m \in \mathbb{Z} \setminus \{0\}\) if \(\varphi_m(t) \defeq \expect X_m(t)\) for \(X_m(t) \defeq \exp\bigl( mi\rphase(t) + 2mi\sqrt{\lambda} e^{\nicefrac{t}{2}} \bigr)\). 

Now, using the expression~\eqref{eq.asymptotics.expSDE} of the differential of \(e^{mi\rphase}\), we see that
\[
\diff{X_m}(t)
    = R_m(t) X_m(t) \diff{t} + S_m(t) X_m(t) \diff{\rBM}(t).
\]
This shows that \(\varphi_m(t) = \varphi_m(1) + \expect \int_1^t R_m(s) X_m(s) \diff{s}\), so that \(\varphi_m'(t) = \expect R_m(t) X_m(t)\). Setting \(\tilde{R}_m \defeq R_m + \frac{m^2}{4\beta}\) to be the oscillating part of \(R_m\) as a function of \(\rphase\), this yields \(\varphi_m'(t) = - \frac{m^2}{4\beta} \varphi_m(t) + \expect \tilde{R}_m(t) X_m(t)\), which integrates to
\[
\varphi_m(t) = e^{-\frac{m^2(t-1)}{4\beta}} \varphi_m(1) + e^{-\frac{m^2t}{4\beta}} \expect \int_1^t e^{\frac{m^2s}{4\beta}} \tilde{R}_m(s) X_m(s) \diff{s}.
\]
It is clear that the first term here vanishes as \(t\to\infty\), so by definition~\eqref{eq.asymptotics.defRS} of \(R_m\), it suffices to show that
\beq{eq.asymptotics.phase.toshow}
e^{-\frac{m^2t}{4\beta}} \expect \int_1^t e^{\frac{m^2s}{4\beta}} X_m(s) e^{ki\rphase(s)} \diff{s}
    \to 0
\quadtext{as}
t\to\infty
\eeq
for \(k \in \{2,4\}\). To prove this, we first reuse the expression~\eqref{eq.asymptotics.expSDE} of the differential of \(e^{ki\rphase}\) to write
\begin{multline*}
e^{-\frac{m^2t}{4\beta}} \expect \int_1^t e^{\frac{m^2s}{4\beta}} X_m(s) e^{ki\rphase(s)} \diff{s}
    = \frac{1}{ki\sqrt{\lambda}} e^{-\frac{m^2t}{4\beta}} \expect \int_1^t e^{ki\rphase(s) + (\frac{m^2}{4\beta}-\frac{1}{2})s} R_k(s) X_m(s) \diff{s} \\
    - \frac{1}{ki\sqrt{\lambda}} e^{-\frac{m^2t}{4\beta}} \expect \int_1^t e^{(\frac{m^2}{4\beta}-\frac{1}{2})s} X_m(s) \diff{\bigl( e^{ki\rphase} \bigr)}(s).
\end{multline*}
Because \(X_m\) and \(R_k\) are bounded, the first integral is bounded by a constant times \(e^{\nicefrac{-t}{2}}\), so it vanishes as \(t\to\infty\). Then, integrating by parts, we get
\begin{multline*}
e^{-\frac{m^2t}{4\beta}} \expect \int_1^t e^{(\frac{m^2}{4\beta}-\frac{1}{2})s} X_m(s) \diff{\bigl( e^{ki\rphase} \bigr)}(s)
    = e^{-\frac{m^2t}{4\beta}} \expect e^{ki\rphase(s)+(\frac{m^2}{4\beta}-\frac{1}{2})s} X_m(s) \biggr\rvert_1^t \\
    - e^{-\frac{m^2t}{4\beta}} \expect \int_1^t e^{ki\rphase(s)+(\frac{m^2}{4\beta}-\frac{1}{2})s} \Bigl( R_m(s) + \frac{m^2}{4\beta} - \frac{1}{2} + S_k(s) S_m(s) \Bigr) X_m(s) \diff{s}.
\end{multline*}
Like earlier, because \(X_m\), \(R_m\), \(S_k\) and \(S_m\) are bounded, this is bounded by a constant times \(e^{\nicefrac{-t}{2}}\), so it vanishes as \(t\to\infty\). This finishes to prove~\eqref{eq.asymptotics.phase.toshow} and completes the proof.
\end{proof}

\subsubsection{Convergence of the Weyl--Titchmarsh functions}
\label{sec.spectralconv.beta>2.WT}

We now have all the tools to prove the convergence of the Bessel system's Weyl--Titchmarsh function to that of the sine system. Before carrying out the full proof, we give an overview of the setup and introduce some notation.

When \(\abs{a} < 1\), the shifted Bessel system \(\timechange' (\sBesselmat\circ\timechange)\) is limit circle at \(1\), like the sine system. In that case, for each \(z \in \mathbb{C}\) there is a solution \(h_z\colon [0,\infty) \to \mathbb{R}\) to \(\sBesselop h_z = zh_z\) that satisfies the boundary condition \(\lim_{t\to\infty} \Besselcoeff(t) h_z'(t) = 0\), and (as seen in Section~\ref{sec.sBesselCSrepresentation}) the canonical system's boundary condition at \(1\) takes the form \(\sBesselSLtoCS^{-1} \begin{smallpmatrix} \Besselcoeff h_z' \\ h_z \end{smallpmatrix} (\infty)\) where \(\sBesselSLtoCS\) is the matrix~\eqref{eq.sBesselSLtoCS} maps canonical system solutions to those of \(\sBesselop f = zf\).

When \(a \geq 1\), the Bessel system is limit point at its right endpoint. Recall that in that case we took \(\diln = 1 - \nicefrac{1}{\sqrt{\shift}}\) in the definition of \(\timechange\), so the system is actually defined on \([0, \nicefrac{1}{\diln})\). Following the remark below Theorem~\ref{thm.TMconv.LC}, we restrict it to \([0,1]\) and we add at the new right endpoint \(1\) the \(z\)-dependent \enquote{boundary condition} \(u_z\bigl( \timechange(1) \bigr)\) where \(u_z\) is an integrable solution to the original system on \((0,\infty)\), so that \(u_z\circ\timechange\) solves the system on \((0,\nicefrac{1}{\diln})\). By the standard construction of the canonical system corresponding to a Sturm--Liouville operator (see e.g.~\cite[Section~2.4]{painchaud_operator_2025}), this solution \(u_z\) must have the form \(u_z = \sBesselSLtoCS^{-1} \begin{smallpmatrix} \Besselcoeff h_z' \\ h_z \end{smallpmatrix}\) for an \(L^2\bigl( [0,\infty), \Besselweight(t) \diff{t} \bigr)\) solution \(h_z\) to \(\sBesselop h_z = zh_z\). 

In order to uniformize the analysis of the two cases, we will show that when \(\abs{a} < 1\) we can effectively move the boundary condition from \(1\) to \(1 - \nicefrac{1}{\sqrt{\shift}}\). Hence, in both cases, our goal is to show the convergence of \(u_z\bigl( \timechange(\lasttime) \bigr)\) to the sine system's boundary condition, where \(\lasttime \defeq \frac{1}{\diln} (1 - \nicefrac{1}{\sqrt{\shift}})\) as before and where \(u_z \defeq \sBesselSLtoCS^{-1} \begin{smallpmatrix} \Besselcoeff h_z' \\ h_z \end{smallpmatrix}\) for a specific solution \(h_z\) to \(\sBesselop h_z = zh_z\), either an integrable one (if \(a \geq 1\)) or one that satisfies the boundary condition at infinity (if \(\abs{a} < 1\)). This defines \(h_z\) up to a multiplicative constant, and to see the appropriate way to normalize it we make the boundary condition's representation a bit more explicit.

The matrix \(\sBesselSLtoCS\) is built out of the two solutions \(\sBesself\) and \(\sBesselg\). Remark that in the polar coordinates from Proposition~\ref{prop.polarcoords}, with \(\diffamps \defeq \ampg - \ampf\) and \(\diffphases \defeq \phaseg - \phasef\),
\begin{align*}
\shift^{\nicefrac{-1}{4}} \sBesself\circ\timechange
    & = \frac{\shift^{\nicefrac{-1}{4}} e^{\nicefrac{\timechange}{4}}}{\sqrt{\Besselcoeff\circ\timechange}} e^{\ampg-\diffamps} \cos(\phaseg - \diffphases) \\
    & = e^{-\diffamps} \cos\diffphases \, \shift^{\nicefrac{1}{4}} \sBesselg\circ\timechange + e^{-\diffamps} \sin\diffphases \frac{\shift^{\nicefrac{-1}{4}} e^{\nicefrac{\timechange}{4}}}{\sqrt{\Besselcoeff\circ\timechange}} e^{\ampg} \sin\phaseg.
\end{align*}
The second term can be simplified using the identity~\eqref{eq.Wronskianidentity}, which implies that \(e^{-\diffamps} \sin\diffphases = e^{-2\ampg}\). This further simplifies when evaluated at \(\lasttime\), since \(\timechange(\lasttime) = \log\shift\). Thus,
\begin{align*}
\shift^{\nicefrac{-1}{4}} \sBesself(\log\shift)
    & = e^{-\diffamps(\lasttime)} \cos\diffphases(\lasttime) \shift^{\nicefrac{1}{4}} \sBesselg(\log\shift) + \frac{1}{\sqrt{\Besselcoeff(\log\shift)}} e^{-\ampg(\lasttime)} \sin\phaseg(\lasttime),
\intertext{and it can be shown in the same way that}
\shift^{\nicefrac{-1}{4}} \sBesself'(\log\shift)
    & = e^{-\diffamps(\lasttime)} \cos\diffphases(\lasttime) \shift^{\nicefrac{1}{4}} \sBesselg'(\log\shift) - \frac{1}{\sqrt{\Besselcoeff(\log\shift)}} e^{-\ampg(\lasttime)} \cos\phaseg(\lasttime).
\end{align*}
Using these representations, we can write
\begin{align}
u_z(\log\shift)
    & = \begin{pmatrix}
        \shift^{\nicefrac{-1}{4}} \sBesself & -\shift^{\nicefrac{-1}{4}} \Besselcoeff\sBesself' \\
        -\shift^{\nicefrac{1}{4}} \sBesselg & \shift^{\nicefrac{1}{4}} \Besselcoeff\sBesselg'
    \end{pmatrix} \begin{pmatrix}
        \Besselcoeff h_z' \\
        h_z
    \end{pmatrix} (\log\shift)
\label{eq.spectralconv.uz.def} \\
    \begin{split}
    & = \wronskian{h_z}{\shift^{\nicefrac{1}{4}}\sBesselg}(\log\shift) \begin{pmatrix}
        - e^{-\diffamps(\lasttime)} \cos\diffphases(\lasttime) \\
        1
    \end{pmatrix} \\
    &\hspace*{22mm} + e^{-\ampg(\lasttime)} \sqrt{\Besselcoeff(\log\shift)} \begin{pmatrix}
        \sin\phaseg(\lasttime) & \cos\phaseg(\lasttime) \\
        0 & 0 
    \end{pmatrix} \begin{pmatrix}
        h_z'(\log\shift) \\
        h_z(\log\shift)
    \end{pmatrix}
    \end{split}
\label{eq.spectralconv.uz}
\end{align}
where \(\wronskian{h_z}{\shift^{\nicefrac{1}{4}}\sBesselg} = \shift^{\nicefrac{1}{4}} \Besselcoeff (h_z \sBesselg' - h_z' \sBesselg)\) is the Wronskian of \(h_z\) and \(\shift^{\nicefrac{1}{4}} \sBesselg\). Recall that the right boundary condition of the sine system is \((\Re\HBM(\infty), 1)\) and that we have shown in Section~\ref{sec.coupling.HBMconvergence} that \(-e^{-\diffamps} \cos\diffphases\) converges to \(\Re\HBM\) run in logarithmic time. Hence, we expect the vector in the first term of~\eqref{eq.spectralconv.uz} to converge to this boundary condition if the prefactor is removed, and we need to normalize \(h_z\) so that this prefactor is set to \(1\).

To do so, we exploit the following property of \(h_z\): since it solves \(\sBesselop h_z = zh_z\), by Proposition~\ref{prop.unshift} the function \(\tilde{h}_z(t) \defeq h_z(t + \log\shift)\) solves \(\unsBesselop \tilde{h}_z = (1 + \frac{z}{2\sqrt{\shift}}) \tilde{h}_z\) where \(\unsBesselop = \Besselop\) in law, but is defined from a different Bownian motion than \(\sBesselop\). This equivalence extends to the whole real line if we make the Brownian motions two-sided. Now, we fix a solution \(\Phi_z\) to \(\unsBesselop\Phi_z = z \Phi_z\) with the right behavior at infinity, so that \(\tilde{h}_z = \gamma\Phi_{1+\nicefrac{z}{2\sqrt{\shift}}}\) for some (possibly random) scaling \(\gamma\). Note that we may (and do) choose \(\Phi_z\) to be analytic in \(z\) by properties of Sturm--Liouville operators (see e.g.~\cite[Theorem~2.7]{eckhardt_weyl-titchmarsh_2013}). Then with \(\unsBesselg(t) \defeq \sBesselg(t + \log\shift)\),
\beq{eq.wronskian.logE}
\begin{multlined}
\wronskian{h_z}{\shift^{\nicefrac{1}{4}}\sBesselg}(\log\shift)
    = \shift^{\nicefrac{1}{4}} \Besselcoeff(\log\shift) \bigl( h_z\sBesselg' - h_z'\sBesselg \bigr)(\log\shift) \\
    = \gamma \shift^{\nicefrac{1}{4}} \Besselcoeff(\log\shift) \bigl( \Phi_{1+\nicefrac{z}{2\sqrt{\shift}}}\unsBesselg' - \Phi_{1+\nicefrac{z}{2\sqrt{\shift}}}'\unsBesselg \bigr)(0)
    = \gamma\Besselcoeff(\log\shift) \unswronskian{\Phi_{1+\nicefrac{z}{2\sqrt{\shift}}}}{\shift^{\nicefrac{1}{4}}\unsBesselg}(0),
\end{multlined}
\eeq
where \(\unswronskian{f}{g} \defeq \unsBesselcoeff (fg' - f'g)\) is the Wronskian for \(\unsBesselop\). Taking \(\gamma \defeq \bigl( \Besselcoeff(\log\shift) \unswronskian{\Phi_{1+\nicefrac{z}{2\sqrt{\shift}}}}{\shift^{\nicefrac{1}{4}}\unsBesselg}(0) \bigr)^{-1}\) thus normalizes \(h_z\) as desired (i.e., it sets \(\wronskian{h_z}{\shift^{\nicefrac{1}{4}}\sBesselg}(\log\shift) = 1\)), but it is not obvious that this can be done. This leads us to Lemma~\ref{lem.spectralconv.beta>2.goodevent} below, which gives a good event where the solution \(h_z\) can be normalized as desired. 

\begin{lemma}
\label{lem.spectralconv.beta>2.goodevent}
If \(\beta > 2\) and \(K \subset \mathbb{C}\) is compact, then \(\bprob[\big]{\forall z \in K, \Besselcoeff(\log\shift) \abs[\big]{\unswronskian{\Phi_{1+\nicefrac{z}{2\sqrt{\shift}}}}{\shift^{\nicefrac{1}{4}} \unsBesselg}(0)} \geq \shift^{\nicefrac{-a}{2}}} \to 1\) as \(\shift\to\infty\), where all symbols are as in~\eqref{eq.wronskian.logE}.
\end{lemma}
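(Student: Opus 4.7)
Substituting the polar expression for $g = \sBesselg$ from Proposition~\ref{prop.polarcoords} evaluated at $\timechange(\lasttime) = \log\shift$ (and using that $\unsBesselcoeff(0) = 1$) into the Wronskian
\[
\unswronskian{\Phi_{z_E}}{\shift^{\nicefrac{1}{4}}\unsBesselg}(0) = \Phi_{z_E}(0) \shift^{\nicefrac{1}{4}} \unsBesselg'(0) - \Phi_{z_E}'(0) \shift^{\nicefrac{1}{4}} \unsBesselg(0),
\]
the random prefactor $e^{r^g(\lasttime)}/\sqrt{2\diln}$ from the polar representation combines with $\Besselcoeff(\log\shift) = \shift^{-a} e^{\nicefrac{-2B(\log\shift)}{\sqrt\beta}}$ to yield the identity
\[
\Besselcoeff(\log\shift) \unswronskian{\Phi_{z_E}}{\shift^{\nicefrac{1}{4}}\unsBesselg}(0) = \shift^{\nicefrac{-a}{2}} e^{\Lambda_\shift} F_\shift(z),
\]
where $z_E \defeq 1 + \nicefrac{z}{2\sqrt\shift}$, $\Lambda_\shift \defeq \ampg(\lasttime) - \nicefrac{B(\log\shift)}{\sqrt\beta}$, and $F_\shift(z) \defeq \Phi_{z_E}(0) \sin\phaseg(\lasttime) - \Phi_{z_E}'(0) \cos\phaseg(\lasttime)$. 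The lemma is then equivalent to showing that $\Lambda_\shift + \log|F_\shift(z)| \geq 0$ for every $z \in K$ with probability tending to one.

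\textbf{Radial estimate.} Proposition~\ref{prop.tracebound} applied to $\ampg$ at $\lasttime$ (noting $-\log(1-\diln\lasttime) = \nicefrac{\log\shift}{2}$) gives $\ampg(\lasttime) = \nicefrac{\log\shift}{2\beta} + O((\log\shift)^{\nicefrac{3}{4}})$ with probability at least $1-\varepsilon$, and the standard Gaussian tail bound gives $|B(\log\shift)|/\sqrt\beta = O(\sqrt{\log\shift\,\log\log\shift})$. Together this yields $\Lambda_\shift \geq \nicefrac{\log\shift}{4\beta}$ for $\shift$ large enough on a good event, so $e^{\Lambda_\shift} \geq \shift^{\nicefrac{1}{4\beta}}$ diverges polynomially in $\shift$.

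\textbf{Oscillating factor.} The map $z \mapsto (\Phi_z(0), \Phi_z'(0))$ is entire by standard Sturm--Liouville theory, and $|z_E - 1| = O(\nicefrac{1}{\sqrt\shift})$ uniformly in $z \in K$, so a first-order Taylor expansion gives
\[
\sup_{z \in K} \abs[\big]{F_\shift(z) - F_\shift^\infty} = O\bigl(\nicefrac{1}{\sqrt\shift}\bigr)
\quadtext{where}
F_\shift^\infty \defeq \Phi_1(0) \sin\phaseg(\lasttime) - \Phi_1'(0) \cos\phaseg(\lasttime),
\]
with an implicit constant that is a.s.\@ finite (controlled by $\sup_{z\in K} \abs{\partial_z\Phi_z(0)} + \abs{\partial_z\Phi_z'(0)}$ near $z=1$). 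Writing the real random vector $(\Phi_1(0), \Phi_1'(0)) = R(\cos\theta, \sin\theta)$, I have $F_\shift^\infty = R\sin(\phaseg(\lasttime) - \theta)$. The crucial structural observation is that $\phaseg(\lasttime)$ depends only on $B|_{[0, \log\shift]}$ while $(R,\theta)$ depends only on the shifted Brownian motion $B(\cdot + \log\shift) - B(\log\shift)$, so $\phaseg(\lasttime)$ and $(R,\theta)$ are independent. Since $R > 0$ a.s.\@ and has a distribution independent of $\shift$, there is $R_0 > 0$ with $\pprob{R \geq R_0} \geq 1 - \varepsilon$ uniformly in $\shift$. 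Combined with an anti-concentration estimate $\pprob{|\sin(\phaseg(\lasttime) - \theta)| \geq \delta_0} \geq 1 - \varepsilon$ for some $\delta_0 = \delta_0(\varepsilon) > 0$, this yields $|F_\shift(z)| \geq \nicefrac{R_0\delta_0}{2}$ uniformly in $z \in K$ for $\shift$ large on a good event. Combining with the radial estimate proves the lemma.

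\textbf{Main obstacle.} The anti-concentration of $\phaseg(\lasttime) \bmod \pi$ uniformly in $\shift$ is the main technical input. Because the SDE for $\phaseg$ from Proposition~\ref{prop.polarcoords} carries the deterministic drift $-2\diln\sqrt\shift$, $\phaseg$ undergoes of order $\sqrt\shift$ full rotations on $[0, \lasttime]$, and together with its non-degenerate diffusion one expects $\phaseg(\lasttime) \bmod 2\pi$ to converge weakly to the uniform distribution on $[0, 2\pi)$ as $\shift\to\infty$. I would prove this by a Fourier-type argument showing $\expect e^{im\phaseg(\lasttime)} \to 0$ for every nonzero integer $m$, analogous to the proof of Proposition~\ref{prop.asymptotics.phase}, with additional care taken near $\lasttime$ where the diffusion coefficient blows up.
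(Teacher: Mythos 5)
Your algebraic reduction is correct: the identity $\Besselcoeff(\log\shift)\,\unswronskian{\Phi_{1+\nicefrac{z}{2\sqrt\shift}}}{\shift^{\nicefrac{1}{4}}\unsBesselg}(0) = \shift^{\nicefrac{-a}{2}} e^{\ampg(\lasttime)-\nicefrac{B(\log\shift)}{\sqrt\beta}} F_\shift(z)$ with $F_\shift^\infty = R\sin(\phaseg(\lasttime)-\theta)$ follows from Proposition~\ref{prop.polarcoords} exactly as you say, the radial estimate $\Lambda_\shift \geq \nicefrac{\log\shift}{4\beta}$ via Proposition~\ref{prop.tracebound} is fine, and the independence of $\phaseg(\lasttime)$ and $(R,\theta)$ is a correct and pleasant structural observation. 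However, the argument has a genuine gap: the anti-concentration of $\phaseg(\lasttime)\bmod 2\pi$ is not merely an unproved technicality that needs ``additional care near $\lasttime$'' --- the proposed Fourier argument in fact fails, and it appears that $\phaseg(\lasttime)\bmod 2\pi$ does \emph{not} converge to the uniform distribution. The issue is the following. In the analogue of the paper's Proposition~\ref{prop.asymptotics.phase}, the dissipation term gives $\varphi_m(\lasttime) = \shift^{-\nicefrac{m^2}{4\beta}}\varphi_m(0) + \shift^{-\nicefrac{m^2}{4\beta}}\int_0^{\lasttime}(1-\diln s)^{-\nicefrac{m^2}{2\beta}-1}\diln\,\expect\tilde R_m(s)X_m(s)\diff s$, and for $m \in \{\pm 2,\pm 4\}$ the oscillating part $\tilde R_m$ contains a $k=-m$ Fourier mode for which $\tilde R_m X_m$ has a deterministic contribution proportional to $e^{2mi\diln\sqrt\shift s}$. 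Substituting $u = \diln\sqrt\shift(\lasttime - s)$ turns the resulting integral into $\shift^{\nicefrac{m^2}{4\beta}}$ times $\int_0^{\cdots}(1+u)^{-\nicefrac{m^2}{2\beta}-1}e^{-2miu}\diff u$, which converges to a nonzero constant. In words: near $\lasttime$ the weight $\frac{\diln}{1-\diln t}$ grows to order $\sqrt\shift$ and varies on the same scale as the period of the deterministic rotation, so there is no averaging gain at the endpoint, and $\expect e^{mi\phaseg(\lasttime)}$ does not tend to zero.

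This is precisely the difficulty that the paper's proof is engineered to avoid, and the detour is not cosmetic. The paper exploits that $\Phi_1$ and $\unsBesselg$ both solve $\unsBesselop f = f$, so the Wronskian $\unswronskian{\Phi_1}{\shift^{\nicefrac{1}{4}}\unsBesselg}$ is \emph{constant}; evaluating it at $-\log\shift$ (where $\unsBesselg(-\log\shift)=0$, $\unsBesselg'(-\log\shift)=1$) reduces everything to $\Phi_1(-\log\shift)$, whose phase $\rphase(\log\shift)$ (Proposition~\ref{prop.polarcoords2}) is evaluated at the \emph{fast-rotation} endpoint: there the rotation rate $\sqrt\lambda e^{t/2}$ grows exponentially, so the integration-by-parts gain $\nicefrac{e^{-s/2}}{\sqrt\lambda}$ decays and the Fourier argument of Proposition~\ref{prop.asymptotics.phase} closes. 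Your version reads off the phase of $\sBesselg$ at $\log\shift$, i.e.\@ at the \emph{slow-rotation} endpoint where the local rotation rate relative to the weight is $O(1)$, and the same mechanism breaks down. If you want to pursue your decomposition, one possible repair is to instead prove anti-concentration of $\theta = \arg(\Phi_1(0),\Phi_1'(0))$ (whose law is $\shift$-independent, so a one-time argument would suffice) and use the independence to transfer it to $\sin(\phaseg(\lasttime)-\theta)$ by convolution; but note that showing $\theta$ has a bounded density is not obviously easier than the paper's route via the Wronskian, which also gives you Propositions~\ref{prop.asymptotics.amp} and~\ref{prop.asymptotics.phase} for free since they are needed anyway for Theorem~\ref{thm.asymptotics}.
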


This allows us to complete the proof of the convergence of the Weyl--Titchmarsh functions. To avoid interrupting the argument too much we postpone the proof of Lemma~\ref{lem.spectralconv.beta>2.goodevent} until the end of the section.

\begin{theorem}
\setshift{E_n}
Suppose \(\beta > 2\), and let \(\sBesselWT, \sineWT\colon \UHP \to \clUHP\) be the Weyl--Titchmarsh functions of the shifted Bessel and sine canonical systems, both defined on the probability space from Lemma~\ref{lem.coupling}. Then \(\sBesselWT \to \sineWT\) compactly on \(\UHP\) in probability as \(n\to\infty\).
\end{theorem}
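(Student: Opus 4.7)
The plan is to combine the transfer matrix convergence from Corollary~\ref{cor.TMconvP} with the continuity result of Theorem~\ref{thm.TMconv.LC}. Since the sine system is limit circle at $1$ for $\beta > 2$, the main task is to show that the $z$-dependent effective boundary data $u_z(\log\shift)$ constructed in~\eqref{eq.spectralconv.uz} converges, compactly on $\UHP$ in probability, to the sine system's right boundary condition $(\Re\HBM(\infty), 1)^{\transpose}$ (up to a nowhere-zero scalar multiplier), jointly with the transfer matrices. In the $\abs{a} < 1$ case, where the Bessel system already carries a $z$-independent boundary condition at $1$, I would first verify via the transfer matrix intertwining $u_z(0) = \sBesselTM*(\lasttime, z)^{-1} u_z(\log\shift)$ that using $u_z(\log\shift)$ as the boundary datum at $\lasttime$ yields the same Weyl--Titchmarsh function; combined with $\lasttime \to 1$ and the continuity of the sine transfer matrix at $1$, this reduces both cases to a uniform analysis of $u_z(\log\shift)$.

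On the high-probability event of Lemma~\ref{lem.spectralconv.beta>2.goodevent}, I would normalize $h_z$ by setting the scaling constant in~\eqref{eq.wronskian.logE} to $\gamma(z) \defeq \bigl( \Besselcoeff(\log\shift) \unswronskian{\Phi_{1+z/(2\sqrt{\shift})}}{\shift^{\nicefrac{1}{4}} \unsBesselg}(0) \bigr)^{-1}$, which forces the Wronskian prefactor in~\eqref{eq.spectralconv.uz} to equal one and, by the lemma, satisfies $\abs{\gamma(z)} \leq \shift^{\nicefrac{a}{2}}$ uniformly on any compact $K \subset \UHP$. The boundary data $u_z(\log\shift)$ then splits into a $z$-independent \emph{hyperbolic part} $\bigl( -e^{-\diffamps(\lasttime)}\cos\diffphases(\lasttime), 1 \bigr)^{\transpose}$ plus an \emph{error term} whose first coordinate is bounded by $e^{-\ampg(\lasttime)}\sqrt{\Besselcoeff(\log\shift)}\, \bigl( \abs{h_z(\log\shift)} + \abs{h_z'(\log\shift)} \bigr)$ and whose second coordinate is zero.

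For the hyperbolic part, Proposition~\ref{prop.ReHBM} combined with the coupling of Lemma~\ref{lem.coupling} gives convergence of $-e^{-\diffamps}\cos\diffphases$ to $\Re\HBM\circ\slogtime$ on $[0, (1-\shift^{\nicefrac{-1}{2}+\alpha})/\diln]$; I would then bound the remaining increment up to $\lasttime$ using~\eqref{eq.approxReHBMSDE} over the short logarithmic window of length $\alpha\log\shift$, and combine with the a.s.\@ convergence $\HBM(t) \to \HBM(\infty)$ (valid for $\beta > 2$) to obtain $-e^{-\diffamps(\lasttime)}\cos\diffphases(\lasttime) \to \Re\HBM(\infty)$ in probability. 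For the error term, the estimates $\abs{h_z^{(j)}(\log\shift)} \leq \abs{\gamma(z)}\abs{\Phi^{(j)}_{1+z/(2\sqrt{\shift})}(0)} \leq C_K \shift^{\nicefrac{a}{2}}$ (from analyticity of $\Phi_z$ in $z$ and compactness of $K$) together with $\sqrt{\Besselcoeff(\log\shift)} \leq C\shift^{\nicefrac{-a}{2}}$ (up to sub-polynomial Brownian fluctuations) reduce the error to $e^{-\ampg(\lasttime)}$ times a sub-polynomial factor, and an extension of Proposition~\ref{prop.GBM} to $\lasttime$ shows $e^{-\ampg(\lasttime)} = O(\shift^{\nicefrac{-1}{2\beta}})$ with high probability, so the error vanishes in probability. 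A final subsequence argument identical to the one in Corollary~\ref{cor.WTconvP.beta<2} then converts these in-probability convergences into pathwise statements and applies Theorem~\ref{thm.TMconv.LC}. The hardest parts will be proving Lemma~\ref{lem.spectralconv.beta>2.goodevent}, which requires a careful analysis of $\unswronskian{\Phi_z}{\unsBesselg}(0)$ via the asymptotics from Section~\ref{sec.spectralconv.beta>2.polarcoords}, and extending Propositions~\ref{prop.GBM} and~\ref{prop.ReHBM} past $(1-\shift^{\nicefrac{-1}{2}+\alpha})/\diln$ in a form tight enough to match the a.s.\@ convergence rate of $\HBM$ to $\HBM(\infty)$.
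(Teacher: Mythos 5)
Your proposal is essentially correct and matches the paper's overall approach: combine the transfer matrix convergence (Corollary~\ref{cor.TMconvP}) with the limit-circle continuity result (Theorem~\ref{thm.TMconv.LC}), reduce the boundary condition comparison to an analysis of $u_z(\log\shift)$, and split it into a hyperbolic part converging to $(\Re\HBM(\infty), 1)$ plus a polynomially vanishing error term, with Lemma~\ref{lem.spectralconv.beta>2.goodevent} providing the needed normalization. The three-term decomposition for the hyperbolic part (compare on $[0,\penultime]$ via Proposition~\ref{prop.ReHBM}, control the increments on $[\penultime,\lasttime]$ separately, and use a.s.\@ convergence of $\HBM$) also matches what the paper does.

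Two remarks, one on a variant and one on an unnecessary detour. For reconciling the $\abs{a}<1$ case, you use the identity $m(z) = \projection \sBesselTM*(\lasttime,z)^{-1} u_z(\log\shift)$ combined with $\lasttime\to 1$ and continuity of the transfer matrices; the paper instead directly bounds $u_z\circ\timechange(1) - u_z\circ\timechange(\lasttime)$ by a quantitative Wronskian estimate, $(\wronskian{h_z}{\psi})' = \frac{z\sqrt{\shift}}{2} h_z \psi \Besselweight$, and shows that difference vanishes. Both are valid; your formulation is arguably more uniform across the two cases, while the paper's is easier to plug verbatim into $M(T,w) = \projection T(b,\cdot)^{-1}w$. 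The second remark is the real simplification you are missing: you say the hard parts include \enquote{extending Propositions~\ref{prop.GBM} and~\ref{prop.ReHBM} past $(1-\shift^{\nicefrac{-1}{2}+\alpha})/\diln$ in a form tight enough to match the a.s.\@ convergence rate of $\HBM$ to $\HBM(\infty)$} and also invoke an \enquote{extension of Proposition~\ref{prop.GBM} to $\lasttime$} for the bound on $e^{-\ampg(\lasttime)}$. Neither extension is needed. Proposition~\ref{prop.tracebound} is already stated on all of $[0,\lasttime]$ and, since it controls $\amp(t) + \frac{1}{\beta}\log(1-\diln t)$ two-sidedly, it gives $e^{-\ampg(\lasttime)} \leq C\shift^{\nicefrac{-1}{2\beta}+\nicefrac{\delta}{2}}$ directly (note the $\delta$ loss: your claimed $O(\shift^{\nicefrac{-1}{2\beta}})$ is slightly too strong). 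For the increments of $-e^{-\diffamps}\cos\diffphases$ on the tail window $[\penultime,\lasttime]$, one just integrates the SDE~\eqref{eq.approxReHBMSDE} directly, bounding both the drift and the quadratic variation by Proposition~\ref{prop.tracebound} and applying Bernstein; no comparison with the coupled complex Brownian motion is needed on that window, so the coupling estimate never has to be pushed past $\penultime$.
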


\begin{proof}
As stated before, since the transfer matrices of these canonical systems convergence compactly in probability by Corollary~\ref{cor.TMconvP}, it suffices to prove that the Bessel system's boundary condition at \(1\) converges in probability to the boundary condition of the sine system by Theorem~\ref{thm.TMconv.LC}. When \(a \geq 1\), the boundary condition we are talking about here is the \(z\)-dependent one of the truncated system, and the convergence must hold compactly in \(z\). 

The boundary condition of the Bessel system is given by \(u_z\circ\timechange(1)\) where \(u_z \defeq \sBesselSLtoCS^{-1} \begin{smallpmatrix} \Besselcoeff h_z' \\ h_z \end{smallpmatrix}\) for
\beq{eq.spectralconv.hz}
h_z \defeq \frac{1}{\Besselcoeff(\log\shift) \unswronskian{\Phi_{1+\nicefrac{z}{2\sqrt{\shift}}}}{\shift^{\nicefrac{1}{4}}\unsBesselg}(0)}\, \Phi_{1+\nicefrac{z}{2\sqrt{\shift}}}
\eeq
where \(\Phi_z\) is a fixed solution to \(\unsBesselop\Phi_z = z\Phi_z\) with the right behavior at infinity, and chosen to be analytic in \(z\). Fixing a compact \(K \subset \mathbb{C}\), by Lemma~\ref{lem.spectralconv.beta>2.goodevent}, \(h_z\) is well defined at least asymptotically almost surely for \(z \in K\), and we will not worry about what happens outside of the event from the lemma.

In order to uniformize the analysis of the two cases (\(\abs{a} < 1\) and \(a \geq 1\)), we first show that when \(\abs{a} < 1\) we can effectively move the boundary condition from \(1\) to \(\lasttime\), that is, we show that \(u_z\circ\timechange(1) - u_z\circ\timechange(\lasttime) \to 0\) as \(\shift\to\infty\). By definition,
\[
u_z
    = \begin{pmatrix}
        \shift^{\nicefrac{-1}{4}} \sBesself & -\shift^{\nicefrac{-1}{4}} \Besselcoeff\sBesself' \\
        -\shift^{\nicefrac{1}{4}} \sBesselg & \shift^{\nicefrac{1}{4}} \Besselcoeff\sBesselg'
    \end{pmatrix} \begin{pmatrix}
        \Besselcoeff h_z' \\
        h_z
    \end{pmatrix}
    = \begin{pmatrix}
        \wronskian{\shift^{\nicefrac{-1}{4}}\sBesself}{h_z} \\
        \wronskian{h_z}{\shift^{\nicefrac{1}{4}}\sBesselg}
    \end{pmatrix},
\]
so it suffices to show that if \(\psi\) stands for either \(\shift^{\nicefrac{-1}{4}} \sBesself\) or \(\shift^{\nicefrac{1}{4}} \sBesselg\), then \(\wronskian{h_z}{\psi}(\infty) - \wronskian{h_z}{\psi}(\log\shift) \to 0\) in probability as \(\shift\to\infty\) (this is not entirely trivial, as the solutions themselves depend on \(\shift\)). To prove this, remark that because \(\sBesselop\psi = 0\) and \(\sBesselop h_z = zh_z\), then \(\bigl( \wronskian{h_z}{\psi} \bigr)' = \frac{z\sqrt{\shift}}{2} h_z \psi \Besselweight\) so
\[
\wronskian{h_z}{\psi}(\infty) - \wronskian{h_z}{\psi}(\log\shift) 
    = \frac{z\sqrt{\shift}}{2} \int_{\log\shift}^\infty h_z(t) \psi(t) \Besselweight(t) \diff{t}.
\]
To estimate this, we can make the dependence on \(\shift\) clearer by shifting time and using \(\tilde{h}_z(t) \defeq h_z(t+\log\shift)\) and \(\tilde{\psi}(t) \defeq \psi(t+\log\shift)\). By Proposition~\ref{prop.unshift}, \(\tilde{h}_z\) and \(\tilde{\psi}\) solve \(\unsBesselop \tilde{h}_z = (1 + \frac{z}{2\sqrt{\shift}}) \tilde{h}_z\) and \(\unsBesselop\tilde{\psi} = \tilde{\psi}\) where \(\unsBesselop\) is defined from the Brownian motion \(\unsBM \defeq \sBM(\cdot+\log\shift) - \sBM(\log\shift)\). In particular, the weight of \(\unsBesselop\) is related to that of \(\Besselop\) by \(\unsBesselweight(t) = \shift^{a+1} e^{\frac{2}{\sqrt{\beta}} \sBM(\log\shift)} \Besselweight(t+\log\shift)\). Hence, shifting time by \(\log\shift\) in the above integral yields
\[
\wronskian{h_z}{\psi}(\infty) - \wronskian{h_z}{\psi}(\log\shift)
    = \frac{z}{2} \shift^{\nicefrac{-1}{2}-a} e^{-\frac{2}{\sqrt{\beta}} \sBM(\log\shift)} \int_0^\infty \tilde{h}_z(t) \tilde{\psi}(t) \unsBesselweight(t) \diff{t}.
\]
Recall from Section~\ref{sec.solprops} that \(1\) and \(\Besselsol(t) \defeq \int_0^t \frac{1}{\unsBesselcoeff(s)} \diff{s}\) are a pair of fundamental solutions to \(\unsBesselop f = 0\). Therefore, the argument we used in the proof of Lemma~\ref{lem.tracebound} shows that for any \(\varepsilon > 0\) there is a \(C > 0\) such that with probability at least \(1 - \nicefrac{\varepsilon}{4}\), for all \(t \geq 0\),
\begin{align*}
\abs{\tilde{\psi}(t)}
    & \leq C\bigl( \abs{\psi(\log\shift)} + \abs{\psi'(\log\shift)} \bigr) \bigl( 1 + \Besselsol(t) \bigr)
\shortintertext{and}
\abs{\tilde{h}_z(t)}
    & \leq C \Bigl( 1 + \frac{\abs{z}}{2\sqrt{\shift}} \Bigr) \bigl( \abs{\tilde{h}_z(0)} + \abs{\tilde{h}_z'(0)} \bigr) \bigl( 1 + \Besselsol(t) \bigr).
\end{align*}
By the representation in polar coordinates of \(\psi\) with the estimates on \(e^{2\ampf}\) and \(e^{2\ampg}\) from Proposition~\ref{prop.tracebound}, for any \(\delta > 0\) we know that there is a \(C' > 0\) such that \(\abs{\psi(\log\shift)} \vee \abs{\psi'(\log\shift)} \leq C' \shift^{\nicefrac{1}{2\beta} + \nicefrac{\delta}{2}} \bigl( \Besselcoeff(\log\shift) \bigr)^{\nicefrac{-1}{2}}\) with probability at least \(1 - \nicefrac{\varepsilon}{4}\). Note that by definition, \(\bigl( \Besselcoeff(\log\shift) \bigr)^{\nicefrac{-1}{2}} =  \shift^{\nicefrac{a}{2}} \exp\bigl( \frac{1}{\sqrt{\beta}} \sBM(\log\shift) \bigr)\). Then, on the event from Lemma~\ref{lem.spectralconv.beta>2.goodevent}, \(\abs{\tilde{h}_z(0)} \leq \shift^{\nicefrac{a}{2}} \abs{\Phi_{1+\nicefrac{z}{2\sqrt{\shift}}}(0)}\) and \(\abs{\tilde{h}_z'(0)} \leq \shift^{\nicefrac{a}{2}} \abs{\Phi_{1+\nicefrac{z}{2\sqrt{\shift}}}'(0)}\). Because \(\Phi_z\) is analytic in \(z\), \(\Phi_{1+\nicefrac{z}{2\sqrt{\shift}}}(0) = \Phi_1(0) + O(\nicefrac{\abs{z}}{\sqrt{\shift}})\) and \(\Phi_{1+\nicefrac{z}{2\sqrt{\shift}}}'(0) = \Phi_1'(0) + O(\nicefrac{\abs{z}}{\sqrt{\shift}})\), and since \(\Phi_1\) is a solution to \(\unsBesselop \Phi_1 = \Phi_1\) defined by its behavior at infinity, its law does not depend on \(\shift\). It follows that there is a \(C'' > 0\) such that \(\abs{\tilde{h}_z(0)} \vee \abs{\tilde{h}_z'(0)} \leq C'' \shift^{\nicefrac{a}{2}}\) for all \(z \in K\) with probability at least \(1 - \nicefrac{\varepsilon}{4}\). Combining these estimates, we see that for a (different) \(C > 0\), with probability at least \(1 - \nicefrac{3\varepsilon}{4}\), for all \(z \in K\),
\[
\abs[\big]{\wronskian{h_z}{\psi}(\infty) - \wronskian{h_z}{\psi}(\log\shift)}
    \leq C \shift^{\nicefrac{1}{2\beta}-\nicefrac{1}{2}+\nicefrac{\delta}{2}} e^{-\frac{1}{\sqrt{\beta}} \sBM(\log\shift)} \int_0^\infty \bigl( 1 + \Besselsol(t) \bigr)^2 \unsBesselweight(t) \diff{t}.
\]
Since \(\abs{a} < 1\) here, \(1, \Besselsol \in L^2\bigl( [0,\infty), \Besselweight(t) \diff{t} \bigr)\), so the integral is finite. Then for \(\shift\) large enough \(e^{-\frac{1}{\sqrt{\beta}} \sBM(\log\shift)} \leq \shift^{\nicefrac{\delta}{2}}\) with probability at least \(1 - \nicefrac{\varepsilon}{4}\) by properties of Brownian motion, so finally the difference is bounded by a constant times \(\shift^{- (\nicefrac{1}{2} - \nicefrac{1}{2\beta}) + \delta}\) with probability at least \(1 - \varepsilon\). Since \(\beta > 2\) here, this vanishes as \(\shift\to\infty\) for \(\delta\) small enough. Therefore, the difference of the Wronskians converges to \(0\) in probability, and indeed \(u_z(\infty) - u_z(\log\shift) \to 0\) compactly in probability as \(\shift\to\infty\). 

It remains to show (for now any \(a > -1\)) that \(u_z(\log\shift)\) converges to the boundary condition of the sine system, namely \((\Re\HBM(\infty), 1)\) where \(\HBM\) is the hyperbolic Brownian motion from the sine system's coefficient matrix. Writing \(u_z(\log\shift)\) as in~\eqref{eq.spectralconv.uz} but using the expression~\eqref{eq.spectralconv.hz} of \(h_z\) in terms of \(\Phi_{1+\nicefrac{z}{2\sqrt{\shift}}}\),
\beq{eq.spectralconv.uz2}
\setlength{\multlinegap}{22mm}
\begin{multlined}
u_z(\log\shift)
    = \begin{pmatrix}
        - e^{-\diffamps(\lasttime)} \cos\diffphases(\lasttime) \\
        1
    \end{pmatrix} \\
    + \frac{\shift^{\nicefrac{-a}{2}} e^{-\ampg(\lasttime)-\frac{1}{\sqrt{\beta}} \sBM(\log\shift)}}{\Besselcoeff(\log\shift) \unswronskian{\Phi_{1+\nicefrac{z}{2\sqrt{\shift}}}}{\shift^{\nicefrac{1}{4}}\unsBesselg}(0)} \begin{pmatrix}
        \sin\phaseg(\lasttime) & \cos\phaseg(\lasttime) \\
        0 & 0 
    \end{pmatrix} \begin{pmatrix}
        \Phi_{1+\nicefrac{z}{2\sqrt{\shift}}}'(0) \\
        \Phi_{1+\nicefrac{z}{2\sqrt{\shift}}}(0)
    \end{pmatrix}.
\end{multlined}
\eeq
The second line here vanishes in the limit in probability. Indeed, for any \(\varepsilon > 0\), Proposition~\ref{prop.tracebound} shows that if \(\delta > 0\) then \(e^{-\ampg(\lasttime)} \leq C\shift^{\nicefrac{-1}{2\beta}+\nicefrac{\delta}{2}}\) for some \(C > 0\) with probability at least \(1-\nicefrac{\varepsilon}{3}\), as above \(e^{-\frac{1}{\sqrt{\beta}} \sBM(\log\shift)} \leq \shift^{\nicefrac{\delta}{2}}\) with probability at least \(1-\nicefrac{\varepsilon}{3}\), and the denomiator of the prefactor is at least \(\shift^{\nicefrac{-a}{2}}\) on the event from Lemma~\ref{lem.spectralconv.beta>2.goodevent}. As above, \(\Phi_{1+\nicefrac{z}{2\sqrt{\shift}}}(0) = \Phi_1(0) + O(\nicefrac{\abs{z}}{\sqrt{\shift}})\) and the law of \(\Phi_1(0)\) does not depend on \(\shift\), and the same goes for \(\Phi_{1+\nicefrac{z}{2\sqrt{\shift}}}'(0)\). It follows that for any \(\shift\) large enough the second line of~\eqref{eq.spectralconv.uz2} is bounded (uniformly for \(z \in K\)) by a constant times \(\shift^{\nicefrac{-1}{2\beta}+\delta}\) with probability at least \(1 - \varepsilon\). Taking \(\delta\) small enough, since \(\varepsilon\) is arbitrary it follows that the second line of~\eqref{eq.spectralconv.uz2} converges to \(0\) compactly in \(z\) in probability as \(\shift\to\infty\).

The above immediately implies that the second entry of \(u_z(\log\shift)\) converges to \(1\), which is the desired value. To show that \(-e^{-\diffamps(\lasttime)} \cos\diffphases(\lasttime) \to \Re\HBM(\infty)\), we set \(\penultime \defeq \frac{1}{\diln}(1 - \shift^{\nicefrac{-1}{2}+\alpha})\), and we write
\beq{eq.spectralconv.bcdiff}
\begin{aligned}
\abs[\big]{e^{-\diffamps(\lasttime)}\cos\diffphases(\lasttime) + \Re\HBM(\infty)}
    & \leq \abs[\big]{e^{-\diffamps(\lasttime)}\cos\diffphases(\lasttime) - e^{-\diffamps(\penultime)}\cos\diffphases(\penultime)} \\
    &\qquad + \abs[\big]{e^{-\diffamps(\penultime)}\cos\diffphases(\penultime) + \Re\HBM\circ\slogtime(\penultime)} \\
    &\qquad + \abs[\big]{\Re\HBM\circ\slogtime(\penultime) - \Re\HBM(\infty)}.
\end{aligned}
\eeq
As \(\slogtime(\penultime) = (\frac{1}{2}-\alpha)\log\shift\), the third line vanishes a.s.\@ in the limit \(\shift\to\infty\). It is also clear that the second line vanishes in probability as \(\shift\to\infty\) by Proposition~\ref{prop.ReHBM}. To show that the first line of~\eqref{eq.spectralconv.bcdiff} also vanishes, we return to the SDE
\beq{eq.spectralconv.ReHBM}
\setlength{\multlinegap}{33mm}
\begin{multlined}
\diff{\bigl( -e^{-\diffamps} \cos\diffphases \bigr)}(t)
    = \frac{2}{\sqrt{\beta}} e^{-2\ampg(t)} \sin 2\phaseg(t) \sqrt{\frac{2\diln}{1-\diln t}} \diff{\sBM}(t) \\
    + e^{-2\ampg(t)} \Bigl( (2a + 1) \sin 2\phaseg(t) + \frac{4}{\beta} \sin 4\phaseg(t) \Bigr) \frac{\diln}{1-\diln t} \diff{t},
\end{multlined}
\eeq
which was obtained earlier by taking the real part of~\eqref{eq.approxHBMSDE} and simplifying with the identity~\eqref{eq.Wronskianidentity}. By Proposition~\ref{prop.tracebound}, we know that for any \(\varepsilon, \delta > 0\), there is a \(C > 0\) such that \(e^{-\ampg(t)} \leq C(1-\diln t)^{\nicefrac{1}{\beta}-\nicefrac{\delta}{2}}\) for \(t \in [0,\lasttime]\) with probability at least \(1-\varepsilon\). For \(t \in [\penultime, \lasttime]\), and it follows that
\[
\abs[\bigg]{\int_{\penultime}^{\lasttime} e^{-2\ampg(t)} \Bigl( (2a + 1) \sin 2\phaseg(t) + \frac{4}{\beta} \sin 4\phaseg(t) \Bigr) \frac{\diln}{1-\diln t} \diff{t}}
    \leq C' \int_{\penultime}^{\lasttime} \diln (1 - \diln t)^{-1+\nicefrac{2}{\beta}-\delta} \diff{t}
\]
with probability at least \(1-\varepsilon\) for a different constant \(C' > 0\). This integrates to be bounded by a constant times \(\shift^{-(\nicefrac{1}{2}-\alpha)(\nicefrac{2}{\beta}-\delta)}\), which vanishes as \(\shift\to\infty\) for \(\delta\) small enough. Similarly, with probability at least \(1-\varepsilon\), the quadratic variation of the first line of~\eqref{eq.spectralconv.ReHBM} is bounded on \([\penultime, \lasttime]\) by
\[
C \int_{\penultime}^{\lasttime} \diln (1-\diln t)^{-1+\nicefrac{4}{\beta}+2\delta} \diff{t}
    = \frac{C}{\nicefrac{4}{\beta}+2\delta} \shift^{-(\nicefrac{1}{2}-\alpha)(\nicefrac{4}{\beta}-2\delta)} \bigl( 1 - \shift^{-\alpha(\nicefrac{4}{\beta}-2\delta)} \bigr),
\]
which vanishes as \(\shift\to\infty\). Applying Bernstein's inequality, it follows that the integral of~\eqref{eq.spectralconv.ReHBM} between \(\penultime\) and \(\lasttime\) converges to \(0\) in probability as \(\shift\to\infty\), and this concludes the proof.
\end{proof}

\begin{proof}[Proof of Lemma~\ref{lem.spectralconv.beta>2.goodevent}]
We need to estimate \(\Besselcoeff(\log\shift) \unswronskian{\Phi_{1+\nicefrac{z}{2\sqrt{\shift}}}}{\shift^{\nicefrac{1}{4}}\unsBesselg}(0)\) for \(z \in K\) where \(K \subset \mathbb{C}\) is compact.

We first replace \(\Phi_{1+\nicefrac{z}{2\sqrt{\shift}}}\) with \(\Phi_1\). Fix \(\varepsilon > 0\). As we have seen before, Proposition~\ref{prop.tracebound} implies that for any \(\delta > 0\), there is a \(C > 0\) such that
\(
\shift^{\nicefrac{1}{4}}\unsBesselg(0) \vee \shift^{\nicefrac{1}{4}}\unsBesselg'(0)
    \leq C \shift^{\nicefrac{1}{2\beta}+\nicefrac{\delta}{2}} \bigl( \Besselcoeff(\log\shift) \bigr)^{\nicefrac{-1}{2}}
\)
with probability at least \(1 - \nicefrac{\varepsilon}{6}\). Then, by definition, \(\bigl( \Besselcoeff(\log\shift) \bigr)^{\nicefrac{1}{2}} = \shift^{\nicefrac{-a}{2}} \exp\bigl(- \frac{1}{\sqrt{\beta}} \sBM(\log\shift) \bigr)\), and by properties of Brownian motion this is bounded by \(\shift^{\nicefrac{-a}{2}+\nicefrac{\delta}{2}}\) with probability at least \(1 - \nicefrac{\varepsilon}{6}\) for any \(\shift\) large enough. Finally, as \(\Phi_z\) is analytic in \(z\), \(\Phi_{1+\nicefrac{z}{2\sqrt{\shift}}}(0) - \Phi_1(0) = O(\nicefrac{\abs{z}}{\sqrt{\shift}})\) and \(\Phi'_{1+\nicefrac{z}{2\sqrt{\shift}}}(0) - \Phi'_1(0) = O(\nicefrac{\abs{z}}{\sqrt{\shift}})\) where the implicit constants are well-defined random variables, so with probability at least \(1 - \nicefrac{\varepsilon}{6}\) these differences are bounded by \(C'\shift^{\nicefrac{-1}{2}}\) for some constant \(C' > 0\), uniformly in \(z \in K\). It follows that with probability at least \(1 - \nicefrac{\varepsilon}{2}\), there is a \(C > 0\) such that
\beq{eq.spectralconv.goodevent.zpart}
\abs[\big]{\Besselcoeff(\log\shift) \unswronskian{\Phi_{1+\nicefrac{z}{2\sqrt{\shift}}} - \Phi_1}{\shift^{\nicefrac{1}{4}}\unsBesselg}(0)}
    \leq C\shift^{\nicefrac{-1}{2}+\nicefrac{1}{2\beta}-\nicefrac{a}{2}+\delta}
\eeq
for all \(z \in K\) and all \(\shift\) large enough.

Now, because \(\Phi_1\) and \(\unsBesselg\) both solve \(\unsBesselop f = f\), their Wronskian is constant by standard theory of Sturm--Liouville operators. Therefore, by definition of \(\unsBesselg\), \(\unswronskian{\Phi_1}{\shift^{\nicefrac{1}{4}}\unsBesselg}(0) = \unswronskian{\Phi_1}{\shift^{\nicefrac{1}{4}}\unsBesselg}(-\log\shift) = \shift^{\nicefrac{1}{4}} \unsBesselcoeff(-\log\shift) \Phi_1(-\log\shift)\). In the polar coordinates from Proposition~\ref{prop.polarcoords2}, \(\Phi_1(-t) = e^{\ramp(t) - \nicefrac{t}{4}} \cos\rphase(t)\), so by expanding the definitions of \(\Besselcoeff\) and \(\unsBesselcoeff\), we get
\[
\Besselcoeff(\log\shift) \unswronskian{\Phi_1}{\shift^{\nicefrac{1}{4}}\unsBesselg}(0)
    = e^{-\frac{2}{\sqrt{\beta}} \sBM(\log\shift) - \frac{2}{\sqrt{\beta}} \rBM(\log\shift)} e^{\ramp(\log\shift)} \cos\rphase(\log\shift).
\]
By Proposition~\ref{prop.asymptotics.amp}, we know that \(e^{\ramp(\log\shift)} \geq C\shift^{\nicefrac{1}{2\beta} - \nicefrac{a}{2} - \nicefrac{\delta}{2}} e^{\ramp(1)}\) with probability at least \(1 - \nicefrac{\varepsilon}{8}\) for some \(C > 0\). As \(\ramp(1)\) is a well-defined random variable, we can choose \(\zeta > 0\) small enough so that \(e^{\ramp(1)} > \zeta\) with probability at least \(1 - \nicefrac{\varepsilon}{8}\). Proposition~\ref{prop.asymptotics.phase} shows that \(e^{i\rphase(t)} \to U \sim \uniform\mathbb{S}^1\) in law as \(t\to\infty\), so shrinking \(\zeta\) if necessary we also see that \(\abs{\cos\rphase(\log\shift)} \geq \zeta\) with probability at least \(1 - \nicefrac{\varepsilon}{8}\) for any \(\shift\) large enough. Then, as in other cases properties of Brownian motions imply that \(\exp\bigl( -\frac{2}{\sqrt{\beta}} \sBM(\log\shift) - \frac{2}{\sqrt{\beta}} \rBM(\log\shift) \bigr) \geq C'\shift^{\nicefrac{-\delta}{2}}\) with probability at least \(1 - \nicefrac{\varepsilon}{8}\) for some \(C' > 0\), so finally
\beq{eq.spectralconv.goodevent.1part}
\abs[\big]{\Besselcoeff(\log\shift) \unswronskian{\Phi_1}{\shift^{\nicefrac{1}{4}}\unsBesselg}(0)}
    \geq CC'\zeta^2 \shift^{\nicefrac{1}{2\beta}-\nicefrac{a}{2}-\delta}
\eeq
with probability at least \(1 - \nicefrac{\varepsilon}{2}\), for all \(\shift\) large enough. 

Putting together~\eqref{eq.spectralconv.goodevent.zpart} and~\eqref{eq.spectralconv.goodevent.1part}, we see that with probability at least \(1 - \varepsilon\),
\[
\shift^{\nicefrac{a}{2}} \abs[\big]{\Besselcoeff(\log\shift) \unswronskian{\Phi_{1+\nicefrac{z}{2\sqrt{\shift}}}}{\shift^{\nicefrac{1}{4}}\unsBesselg}(0)}
    \geq C \zeta^2 \shift^{\nicefrac{1}{2\beta} - \delta} - C \shift^{\nicefrac{-1}{2}+\nicefrac{1}{2\beta}+\delta}
\]
for some \(C > 0\) and all \(\shift\) large enough. Because \(\beta > 2\) here, with \(\delta\) small enough the first term diverges as \(\shift\to\infty\) while the second one vanishes. In particular, the right-hand side is at least \(1\) for any \(\shift\) large enough, hence the result.
\end{proof}

\printbibliography

@book{anderson_introduction_2009,
  title = {An {{Introduction}} to {{Random Matrices}}},
  author = {Anderson, Greg W. and Guionnet, Alice and Zeitouni, Ofer},
  date = {2009},
  series = {Cambridge {{Studies}} in {{Advanced Mathematics}}},
  publisher = {Cambridge University Press},
  location = {Cambridge},
  doi = {10.1017/CBO9780511801334},
  isbn = {978-0-521-19452-5},
  pagetotal = {507}
}

@article{bekerman_transport_2015,
  title = {Transport {{Maps}} for ${\beta}$-{{Matrix Models}} and {{Universality}}},
  author = {Bekerman, F. and Figalli, A. and Guionnet, A.},
  date = {2015-09-01},
  journaltitle = {Communications in Mathematical Physics},
  shortjournal = {Commun. Math. Phys.},
  volume = {338},
  number = {2},
  pages = {589--619},
  issn = {1432-0916},
  doi = {10.1007/s00220-015-2384-y},
  langid = {english}
}

@article{bourgade_bulk_2012,
  title = {Bulk Universality of General $\beta$-Ensembles with Non-Convex Potential},
  author = {Bourgade, Paul and Erd\H os, L\'aszl\'o and Yau, Horng-Tzer},
  date = {2012-09-20},
  journaltitle = {Journal of Mathematical Physics},
  shortjournal = {Journal of Mathematical Physics},
  volume = {53},
  number = {9},
  pages = {095221},
  issn = {0022-2488},
  doi = {10.1063/1.4751478}
}

@article{bourgade_edge_2014,
  title = {Edge {{Universality}} of {{Beta Ensembles}}},
  author = {Bourgade, Paul and Erd\"os, L\'aszl\'o and Yau, Horng-Tzer},
  date = {2014-11-01},
  journaltitle = {Communications in Mathematical Physics},
  shortjournal = {Commun. Math. Phys.},
  volume = {332},
  number = {1},
  pages = {261--353},
  issn = {1432-0916},
  doi = {10.1007/s00220-014-2120-z},
  langid = {english}
}

@article{bourgade_universality_2014,
  title = {Universality of General $\beta$-Ensembles},
  author = {Bourgade, Paul and Erd\H os, L\'aszl\'o and Yau, Horng-Tzer},
  date = {2014-04},
  journaltitle = {Duke Mathematical Journal},
  volume = {163},
  number = {6},
  pages = {1127--1190},
  publisher = {Duke University Press},
  issn = {0012-7094, 1547-7398},
  doi = {10.1215/00127094-2649752}
}

@book{de_branges_hilbert_1968,
  title = {Hilbert {{Spaces}} of {{Entire Functions}}},
  author = {family=Branges, given=Louis, prefix=de, useprefix=true},
  date = {1968},
  series = {Prentice-{{Hall}} Series in Modern Analysis},
  publisher = {Prentice-Hall, Inc.},
  location = {Englewood Cliffs, N.J.},
  langid = {english},
  pagetotal = {326}
}

@article{dumaz_operator_2021,
  title = {Operator Level Hard-to-Soft Transition for $\beta$-Ensembles},
  author = {Dumaz, Laure and Li, Yun and Valk\'o, Benedek},
  date = {2021-01},
  journaltitle = {Electronic Journal of Probability},
  volume = {26},
  pages = {1--28},
  publisher = {{Institute of Mathematical Statistics and Bernoulli Society}},
  issn = {1083-6489, 1083-6489},
  doi = {10.1214/21-EJP602},
  issue = {none}
}

@article{dumitriu_matrix_2002,
  title = {Matrix Models for Beta Ensembles},
  author = {Dumitriu, Ioana and Edelman, Alan},
  date = {2002-11},
  journaltitle = {Journal of Mathematical Physics},
  shortjournal = {J. Math. Phys.},
  volume = {43},
  number = {11},
  pages = {5830--5847},
  publisher = {American Institute of Physics},
  issn = {0022-2488},
  doi = {10.1063/1.1507823}
}

@article{eckhardt_weyl-titchmarsh_2013,
  title = {Weyl-{{Titchmarsh}} Theory for {{Sturm-Liouville}} Operators with Distributional Potentials},
  author = {Eckhardt, Jonathan and Gesztesy, Fritz and Nichols, Roger and Teschl, Gerald},
  date = {2013},
  journaltitle = {Opuscula Mathematica},
  shortjournal = {Opuscula Math.},
  volume = {33},
  number = {3},
  pages = {467--563},
  publisher = {{AGH University of Science and Technology Press}},
  issn = {1232-9274, 2300-6919},
  doi = {10.7494/OpMath.2013.33.3.467},
  issue = {3},
  langid = {english}
}

@article{edelman_random_2007,
  title = {From {{Random Matrices}} to {{Stochastic Operators}}},
  author = {Edelman, Alan and Sutton, Brian D.},
  date = {2007-06-01},
  journaltitle = {Journal of Statistical Physics},
  shortjournal = {J Stat Phys},
  volume = {127},
  number = {6},
  pages = {1121--1165},
  issn = {1572-9613},
  doi = {10.1007/s10955-006-9226-4},
  langid = {english}
}

@article{freedman_tail_1975,
  title = {On {{Tail Probabilities}} for {{Martingales}}},
  author = {Freedman, David A.},
  date = {1975},
  journaltitle = {The Annals of Probability},
  volume = {3},
  number = {1},
  eprint = {2959268},
  eprinttype = {jstor},
  pages = {100--118},
  publisher = {Institute of Mathematical Statistics},
  issn = {0091-1798},
  url = {https://www.jstor.org/stable/2959268},
  urldate = {2024-08-26}
}

@article{holcomb_random_2018,
  title = {The Random Matrix Hard Edge: Rare Events and a Transition},
  shorttitle = {The Random Matrix Hard Edge},
  author = {Holcomb, Diane},
  date = {2018-01},
  journaltitle = {Electronic Journal of Probability},
  volume = {23},
  pages = {1--20},
  publisher = {{Institute of Mathematical Statistics and Bernoulli Society}},
  issn = {1083-6489, 1083-6489},
  doi = {10.1214/18-EJP212},
  issue = {none}
}

@book{kallenberg_foundations_2021,
  title = {Foundations of {{Modern Probability}}},
  author = {Kallenberg, Olav},
  date = {2021},
  series = {Probability {{Theory}} and {{Stochastic Modelling}}},
  volume = {99},
  publisher = {Springer International Publishing},
  location = {Cham},
  doi = {10.1007/978-3-030-61871-1},
  isbn = {978-3-030-61870-4},
  langid = {english}
}

@article{killip_eigenvalue_2009,
  title = {Eigenvalue Statistics for {{CMV}} Matrices: {{From Poisson}} to Clock via Random Matrix Ensembles},
  shorttitle = {Eigenvalue Statistics for {{CMV}} Matrices},
  author = {Killip, Rowan and Stoiciu, Mihai},
  date = {2009-02},
  journaltitle = {Duke Mathematical Journal},
  volume = {146},
  number = {3},
  pages = {361--399},
  publisher = {Duke University Press},
  issn = {0012-7094, 1547-7398},
  doi = {10.1215/00127094-2009-001}
}

@article{krishnapur_universality_2016,
  title = {Universality of the {{Stochastic Airy Operator}}},
  author = {Krishnapur, Manjunath and Rider, Brian and Vir\'ag, B\'alint},
  date = {2016},
  journaltitle = {Communications on Pure and Applied Mathematics},
  volume = {69},
  number = {1},
  pages = {145--199},
  issn = {1097-0312},
  doi = {10.1002/cpa.21573},
  langid = {english}
}

@misc{NIST:DLMF,
  title = {{{NIST}} Digital Library of Mathematical Functions},
  editor = {Olver, Frank W. J. and Daalhuis, Adri B. Olde and Lozier, Daniel W. and Schneider, Barry I. and Boisvert, Ronald F. and Clark, Charles W. and Miller, Bruce R. and Saunders, Bonita V. and Cohl, Howard S. and McClain, Marjorie A.},
  date = {2022-12-15},
  url = {http://dlmf.nist.gov/},
  key = {DLMF}
}

@online{painchaud_operator_2025,
  title = {Operator Level Soft Edge to Bulk Transition in $\beta$-Ensembles via Canonical Systems},
  author = {Painchaud, Vincent and Paquette, Elliot},
  date = {2025-02-14},
  eprint = {2502.10305},
  eprinttype = {arXiv},
  eprintclass = {math},
  doi = {10.48550/arXiv.2502.10305},
  pubstate = {prepublished}
}

@article{ramirez_beta_2011,
  title = {Beta Ensembles, Stochastic Airy Spectrum, and a Diffusion},
  author = {Ram\'irez, Jos\'e A. and Rider, Brian and Vir\'ag, B\'alint},
  date = {2011-10},
  journaltitle = {Journal of the American Mathematical Society},
  shortjournal = {J. Amer. Math. Soc.},
  volume = {24},
  number = {4},
  pages = {919--944},
  issn = {0894-0347, 1088-6834},
  doi = {10.1090/S0894-0347-2011-00703-0},
  langid = {english}
}

@article{ramirez_diffusion_2009,
  title = {Diffusion at the {{Random Matrix Hard Edge}}},
  author = {Ram\'irez, Jos\'e A. and Rider, Brian},
  date = {2009-06-01},
  journaltitle = {Communications in Mathematical Physics},
  shortjournal = {Commun. Math. Phys.},
  volume = {288},
  number = {3},
  pages = {887--906},
  issn = {1432-0916},
  doi = {10.1007/s00220-008-0712-1},
  langid = {english}
}

@book{remling_spectral_2018,
  title = {Spectral Theory of Canonical Systems},
  author = {Remling, Christian},
  date = {2018-08-21},
  journaltitle = {Spectral Theory of Canonical Systems},
  publisher = {De Gruyter},
  doi = {10.1515/9783110563238},
  isbn = {978-3-11-056323-8},
  langid = {english}
}

@book{revuz_continuous_1999,
  title = {Continuous {{Martingales}} and {{Brownian Motion}}},
  author = {Revuz, Daniel and Yor, Marc},
  editor = {Chern, S. S. and Eckmann, B. and family=Harpe, given=P., prefix=de la, useprefix=true and Hironaka, H. and Hirzebruch, F. and Hitchin, N. and H\"ormander, L. and Knus, M.-A. and Kupiainen, A. and Lannes, J. and Lebeau, G. and Ratner, M. and Serre, D. and family=Sinai, given=Ya. G., given-i={{Ya}}G and Sloane, N. J. A. and Tits, J. and Waldschmidt, M. and Watanabe, S. and Berger, M. and Coates, J. and Varadhan, S. R. S.},
  editortype = {redactor},
  date = {1999},
  series = {Grundlehren Der Mathematischen {{Wissenschaften}}},
  volume = {293},
  publisher = {Springer},
  location = {Berlin, Heidelberg},
  doi = {10.1007/978-3-662-06400-9},
  isbn = {978-3-540-64325-8}
}

@article{rider_universality_2019,
  title = {Universality of the {{Stochastic Bessel Operator}}},
  author = {Rider, Brian and Waters, Patrick},
  date = {2019-10-01},
  journaltitle = {Probability Theory and Related Fields},
  shortjournal = {Probab. Theory Relat. Fields},
  volume = {175},
  number = {1},
  pages = {97--140},
  issn = {1432-2064},
  doi = {10.1007/s00440-018-0888-z},
  langid = {english}
}

@book{schmudgen_unbounded_2012,
  title = {Unbounded {{Self-adjoint Operators}} on {{Hilbert Space}}},
  author = {Schm\"udgen, Konrad},
  date = {2012},
  series = {Graduate {{Texts}} in {{Mathematics}}},
  volume = {265},
  publisher = {Springer Netherlands},
  location = {Dordrecht},
  doi = {10.1007/978-94-007-4753-1},
  isbn = {978-94-007-4752-4},
  langid = {english}
}

@article{valko_continuum_2009,
  title = {Continuum Limits of Random Matrices and~the~Brownian~Carousel},
  author = {Valk\'o, Benedek and Vir\'ag, B\'alint},
  date = {2009-09-01},
  journaltitle = {Inventiones mathematicae},
  shortjournal = {Invent. math.},
  volume = {177},
  number = {3},
  pages = {463--508},
  issn = {1432-1297},
  doi = {10.1007/s00222-009-0180-z},
  langid = {english}
}

@article{valko_palm_2023,
  title = {Palm Measures for {{Dirac}} Operators and the {{Sine}}$_\beta$ Process},
  author = {Valk\'o, Benedek and Vir\'ag, B\'alint},
  date = {2023-09-01},
  journaltitle = {Stochastic Processes and their Applications},
  shortjournal = {Stochastic Processes and their Applications},
  volume = {163},
  pages = {106--135},
  issn = {0304-4149},
  doi = {10.1016/j.spa.2023.05.011}
}

@article{valko_sine_beta_2017,
  title = {The {{Sine}}$_\beta$ Operator},
  author = {Valk\'o, Benedek and Vir\'ag, B\'alint},
  date = {2017-07-01},
  journaltitle = {Inventiones mathematicae},
  shortjournal = {Invent. math.},
  volume = {209},
  number = {1},
  pages = {275--327},
  issn = {1432-1297},
  doi = {10.1007/s00222-016-0709-x},
  langid = {english}
}

\end{document}